\numberwithin{equation}{section}
\title{Prescribing Ricci curvature on a Product of Spheres}
\author{Timothy Buttsworth, Anusha M. Krishnan}
\address{\begin{tabular}{l}
School of Mathematics and Physics, The University of Queensland, St Lucia, Brisbane,\\ QLD 4072, Australia\\
\emph{E-mail address}: {\tt t.buttsworth@uq.edu.au}\\[0.2cm]
215 Carnegie Building, Department of Mathematics, Syracuse University, Syracuse,\\ NY 13244, USA \\
\emph{E-mail address}: {\tt akrish03@syr.edu}
\end{tabular}
}
\newtheorem{mainthm}{\sc Theorem}
\newtheorem{theorem}{Theorem}
\newtheorem{lemma}[theorem]{Lemma}
\newtheorem{prop}[theorem]{Proposition}
\newtheorem{cor}[theorem]{Corollary}
\theoremstyle{definition}
\theoremstyle{remark}
\newtheorem{remark}[theorem]{Remark}
\newtheorem*{mainrmk}{\sc Remark}
\numberwithin{equation}{section}
\newcommand{\Z}{\mathbb{Z}}
\newcommand{\R}{\mathbb{R}}
\newcommand{\G}{\mathsf{G}}
\newcommand{\K}{\mathsf{K}}
\renewcommand{\H}{\mathsf{H}}
\newcommand{\SO}{\mathsf{SO}}
\newcommand{\g}{\mathrm{g}}
\newcommand{\dd}{\mathrm{d}}
\newcommand{\Ric}{\operatorname{Ric}}
\newcommand{\h}{\mathfrak{h}}
\renewcommand{\k}{\mathfrak{k}}
\renewcommand{\gg}{\mathfrak{g}}
\newcommand{\m}{\mathfrak{m}}
\newcommand{\p}{\mathfrak{p}}
\newcommand{\n}{\mathfrak{n}}
\newcommand{\Ad}{\operatorname{Ad}}
\newcommand{\ddt}{\frac{\partial}{\partial t}}
\newcommand{\ddx}{\frac{\partial }{\partial x}}
\newcommand{\ddy}{\frac{\partial }{\partial y}}
\begin{document}
\maketitle
\begin{abstract}
 We prove an existence result for the prescribed Ricci curvature equation for
 certain doubly warped product metrics on $\mathbb{S}^{d_1+1}\times \mathbb{S}^{d_2}$, where $d_i \geq 2$. If $T$ is a metric satisfying certain curvature assumptions, we show that $T$ can be scaled independently on the two factors so as to itself be the Ricci tensor of some metric.
\end{abstract}
\section{Introduction}
The prescribed Ricci curvature problem on a manifold $M$ is the problem of finding a 
Riemannian metric $\g$ that solves the following equation for a fixed symmetric $(0,2)$-tensor field $T$ on $M$:
\begin{equation}\label{PRCE1}
\Ric(\g)=T.
\end{equation}
The Ricci tensor $\Ric(\g)$ can be expressed as the product of the first and second derivatives of $\g$, as well as the 
inverse components of $\g$ itself, so \eqref{PRCE1} is a second order nonlinear PDE in the metric, and its local and global solvability is a fundamental problem in Riemannian geometry. 
In the setting of compact K\"ahler manifolds, an important instance of this problem is the Calabi conjecture, solved by Yau in \cite{YauRicci}.

Local existence theory was studied by DeTurck in \cite{Deturck81}, where he proved that \eqref{PRCE1} 
can always be solved in a small neighbourhood of a point $p$ at which $T$ is non-degenerate. 
On the other hand, global solvability is a much more subtle problem.
In \cite{DeturckKoiso,Hamilton84}, the authors showed independently 
that for each positive definite $T$ on a compact manifold there is a constant $c_0(T)$ so that if $c > c_0$, 
$cT$ is not the Ricci tensor of any metric on $M$. Thus a more reasonable problem is finding both a constant $c>0$ and a metric $\g$ so that 
\begin{equation}\label{PRCE1'}
\Ric(\g)=cT.
\end{equation}

In \cite{Delay01}, \cite{Delay02}, and \cite{DeTurck85},
the inverse function theorem was used to solve the equation in a neighbourhood of certain Einstein metrics. 
Apart from these perturbation-type results, the only other setting where global existence has been obtained is under the assumption of symmetry, i.e., when $\g$ and $T$ are invariant under a Lie group acting on $M$. For example, if $M$ is a homogeneous space, 
and $\g$ and $T$ are assumed to be invariant under the transitive action of a group $\G$, \eqref{PRCE1'} 
becomes a system of algebraic equations. Existence results were obtained in, for example, \cite{Buttsworth19,BRPW2020,AP16,AP19}.

The next natural setting is one where a compact Lie group $\G$ acts on $M$ with cohomogeneity one, i.e., 
the generic orbits of the group action have co-dimension one. 
The simplest example is that of rotationally symmetric metrics on $\R^n$ or $\mathbb{S}^n$, where $\G = \SO(n)$. 
By symmetry, any invariant cohomogeneity one metric is parametrized by functions of a single spatial variable, 
so \eqref{PRCE1'} reduces to a system of ODEs. Related problems such as the Einstein equation ($\Ric(\g) = \lambda\g$) 
and the gradient Ricci soliton equation ($\Ric(\g) + \mathrm{Hess}(f) = \lambda\g$) have been widely studied 
in the cohomogeneity one setting by several authors (see, for example, \cite{BohmSpheres,Buttsworth18,Buttsworth19',DancerWang,EscWang}).

By contrast, the prescribed Ricci curvature problem on cohomogeneity one manifolds has remained relatively less explored. Existence results for rotationally symmetric $T$ on $\R^n$ were obtained by DeTurck and Cao in \cite{CaoDeTurck}. 
In \cite{Hamilton84}, Hamilton proved that if $T$ is itself a metric of positive Ricci curvature on $\mathbb{S}^3$ equivariant 
under $\SO(3)\times \Z_2$ (i.e. rotationally symmetric with an additional reflection symmetry about the equatorial $\mathbb{S}^2$) 
then there exists a unique $c>0$ and a metric $\g$ satisfying \eqref{PRCE1'}. 
In this article we obtain an analogous result for $\SO(d_1+1)\times \SO(d_2+1)\times \Z_2$-invariant tensors $T$ and metrics $\g$ on $\mathbb{S}^{d_1 + 1}\times \mathbb{S}^{d_2}$ (where $d_i\geq 2$) that are of the form
\begin{align*}
 T(t) &= \dd t^2 + T_1(t)\,\Omega_1^2 + T_2(t)\, \Omega_2^2, \\
 \g(t) &= h(t)^2\,\dd t^2 + f_1(t)^2\,\Omega_1^2 + f_2(t)^2 \,\Omega_2^2, \hspace{0.5cm} t\in [0,1].
\end{align*}

The action is essentially a product of the 
``rotation'' action of $\SO(d_1+1)$ on $\mathbb{S}^{d_1 + 1}$, 
with the transitive action of $\SO(d_2+1)$ on $\mathbb{S}^{d_2}$, and the $\Z_2$ factor corresponds to 
reflection symmetry about the midpoint $t=\frac{1}{2}$ of the orbit space $[0,1]$. 
The singular orbit at
$t=1$ ($t=0$) is the product of the second sphere $\mathbb{S}^{d_2}$ 
with the north (south) pole of the first sphere $\mathbb{S}^{d_1+1}$. 
Here, the quantity $T_t = T_1(t)\,\Omega_1^2 + T_2(t)\,\Omega_2^2$ is a homogeneous product metric on the generic orbit $\mathbb{S}^{d_1} \times \mathbb{S}^{d_2}$, where $\Omega_i^2$ denotes the standard metric on $\mathbb{S}^{d_i}$. Since the metric restricted to each orbit is a product, it is reasonable to expect that we will require two scaling factors $c_1$ and $c_2$ corresponding to the two factors in the product $\mathbb{S}^{d_1 + 1}\times \mathbb{S}^{d_2}$. We prove the following existence result.

\begin{mainthm}\label{RicciExistence}
Let $T = \dd t^2 + T_1(t)\,\Omega_1^2 + T_2(t)\, \Omega_2^2$ be an $\SO(d_1+1)\times \SO(d_2+1)\times \Z_2$-invariant metric on $\mathbb{S}^{d_1 + 1} \times \mathbb{S}^{d_2}$ such that $T_i'(t)\le 0$ for $t\in[\frac{1}{2},1]$, $T_1'(t)<0$ for $t\in (\frac{1}{2},1)$, and $T_1''(\frac{1}{2})<0$. Then there exist constants $c_1, c_2 >0$ and a smooth 
invariant metric $\g$ on $M$ satisfying the following prescribed Ricci curvature equation with scaling: $\Ric(\g) = c_1 (\dd t^2 + T_1\,\Omega_1^2) + c_2T_2\,\Omega_2^2$.
\end{mainthm}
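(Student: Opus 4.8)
The plan is to reduce the equation to a system of ordinary differential equations on the orbit interval $[0,1]$ and to solve the resulting singular boundary value problem by a shooting argument, with the hypotheses on $T$ entering through a priori estimates.

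First I would compute $\Ric(\g)$ for $\g = h(t)^2\,\dd t^2 + f_1(t)^2\,\Omega_1^2 + f_2(t)^2\,\Omega_2^2$, treating each $\Omega_i^2$ as an Einstein metric with $\Ric(\Omega_i^2) = (d_i-1)\,\Omega_i^2$; this yields three scalar equations in $h, f_1, f_2, c_1, c_2$, one from the $\dd t^2$-component and one from each sphere factor. Smoothness of $\g$ across the two singular orbits forces $f_1(0) = f_1(1) = 0$ with $|f_1'| = h$ at both endpoints, $f_2' = h' = 0$ there, and the usual parity of the profile functions, while smoothness of $T$ on $M$ forces $T_1$ to vanish to exactly second order at $t = 0$ and $t = 1$ and $T_2$ to stay positive. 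Using the $\Z_2$-symmetry I would work on $[\tfrac12,1]$ with Neumann-type conditions $f_1'(\tfrac12) = f_2'(\tfrac12) = h'(\tfrac12) = 0$; by symmetry $T_1'(\tfrac12) = 0$, so the hypothesis $T_1''(\tfrac12) < 0$ says that $t = \tfrac12$ is a strict interior maximum of $T_1$. The key structural point is that the system is effectively second order: in $\g$-arclength the two sphere-factor equations form a clean second-order system for $(f_1, f_2)$ (with $T_i$ evaluated along the reparametrization), while the $\dd t^2$-equation — equivalently the first-order conservation law obtained by applying the contracted second Bianchi identity to the prescribed right-hand side — becomes a first-order equation for that reparametrization, hence for the warping $h$. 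The resulting first-order system is regular-singular at the singular orbit $t = 1$.

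Next I would set up the shooting map. Near $t = 1$ the free data consist of the scaling constants $c_1, c_2 > 0$ and the value $f_2(1) > 0$, one combination of $f_2(1)$ with the collapse rate of $f_1$ being removed by the scale-invariance of $\Ric$; for each admissible choice the regular-singular system has a unique one-sided smooth solution emanating from $t = 1$, and integrating it inward to $t = \tfrac12$ produces a map $\Phi$ from the space of admissible shooting data to $\big(f_1'(\tfrac12),\, f_2'(\tfrac12)\big) \in \R^2$, whose zeros are exactly the smooth $\Z_2$-invariant solutions of the prescribed Ricci equation. The hypotheses on $T$ are precisely what control $\Phi$: the monotonicity $T_i'(t) \le 0$ on $[\tfrac12,1]$, with $T_1'(t) < 0$ on $(\tfrac12,1)$, drives comparison estimates forcing $f_1, f_2$ to stay bounded above and bounded away from $0$ in the interior while $c_1, c_2$ stay in a compact subset of $(0,\infty)$ — so that $\Phi$ is proper — and $T_1''(\tfrac12) < 0$ forces $f_1$ itself to have a strict maximum at the waist, ruling out degeneration there. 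With these estimates in hand, a continuity (or topological degree) argument, benchmarked against the round product case — which satisfies the hypotheses and for which the solutions are explicit — shows that $\Phi$ vanishes somewhere, producing $f_1, f_2, c_1, c_2$ with $c_i > 0$; elliptic regularity at the singular orbit together with the parity conditions then upgrades this to a smooth invariant metric $\g$ on $M$.

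The step I expect to be the main obstacle is the a priori estimates near the singular orbit $t = 1$, where $T_1$ and $f_1$ both vanish and the reduced system is genuinely singular: one must rule out every way the shooting family can degenerate — collapse of $f_2$, blow-up of $f_1$ or of $c_i$, $c_i \to 0$, or failure to reach $t = \tfrac12$ — uniformly up to the singular orbit, and it is exactly here that all three hypotheses on $T$ must be combined. A secondary difficulty is organizing the final topological step so that the extra degree of freedom in the shooting data does not obstruct it; a variational reformulation — maximizing total scalar curvature over invariant metrics subject to two integral constraints, with $c_1, c_2$ appearing as Lagrange multipliers — would be an alternative but would face the same compactness issue.
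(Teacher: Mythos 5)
Your outline shares the paper's broad skeleton (reduce to ODEs via the Bianchi identity, exploit the $\Z_2$-symmetry to work on $[\tfrac12,1]$, argue by degree theory, then invoke a regularity theorem at the singular orbit), but the crucial mechanism is different and, as stated, has a gap in exactly the place you flag as the main obstacle.

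You propose shooting \emph{inward} from the singular orbit $t=1$, so that the free shooting data are $c_1, c_2, f_2(1)$ and the target is the Neumann data $(f_1'(\tfrac12), f_2'(\tfrac12))$. This presupposes that the regular-singular initial value problem at $t=1$ has a unique smooth one-sided solution depending continuously on the data — a substantive claim requiring an indicial-equation analysis that you do not supply — and then that the resulting shooting map $\Phi$ is proper, so degree theory applies. The paper instead shoots \emph{outward} from the waist $t=\tfrac12$, where the IVP is entirely regular (the even boundary conditions $f_1'(\tfrac12)=f_2'(\tfrac12)=h'(\tfrac12)=0$ are automatic), fixes $a = f_1(\tfrac12)^{-2}$ and $S = \sup(e^{-2y_2}+(y_2')^2)$, and uses Leray--Schauder degree to produce zeroes of an auxiliary map for each $c_1$ in a range. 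The key idea — which is absent from your proposal — is that the desired solution lies on the \emph{boundary} of the existence region: one lowers $c_1$ toward a critical value $\hat c_1$ and shows that the resulting sequence of regular solutions degenerates, and the regularity theorem (Theorem~B with the unboundedness hypothesis \eqref{StrongRegularity}) shows this degeneration \emph{is} the smooth collapse $f_1(1)=0$ with the correct higher-order closing conditions. Your degree argument, by contrast, treats the smooth solution as a generic zero of $\Phi$ in the interior of parameter space and benchmarks against the round product; that bypasses the question of \emph{why} collapse at $t=1$ happens for some $c_1$, which is precisely what the limiting argument in Sections~5--6 of the paper establishes.

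The a priori estimates you invoke are also stated only at the level of intent. The specific hypothesis $T_1''(\tfrac12)<0$ does not simply force $f_1$ to have a strict maximum at the waist (that is already forced by evenness and the sign of $d_1-1-c_1T_1(\tfrac12)$); it is used in a much more delicate way in the paper (Lemma~\ref{BATS}) to rule out $h(\tfrac12)\to\infty$ along the degenerating sequence, via the differential inequality $v_i' \ge (1-2\delta)K$ near $t=\tfrac12$. Similarly, the monotonicity $T_i'\le 0$ does not merely keep quantities bounded; it controls the sign of $y_i'$ (Proposition~\ref{BasicEstimates}), yields the $L^4$ bound on $h$ (Lemma~\ref{L2BH}), and prevents $\gamma = e^{-2y_2(1/2)}\to 0$ (Lemma~\ref{GDNGT0}). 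None of these are generic comparison estimates; each exploits the equations' structure near $t=1$ and the normalization $\sup(e^{-2y_2}+(y_2')^2)=S$. Without some replacement for this machinery, your shooting map $\Phi$ has no reason to be proper, and the topological step collapses. The variational reformulation you suggest as an alternative would face the same compactness issue and is not pursued in the paper.
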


\begin{mainrmk}
 Since the Ricci tensor is scaling-invariant, the set of metrics satisfying the conclusion of Theorem \ref{RicciExistence} will be 
 at least one-dimensional. In fact, since we are allowing two scaling constants $c_1$ and $c_2$, one may expect that there is a two-parameter family of solutions. This is indeed the case, because our construction involves specifying freely the positive numbers $f_1(\frac{1}{2})$ and $\sup_{t\in [\frac{1}{2},1]}\left(\frac{1+(f_2'(t))^2}{f_2(t)^2}\right)$. Although it would appear that a more natural alternative would be to specify $f_1(\frac{1}{2})$ and $f_2(\frac{1}{2})$ freely, at this stage it is unclear if this is possible. Indeed, our existence proof requires \textit{a priori} control of $\frac{1+(f_2'(t))^2}{f_2(t)^2}$ close to the singular orbit at $t=1$, and this seems difficult to achieve by merely specifying $f_2(\frac{1}{2})$.
\end{mainrmk}

To our knowledge, after Hamilton's 1984 result in \cite{Hamilton84}, this is the first global existence result for the prescribed Ricci curvature problem on closed cohomogeneity one manifolds. The assumptions on the tensor $T$ are satisfied, for example, if $T$ is a metric whose sectional curvatures satisfy $\sec(X_1, \ddt)>0$ and $\sec(X_1, X_2) \le 0$ everywhere, whenever $X_i$ is tangent to $\mathbb{S}^{d_i}$, and $\ddt$ is a vector orthogonal to the orbits. As a result, when $T_2$ is a constant, our theorem is very similar to Hamilton's result; in fact, in that situation the proof essentially reduces to that in \cite{Hamilton84} (see Section \ref{T2CC}).

One of the main challenges in proving Theorem \ref{RicciExistence} is that the solution needs to satisfy certain 
boundary conditions at $t=1$, in order to define a smooth metric on the 
compact manifold $\mathbb{S}^{d_1 + 1} \times \mathbb{S}^{d_2}$. 
For example, for $\g$ to define a metric on $\mathbb{S}^{d_1 + 1} \times \mathbb{S}^{d_2}$, we must have $f_1(1) = 0$ since $t=1$ corresponds to a fixed point of the $\SO(d_1 + 1)$ 
action on the $\mathbb{S}^{d_1 +1}$ factor. There are additional conditions on the higher derivatives of the components of $\g$,
collectively referred to as ``smoothness conditions''. 
Our proof follows Hamilton's strategy of obtaining ODE solutions starting in the middle ($t = \frac{1}{2}$) 
and showing that for some value of the scaling constant $c_1$, 
the solution extends up to the singular orbit ($t=1$) while also satisfying the smoothness conditions. 
More precisely, we obtain a sequence of solutions to an equivalent ODE system, that degenerate in the limit. 
The monotonicity assumptions on $T_1$ and $T_2$ 
ensure that the ODE solutions exist on $[\frac{1}{2},1)$, that the degeneration only occurs at the $t=1$ singular orbit, and that 
this degeneration does indeed produce the required smoothness conditions. 
The monotonicity of $T_1$ and $T_2$ is essential to our construction, and in fact, 
Hamilton produces an example of non-existence on $\mathbb{S}^3$ with $T_1$ not monotone.

The degeneration in the sequence described above corresponds to collapse ($f_1(1) = 0$). Our next result concerns regularity of solutions to the prescribed Ricci curvature equation close to the singular orbit. It essentially tells us that once we have shown that collapse occurs at the singular orbit ($t=1$), the remaining smoothness conditions will be satisfied as well. In the following theorem, the principal part of $M$ refers to the open subset of $M$ given by the union of all the generic orbits, and corresponds to $0<t<1$.
\begin{mainthm}\label{RicciRegularity}
 Let $M = \mathbb{S}^{d_1+1}\times \mathbb{S}^{d_2}$. 
 Let $\g = h(t)^2\,\dd t^2 + f_1(t)^2\,\Omega_1^2 + f_2(t)^2\,\Omega_2^2$ 
 be a smooth Riemannian metric on the principal part of $M$ whose Ricci curvature 
 $T = \dd t^2 + T_1(t)\,\Omega_1^2 + T_2(t)\,\Omega_2^2$ is a Riemannian metric which can be extended smoothly to the singular orbit at $t=1$. 
 If $f_1',f_2'\le 0$ and $\frac{f_2'}{f_2}$ is bounded close to $t=1$, and 
 \begin{equation}\label{WeakRegularity}
  \lim_{t\to 1}f_1(t)=0,
 \end{equation}
 then $\g$ can be extended smoothly to the singular  orbit at $t=1$. 
 \end{mainthm}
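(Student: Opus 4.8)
The plan is to establish smooth extension of $\g$ across the singular orbit $t=1$ by verifying, one at a time, the classical smoothness conditions for a cohomogeneity one metric whose singular orbit collapses the $\mathbb{S}^{d_1}$ fibre to a point. Concretely, after reparametrising so that $t$ measures arclength (i.e. absorbing $h$, which is harmless since $\Ric$ is diffeomorphism–covariant and the hypotheses are scale/reparametrisation stable), the metric near $t=1$ should look like $\dd s^2 + f_1(s)^2\Omega_1^2 + f_2(s)^2\Omega_2^2$ with $s = 1-t \to 0$, and smoothness on $M$ is equivalent to: $f_1$ extends to an odd smooth function of $s$ with $f_1'(0)=1$ (so the $\mathbb{S}^{d_1+1}$-factor closes up like a round cap), while $f_2$ and $h$ extend to even smooth functions of $s$ with $f_2(0)>0$. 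So the goal is to upgrade the weak hypotheses — $f_1\to 0$, $f_1',f_2'\le 0$, $f_2'/f_2$ bounded near $t=1$ — to these full asymptotics, using that the Ricci tensor $T$ is a \emph{smooth Riemannian metric} up to $t=1$, in particular $T_1(1)>0$, $T_2(1)>0$, and all the $T_i$ are smooth in whatever smooth coordinate survives at the singular orbit.

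First I would write out the three Ricci component equations $\Ric(\g)_{tt}=1$, $\Ric(\g)_{\Omega_1}=T_1$, $\Ric(\g)_{\Omega_2}=T_2$ explicitly in terms of $f_1,f_2,h$ and their derivatives — these are exactly the ODEs that appear earlier in the paper in the construction for Theorem \ref{RicciExistence}. The $\Omega_1$-equation has the schematic shape
\begin{equation*}
-\frac{f_1''}{f_1} - (d_1-1)\frac{(f_1')^2-1}{f_1^2} - d_2\frac{f_1'f_2'}{f_1 f_2} + (\text{lower order}) = \frac{T_1}{f_1^2},
\end{equation*}
(with suitable $h$-factors if $h\not\equiv 1$), and the key structural point is that $f_1^2$ times the left side must converge to $T_1(1)>0$. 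Since $f_1\to 0$, $f_1'\le 0$, and $f_2'/f_2$ is bounded, I would first extract that $f_1'$ stays bounded, then that $(f_1')^2\to 1$ — because the dominant balance as $f_1\to 0$ forces $-(d_1-1)((f_1')^2-1)\to (d_1-1)\cdot(\text{something})$ matching $T_1(1)$ via the curvature normalisation; a short bootstrap shows $(f_1')^2 - 1 \to 0$, hence $f_1'\to -1$ (the sign is fixed by $f_1'\le0$ and $f_1\searrow 0$). This is essentially the statement that the metric is $C^0$-close to a smooth collapsing model, and it is the analytic heart of the argument. From $f_1'\to-1$ and the $\Omega_2$ and $tt$ equations one then reads off that $f_2'\to 0$ and $f_2(1)>0$ (a collapse of the $\mathbb{S}^{d_2}$-fibre would make $T_2/f_2^2$ blow up, contradicting smoothness of $T$), and that $h$ has a positive limit, which we may normalise to $1$.

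With the leading-order asymptotics in hand, the remaining work is to propagate smoothness to all orders. Here I would invoke the standard machinery for ODEs with a regular singular point: after the substitution $x = f_1$ (legitimate once $f_1'<0$ near $t=1$, so $f_1$ is a genuine coordinate), or equivalently after passing to the variable $s=1-t$, the system $\Ric(\g)=T$ becomes a system of the form $x\,u' = F(x,u)$ where $F$ is smooth in $x$ (using smoothness of the $T_i$ in $x$, which follows from $T$ being a smooth tensor on $M$) and $u$ encodes $(f_1', f_2, f_2', h)$; the eigenvalues of $\partial_u F(0,u_0)$ at the limiting value $u_0$ are computed from $d_1,d_2$ and turn out to be positive integers, which is precisely the resonance situation where a bounded solution is automatically smooth (this is the same Borel/formal-series-plus-ODE-uniqueness argument used by Eschenburg–Wang and in Hamilton's paper). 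Alternatively, and perhaps cleaner to write: one shows by induction that each $s$-derivative of $f_1, f_2, h$ has a finite limit at $s=0$ by differentiating the ODEs and using the already-established lower-order limits together with $T_i\in C^\infty$, and then checks the parity (odd/even) conditions by noting the ODEs are invariant under $s\mapsto -s$ with $f_1\mapsto -f_1$. I expect the main obstacle to be the first part — rigorously pinning down $f_1'\to -1$ and ruling out pathological oscillation of $f_1', f_2', h$ near $t=1$ using only the one-sided bounds $f_1',f_2'\le 0$ and boundedness of $f_2'/f_2$; this likely requires a careful monotonicity/integral estimate (e.g. controlling $\int^1 |f_1''|$ or showing $(f_1')^2$ is eventually monotone) rather than a soft limiting argument, and getting the $h$-terms to cooperate when $h$ is not yet known to be regular adds bookkeeping. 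Once that is done, the higher-order regularity is routine regular-singular-point ODE theory applied to the explicit Ricci equations.
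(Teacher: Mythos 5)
Your high-level picture of the boundary conditions (odd/even parity, $f_1'\to -1$, $f_2$ and $h$ even with $f_2(1)>0$) is correct, and your first stage — extracting the leading-order asymptotics from the balance in the $\Omega_1$-equation as $f_1\to 0$ — is the right target. However, your route diverges from the paper's in a way that introduces a genuine gap. The paper establishes only $C^1$ regularity of $\g$ (equivalently $f_i\in C^2$, $h\in C^1$ with the limiting values in \eqref{SCST}) from the ODE side, and then invokes a DeTurck--Kazdan type elliptic regularity theorem (Theorem \ref{DTG} in Appendix \ref{DKRE}) to bootstrap directly from $C^1$ to $C^\infty$: since $T$ is a smooth nondegenerate prescribed tensor and $\Ric(\g)=T$ with $\g\in C^1(M)$ and smooth on a dense open set, the metric is automatically smooth. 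This shortcut is what the authors are alluding to when they say in Section~\ref{Prelim} that it suffices to know the metric is merely $C^1$. You instead propose to propagate smoothness to all orders directly by regular-singular-point ODE theory, claiming the indicial eigenvalues ``turn out to be positive integers'' so that bounded solutions are smooth. That claim is unverified, and in the presence of the extra unknown $h$ (whose equation is only first order and couples into the system in a nontrivial way) the resonance/log-term analysis is not routine and could genuinely fail; at minimum it requires a computation you have not done. The paper sidesteps this entirely.

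A second, more technical divergence: to control the leading-order asymptotics you invoke a ``dominant balance'' heuristic, but making this rigorous requires controlling the term $f_1 f_1''$ and the $h'$-terms, which is exactly the hard part. The paper's mechanism is quite different — it uses the second contracted Bianchi identity to rewrite the system as \eqref{NEF2S'}, derives the auxiliary algebraic formula \eqref{NEF2S'H2} expressing $h^2$ in terms of $y_i,y_i'$ (so $h'$ never has to be estimated directly), and then analyses the blow-up of $y_1'=f_1'/f_1$ via the substitution $z_1(t)=(1-t)y_1'(t)$, showing $z_1\to 1$ and applying Proposition~\ref{TCs} (a regularity lemma for a scalar ODE with a first-order pole). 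Your arc-length normalisation $h\equiv 1$ is also not what the paper does (it normalises $T_0\equiv 1$ instead, which keeps the prescribed data in standard form), and if you do reparametrise to $h\equiv 1$ you change $T_0$ and must track that. None of this is fatal in principle, but as written your proposal relies on an unjustified resonance claim for the all-orders step and on a vague dominant-balance argument for the leading-order step, whereas the paper's argument uses the Bianchi identity and the elliptic bootstrap to make both steps tractable.
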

\begin{mainrmk}
 It turns out that the condition \eqref{WeakRegularity} in Theorem \ref{RicciRegularity} can actually be replaced by the weaker, but more complicated condition that 
 \begin{equation}\label{StrongRegularity}
  (h,\ln(f_1),\frac{f_1'}{f_1}) \ \text{is unbounded as} \ t\to 1. 
 \end{equation}
 We require this stronger form of Theorem \ref{RicciRegularity} to prove Theorem \ref{RicciExistence} because \textit{a priori}, 
 the candidate solution we eventually construct is not known to satisfy \eqref{WeakRegularity}, but it \textit{is} known to satisfy \eqref{StrongRegularity}. 
\end{mainrmk}

Regularity at the singular orbit is a challenging issue in the study of all geometric equations on closed cohomogeneity one manifolds, and 
Theorem \ref{RicciRegularity} enables us to tackle this. 
It is worth noting that most known results on the Einstein and Ricci soliton equations in the cohomogeneity one setting are on non-compact manifolds.

The paper is structured as follows. 
In Section 2, we 
provide necessary background
and use the second contracted Bianchi identity to re-write the prescribed Ricci curvature equation. 
In Section 3, we prove the regularity result of Theorem \ref{RicciRegularity} (with \eqref{WeakRegularity} replaced by \eqref{StrongRegularity}). 
In Section 4, we prove the existence result in the special case that $T_2$ is constant. 
In this case, the ODEs essentially reduce to those treated in \cite{Hamilton84}, 
and the techniques give a good warm-up for the material in the subsequent sections. 
In Section 5, we use Leray-Schauder degree theory to construct a sequence of metrics which solve the prescribed Ricci curvature problem, but do not close up smoothly at the singular orbits. 
In Section 6, we establish bounds on this sequence of solutions, 
to the extent that we can find a subsequence which converges to a solution satisfying the hypothesis of Theorem \ref{RicciRegularity}
(with \eqref{WeakRegularity} replaced by \eqref{StrongRegularity}),
thus completing the proof of Theorem \ref{RicciExistence}.

\section*{Acknowledgements}
The authors are grateful to Ramiro Lafuente, Artem Pulemotov and Wolfgang Ziller for comments on an early version of this paper. 
\section{Preliminaries}\label{Prelim}
In this section, we first provide general background on cohomogeneity one manifolds and their invariant metrics 
(for more details about the general theory of Lie group actions on manifolds and cohomogeneity one manifolds in particular, 
the reader may refer to \cite{AlexandrinoBettiol, Ziller09}). We then discuss the smoothness conditions required for an invariant metric defined on the principal part of the manifold to extend to a $C^1$ metric on the whole manifold. The reason it is sufficient to know when the metric is merely $C^1$ instead of $C^{\infty}$, will be made clear in Section \ref{PRR}. Finally, we write down the ODEs corresponding to the prescribed Ricci curvature equation with scaling in the cohomogeneity one setting, and use the second Bianchi identity to re-write the equations in a more convenient form.

\subsection{Cohomogeneity one manifolds}
A Lie group $\G$ is said to act on a manifold $M$ with cohomogeneity one if the orbit space $M/\G$ is one-dimensional, equivalently if the generic orbits of the group action have codimension one in $M$. Thus there are the following possibilities for the orbit space $M/\G$: $\mathbb{S}^1$, $\R$, a half-open interval, or a closed interval. For the manifolds considered in this article, $M/\G = [0,1]$.

If we let $\pi : M\rightarrow M/\G$ be the quotient map, then the preimages $\pi^{-1}(t)\in M$, for $t\in [0,1]$, are orbits of the group action. The preimages $\pi^{-1}(t)$ for $t\in(0,1)$ are called the principal orbits, while $B_- = \pi^{-1}(0)$ and $B_+ = \pi^{-1}(1)$ are called singular orbits. The union of all principal orbits is an open set $M_0\subset M$, and is called the principal part of $M$.

Pick a point $x_-\in B_-$ and let $\gamma(t)$ be a minimal $T$-geodesic normal to $B_-$, and with $\gamma(0) = x_-$, that meets $B_+$ for the first time at $\gamma(1) = x_+$. Then $\gamma(t)$ for $t \in [0,1]$ parametrizes the orbit space. 
The isotropy group at all points $\gamma(t)$, $t\in (0,1)$ is the same subgroup $\H\subset \G$, and is called the principal isotropy group. The isotropy groups $\K_\pm$ at $x_\pm$ are called the singular isotropy groups.

By the slice theorem, $M$ can be constructed as the union of two disk bundles over the singular orbits $\G/\K_\pm$, glued along their common boundary $\G/\H$. 
As a result, $\K_\pm/\H$ must be spheres $\mathbb{S}^{\ell_\pm}$. 
The data $\H\subset \K_\pm \subset \G$ is called a group diagram, and it determines the manifold up to an equivariant diffeomorphism.

By symmetry, any $\G$-invariant metric on $M$ is determined by its value at points $\gamma(t)$. We write
\begin{align*}
 \g = h(t)^2 \dd t^2 + \g_t, \hspace{0.4cm} t\in(0, 1),
\end{align*}
where $\g_t$ is a homogeneous metric on $\G/\H$.

Fix a bi-invariant metric $Q$ on $\mathfrak{g}$. The tangent space of $\G/\H$ at the point $\H$ can be identified with $\n = \h^\perp$. Then $\H$ acts on $\n$ by the adjoint action, and under this action we can write $\n$ as a sum of irreducible $\H$-modules:
\begin{align*}
 \n = \n_1 \oplus \cdots \oplus \n_s.
\end{align*}

\subsection{The $2$-summand case} \label{subsec:2summand}
In this article we make the assumption that under the isotropy action of $\H$, $\n$ splits as a sum of two irreducible modules, $\h^\perp = \n_1 \oplus \n_2$. We will further assume that $d_i = \dim \n_i > 1$.
We will focus on diagonal metrics, i.e. metrics for which $\n_1$ and $\n_2$ are orthogonal:
\begin{align*}
 \g_t (X,Y) = f_1(t)^2 Q(pr_{\n_1}X, pr_{\n_1}Y) + f_2(t)^2 Q(pr_{\n_2}X, pr_{\n_2}Y), \hspace{0.4cm} X, Y \in \n.
\end{align*}
For convenience of notation we may also write this as 
\begin{align}\label{eqn:metric2summand}
 \g(t) = h(t)^2 \dd t^2 + f_1(t)^2 \Omega_1^2 + f_2(t)^2 \Omega_2^2,
\end{align}
where $\Omega_i^2$ is a symmetric bilinear form on $T_{\H}\G/\H$ defined by $\Omega_i^2(X, Y) = Q(pr_{\n_i}X, pr_{\n_i}Y)$. In general, a diagonal $\G$-invariant symmetric $2$-tensor $T$ on $M$ can be written as
\begin{align}\label{eqn:T2summand}
 T(t) = T_0(t) \dd t^2 + T_1(t) \Omega_1^2 + T_2(t) \Omega_2^2.
\end{align}
If the $\H$-modules $\n_i$ are inequivalent then any invariant symmetric $2$-tensor is diagonal.
 
For the existence result, we assume that the manifold $M$ is a product 
$M \cong \mathbb{S}^{d_1 +1} \times \G_2/\K_2$ where $\mathbb{S}^{d_1} = \G_1/\K_1$ 
is an isotropy irreducible sphere and $\G_2/\K_2$ is an isotropy irreducible space of dimension $d_2$. 
Note that when the principal orbit is a product $\G_1/\K_1 \times \G_2/\K_2$, then the modules $\n_1$ and $\n_2$ are inequivalent, 
and so the diagonal form is automatic. 
In this case, the two singular isotropy groups are the same, that is, $\K_\pm = \K$, and the group diagram satisfies 
$\G = \G_1\times \G_2$, $\K = \G_1 \times \K_2$, $\H = \K_1 \times \K_2$. 
The product of spheres addressed in Theorem \ref{RicciExistence} 
is a special case of this, where $\G_2/\K_2 = \SO(d_2 + 1)/ \SO(d_2) \cong \mathbb{S}^{d_2}$.

\subsection{Smoothness conditions}
The expressions \eqref{eqn:metric2summand} and \eqref{eqn:T2summand} define the metric (or in general, the symmetric $2$-tensor) on the principal part $M_0$ of the manifold. 
In order for it to extend smoothly to a Riemannian metric on all of $M$, 
its components must satisfy some conditions at the end-points $t=0$ and $t=1$ of the orbit space; these are referred to as \emph{smoothness conditions}. 
In \cite{VZ18}, Verdiani and Ziller provide a detailed discussion of these conditions. 
Here, we use results from that paper to derive the conditions that guarantee a $C^1$ metric.

Let us examine the metric at a particular singular orbit,
where the group diagram is $\H \subset \K \subset \G$. The normal bundle in $M$ of the singular orbit $\G/ \K$ is a 
vector bundle of the form $\G\times_{\K}V$. Here $V\cong \R^{\ell + 1}$ is a real vector space of dimension $\ell + 1$ corresponding to the sphere $\mathbb{S}^\ell = \K/\H$. Let $\mathbb{D}^{\ell + 1} \subset V$ 
be a disk. We can identify $\mathbb{D}^{\ell + 1}$ with a normal disk to $\G/\K$ at a point $x_0$, called the slice of the group action at $x_0$. As before, we denote by $\gamma(t)$ a geodesic normal to the singular orbit $\G/\K$ with $\gamma(0) = x_0$. By the slice theorem, the metric is smooth if its restriction to the slice is smooth.

We have the following diagram of Lie algebras: $\h \subset \k \subset \gg$. Let $\p$ be an $\Ad_\H$-invariant complement of $\h \subset \k$ and let $\m$ be an $\Ad_\K$-invariant complement of $\k \subset \gg$. Then we can identify $V \cong \R\cdot \gamma'(0) \oplus \p$. The tangent space to the principal orbit is identified with $\h^\perp = \p \oplus \m$.

With this notation in hand, we can prove sufficient conditions for the metric \eqref{eqn:metric2summand} to be $C^1$.
\begin{theorem}\label{C1RG}
 Suppose $h, f_1, f_2\in C^1[\frac{1}{2},1]$ and that $f_1(1) = 0$, $f_1'(1) = -h(1)<0$, $h'(1) = 0$, $f_1''(1) = 0$, $f_2(1)>0$ 
 and $f_2'(1) = 0$. Then the metric $\g$ given by \eqref{eqn:metric2summand} is $C^1$.
\end{theorem}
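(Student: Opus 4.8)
The plan is to work directly with the slice representation described just above the statement. Near the singular orbit at $t=1$, the metric $\g$ extends to a $C^1$ metric on $M$ if and only if its restriction to the slice $\mathbb{D}^{d_1+1} \subset V \cong \R\cdot\gamma'(0) \oplus \p$ extends to a $C^1$ metric there, since the ambient metric is built by the associated-bundle construction $\G\times_\K V$ from the $\K$-invariant slice metric, and $C^1$-ness is preserved under this construction. So the problem reduces to checking when a cohomogeneity-one metric on a disk $\mathbb{D}^{d_1+1}$ with $\SO(d_1+1)$ acting by rotations — i.e. a rotationally symmetric metric on $\R^{d_1+1}$ — together with the ``spectator'' directions in $\m$ (on which the metric is $f_2(t)^2\Omega_2^2$), is $C^1$ at the origin.

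First I would invoke the Verdiani--Ziller criteria (cited from \cite{VZ18}) for smoothness of rotationally symmetric metrics. Writing the slice metric in polar-type coordinates as $h(t)^2\dd t^2 + f_1(t)^2 \Omega_1^2$, where $t$ is arclength-proportional to the distance to the origin and $\Omega_1^2$ is the round metric on $\mathbb{S}^{d_1}$, the standard fact is that this extends smoothly at $t=1$ precisely when, after reparametrising by the actual distance $r = \int_t^1 h(s)\,\dd s$ (so $r\to 0$ as $t\to 1$ and $\dd r = -h(t)\dd t$), the function $\phi(r) := f_1(t(r))$ is a smooth odd function of $r$ with $\phi'(0)=1$. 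For merely $C^1$ regularity, one needs: $f_1(1)=0$, the distance-normalised derivative $|f_1'/h|\to 1$ at $t=1$ (which, together with $f_1'<0$, forces $f_1'(1) = -h(1)$), and that the second derivative of $\phi$ in $r$ vanishes at the origin — this is exactly the condition $f_1''(1)=0$ once one also knows $h'(1)=0$, since $\ddot\phi(0)$ is a combination of $f_1''(1)$, $h'(1)$ and $f_1'(1)$ that collapses to a multiple of $f_1''(1)$ when $h'(1)=0$. The condition $h'(1)=0$ itself comes from requiring the $\dd t^2$ coefficient, i.e. the metric in the radial direction, to be $C^1$ at the origin: in distance coordinates the radial coefficient is identically $1$, so $h(t)\dd t = -\dd r$ forces $h$ to be even in $r$ to first order, giving $h'(1)=0$. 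For the $\m$-directions one needs $f_2$ to extend as a $C^1$ function of $r$, which holds iff $f_2'(1) = 0$ (the requirement that $f_2^2$ be even to first order in $r$), together with $f_2(1)>0$ so these directions do not degenerate.

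Concretely, the key steps in order are: (1) reduce to the slice via the slice theorem and the associated-bundle description of $\g$; (2) pass from the orbit-space parameter $t$ to the geometric distance $r$ to the singular orbit, recording $\dd r = -h\,\dd t$ and hence $r \sim h(1)(1-t)$; (3) apply the Verdiani--Ziller $C^1$ criterion for the $\SO(d_1+1)$-rotationally-symmetric block, translating ``$\phi$ odd with $\phi'(0)=1$, to first order'' into $f_1(1)=0$, $f_1'(1)=-h(1)$, $h'(1)=0$, $f_1''(1)=0$; (4) handle the fixed directions $\m$ by requiring $f_2$ even to first order in $r$, giving $f_2'(1)=0$ and $f_2(1)>0$; (5) assemble these into a $C^1$ metric on the slice and conclude. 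The main obstacle I anticipate is step (3): carefully unwinding the change of variables so that the smoothness conditions, which are naturally stated in the distance coordinate $r$, are correctly expressed in terms of the $t$-derivatives of $h$ and $f_1$ at the endpoint — in particular tracking how the non-constancy of $h$ feeds into $\ddot\phi(0)$ and seeing precisely why $h'(1)=0$ is needed to make the second-derivative condition reduce to the clean statement $f_1''(1)=0$. The rest is bookkeeping with Taylor expansions at $t=1$.
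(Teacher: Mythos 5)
Your proposal follows essentially the same route as the paper: reduce to the slice representation, pass to the arclength parameter $r$ (equivalently, Cartesian coordinates on the slice), and extract pointwise conditions on $f_1,h,f_2$ at $t=1$ from the requirement that the slice metric and the $\mathfrak{m}$-block be $C^1$ at the origin. Your bookkeeping of where each hypothesis enters is accurate: $f_1(1)=0$ and $f_1'(1)=-h(1)$ give $\dot\phi(0)=1$; $f_1''(1)=0$ and $h'(1)=0$ together give $\ddot\phi(0)=0$; $h'(1)=0$ also makes the change of variable between $t$- and $r$-based Cartesian coordinates $C^2$; and $f_2'(1)=0$, $f_2(1)>0$ are exactly what is needed for $f_2^2$ to extend to a $C^1$ rotationally symmetric function.

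The one difference worth flagging is what is taken as a black box. The paper invokes \cite{VZ18} only for the reduction to a $2$-plane in the slice (their Proposition 2.1 and Lemma 3.5), and then \emph{proves} the $C^1$ criteria directly: it writes out $\g_{xx},\g_{xy},\g_{yy}$ in Cartesian coordinates and verifies continuity of the metric and its first partials at the origin using L'H\^opital (Lemmas \ref{lem:C1slice}, \ref{lem:rotsymC1} and their corollaries). You instead cite ``the Verdiani--Ziller criteria'' for the $C^1$ statement itself, which is a shortcut: what Verdiani--Ziller formulate cleanly is the full $C^\infty$ condition (``$\phi$ smooth odd with $\phi'(0)=1$''), and the finite-regularity version is exactly what the paper has to spell out. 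Your phrasing ``for merely $C^1$ regularity one needs $\ddot\phi(0)=0$'' is the right condition but is asserted rather than derived; making this precise is the content of Lemma \ref{lem:C1slice}, whose proof is the only genuinely computational step. So the proposal is correct in structure and in its identification of the role of each hypothesis, but it leans on the cited reference for a statement that the paper chooses (and, given the cited form of Verdiani--Ziller, needs) to prove from scratch.
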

\begin{proof}
By \cite[Proposition 2.1]{VZ18}, 
we need only consider regularity of the metric when restricted to finitely many $2$-planes in the slice. 
In our case, $\p = \n_1$ is irreducible, hence we need only consider regularity of the metric when restricted to a single $2$-plane in the slice. In particular, we can pick a vector $v_1 \in \n_1$ and let $\mathbb{D} = \operatorname{span}\{\gamma'(0), v_1\}$. 
We need to characterize the regularity of the metric when restricted to the $2$-disk $\mathbb{D}^2 = \exp_{x_0}(\mathbb{D})$. 
Further, we can separately consider the inner product amongst vectors in the slice and amongst vectors in $\m$. Note that in our case, $\m = \n_2$.

Then, smoothness ($C^1$) conditions for inner products in the slice are addressed by Corollary \ref{cor:C1slice} along with a change of coordinates to account for the fact that we are considering regularity at $t=1$ instead of $t=0$. Corollary \ref{cor:C1m} yields the condition for the inner products in $\m = \n_2$ to be $C^1$.
\end{proof}

Let us now prove the results that go into the proof of Theorem \ref{C1RG}. First we discuss regularity of the metric on the slice. 
Since we can restrict to a $2$-plane in the slice, we work with rotationally symmetric metrics on the disc $\mathbb{D}^2$. We are interested in the conditions for the metric to be $C^1$ 
with respect to Cartesian coordinates $x, y$ at the origin. We have
\begin{align}
\begin{split}
\label{eqn:cart_polar}
 x &= t\cos \theta, \quad y = t\sin \theta,\\
 \dd x &= \cos \theta \dd t - t \sin \theta \dd \theta, \quad \dd y = \sin \theta \dd t + t \cos \theta \dd \theta,\\
 \implies \dd t &= \cos \theta \dd x + \sin \theta \dd y, \quad \dd \theta = -\frac{\sin \theta}{t}\dd x + \frac{\cos \theta}{t}\dd y. 
\end{split}
\end{align}
\begin{lemma}\label{lem:C1slice}
 Consider the metric $\g = \dd t^2 + f_1(t)^2\dd \theta^2$ on $\mathbb{D}^2$. 
 Suppose that $f_1 \in C^1[0,1]$ and that $f_1(0) = 0$, $f_1'(0) = 1$ and $f_1''(0)$ exists with $f_1''(0) = 0$. 
 Then $\g$ is $C^1$ on $\mathbb{D}^2$ with respect to Cartesian coordinates.
\end{lemma}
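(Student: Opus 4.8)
The plan is to write $\g$ in the Cartesian coordinates $x,y$ and verify that its coefficients extend to $C^1$ functions across the origin. Substituting $\dd t = \cos\theta\,\dd x + \sin\theta\,\dd y$ and $\dd\theta = -t^{-1}\sin\theta\,\dd x + t^{-1}\cos\theta\,\dd y$ from \eqref{eqn:cart_polar} (valid on $\mathbb{D}^2\setminus\{0\}$) and collecting terms gives
\begin{align*}
 \g = \dd x^2 + \dd y^2 + \eta(t)\bigl(\sin^2\theta\,\dd x^2 - 2\sin\theta\cos\theta\,\dd x\,\dd y + \cos^2\theta\,\dd y^2\bigr),
\end{align*}
where $\eta(t):=f_1(t)^2/t^2-1$, and where $\cos\theta=x/t$, $\sin\theta=y/t$, $t=\sqrt{x^2+y^2}$ are viewed as functions of $(x,y)$. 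Off the origin these coefficients are smooth, so it suffices to show that each of the three functions $\eta(t)\sin^2\theta$, $\eta(t)\sin\theta\cos\theta$, $\eta(t)\cos^2\theta$ is $C^1$ near the origin.

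Next I would extract the asymptotics of $\eta$ near $t=0$. Setting $\rho(t):=f_1(t)-t$, the hypotheses give $\rho(0)=0$, $\rho'(0)=f_1'(0)-1=0$ and $\rho''(0)=f_1''(0)=0$. Differentiability of $\rho$ at $0$ gives $\rho(t)/t\to 0$, and Taylor's formula at $0$ (using only that $\rho''(0)$ exists) gives $\rho(t)/t^2\to\tfrac12\rho''(0)=0$ and $\rho'(t)/t\to\rho''(0)=0$. Since $\eta(t)=2\rho(t)/t+\rho(t)^2/t^2$, these limits yield $\eta(t)\to 0$, $t^{-1}\eta(t)\to 0$, and --- differentiating this expression for $t>0$ and using the same limits --- $\eta'(t)\to 0$ as $t\to 0^+$.

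Then I would differentiate the three functions on $\mathbb{D}^2\setminus\{0\}$ using $\partial_x t=\cos\theta$, $\partial_y t=\sin\theta$. Each partial derivative is a finite sum of terms of the shape $\eta'(t)\,p(\cos\theta,\sin\theta)$ and $t^{-1}\eta(t)\,q(\cos\theta,\sin\theta)$ with $p,q$ trigonometric polynomials; for example $\partial_x\bigl(\eta(t)\sin^2\theta\bigr)=\bigl(\eta'(t)-2t^{-1}\eta(t)\bigr)\cos\theta\sin^2\theta$. Since $\eta'(t)\to 0$ and $t^{-1}\eta(t)\to 0$ while the trigonometric factors are bounded, every such partial derivative tends to $0$ as $(x,y)\to 0$, and the three functions themselves extend continuously across the origin (with value $0$ there). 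A continuous function on a disk that is $C^1$ off the centre and whose first partial derivatives extend continuously to the centre is $C^1$ on the whole disk: applying the mean value theorem along the coordinate segments through $0$ shows that the partials exist at $0$ and equal these limits, and continuity of the first partials then gives differentiability. Applying this to each of the three functions shows that $\g$ has $C^1$ coefficients on $\mathbb{D}^2$, which is the claim.

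The one delicate step is the asymptotic bookkeeping in the second paragraph: one must deduce $t^{-1}\eta(t)\to 0$ and $\eta'(t)\to 0$ from the mere existence of $f_1''(0)$, not from full $C^2$ regularity of $f_1$. This is also precisely where the hypothesis $f_1''(0)=0$ enters in an \emph{essential} and sharp way: if $f_1''(0)\neq 0$ then $\eta'(t)$ tends to a nonzero constant and $\partial_x\bigl(\eta(t)\sin^2\theta\bigr)$ has no limit at the origin, so $\g$ is not $C^1$ there.
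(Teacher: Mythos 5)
Your proof is correct and follows essentially the same route as the paper: express the metric components in Cartesian coordinates, check continuity, and show the first partial derivatives extend continuously (to zero) at the origin using $f_1(0)=0$, $f_1'(0)=1$, $f_1''(0)=0$. Your bookkeeping via $\eta(t)=f_1(t)^2/t^2-1$ and Taylor's theorem with Peano remainder is just a cleaner substitute for the paper's L'H\^opital computations, and your closing mean-value-theorem step matches the paper's difference-quotient verification of the partials at the origin.
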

\begin{proof}
Since the coordinates $(t, \theta)$ are smoothly equivalent to the Cartesian coordinates 
$(x, y)$ at points other than the origin,
at points $(x,y) \neq (0,0)$ the metric is $C^1$ if and only if $f_1 \in C^1(0,1]$. Therefore, we are left to characterize regularity at the origin.

First, using \eqref{eqn:cart_polar} we find the components of the metric with respect to Cartesian coordinates.
\begin{align*}
 \g_{xx} = 1 + \left( \frac{f_1(t)^2}{t^2} - 1 \right)\sin^2 \theta, \quad \g_{yy} = 1 + \left( \frac{f_1(t)^2}{t^2} - 1 \right)\cos^2 \theta, \quad
 \g_{xy} = 2\left( \frac{f_1(t)^2}{t^2} - 1 \right)\cos \theta \sin \theta
\end{align*}
and the metric at the origin is the Euclidean metric satisfying $\g_{xx} = \g_{yy} = 1$, $\g_{xy} = 0$.

Since $f_1(0) = 0$ and $f_1'(0) = 1$, we find $\displaystyle\lim_{t\to 0} \frac{f_1(t)}{t} = f_1'(0) = 1$. Hence $\displaystyle\lim_{t\to 0}\left( \frac{f_1(t)^2}{t^2} - 1 \right) = 0$. As a result, 
$\displaystyle\lim_{(x,y) \to (0,0)}\g_{xx} = \displaystyle\lim_{t\to 0}\g_{xx}$ exists and equals $\g_{xx}|_{(0,0)}$ so $\g_{xx}$ is continuous at the origin. Similarly $\g_{yy}$ and $\g_{xy}$ are also continuous at the origin, so we now know
that $\g \in C^0$.

Now, using the chain rule we see that at points $(x,y)$ other than the origin,
 \begin{align*}
  \ddx\g_{yy} &=  \left( \frac{2f_1f_1'}{t^2} - \frac{2f_1^2}{t^3} \right)\cos^3\theta + 2\left( \frac{f_1^2}{t^3} - \frac{1}{t} \right)\cos\theta \sin^2\theta.
 \end{align*}
 Using $f_1(0) = 0$ and $f_1''(0) = 0$ along with L'H\^opital's rule yields
 \begin{align*}
  \lim_{t\to 0} \left( \frac{2f_1f_1'}{t^2} - \frac{2f_1^2}{t^3} \right) = 0 \quad \mbox{and} \quad \lim_{t\to 0} \left( \frac{f_1^2}{t^3} - \frac{1}{t} \right) = 0.
 \end{align*}
Hence $\displaystyle\lim_{(x,y)\to (0,0)} \ddx\g_{yy}$ exists and equals zero. On the other hand, a similar computation using L'H\^opital's rule yields that $\ddx \g_{yy}|_{(0,0)} = 0$. Thus we see that $\ddx \g_{yy}$ is continuous at the origin. Similarly we can show that the other first partial derivatives of all the metric components are continuous at the origin. 
We conclude that $\g \in C^1$.
\end{proof}
When the curve $\gamma$ is not parametrized by the arc length of $\g$, the conditions are slightly modified as follows:
\begin{cor} \label{cor:C1slice}
 Consider the metric $\g = h(t)^2\dd t^2 + f_1(t)^2\dd \theta^2$. Suppose we have $h, f_1\in C^1[0,1]$ and $f_1(0) = 0$, $h(0) \neq 0$, $f_1'(0) = h(0)$, $h'(0) = 0$, 
 $f_1''(0) = 0$. Then the metric $\g$ is $C^1$ at the origin.
\end{cor}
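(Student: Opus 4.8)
The plan is to reduce to Lemma~\ref{lem:C1slice} by reparametrising the radial variable by arc length. Since $h\in C^1$ with $h(0)\neq 0$, we may assume $h>0$ near $0$ (only regularity at the origin matters) and set $s(t)=\int_0^t h(\tau)\,\dd\tau$, a $C^1$ diffeomorphism of a neighbourhood of $0$ onto its image with $s(0)=0$, $s'=h$ and $C^1$ inverse $t=t(s)$. In the coordinate $s$ the metric becomes $\g=\dd s^2+\tf_1(s)^2\,\dd\theta^2$ with $\tf_1(s)=f_1(t(s))$, and the ``Cartesian coordinates at the origin'' are $x=s\cos\theta$, $y=s\sin\theta$ (the geodesic normal coordinates, which do not depend on how the radial geodesics are parametrised). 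So it suffices to check that $\tf_1$ satisfies the hypotheses of Lemma~\ref{lem:C1slice}: $\tf_1\in C^1$, $\tf_1(0)=0$, $\tf_1'(0)=1$, $\tf_1''(0)=0$.

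The first two are trivial. Since $f_1$ and $t(\cdot)$ are $C^1$, the chain rule gives $\tf_1'(s)=f_1'(t(s))\,t'(s)=f_1'(t(s))/h(t(s))$ throughout, so $\tf_1\in C^1$, and at $s=0$ this equals $f_1'(0)/h(0)=1$ using $f_1'(0)=h(0)$. For $\tf_1''(0)$ I would compute the limit of difference quotients directly, since no $C^2$ hypothesis is available:
\[
 \frac{\tf_1'(s)-\tf_1'(0)}{s}
 =\frac{1}{h(t(s))}\cdot\frac{f_1'(t(s))-h(t(s))}{t(s)}\cdot\frac{t(s)}{s}.
\]
As $s\to 0^+$ one has $t(s)\to 0^+$, hence $h(t(s))\to h(0)\neq 0$ and $t(s)/s\to 1/h(0)$ by the fundamental theorem of calculus; and because $f_1'(0)=h(0)$ we may write $f_1'(t)-h(t)=(f_1'(t)-f_1'(0))-(h(t)-h(0))$, so $(f_1'(t)-h(t))/t\to f_1''(0)-h'(0)=0$. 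Hence the product tends to $0$, i.e.\ $\tf_1''(0)=0$, and Lemma~\ref{lem:C1slice} applies.

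I expect the only delicate point to be the evaluation of $\tf_1''(0)$: because $h$ and $f_1$ are merely $C^1$, the second derivative at the origin cannot be obtained by differentiating the chain-rule formula for $\tf_1'$ but must be taken as a genuine limit, and the three normalisations $f_1'(0)=h(0)$, $f_1''(0)=0$, $h'(0)=0$ are precisely what force that limit to vanish. Everything else is routine bookkeeping about the arc-length change of variables.
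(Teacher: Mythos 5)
Your reduction to Lemma \ref{lem:C1slice} via the arc-length parameter is the same strategy as the paper's proof, and your verification of the hypotheses for $\tf_1(s)=f_1(t(s))$ --- in particular the difference-quotient argument showing that $\tf_1''(0)$ exists and vanishes, using $f_1'(0)=h(0)$, $f_1''(0)=0$ and $h'(0)=0$ --- is correct, and indeed more careful than the paper's one-line assertion of the same fact.

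The gap is in your final step, where you assert that the relevant Cartesian coordinates at the origin are $x=s\cos\theta$, $y=s\sin\theta$ and that these ``do not depend on how the radial geodesics are parametrised.'' The conclusion of the corollary, as it is used in the proof of Theorem \ref{C1RG}, is regularity with respect to the Cartesian coordinates of \eqref{eqn:cart_polar}, i.e.\ $x=t\cos\theta$, $y=t\sin\theta$: these come from the fixed smooth structure on the slice (equivalently, on $M$), in which $t$ is the given radial parameter, whereas the coordinates built from $\g$-arclength are manufactured from the metric $\g$ itself, which a priori is regular only away from the origin. The two Cartesian systems genuinely differ (unless $h$ is constant): they are related by the radial map $(x,y)\mapsto \frac{s(t)}{t}(x,y)$ with $t=\sqrt{x^2+y^2}$, which under the stated hypotheses is a priori only a $C^1$ change of coordinates, and $C^1$-regularity of a metric is not preserved by a merely $C^1$ coordinate change, since the transformed components involve first derivatives of the transition. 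The paper devotes the last step of its proof to exactly this point: it uses $h'(0)=0$ together with Lemma \ref{lem:rotsymC1}, applied to the rotationally symmetric radial factor $s(t)/t$ (whose $t$-derivative tends to $\tfrac{1}{2}h'(0)=0$), to conclude that the transition between the two Cartesian systems is $C^2$, and only then transfers the $C^1$ statement from the $(s,\theta)$-coordinates to $(x,y)$. Your argument needs this step (or a direct verification of the metric components in the $(x,y)$ coordinates); note that it is a second, independent use of the hypothesis $h'(0)=0$, which your proposal uses only once.
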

\begin{proof}
 Let $s$ be the arclength parameter along the geodesic $\gamma$. Then $\frac{\partial}{\partial s} = \frac{1}{h}\ddt$.  
 Also note that $s=0$ coincides with $t = 0$. 
 Therefore, $f_1'(0) = h(0)$ implies $\frac{\partial}{\partial s} f_1(0) = 1$. 
 Further, $f_1''(0) = 0$ and $h'(0)=0$ imply that $\frac{\partial^2}{\partial s^2}f_1(0) = 0$.
 
 Therefore by Lemma \ref{lem:C1slice}, the metric is $C^1$ in the Cartesian coordinates $(x_1, y_1)$ corresponding to $s, \theta$. By Lemma \ref{lem:rotsymC1} and $h'(0) = 0$ one sees that the transformation to the Cartesian coordinates $(x,y)$ corresponding to $(t, \theta)$ is $C^2$. 
 Hence we conclude that $\g$ is $C^1$ with respect to the    coordinates $(x,y)$.
\end{proof}
To study the regularity of the metric on $\m$, we can examine the restriction $\g|_{\m}$ at points on $\mathbb{D}^2$. 
Then \cite[Lemma 3.5]{VZ18} implies that $f_2(t)^2$ must extend to a rotationally symmetric function on $\mathbb{D}^2$. The next result characterizes regularity for rotationally symmetric functions on the disc.
\begin{lemma} \label{lem:rotsymC1}
 Let $G$ be a rotationally symmetric function on $\mathbb{D}^2$, that is, 
 $G(x,y) = F(t)$ for some function $F$, where $t = \sqrt{x^2 + y^2}$. Then $G$ is $C^1$ on $\mathbb{D}^2$ if and only if $F\in C^1[0,1]$ with $F'(0) = 0$.
\end{lemma}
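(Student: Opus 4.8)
The plan is to split the analysis into the region away from the origin and the single point $(x,y)=(0,0)$. Away from the origin the polar radius $t=\sqrt{x^2+y^2}$ is a smooth function of $(x,y)$ with nowhere-vanishing differential, so by the chain rule $G$ is $C^1$ on $\mathbb{D}^2\setminus\{(0,0)\}$ precisely when $F\in C^1(0,1]$; both conditions in the statement hold automatically there, and the entire content of the lemma concerns regularity at the origin.

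For the direction in which $G\in C^1(\mathbb{D}^2)$ is assumed, I would restrict $G$ to the $x$-axis, where $G(x,0)=F(|x|)$. The chain rule gives $\partial_xG(x,0)=F'(x)$ for $x>0$ and $\partial_xG(x,0)=-F'(-x)$ for $x<0$, so continuity of $\partial_xG$ at the origin forces $\lim_{x\to0^+}F'(x)=\partial_xG(0,0)=-\lim_{x\to0^+}F'(x)$. Hence this limit equals $0$, which shows $F'$ extends continuously across $t=0$ with value $0$; together with continuity of $F$ and the mean value theorem this upgrades to one-sided differentiability of $F$ at $0$ with $F'(0)=0$, i.e. $F\in C^1[0,1]$ and $F'(0)=0$.

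For the converse, assume $F\in C^1[0,1]$ with $F'(0)=0$. For $(x,y)\neq(0,0)$ the chain rule yields $\partial_xG=F'(t)\,x/t$ and $\partial_yG=F'(t)\,y/t$; since $|x/t|\le1$, $|y/t|\le1$, and $F'(t)\to F'(0)=0$ as $t\to0$, both partials tend to $0$ at the origin. A short difference-quotient computation along the coordinate axes (using $G(x,0)=F(|x|)$, the analogous formula on the $y$-axis, and $F'(0)=0$) shows that the partial derivatives also exist at the origin and equal $0$ there. Thus $G$ has continuous first-order partial derivatives on all of $\mathbb{D}^2$, and the standard fact that continuous partials imply total differentiability yields $G\in C^1(\mathbb{D}^2)$.

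The argument is routine; the one place that needs genuine care is the forward implication at the origin, where one must use that $t=|x|$ along the $x$-axis is only Lipschitz, not $C^1$, in order to extract $\lim_{x\to0^+}F'(x)=0$, and then promote the mere existence of $\lim_{t\to0^+}F'(t)$ to membership of $F$ in $C^1$ up to the endpoint $t=0$.
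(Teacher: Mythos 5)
Your proposal is correct and follows essentially the same route as the paper: chain-rule computations away from the origin, difference quotients along the coordinate axes for one direction, and restriction to the $x$-axis together with the evenness of $G(x,0)=F(|x|)$ for the other. The only (harmless) variation is in the direction where $G\in C^1$ is assumed: the paper obtains $F'(0)=0$ directly from the symmetry of the difference quotient of $F_1(x)=G(x,0)$ (namely $F_1(h)=F_1(-h)$ gives $F_1'(0)=-F_1'(0)$), whereas you match the one-sided limits of $\partial_x G$ along the axis and then use the mean value theorem to recover one-sided differentiability of $F$ at $0$; both arguments are valid.
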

\begin{proof}
 As before, we only need to provide conditions at the origin. 
 Suppose $F$ is a $C^1$ function of $t$ satisfying $F'(0) = 0$. We see that $\displaystyle\lim_{(x,y)\to(0,0)} G(x,y) = \lim_{t\to 0}F(t) = F(0) = G(0,0)$, so $G$ is continuous at the origin. Next, we calculate at points other than the origin:
 \begin{align*}
  &\ddx G (x,y) = \ddx F(t) = F'(t)\cos \theta\\
  \implies &\lim_{(x,y)\to (0,0)} \ddx G (x,y) = \lim_{t\to 0 }F'(t)\cos\theta = 0 \quad \mbox{since $F'$ is continuous and $F'(0) = 0$.}
 \end{align*}
 On the other hand,
 \begin{align*}
  \lim_{h\to 0^+} \frac{G(h,0) - G(0,0)}{h} = \lim_{h\to 0^+} \frac{F(h) - F(0)}{h} = F'(0) = 0.
 \end{align*}
 This equation coupled with the observation that $G(-h,0)=G(h,0)$ implies that $\ddx G(0,0)=0$, so $\ddx G(x,y)$ is continuous at the origin. Similarly, we can see that $\ddy G (x,y)$ is also continuous at the origin. Therefore $G \in C^1$.
 
 For the converse, assume that $G\in C^1$. Then the restriction of $G$ to the $x$-axis defined by $F_1(x) = G(x,0)$ is a $C^1$ function of $x$, and agrees with $F(x)$ for $x>0$. Since $G$ is rotationally symmetric, we have $G(h,0) = G(-h,0)$, so $F_1(h) = F_1(-h)$. We see that
 \begin{align*}
  & F_1'(0) = \lim_{h\to 0} \frac{F_1(h) - F_1(0)}{h} = \lim_{h\to 0} \frac{F_1(-h) - F_1(0)}{h} = -\lim_{h\to 0} \frac{F_1(-h) - F_1(0)}{-h} = -F_1'(0).
 \end{align*}
 Hence $F'(0) = 0$.
\end{proof}
Therefore, we have
 \begin{cor} \label{cor:C1m}
  Suppose $f_2\in C^1[0,1]$ and $f_2'(0) = 0$, $f_2(0)>0$. Then the metric $\g$ is $C^1$ on $\m$.
 \end{cor}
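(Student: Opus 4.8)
The plan is to combine the two ingredients already in place just before the statement: by \cite[Lemma 3.5]{VZ18}, the restriction $\g|_\m$ extends to a $C^1$ tensor near the singular orbit exactly when the coefficient $f_2(t)^2$ extends to a $C^1$ rotationally symmetric function on the $2$-disk $\mathbb{D}^2$, and Lemma \ref{lem:rotsymC1} tells us precisely when a radial function on $\mathbb{D}^2$ is $C^1$. So the proof reduces to checking that the stated hypotheses on $f_2$ supply the hypotheses of Lemma \ref{lem:rotsymC1}.

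Concretely, I would set $F(t) := f_2(t)^2$ and consider the radial function $G(x,y) := F\big(\sqrt{x^2+y^2}\big)$ on $\mathbb{D}^2$. Since $f_2 \in C^1[0,1]$, the product $F = f_2\cdot f_2$ lies in $C^1[0,1]$, and by the product rule $F' = 2 f_2 f_2'$, so that $F'(0) = 2 f_2(0) f_2'(0) = 0$ using the hypothesis $f_2'(0) = 0$. Thus $F$ satisfies the hypotheses of Lemma \ref{lem:rotsymC1}, hence $G$ is $C^1$ on $\mathbb{D}^2$, and by \cite[Lemma 3.5]{VZ18} this is exactly the statement that $\g|_\m$ is $C^1$. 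The remaining hypothesis $f_2(0) > 0$ plays no role in the $C^1$-regularity as such; it is needed only to ensure that $F(0) = f_2(0)^2 > 0$, so that the extended tensor $f_2^2\,\Omega_2^2$ is nondegenerate on $\m$ along the singular orbit and $\g$ is a genuine Riemannian metric there rather than merely a $C^1$ symmetric $2$-tensor.

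There is no real obstacle here: all the analytic substance sits in Lemma \ref{lem:rotsymC1} and \cite[Lemma 3.5]{VZ18}, both of which are available, and the reduction to the single $2$-disk $\mathbb{D}^2$ in the slice has already been explained above (and parallels the argument in the proof of Theorem \ref{C1RG}). The only thing worth spelling out explicitly is the one-line observation that squaring a $C^1$ function whose first derivative vanishes at the endpoint again has vanishing first derivative there, which is immediate from the product rule.
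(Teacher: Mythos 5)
Your proof is correct and is essentially the argument the paper leaves implicit: apply Lemma \ref{lem:rotsymC1} to $F = f_2^2$ (noting $F'(0) = 2f_2(0)f_2'(0) = 0$) and invoke \cite[Lemma 3.5]{VZ18} to transfer the conclusion to $\g|_\m$. The added remark that $f_2(0)>0$ only ensures nondegeneracy rather than contributing to regularity is accurate and a welcome clarification.
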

Therefore, to summarize, in order for the metric $\g$ to be $C^1$, we must ensure that:
\begin{align}\label{SC}
 f_1(1) = 0; \quad f_1'(1) = -h(1)<0; \quad h'(1) = 0; \quad f_1''(1) = 0; \quad f_2'(1) = 0; \quad f_2(1)>0.
\end{align}
Since the $2$-tensor $T$ is assumed to be smooth, $T_2''(1)$ must exist, and the $T_i$ functions must also satisfy:
\begin{align}
 T_1(1) = 0; \quad T_1'(1) = 0; \quad T_1''(1) = 2\,T_0(1)^2>0; \quad T_2'(1) = 0; \quad T_2(1)>0. 
\end{align}
Additionally, the assumption of reflection-symmetry implies that all functions must be even at $t = \frac{1}{2}$:
\begin{align}\label{EC}
 T_0'(\tfrac{1}{2}) = 0; \quad T_1'(\tfrac{1}{2}) = 0; \quad T_2'(\tfrac{1}{2}) = 0; \quad f_1'(\tfrac{1}{2}) = 0; \quad f_2'(\tfrac{1}{2}) = 0; \quad h'(\tfrac{1}{2}) = 0.
\end{align}
\subsection{The Cohomogeneity One Prescribed Ricci Curvature Equation}
Using Lemma 3.1 of \cite{Pulemotov16}, or Proposition 1.14 of \cite{GroveZiller}, we see that the prescribed Ricci curvature equation for the metric \eqref{eqn:metric2summand} is 
\begin{align}\label{PRC2S''}
\begin{split}
 d_1\frac{f_1''}{f_1}+d_2\frac{f_2''}{f_2}-\frac{h'}{h}\left(\frac{d_1f_1'}{f_1}+\frac{d_2f_2'}{f_2}\right)&=-c_1\\
 \frac{f_1''}{h^2f_1}+(d_1-1)\frac{(f_1')^2}{h^2f_1^2}+d_2\frac{f_1'f_2'}{h^2f_1f_2}-\frac{h'f_1'}{h^3 f_1}-\frac{1}{f_1^2}(d_1-1)-\frac{d_2}{d_1}\beta\frac{f_1^2}{f_2^4}&=-\frac{c_1T_1}{f_1^2}\\
 \frac{f_2''}{h^2f_2}+(d_2-1)\frac{(f_2')^2}{h^2f_2^2}+d_1\frac{f_1'f_2'}{h^2f_1f_2}-\frac{h'f_2'}{h^3 f_2}-\frac{\alpha}{f_2^2}+2\beta \frac{f_1^2}{f_2^4}&=-\frac{c_2T_2}{f_2^2},\\
\end{split}
\end{align}
where $d_i$ is the dimension of $\mathfrak{n}_i$, and the non-negative constants $\alpha$ and $\beta$ are given by
\begin{align*}
 \alpha &= \sum_{e_i \in \n_2} \|[e_1, e_i]_{\h}\|_Q^2 + \sum_{e_i \in \n_2} \|[e_1, e_i]_{\n_1}\|_Q^2 + \frac{1}{4}\sum_{e_i \in \n_2} \|[e_1, e_i]_{\n_2}\|_Q^2\\
 \beta &= \frac{1}{4}\sum_{e_i \in \n_2} \|[e_1, e_i]_{\n_1}\|_Q^2, \mbox{ where $\{e_i\}_{i = 1}^{d_2}$ is a basis for $\n_2$.}
\end{align*}
Note that $\alpha = 0$ 
would imply that all brackets in $\n_2$ are zero, equivalently, that the singular orbit $\G/\K$ is flat. We therefore assume throughout this paper that $\alpha>0$. One consequence of this is that we necessarily have $d_2>1$, which will be used implicitly throughout this paper as well. Also note that $\beta = 0$ is equivalent to $[\n_1, \n_2] = 0$. We allow $\beta\ge 0$ for Theorem \ref{RicciRegularity}, but require that $\beta=0$ for the main existence 
result of Theorem \ref{RicciExistence}. In fact, $\beta = 0$ is satisfied if $M$ is a product manifold of the form described in Section \ref{subsec:2summand}, since in that case the principal orbits themselves are products.
Finally, since we assume that 
$T = T_0(t)\, \dd t^2 + T_1(t)\, \Omega_1^2 + T_2(t)\, \Omega_2^2$ is positive-definite, then we can re-parametrise so that $T_0$ is constant. 
Since we are allowing the tensor $T$ to be scaled, we may as well assume that $T_0=1$, which is why no such term appears in \eqref{PRC2S''}.
\begin{remark}
Suppose $\beta=0$ (the main case of interest in this paper). Our reason for including two scaling constants $c_1,c_2>0$ in \eqref{PRC2S''} is that, at least in the case that $T_2>0$ is constant, 
the choice of $c_2$ is unique and $f_2$ is necessarily constant (as seen in Proposition \ref{MPF2}). This is reflective of the fact that we are solving 
the prescribed Ricci curvature equation on product manifolds. 
\end{remark}
\begin{remark}
If $\beta>0$, assuming that $T_2$ is constant no longer implies that $f_2$ is constant, but it seems as though we still need to allow $T$ to be scaled by two different scaling constants $c_1,c_2>0$. Indeed, the second equation of \eqref{PRC2S''}, coupled with $f_1(0)=f_1(1)=0$ implies that if $T_1$ is small, then $c_1$ must be large, whereas 
the third equation of \eqref{PRC2S''}, coupled with $f_2'(0)=f_2'(1)=0$, implies that if $T_2$ is large, then $c_2$ must be small. 
It therefore appears that in full generality, scaling $T$ by a single constant $c$ (i.e., insisting that $c_1=c_2$) is not enough to solve \eqref{PRC2S''}. 
\end{remark}

\subsection{The Second Contracted Bianchi Identity}
As is usual for the study of geometric equations featuring the Ricci curvature, the second contracted Bianchi 
identity plays a crucial role. In this paper, the Bianchi identity allows us to rewrite \eqref{PRC2S''} in a more convenient form. 
Indeed, Lemma 3.2 of \cite{Pulemotov16}, which is essentially the second contracted Bianchi identity, implies that whenever the 
second and third equations of \eqref{PRC2S''} hold on $[\frac{1}{2},1)$, we have
\begin{align}\label{SCBI}
\frac{\sigma'}{2h^2}&=\sigma \frac{h'}{h^3}+d_1\left(\frac{c_1T_1'}{2f_1^2}-\frac{\sigma f_1'}{h^2 f_1}\right)+d_2\left(\frac{c_2T_2'}{2f_2^2}-\frac{\sigma f_2'}{h^2 f_2}\right),
\end{align}
where $\sigma=- d_1\frac{f_1''}{f_1}-d_2\frac{f_2''}{f_2}+\frac{h'}{h}\left(\frac{d_1f_1'}{f_1}+\frac{d_2f_2'}{f_2}\right)$. 
Therefore, if in addition the first equation of \eqref{PRC2S''} holds on 
$[\frac{1}{2},1)$, 
then $\sigma=c_1$ everywhere, so
\begin{align}\label{MSCBIE}
    -\frac{h'}{h^3}=d_1\left(\frac{T_1'}{2f_1^2}-\frac{f_1'}{h^2 f_1}\right)+
    d_2\left(\frac{c_2T_2'}{2c_1f_2^2}-\frac{f_2'}{h^2 f_2}\right).
\end{align}
Therefore, if all three equations of \eqref{PRC2S''} hold, then
\begin{align}\label{NEF2S}
\begin{split}
 -\frac{h'}{h^3}&=d_1\left(\frac{T_1'}{2f_1^2}-\frac{f_1'}{h^2 f_1}\right)+
    d_2\left(\frac{c_2T_2'}{2c_1f_2^2}-\frac{f_2'}{h^2 f_2}\right),\\
\frac{f_1''}{f_1}-\frac{(f_1')^2}{f_1^2}&=h^2\left(\frac{d_1-1-c_1T_1}{f_1^2}-\frac{f_1'}{f_1}\left(\frac{d_1T_1'}{2f_1^2}+\frac{d_2c_2T_2'}{2c_1f_2^2}\right)+\beta \frac{d_2}{d_1}\frac{f_1^2}{f_2^4}\right),\\
\frac{f_2''}{f_2}-\frac{(f_2')^2}{f_2^2}&=h^2\left(\frac{\alpha-c_2T_2}{f_2^2}-\frac{f_2'}{f_2}\left(\frac{d_1T_1'}{2f_1^2}+\frac{d_2c_2T_2'}{2c_1f_2^2}\right)-2\beta \frac{f_1^2}{f_2^4}\right),\\
c_1&= -h^2\left(\frac{d_1(d_1-1-c_1T_1)}{f_1^2}+\frac{d_2(\alpha-c_2T_2)}{f_2^2}-d_2\beta \frac{f_1^2}{f_2^4}\right)\vert_{t=\frac{1}{2}}.
\end{split}
\end{align}
We make the following observation.
\begin{prop}\label{EOIE}
Let $(h,f_1,f_2)$ be a triplet of positive smooth functions on $[\frac{1}{2},1)$ that are even about $\frac{1}{2}$. The triplet solves \eqref{PRC2S''} if and only if 
it solves \eqref{NEF2S}. 
\end{prop}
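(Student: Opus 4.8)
The plan is to prove the two implications separately, with the second contracted Bianchi identity serving as the bridge in both. The implication ``\eqref{PRC2S''} $\Rightarrow$ \eqref{NEF2S}'' is essentially carried out already in the paragraphs preceding the statement, so it only needs to be reorganised. Assume the triplet solves \eqref{PRC2S''}. Its second and third equations yield \eqref{SCBI}; its first equation is equivalent to $\sigma=c_1$, hence $\sigma'=0$, and \eqref{SCBI} collapses to \eqref{MSCBIE}, the first line of \eqref{NEF2S}. Multiplying the second (resp.\ third) equation of \eqref{PRC2S''} by $h^2$, isolating $\frac{f_i''}{f_i}-\frac{(f_i')^2}{f_i^2}$, and substituting the value of $h'/h$ read off from \eqref{MSCBIE} produces the second (resp.\ third) line of \eqref{NEF2S}; this is a routine rearrangement in which the $\frac{(f_1')^2}{f_1^2}$ and $\frac{f_1'f_2'}{f_1f_2}$ terms cancel. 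For the last line I evaluate the first equation of \eqref{PRC2S''} at $t=\tfrac12$, where evenness forces $h'(\tfrac12)=f_1'(\tfrac12)=f_2'(\tfrac12)=0$, to get $d_1\frac{f_1''}{f_1}+d_2\frac{f_2''}{f_2}=-c_1$ there, and then replace $\frac{f_1''}{f_1}$ and $\frac{f_2''}{f_2}$ at $t=\tfrac12$ using the second and third equations of \eqref{PRC2S''} (again with vanishing first derivatives); the $\beta$-terms combine as $d_1\cdot\frac{d_2}{d_1}\beta-2d_2\beta=-d_2\beta$, giving exactly the fourth line of \eqref{NEF2S}.

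For the converse, suppose the triplet solves \eqref{NEF2S}. Since $h,f_1,f_2$ are positive and smooth, every step of the substitution above is an algebraic equivalence, so \eqref{MSCBIE} (the first line of \eqref{NEF2S}) together with the second and third lines of \eqref{NEF2S} is equivalent to \eqref{MSCBIE} together with the second and third equations of \eqref{PRC2S''}; in particular those two equations of \eqref{PRC2S''} now hold on $[\tfrac12,1)$. It remains to produce the first equation of \eqref{PRC2S''}, equivalently to show $\sigma\equiv c_1$ on $[\tfrac12,1)$. The second and third equations of \eqref{PRC2S''} being available, the Bianchi identity \eqref{SCBI} holds; substituting $\sigma=c_1+(\sigma-c_1)$ into \eqref{SCBI} and using $c_1$ times \eqref{MSCBIE} to rewrite the resulting $c_1h'/h^3$ term, all terms not proportional to $\sigma-c_1$ cancel, leaving the linear homogeneous ODE
\[
(\sigma-c_1)'=\left(\frac{2h'}{h}-\frac{2d_1f_1'}{f_1}-\frac{2d_2f_2'}{f_2}\right)(\sigma-c_1)
\]
on $[\tfrac12,1)$, whose solution is $(\sigma-c_1)(t)=(\sigma-c_1)(\tfrac12)\exp\!\left(\int_{1/2}^{t}(\cdots)\right)$. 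So it suffices to check $\sigma(\tfrac12)=c_1$. At $t=\tfrac12$ evenness gives $h'(\tfrac12)=f_1'(\tfrac12)=f_2'(\tfrac12)=0$, so $\sigma(\tfrac12)=-d_1\frac{f_1''}{f_1}\big|_{1/2}-d_2\frac{f_2''}{f_2}\big|_{1/2}$; the already recovered second and third equations of \eqref{PRC2S''} at $t=\tfrac12$ express $\frac{f_1''}{f_1}$ and $\frac{f_2''}{f_2}$ there, and substituting and comparing with the fourth line of \eqref{NEF2S} yields $\sigma(\tfrac12)=c_1$. Hence $\sigma\equiv c_1$, i.e.\ the first equation of \eqref{PRC2S''} holds, completing the converse.

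The one step with genuine content is the recovery of the ``missing'' first equation of \eqref{PRC2S''} in the converse: one has to notice that the Bianchi identity combined with \eqref{MSCBIE} degenerates into a \emph{linear homogeneous} ODE for $\sigma-c_1$, and then exploit the reflection symmetry at $t=\tfrac12$ --- through the fourth line of \eqref{NEF2S} --- to pin the initial value to zero. Everything else is careful but routine algebra; the things to watch are the signs and the precise points at which positivity of $h,f_1,f_2$ and the evenness hypothesis enter.
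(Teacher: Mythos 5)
Your proposal is correct and follows essentially the same route as the paper: recover the second and third equations of \eqref{PRC2S''} algebraically, invoke \eqref{SCBI}, observe that \eqref{MSCBIE} reduces it to a linear homogeneous first-order ODE for $\sigma-c_1$, and use the fourth line of \eqref{NEF2S} together with evenness at $t=\tfrac12$ to force $\sigma(\tfrac12)=c_1$, hence $\sigma\equiv c_1$. The only cosmetic difference is that you keep the coefficient of $\sigma-c_1$ in the form $\tfrac{2h'}{h}-\tfrac{2d_1f_1'}{f_1}-\tfrac{2d_2f_2'}{f_2}$, whereas the paper substitutes \eqref{MSCBIE} once more to write it as $-2h^2\left(\tfrac{d_1T_1'}{2f_1^2}+\tfrac{d_2c_2T_2'}{2c_1f_2^2}\right)$; these agree by \eqref{MSCBIE}.
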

\begin{proof}
We have already shown that a solution of \eqref{PRC2S''} also solves \eqref{NEF2S}. 
On the other hand, if \eqref{NEF2S} holds for smooth $(h,f_1,f_2)$, we see that the second and third equations of \eqref{PRC2S''} hold. Then again considering the quantity $\sigma=
- d_1\frac{f_1''}{f_1}-d_2\frac{f_2''}{f_2}+\frac{h'}{h}\left(\frac{d_1f_1'}{f_1}+\frac{d_2f_2'}{f_2}\right)$, \eqref{SCBI} implies that 
\begin{align*}
\frac{\sigma'}{2h^2}&=\sigma \frac{h'}{h^3}+d_1\left(\frac{c_1T_1'}{2f_1^2}-\frac{\sigma f_1'}{h^2 f_1}\right)+d_2\left(\frac{c_2T_2'}{2f_2^2}-\frac{\sigma f_2'}{h^2 f_2}\right)\\
&=-\sigma d_1\left(\frac{T_1'}{2f_1^2}-\frac{f_1'}{h^2 f_1}\right)-\sigma d_2\left(\frac{T_2'}{2f_2^2}-\frac{f_2'}{h^2 f_2}\right)+d_1\left(\frac{c_1T_1'}{2f_1^2}-\frac{\sigma f_1'}{h^2 f_1}\right)+d_2\left(\frac{c_2T_2'}{2f_2^2}-\frac{\sigma f_2'}{h^2 f_2}\right)\\
&=(c_1-\sigma)\left(\frac{d_1T_1'}{2f_1^2}+\frac{d_2c_2T_2'}{2c_1f_2^2}\right).
\end{align*}
Now the fourth equation of \eqref{NEF2S} implies that $\sigma=c_1$ at $t=\frac{1}{2}$, 
so this computation reveals that in fact $\sigma=c_1$ for all $t\in [\frac{1}{2},1)$, so the first equation of \eqref{PRC2S''} also holds. 
\end{proof}
In light of Proposition \ref{EOIE}, we can solve \eqref{NEF2S} instead of \eqref{PRC2S''}. Further, we introduce the new functions $y_i=\ln(f_i)$ and re-write \eqref{NEF2S} in terms of the $y_i$ functions as follows: 
\begin{align}\label{NEF2S'}
\begin{split}
y_1''&=h^2\left((d_1-1-c_1T_1)e^{-2y_1}-y_1'\left(\tfrac{d_1}{2}T_1'e^{-2y_1}+\tfrac{d_2c_2}{2c_1}T_2'e^{-2y_2}\right)+\beta\tfrac{d_2}{d_1}e^{2y_1-4y_2}\right),\\
y_2''&=h^2\left((\alpha-c_2T_2)e^{-2y_2}-y_2'\left(\tfrac{d_1}{2}T_1'e^{-2y_1}+\tfrac{d_2c_2}{2c_1}T_2'e^{-2y_2}\right)-2\beta e^{2y_1-4y_2}\right),\\
-\frac{h'}{h^3}&=d_1\left(\frac{T_1'}{2}e^{-2y_1}-\frac{y_1'}{h^2}\right) + d_2\left(\frac{c_2T_2'}{2c_1}e^{-2y_2} -\frac{y_2'}{h^2}\right),\\
c_1&= -h^2\left((d_1(d_1-1-c_1T_1))e^{-2y_1}+(d_2(\alpha-c_2T_2))e^{-2y_2}-d_2\beta e^{2y_1-4y_2}\right)\vert_{t=\frac{1}{2}}.
\end{split}
\end{align}
We also note that whenever \eqref{NEF2S'} (and hence \eqref{PRC2S''}) holds, we also have
\begin{align}\label{NEF2S'H2}
 h^2=\frac{tr(y')^2-tr((y')^2)-c_1}{d_1 (d_1-1-c_1T_1) e^{-2y_1} + d_2 (\alpha-c_2 T_2) e^{-2y_2} -d_2\beta e^{2y_1-4y_2}},
\end{align}
at least whenever the denominator is non-zero. Here, $tr(a,b)=d_1a+d_2b$. 
Although this auxiliary equation appears to have a singularity whenever the denominator is zero, this is not an issue we need to address in this paper. In fact, the only times that we reference this equation are in situations when it is clear that the denominator has a sign.

\section{Regularity}\label{PRR}
In this section, we prove Theorem \ref{RicciRegularity}, with \eqref{WeakRegularity} replaced by the weaker assumption \eqref{StrongRegularity}. 
The proof will follow from the following result. 
\begin{theorem}\label{LS:R}
 Suppose $(h>0,y_1,y_2)$ solves the first three equations \eqref{NEF2S'} and \eqref{NEF2S'H2} on $[1-\epsilon,1)$ for some $c_1,c_2,\epsilon>0$, and 
 $(h(t),y_1(t),y_1'(t))$ is unbounded about $t=1$. 
 Suppose also that $\left|y_2'\right|$ is bounded on $[1-\epsilon,1)$ and $y_i'\le 0$ on $[1-\epsilon,1)$. Then $h\in C^1[1-\epsilon,1]$, $f_i=e^{y_i}\in C^2[1-\epsilon,1]$, and the following is true: 
 \begin{align}\label{SCST}
 \begin{split}
  f_1(1)=0, \qquad f_2(1)>0,\\
  f_1'(1)=-h(1)<0, \qquad f_2'(1)=0,\\
  f_1''(1)=0, \qquad h'(1)=0.
  \end{split}
 \end{align}
\end{theorem}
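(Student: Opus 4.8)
The plan is to extract from the monotonicity hypotheses the qualitative shape of the solution near $t=1$ — that $f_2$ stabilises while $f_1$ collapses at the linear rate — and then to run a careful asymptotic analysis of \eqref{NEF2S'} and \eqref{NEF2S'H2} in which the smoothness conditions forced on the prescribed tensor $T$ near the singular orbit (namely $T_1(1)=T_1'(1)=0$, $T_1''(1)=2T_0(1)^2=2$, $T_2'(1)=0$, $T_2(1)>0$) produce the exact cancellations that yield \eqref{SCST}. Throughout I regard $T_1,T_2$ as smooth on $[1-\epsilon,1]$.

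First I would dispose of $f_2$ and show that $f_1$ collapses. Since $y_2'\le0$ and $|y_2'|$ is bounded, $y_2$ is non-increasing and Lipschitz on $[1-\epsilon,1)$, so it extends continuously to $t=1$ and $f_2=e^{y_2}$ extends continuously with $f_2(1)=e^{y_2(1)}>0$. Likewise $f_1'=f_1y_1'\le0$, so $f_1$ is non-increasing and $f_1(1):=\lim_{t\to1}f_1(t)\ge0$ exists; I claim $f_1(1)=0$. If not, then $y_1$ is bounded near $t=1$, and — writing $u=h^{-2}$, so that the third equation of \eqref{NEF2S'} becomes the linear ODE $u'=Q(t)-2u(d_1y_1'+d_2y_2')$ with $Q$ bounded and with non-positive, integrable coefficient $-2(d_1y_1'+d_2y_2')$ (here one uses $y_i'\le0$ together with the boundedness of the $y_i$) — one deduces, using also \eqref{NEF2S'H2}, that $h$ stays bounded above and away from $0$; then the first equation of \eqref{NEF2S'} reads $y_1''=a(t)+b(t)y_1'$ with $a,b$ bounded, and a linear ODE with bounded coefficients cannot blow up in finite time, so $y_1'$ stays bounded as well. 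This contradicts the assumed unboundedness of $(h,y_1,y_1')$; hence $f_1(1)=0$, giving the first line of \eqref{SCST}.

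The heart of the argument is the collapse rate. Once $f_1(1)=0$, the terms $d_1T_1'f_1^{-2}$ and $2d_1f_1'/(h^2f_1)$ appearing in the $u$-equation are individually unbounded, but the value $T_1''(1)=2T_0(1)^2=2$ makes their leading singularities (of order $(1-t)^{-1}$) cancel; making this precise shows that $u$ extends continuously to $t=1$. Comparing with \eqref{NEF2S'H2} — whose denominator is now positive near $t=1$ because the term $d_1(d_1-1)f_1^{-2}$ dominates (this is where $d_1>1$ is needed), so that $h^2$ equals $N/D$ with $N\sim d_1(d_1-1)(f_1'/f_1)^2$ and $D\sim d_1(d_1-1)f_1^{-2}$ — one reads off $u(1)\in(0,\infty)$, hence $h\to h(1):=u(1)^{-1/2}>0$ and $f_1'\to-h(1)$, i.e.\ $f_1(t)\sim h(1)(1-t)$. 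The mechanism can be previewed by a scaling heuristic: if $f_1\sim c(1-t)^\gamma$ with $\gamma>0$, then balancing the leading orders in \eqref{NEF2S'H2} and in the $y_1$-equation, using $T_1''(1)=2$, forces $(\gamma-1)(d_1\gamma+1)=0$, hence $\gamma=1$, with $c$ dropping out as the free parameter $h(1)$. This establishes the second line of \eqref{SCST}.

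Finally I would extract the remaining boundary values and the regularity. For $f_2'(1)=0$ it suffices that $y_2'\to0$; this follows from the $y_2$-equation of \eqref{NEF2S'}, because the coupling term $-y_2'\cdot\tfrac{d_1}{2}T_1'f_1^{-2}$ (with $T_1'f_1^{-2}\to-\infty$ at the rate fixed in the previous step, while the other terms in the bracket stay bounded) would otherwise drive $y_2''$, and hence $y_2'$, unbounded, contradicting the hypothesis on $y_2'$. With the collapse rate in hand, the $u$-equation now shows $u'$ has a finite limit, and one further order in the expansions — again using $T_1''(1)=2$ and $T_2'(1)=0$ — gives $u'(1)=0$, i.e.\ $h'(1)=0$ and $h\in C^1[1-\epsilon,1]$. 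Writing $f_1''=f_1((y_1')^2+y_1'')$ and substituting the $y_1$-equation, the leading parts of order $f_1^{-1}$ cancel by the same mechanism, leaving $f_1''$ with limit $0$; similarly $f_2''$ extends continuously, so $f_i\in C^2[1-\epsilon,1]$ and all of \eqref{SCST} holds. (The conclusion is exactly the list \eqref{SC}, so Theorem \ref{C1RG} then applies.) The main obstacle is precisely the cancellation analysis in the collapse regime: proving, purely from the ODE system and the a priori signs, that $h$ neither blows up nor collapses and that $f_1$ vanishes at exactly the linear rate, without presupposing power-law behaviour and while carrying along the coupling with the only-Lipschitz-controlled function $f_2$; this rests on $d_1>1$ and on the precise smoothness values of $T$, above all $T_1''(1)=2T_0(1)^2$.
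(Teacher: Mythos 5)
Your outline has the right skeleton (first $f_2(1)>0$ and $f_1(1)=0$, then the linear collapse rate $f_1'(1)=-h(1)$, then $f_2'(1)=0$ and the second-order conditions), and it correctly identifies $T_1''(1)=2$ as the source of the cancellations; but at the two places where the real work happens it substitutes assertions for proofs, so there is a genuine gap. The central claim — that $u=h^{-2}$ extends continuously with positive limit and $f_1\sim h(1)(1-t)$ — is supported only by the scaling ansatz $f_1\sim c(1-t)^\gamma$, and you yourself flag that avoiding exactly this ansatz is ``the main obstacle'': that obstacle is the theorem. The paper's route is to first prove $\lim_{t\to1}y_1'=-\infty$ (which your sketch never establishes and which is needed before any rate analysis), then set $z_1=-(1-t)y_1'$ and derive from \eqref{NEF2S'} and \eqref{NEF2S'H2} a singular ODE of the form $z_1'=\frac{z_1}{1-t}\left(g_0(t)+g_1(t)z_1+g_2(t)z_1^2\right)$ with $g_0(1)=1$, $g_1(1)=d_1-1$, $g_2(1)=-d_1$ (this is where $T_1''(1)=2$ enters), rule out $\limsup z_1>1$ (finite-time blow-up from the quadratic term) and $\liminf z_1<1$ (which forces $z_1\to0$ and hence $y_1'$ bounded, a contradiction), and only then upgrade convergence $z_1\to1$ to differentiability at $t=1$ via a regularity lemma for singular ODEs (Proposition \ref{TCs}). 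The same lemma is what delivers the higher-order statements: ``one further order in the expansions'' does not by itself give $h\in C^1$, $f_i\in C^2$, $f_1''(1)=h'(1)=0$, because existence of formal limits of $u'$ and $f_1''$ from leading-order cancellation is precisely what must be proved, and in the paper this again passes through Taylor expansions of $1/f_1$ and $T_1'/f_1$ and two further applications of Proposition \ref{TCs}.

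There is also a hole in your argument for $f_1(1)=0$. Assuming $y_1$ bounded, your equation for $u=h^{-2}$ has a non-negative (not non-positive) integrable coefficient $-2(d_1y_1'+d_2y_2')$, and Gronwall then bounds $u$ from above, i.e.\ bounds $h$ away from zero; it does not exclude $u\to0$, i.e.\ $h\to\infty$. Appealing to \eqref{NEF2S'H2} does not immediately close this, since its numerator contains $(y_1')^2$, which you have not yet controlled, so the ``linear ODE with bounded coefficients cannot blow up'' step is circular in the regime where $h$ is large: the coefficients of the $y_1$-equation contain $h^2$. The paper handles this case with the differential inequality $h'\le C'h^3(1-t)$ (using $y_i'\le0$ and $T_i'(1)=0$) and, in the branch $h(t)\ge \frac{1}{\sqrt{C'}(1-t)}$, a differential inequality for $z=(1-t)y_1'$ forcing $y_1'$ to blow up like $-(1-t)^{-1}$, contradicting the assumed bound on $e^{-2y_1}$. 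Your remaining steps (continuity of $f_2$, and $f_2'(1)=0$ from the $y_2$-equation once $T_1'e^{-2y_1}$ is known to blow up non-integrably) do match the paper's reasoning, but they too rest on the collapse-rate analysis that the proposal leaves unproved.
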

The proof of this Theorem is achieved through a number of lemmas which prove progressively stronger statements about the regularity of the functions $h,f_1,f_2$.
\begin{lemma}\label{0OSC}
 If $(h,y_1,y_2)$ is as in the hypothesis of Theorem \ref{LS:R}, then the functions $f_i(t) = e^{y_i(t)}$ can be extended continuously to 
 $[1-\epsilon,1]$, and $f_1(1)=0$, $f_2(1)>0$. Also $\lim_{t\to 1}y_1'(t)=-\infty$. 
\end{lemma}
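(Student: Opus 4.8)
The plan is to handle the continuity and $f_2$ assertions by monotonicity, then reduce the lemma to the single statement $\lim_{t\to 1}y_1'(t)=-\infty$, and prove the latter by a dichotomy followed by a contradiction. First, since $y_1'\le 0$ and $y_2'\le 0$ on $[1-\epsilon,1)$, each $f_i=e^{y_i}$ is non-increasing, positive, and bounded above by $f_i(1-\epsilon)$, so it has a limit in $[0,\infty)$ as $t\to 1$ and extends continuously to $[1-\epsilon,1]$. Since $|y_2'|$ is bounded, $y_2$ stays in a bounded interval, hence $f_2=e^{y_2}$ is bounded below by a positive constant and $f_2(1)>0$. Once $y_1'\to-\infty$ is established, $\int_{1-\epsilon}^{1}(-y_1')\,dt=\infty$ forces $y_1\to-\infty$, i.e. $f_1(1)=0$; so it remains only to prove $y_1'\to-\infty$.

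To do this I would write $v=-y_1'\ge 0$ and rearrange the first equation of \eqref{NEF2S'} as $y_1''=h^2(A+vB)$, where $A=(d_1-1-c_1T_1)e^{-2y_1}+\beta\tfrac{d_2}{d_1}e^{2y_1-4y_2}$ and $B=\tfrac{d_1}{2}T_1'e^{-2y_1}+\tfrac{d_2c_2}{2c_1}T_2'e^{-2y_2}$. Smoothness of $T$ (so $T_1(1)=T_1'(1)=T_2'(1)=0$), together with $d_1\ge 2$, the bound $f_1\le f_1(1-\epsilon)$, and the boundedness of $y_2$, gives $A\ge a_0>0$ near $t=1$ and $B/A\to 0$ as $t\to 1$. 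Hence for every small $\delta>0$ there is $\epsilon_\delta>0$ with $v'=-y_1''\le h^2A(\delta v-1)$ on $[1-\epsilon_\delta,1)$. Therefore, as soon as $v(t_0)<1/\delta$ for some $t_0\in[1-\epsilon_\delta,1)$, the inequality shows $v'<0$ wherever $v<1/\delta$, so $v$ stays below $1/\delta$ and is non-increasing on $[t_0,1)$; in particular $v$ is then bounded near $t=1$. This yields the dichotomy: either (A) for every small $\delta$ one has $v\ge 1/\delta$ on some $[1-\epsilon_\delta,1)$, in which case $\liminf_{t\to1}v=\infty$, hence $v\to\infty$, i.e. $y_1'\to-\infty$ and we are done; or (B) $v=-y_1'$ is bounded near $t=1$.

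It then remains to rule out case (B), which I would do by showing it forces $(h,y_1,y_1')$ to be bounded near $t=1$, contradicting the hypothesis. In case (B), $y_1'$, hence $y_1$, is bounded near $t=1$, and so is $y_2$; moreover $v'<0$ there, so $y_1'$ is non-decreasing with a finite limit and $\int_{t_0}^{1}y_1''\,dt<\infty$. Shrinking the interval so that $|vB/A|<\tfrac{1}{2}$, we get $y_1''\ge\tfrac{1}{2} h^2A\ge\tfrac{a_0}{2}h^2$, hence $\int_{t_0}^{1}h^2\,dt<\infty$. On the other hand, the third equation of \eqref{NEF2S'} written for $w=h^{-2}$ is the linear ODE $w'=S+Pw$ with $S=d_1T_1'e^{-2y_1}+\tfrac{d_2c_2}{c_1}T_2'e^{-2y_2}$ and $P=-2d_1y_1'-2d_2y_2'\ge 0$, both bounded on $[t_0,1)$; solving it shows $w$ is bounded on $[t_0,1)$ and $\lim_{t\to1}w(t)$ exists. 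If $h$ were unbounded near $t=1$ this limit would be $0$, and then from $w(t)=-\int_{t}^{1}(S+Pw)\,ds$ and the boundedness of $w$, $S$, $P$ one gets $w(t)\le C(1-t)$, so $\int_{t_0}^{1}h^2\,dt=\int_{t_0}^{1}w^{-1}\,dt\ge C^{-1}\int_{t_0}^{1}(1-t)^{-1}\,dt=\infty$, contradicting $\int_{t_0}^{1}h^2\,dt<\infty$. Hence $h$ is bounded near $t=1$, so $(h,y_1,y_1')$ is bounded near $t=1$, contradicting the hypothesis. Thus case (B) cannot occur, $y_1'\to-\infty$, and the lemma follows.

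The step I expect to be the main obstacle is precisely the elimination of case (B) --- turning boundedness of $(y_1,y_1',y_2)$ into boundedness of $h$. The mechanism is that the $y_1''$-equation forces $h^2$ to be integrable near $t=1$ (because there $y_1'$ is monotone with a finite limit), while the $h'$-equation, read as a linear first-order ODE for $h^{-2}$, forces $h^{-2}$ to vanish at worst linearly in $1-t$ if $h$ blows up; these two facts are incompatible.
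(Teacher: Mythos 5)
Your argument for $\lim_{t\to1}y_1'(t)=-\infty$ is sound and takes a genuinely different route from the paper's. (The paper first shows $e^{-2y_1}$ is unbounded --- i.e., $f_1(1)=0$ --- by a contradiction argument that splits cases on whether $h$ is bounded, deriving $h'\le C'h^3(1-t)$ and studying $z=(1-t)y_1'$; it then deduces $y_1'\to-\infty$ from the unboundedness of $e^{-2y_1}$.) Your dichotomy in terms of $v=-y_1'$ and $B/A$, and in particular the elimination of case (B) by playing $\int_{t_0}^1 h^2<\infty$ (from $y_1''\ge\tfrac{a_0}{2}h^2$ and $y_1'$ monotone with a finite limit) against the linear first-order ODE $w'=S+Pw$ for $w=h^{-2}$ (which would force $w\le C(1-t)$ and hence $\int h^2=\infty$ if $h$ blew up), is a clean alternative.

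However, the reduction sentence ``Once $y_1'\to-\infty$ is established, $\int_{1-\epsilon}^1(-y_1')\,dt=\infty$ forces $y_1\to-\infty$'' contains a genuine gap: $y_1'(t)\to-\infty$ does \emph{not} imply $\int(-y_1')\,dt=\infty$. For instance $y_1(t)=\sqrt{1-t}$ has $y_1'\to-\infty$ while $y_1$ stays bounded. So you cannot conclude $f_1(1)=0$ from $y_1'\to-\infty$ alone; you must either establish the divergence of the integral directly or, as the paper does, prove $f_1(1)=0$ independently. Fortunately your dichotomy already contains the needed quantitative content: since $T$ is smooth with $T_1'(1)=T_2'(1)=0$ and $e^{2y_1}$, $e^{\pm 2y_2}$ are bounded, one has $|B/A|\le C(1-t)$ near $t=1$, so the negation of case (B) applied with $\delta=C(1-t)$ yields $-y_1'(t)\ge\frac{1}{C(1-t)}$; this lower bound does force $\int(-y_1')\,dt=\infty$ and hence $y_1\to-\infty$. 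Making that step explicit closes the gap.
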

\begin{proof} 
Since $y_2'$ is non-positive and bounded, it is clear that $y_2$ can be extended continuously to $t=1$. Then the same is true for $f_2(t)=e^{y_2(t)}$, and $f_2(1)>0$. To verify the other claims, first 
note that by the smoothness conditions for $T_1,T_2$, the monotonicity of $y_1,y_2$ and the continuity of $e^{2y_2},e^{-2y_2}$ and $e^{2y_1}$, 
we can write the first equation of
\eqref{NEF2S'} as 
\begin{align}\label{REFY1}
 y_1''(t)=h^2(t)e^{-2y_1(t)}\left(a(t)+b(t)(1-t)y_1'(t)\right),
\end{align}
where $a$ and $b$ are both continuous functions, $a(1)>0$ and $b(1)>0$.

To show that $f_1(t)$ can be extended continuously to $t=1$ with $f_1(1)=0$, it suffices to show that $e^{-2y_1}$ is unbounded, since $y_1'\le 0$. Let us now assume that $e^{-2y_1}$ is bounded, so that $e^{-2y_1}$ is actually continuous on $[1-\epsilon,1]$. Using $y_i'\le 0$, the third equation of \eqref{NEF2S'} implies that $\frac{h'}{h^3}\le-d_1\frac{T_1'}{2}e^{-2y_1} - d_2\frac{c_2T_2'}{2c_1}e^{-2y_2}$. The continuity of $e^{-2y_1}$ and $e^{-2y_2}$ and the smoothness conditions for $T_1,T_2$ then imply that there is a constant $C'>0$ so that $h'\le C'h^3(1-t)$. Therefore, either $h$ is uniformly bounded on 
$[1-\epsilon,1)$ or $h(t)\ge \frac{1}{\sqrt{C'}(1-t)}$. In the second case, we can write $z(t)=(1-t)y_1'(t)$ so \eqref{REFY1} becomes 
\begin{align*}
 z'(t)&=-y_1'(t)+(1-t)y_1''(t)\\
 &=-\frac{z(t)}{(1-t)}+(1-t)h^2(t)e^{-2y_1(t)}\left(a(t)+b(t)z(t)\right)\\
 &\ge \frac{1}{1-t}\left(-z(t)+\frac{1}{C'}e^{-2y_1(t)}\left(a(t)+b(t)z(t)\right)\right).
\end{align*}
Since $z\le 0$ on $[1-\epsilon,1)$, this inequality tells us that $\sup_t z(t)<0$, so $y_1'(t)$ goes to $-\infty$ like $\frac{-1}{1-t}$, which contradicts the boundedness of $e^{-2y_1}$. We therefore find that $h$ is bounded on $[1-\epsilon,1)$. 
Since we now have bounds on $h$ as well as $e^{-2y_1}$, \eqref{REFY1} then implies that $y_1'$ is bounded as well. 
We therefore find bounds on $(h,y_1,y_2)$ in $C^0\times C^1\times C^1$, which is a contradiction. 
Therefore we can conclude that $\lim_{t\to 1}e^{-2y_1(t)}=\infty$.

Finally, if it is not true that $\lim_{t\to 1}y_1'(t)=-\infty$, then $y_1'(t)$ is bounded from below for values of $t$ arbitrarily close to $1$, so \eqref{REFY1} can again be used to find that $y_1'$ is bounded from below on the entirety of $[1-\epsilon,1)$, which contradicts the unboundedness of $e^{-2y_1}$. 
\end{proof}
\begin{lemma}\label{1OSC}
 If $(h,y_1,y_2)$ is as in Lemma \ref{0OSC}, then $f_1\in C^1[1-\epsilon,1]$, $h\in C^0[1-\epsilon,1]$, and $f_1'(1)=-h(1)$.
\end{lemma}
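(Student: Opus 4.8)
Write $f_i=e^{y_i}$, so $f_1'=y_1'f_1$; by Lemma~\ref{0OSC} we have $f_1(1)=0$, $f_2(1)>0$, $y_1'(t)\to-\infty$ and $e^{-2y_1(t)}=f_1(t)^{-2}\to\infty$ as $t\to1$. The argument rests on two reformulations of the system. A direct computation shows the third equation of \eqref{NEF2S'} is equivalent to $\frac{d}{dt}\log\!\big(h^{-2}f_1^{2d_1}f_2^{2d_2}\big)=h^2\big(d_1T_1'e^{-2y_1}+\tfrac{d_2c_2}{c_1}T_2'e^{-2y_2}\big)$, hence
\begin{equation}\label{PLAN:W}
 \frac{d}{dt}\big(h^{-2}f_1^{2d_1}f_2^{2d_2}\big)=d_1T_1'\,f_1^{2d_1-2}f_2^{2d_2}+\tfrac{d_2c_2}{c_1}T_2'\,f_1^{2d_1}f_2^{2d_2-2}.
\end{equation}
Since $d_1\ge2$, $f_1\to0$, $f_2\to f_2(1)>0$ and $T_1',T_2'$ are bounded near $t=1$, the right side of \eqref{PLAN:W} is bounded, so $W:=h^{-2}f_1^{2d_1}f_2^{2d_2}$ extends continuously to $t=1$ with some limit $C\ge0$. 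Second, clearing the factor $e^{-2y_1}$ in \eqref{NEF2S'H2} gives
\begin{equation}\label{PLAN:A}
 h^2A(t)=d_1(d_1-1)(f_1')^2+2d_1d_2\,f_1'(y_2'f_1)+d_2(d_2-1)(y_2'f_1)^2-c_1f_1^2,
\end{equation}
with $A(t):=d_1(d_1-1-c_1T_1)+d_2(\alpha-c_2T_2)e^{2y_1-2y_2}-d_2\beta e^{4y_1-4y_2}\to d_1(d_1-1)>0$. As $|y_1'|\to\infty$ and $y_2'$ is bounded, dividing \eqref{PLAN:A} by $(f_1')^2=(y_1'f_1)^2$ gives $h^2/(f_1')^2\to1$; hence $h^2$ and $(f_1')^2$ are bounded together and converge to a finite limit together. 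It therefore suffices to show $(f_1')^2$ has a finite limit at $t=1$: then $h\in C^0[1-\epsilon,1]$, $f_1\in C^1[1-\epsilon,1]$, and letting $t\to1$ in \eqref{PLAN:A} (the right side tends to $d_1(d_1-1)(f_1'(1))^2$, the left to $d_1(d_1-1)h(1)^2$, and $f_1'\le0<h$) yields $f_1'(1)=-h(1)$.

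If $C>0$, then since $f_1^{2d_1}\to0$ and $f_2^{2d_2}\to f_2(1)^{2d_2}>0$, the definition of $C$ forces $h^{-2}\to\infty$, i.e.\ $h\to0$; then $h^2/(f_1')^2\to1$ gives $(f_1')^2\to0$, and the claim holds with $h(1)=f_1'(1)=0$. The substantive case is $C=0$. Substituting the formula for $h^2e^{-2y_1}$ obtained from \eqref{NEF2S'H2} into \eqref{REFY1} and writing $z:=(1-t)y_1'<0$, one gets, near $t=1$,
\begin{equation}\label{PLAN:Z}
 z'=\frac{1}{1-t}\Big(d_1\,z\big(z-\tfrac1{d_1}\big)(z+1)+o(1)\Big),
\end{equation}
with the $o(1)$ uniform on compact $z$-intervals (using $a(1)=d_1-1$, $b(1)=d_1$ for the functions in \eqref{REFY1}). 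In the variable $\tau=-\log(1-t)$ this is an asymptotically autonomous perturbation of $\dot z=d_1z(z-\tfrac1{d_1})(z+1)$, whose only equilibria with $z<0$ are $z=0$ (attracting) and $z=-1$ (repelling). If $z$ entered a fixed small neighbourhood of $0$ it would stay there, whence $y_1''>0$ by \eqref{REFY1} and $y_1'$ would be bounded below, contradicting $y_1'\to-\infty$; so $z\le-\delta$ near $t=1$ for some $\delta>0$. If in addition $z$ is bounded, then by the phase-line structure it must converge, necessarily to $-1$, so $(1-t)|y_1'|\to1$, $f_1$ decays linearly at $t=1$, $(f_1')^2$ converges to a positive limit, and we finish as above.

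The main obstacle is thus to rule out $z\to-\infty$ in the case $C=0$: this is the regime where the cubic in \eqref{PLAN:Z} is negative and could drive $z$ off, and since the target $z=-1$ is a \emph{repelling} equilibrium of the leading dynamics, boundedness of $z$ (equivalently of $h$) near $t=1$ cannot be read off at leading order. I would extract it from the lower-order terms of \eqref{NEF2S'} together with the constraint $C=0$ (equivalently $f_1^{2d_1}=o((f_1')^2)$) and the sign $T_1'<0$ near $t=1$, by a comparison argument in the spirit of the one used in the proof of Lemma~\ref{0OSC}.
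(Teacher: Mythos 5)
Your overall skeleton (blow-up analysis of $z=(1-t)y_1'$, then using \eqref{NEF2S'H2} to get $h^2/(f_1')^2\to 1$ and read off $f_1'(1)=-h(1)$) is the same as the paper's, but the step you defer is the crux, and the reason you give for deferring it is mistaken. You claim that because $z=-1$ is a repelling equilibrium of the limiting flow, boundedness of $z$ "cannot be read off at leading order". It can: if $z_1:=-(1-t)y_1'$ ever exceeds $1+\epsilon'$ at a time close enough to $1$ that the vanishing coefficients in \eqref{BUEFZ} are small, then in the variable $\tau=-\log(1-t)$ one has $\frac{dz_1}{d\tau}\ge c\,z_1^2$ from then on, so $z_1$ blows up at finite $\tau$, i.e.\ at some $t^*<1$, contradicting that the solution is $C^1$ on all of $[1-\epsilon,1)$. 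This is exactly how the paper argues; the repelling character of $z_1=1$ is not an obstacle but the mechanism: a solution defined up to $t=1$ with $y_1'\to-\infty$ must converge to the repelling equilibrium because escape upward means blow-up strictly before $t=1$, and escape downward lands in the basin of $z_1=0$, contradicting $\lim_{t\to1}y_1'=-\infty$ (Lemma \ref{0OSC}). No use of $C=0$ or of the sign of $T_1'$ beyond what already sits in \eqref{BUEFZ} is needed; note also that your $o(1)$ is only claimed uniformly on compact $z$-intervals, which by itself cannot exclude escape to $-\infty$ — one needs the explicit polynomial-in-$z_1$ structure with coefficients tending to $0$, as in \eqref{BUEFZ}. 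As written, the central estimate of the lemma is left as a sketch of intent.

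There is a second gap at the end: even granting $z\to-1$, the conclusions "$f_1$ decays linearly, $(f_1')^2$ converges to a positive limit" do not follow from convergence of $z$ alone. For instance $z+1\sim c/|\log(1-t)|$ is compatible with $z\to-1$ but gives $f_1\sim(1-t)|\log(1-t)|^{\pm c}$, so $f_1'\to 0$ or $-\infty$ and $f_1\notin C^1$ with negative derivative. The paper closes this with a quantitative upgrade that your plan has no counterpart for: it first shows $\lim_{t\to1}y_2'$ exists (using the bound on $e^{2y_1}/(1-t)^{3/2}$ and the second equation of \eqref{NEF2S'}), so the coefficients of the $z_1$-equation extend differentiably to $t=1$; it then rewrites the equation as $z_1'=\frac{a(t)(z_1-1)}{1-t}+b(t)$ with $a(1)=d_1+1>0$ and applies the singular-ODE result Proposition \ref{TCs} to get $z_1\in C^1[1-\epsilon,1]$, whence $y_1'=\frac{1}{t-1}+O(1)$ with continuous remainder, $f_1\in C^1$ and $f_1'(1)=-h(1)<0$. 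Without such a step you obtain at best $f_1(t)=(1-t)^{1+o(1)}$. Finally, your $W=h^{-2}f_1^{2d_1}f_2^{2d_2}$ dichotomy is a harmless detour, but the branch $C>0$, which you absorb by accepting $h(1)=f_1'(1)=0$, is precisely the degeneration the paper's proof excludes — and must exclude, since Lemma \ref{SOSC} and Theorem \ref{LS:R} require $f_1'(1)=-h(1)<0$.
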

\begin{proof}
We introduce a new function $z_1$ such that $y_1'(t)=-\frac{z_1(t)}{1-t}$. Then \eqref{NEF2S'} and \eqref{NEF2S'H2} imply that
\begin{align*}
 &-\frac{z_1'}{(1-t)}-\frac{z_1}{(1-t)^2}=\\
 &\frac{\left(d_1(d_1-1)\frac{z_1^2}{(1-t)^2}-K(t)\frac{z_1}{1-t} \right)\left(d_1-1-c_1T_1-y_1'(\frac{d_1}{2}T_1' + \frac{d_2c_2}{2c_1}T_2'e^{2y_1-2y_2}) + \beta \frac{d_2}{d_1}e^{4y_1-4y_2}\right)}{d_1(d_1-1-c_1T_1)+d_2 e^{2y_1-2y_2}(\alpha-c_2 T_2)-d_2\beta e^{4y_1-4y_2}},
\end{align*}
where $K(t)=2d_1d_2y_2'(t)+\frac{(d_2^2-d_2)(y_2'(t))^2-c_1}{y_1'(t)}$. Note that by making $\epsilon>0$ smaller if necessary, $K$ is bounded on $[1-\epsilon,1)$ 
because $y_1'$ is continuous on $[1-\epsilon,1)$, and $\lim_{t\to 1}y_1'(t)=-\infty$. 

We rearrange:
\begin{equation}
\begin{aligned}
 -z_1'&=\frac{z_1}{(1-t)}+
 \frac{\left(d_1(d_1-1)\frac{z_1^2}{(1-t)}-Kz_1\right)\left(d_1-1-c_1T_1+\frac{z_1}{(1-t)}(\frac{d_1T_1'}{2}+\frac{d_2T_2'c_2e^{2y_1-2y_2}}{2c_1})+\beta\frac{d_2}{d_1}e^{4y_1-4y_2}\right)}{d_1(d_1-1-c_1T_1)+d_2 e^{2y_1-2y_2}(\alpha-c_2 T_2)-d_2\beta e^{4y_1-4y_2}}\\
 &=\frac{z_1}{1-t}\left(1+\frac{\left(d_1(d_1-1)z_1-K(1-t)\right)\left(d_1-1-c_1T_1+\frac{z_1}{(1-t)}(\frac{d_1T_1'}{2}+\frac{d_2T_2'c_2e^{2y_1-2y_2}}{2c_1})+\beta\frac{d_2}{d_1}e^{4y_1-4y_2}\right)}{d_1(d_1-1-c_1T_1)+d_2 e^{2y_1-2y_2}(\alpha-c_2 T_2)-d_2\beta e^{4y_1-4y_2}}\right)\\
 &=\frac{z_1}{1-t}\left(g_0(t)+g_1(t)z_1+g_2(t)z_1^2\right),
\end{aligned}
\end{equation}
where
\begin{equation}
\begin{aligned}
 g_0(t)&=1-\frac{K(1-t)(d_1-1-c_1T_1+\beta\frac{d_2}{d_1}e^{4y_1-4y_2})}{d_1(d_1-1-c_1T_1)+d_2 e^{2y_1-2y_2}(\alpha-c_2 T_2)-d_2\beta e^{4y_1-4y_2}},\\
 g_1(t)&=\frac{d_1(d_1-1)(d_1-1-c_1T_1+\beta\frac{d_2}{d_1}e^{4y_1-4y_2}))-K(\frac{d_1T_1'}{2}+\frac{d_2T_2'c_2e^{2y_1-2y_2}}{2c_1})}{d_1(d_1-1-c_1T_1)+d_2 e^{2y_1-2y_2}(\alpha-c_2 T_2)-d_2\beta e^{4y_1-4y_2}},\\
 g_2(t)&=\frac{d_1(d_1-1)(\frac{d_1T_1'}{2(1-t)}+\frac{d_2T_2'c_2e^{2y_1-2y_2}}{2c_1(1-t)})}{d_1(d_1-1-c_1T_1)+d_2 e^{2y_1-2y_2}(\alpha-c_2 T_2)-d_2\beta e^{4y_1-4y_2}}.\\
\end{aligned} 
\end{equation}

Using the smoothness conditions for $T$ along with Lemma \ref{0OSC}, we see that for each $i=0,1,2$, the function $g_i$ can be continuously extended all the way up to $1$, with $g_0(1) = 1$, $g_1(1)=d_1-1$, $g_2(1)=-d_1$. It is then convenient to write this as
\begin{align}\label{BUEFZ}
 z_1'=\frac{z_1}{(1-t)}\left((z_1-1)+h_0(t)+d_1z_1(z_1-1)+h_1(t)z_1+h_2(t)z_1^2\right),
\end{align}
where $h_i$ are continuous functions, with $h_i(1)=0$. From this we can see that $\lim_{t\to 1}z_1(t)=1$, for if not, there exists an $\epsilon' >0$ so that $z_1(t)$ achieves values greater than $1+\epsilon'$ (or lower than $1-\epsilon'$) for certain values of $t$
arbitrarily close to $1$. In the first case, \eqref{BUEFZ} implies that $\lim_{t\to T}z_1(t)=\infty$ for some $T<1$ because of the non-linear $z_1(z_1-1)$ term. 
In the second case, we see that there is a constant $C'''>0$ so that $z_1'(t)\le \frac{-C''' z_1(t)}{1-t}$, so we see that $\lim_{t\to 1}z_1(t)=0$, 
so eventually, $y_1'(t)\ge -\frac{1}{4(1-t)}$. 
By the first equation of \eqref{NEF2S'}, this in turn implies that eventually, $y_1''\ge 0$, in which case we see that $y_1'$ is bounded, which is a contradiction. Therefore, $\lim_{t\to 1}z_1(t)=1$, so by making $\epsilon>0$ smaller if necessary, we see that $y_1'(t)\le -\frac{0.75}{(1-t)}$, so $y_1(t) - y_1(1-\epsilon)\le 0.75\ln(1-t)-0.75\ln(1-\epsilon)$, showing that $\frac{e^{2y_1(t)}}{(1-t)^{1.5}}$ is bounded. 

The boundedness of $\frac{e^{2y_1(t)}}{(1-t)^{1.5}}$ implies that $\lim_{t\to 1}\frac{d_1}{2}e^{-2y_1}T_1'+\frac{d_2c_2T_2'}{2c_1}e^{-2y_2}=-\infty$, and 
we already know that $\lim_{t\to 1}e^{2y_1-4y_2}=0$, so the second
equation of \eqref{NEF2S'} implies that either $\lim_{t\to 1}y_2'(t)=0$, or eventually  $y_2''$ has a sign, so $\lim_{t\to 1}y_2'(t)$ exists and is a negative real number 
since $y_2'$ is bounded.
This implies that $K$ is continuous on $[1-\epsilon,1]$, so we can conclude, using the smoothness conditions for $T$, that the continuous functions $g_i$ (and hence $h_i$) are actually differentiable at $1$.

Therefore, \eqref{BUEFZ} can be written as 
\begin{align}\label{BUEFZ'}
 z_1'(t)=\frac{a(t)(z_1(t)-1)}{1-t}+b(t),
\end{align}
where $a$ and $b$ are both continuous on $[1-\epsilon,1]$, and $a(1)=d_1+1>0$. Proposition \ref{TCs} implies that $z_1\in C^1[1-\epsilon,1]$.
Therefore, we get 
$y_1'(t)=\frac{1}{t-1}+O(1)$ with $O(1)$ continuous on $[1-\epsilon,1]$, so $\frac{1}{f_1(t)^2}=e^{-2y_1(t)}=\frac{C(t)}{(1-t)^2}$, 
where $C(t)$ is a continuously differentiable positive function. This shows that $f_1'$ exists and is continuous on $[1-\epsilon,1]$, with $f_1'(1)<0$. 
Then \eqref{NEF2S'H2} implies that $h^2(t)$ is convergent as $t\to 1$. In fact, by using $y_1'=\frac{f_1'}{f_1}$ and
$e^{y_1}=f_1$ and the boundedness of $\frac{f_2'}{f_2}$, \eqref{NEF2S'H2} implies that $f_1'(1)=-h(1)$. 
\end{proof}

\begin{lemma}\label{1OSC'}
 Let $(h,y_1,y_2)$ be as in Lemma \ref{1OSC}. Then $f_2\in C^1[1-\epsilon,1]$, and $f_2'(1)=0$.
\end{lemma}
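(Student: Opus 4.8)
The plan is to show that $y_2' = f_2'/f_2$ extends continuously to $t=1$ with value $0$; since we already know from Lemma \ref{1OSC} that $f_2 = e^{y_2}$ is continuous and positive near $t=1$, this immediately gives $f_2 \in C^1[1-\epsilon,1]$ with $f_2'(1) = f_2(1)\cdot y_2'(1) = 0$. So the whole task reduces to controlling $y_2'$ near $t=1$.

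First I would record what is already available. From Lemma \ref{1OSC} we have $z_1 \to 1$, hence $y_1'(t) = \frac{1}{t-1} + O(1)$ with the $O(1)$ term continuous, and $\frac{e^{2y_1(t)}}{(1-t)^{1.5}}$ bounded; in fact $e^{-2y_1(t)} = C(t)(1-t)^{-2}$ with $C$ continuously differentiable and positive. Also $h^2$ converges as $t\to 1$ and $h(1) = -f_1'(1) > 0$. In the proof of Lemma \ref{1OSC} it was already observed that the second equation of \eqref{NEF2S'} forces either $\lim_{t\to 1} y_2'(t) = 0$ or $\lim_{t\to 1} y_2'(t)$ exists as a negative real number. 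So the crux is to \emph{rule out} the second alternative.

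The key step is therefore: suppose for contradiction that $y_2'(t) \to L < 0$. Then I would plug the asymptotics into the \emph{third} equation of \eqref{NEF2S'}, $-\frac{h'}{h^3} = d_1\left(\frac{T_1'}{2}e^{-2y_1} - \frac{y_1'}{h^2}\right) + d_2\left(\frac{c_2 T_2'}{2c_1}e^{-2y_2} - \frac{y_2'}{h^2}\right)$, and look at the dominant behaviour as $t\to 1$. Since $T_1'(1) = 0$ with $T_1''(1) > 0$ we have $T_1'(t) \sim -T_1''(1)(1-t)$, while $e^{-2y_1(t)} \sim C(1)(1-t)^{-2}$, so the term $\frac{d_1}{2}T_1' e^{-2y_1}$ blows up like a negative constant times $(1-t)^{-1}$; meanwhile $-\frac{d_1 y_1'}{h^2} \sim -\frac{d_1}{h(1)^2}\cdot\frac{1}{t-1} = \frac{d_1}{h(1)^2(1-t)}$, a \emph{positive} constant times $(1-t)^{-1}$. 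The remaining terms ($T_2'e^{-2y_2}$, $y_2'/h^2$) stay bounded because $T_2'(1)=0$, $e^{-2y_2}$ and $h^2$ converge, and $y_2'$ is bounded. So the right-hand side behaves like $\frac{A}{1-t}$ for an explicit constant $A$, and if $A \neq 0$ this forces $h'/h^3 \sim \mp\frac{A}{1-t}$; integrating $(1/h^2)' = 2h'/h^3$ would then make $1/h^2$ blow up logarithmically to $\pm\infty$, contradicting the fact (from Lemma \ref{1OSC}) that $h^2$ converges to the positive finite value $h(1)^2$. The only escape is $A = 0$, but $A$ is a sum of a genuinely negative piece (from $\frac{d_1}{2}T_1'e^{-2y_1}$, using $T_1''(1)>0$, $C(1)>0$) and genuinely positive pieces, and one checks these do not cancel generically — more carefully, I would argue that whatever the value of $A$, a contradiction with the convergence of $h^2$ arises unless the coefficient of $(1-t)^{-1}$ vanishes, and then redo the expansion to the next order, or — cleaner — observe that the \emph{sign} of the leading term is forced: since $C(1) = \lim (1-t)^2 e^{-2y_1}$ and $f_1'(1) = -h(1)$ pin down $C(1) = h(1)^2$ (because $f_1(t) \approx h(1)(1-t)$ near $t=1$), the two $(1-t)^{-1}$ contributions from the $d_1$-terms become $-\frac{d_1 T_1''(1)}{2}\cdot\frac{1}{1-t} + \frac{d_1}{h(1)^2}\cdot\frac{1}{h(1)^2}\cdots$ — wait, I need to be careful with $h^2$ vs $C$; in any case the cleanest route is: the third equation near $t=1$ reads $-h'/h^3 = \frac{d_1}{1-t}\left(-\tfrac{T_1''(1)}{2}\,(1-t)\,e^{-2y_1} \cdot\tfrac{1}{1-t}\text{-normalized} + \tfrac{1}{h(1)^2}\right) + (\text{bounded})$, and since $f_1 \sim h(1)(1-t)$ gives $(1-t)e^{-2y_1}\to \tfrac{1}{h(1)^2}\cdot\tfrac{1}{1-t}$... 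I would instead simply substitute the known expression $e^{-2y_1} = C(t)(1-t)^{-2}$ and $y_1' = \frac{1}{t-1} + O(1)$ directly, giving $-h'/h^3 = \frac{1}{1-t}\left(-\tfrac{d_1}{2}T_1''(1)C(1) + \tfrac{d_1}{h(1)^2}\right) + O(1) + d_2\left(\tfrac{c_2 T_2'}{2c_1}e^{-2y_2} - \tfrac{y_2'}{h^2}\right)$, and conclude that unless $-\tfrac{d_1}{2}T_1''(1)C(1) + \tfrac{d_1}{h(1)^2} = 0$ the convergence of $h^2$ is violated. But the compatibility with $f_1'(1) = -h(1)$ — established in Lemma \ref{1OSC} precisely via \eqref{NEF2S'H2} — is exactly the statement that this coefficient vanishes, i.e. $C(1) = \tfrac{2}{T_1''(1)h(1)^2}$...

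Let me restate the last step more honestly: the vanishing of the $(1-t)^{-1}$ coefficient in the third equation is automatic (it is forced by, and consistent with, Lemma \ref{1OSC}); so the third equation gives no contradiction directly, and I must instead feed the asymptotics back into the \emph{second} equation, which was the one that produced the dichotomy "$y_2'(1) = 0$ or $y_2'(1) = L < 0$" in the first place — and there the contradiction is immediate: if $y_2'(1) = L < 0$ then, as noted in the proof of Lemma \ref{1OSC}, $y_2''$ eventually has a sign, and since $\frac{e^{2y_1}}{(1-t)^{1.5}}$ bounded makes the bracket $\left(\tfrac{d_1}{2}T_1'e^{-2y_1} + \tfrac{d_2c_2}{2c_1}T_2'e^{-2y_2}\right)$ tend to $-\infty$ while $-2\beta e^{2y_1-4y_2}\to 0$ and $(\alpha - c_2 T_2)e^{-2y_2}$ converges, the second equation of \eqref{NEF2S'} reads $y_2'' = h^2\big((\alpha-c_2T_2)e^{-2y_2} - y_2'\cdot(\text{something}\to-\infty) + o(1)\big)$; with $y_2' \to L < 0$ this is $h^2\big(O(1) + (\text{positive}\to +\infty)\big) \to +\infty$, so $y_2'' \to +\infty$, whence $y_2'$ is increasing to $+\infty$ — contradicting $y_2' \le 0$. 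Hence $\lim_{t\to 1} y_2'(t) = 0$, which completes the proof.

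\medskip

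\noindent\emph{Expected main obstacle.} The only delicate point is getting the sign and the blow-up rate of the factor $\tfrac{d_1}{2}T_1'e^{-2y_1} + \tfrac{d_2c_2}{2c_1}T_2'e^{-2y_2}$ right: one needs $T_1'(t) < 0$ for $t$ near $1$ (true, from $T_1''(1) = 2T_0(1)^2 > 0$ and $T_1'(1)=0$) together with the sharp lower bound $y_1'(t) \le -\tfrac{0.75}{1-t}$ from Lemma \ref{1OSC} — equivalently the boundedness of $\tfrac{e^{2y_1}}{(1-t)^{1.5}}$ — to guarantee this product genuinely tends to $-\infty$ rather than merely staying bounded; the $T_2'e^{-2y_2}$ piece is harmless since $T_2'(1)=0$ and $e^{-2y_2}$ converges. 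Once that sign is pinned down, the rest is the short sign-of-$y_2''$ argument above.
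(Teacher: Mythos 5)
Your final argument is, in substance, the paper's own proof: after Lemma \ref{1OSC} the bracket $\tfrac{d_1}{2}T_1'e^{-2y_1}+\tfrac{d_2c_2}{2c_1}T_2'e^{-2y_2}$ tends to $-\infty$, and feeding this into the second equation of \eqref{NEF2S'} rules out the alternative $y_2'\to L<0$ left open by Lemma \ref{1OSC}, giving $y_2'(1)=0$ and hence $f_2\in C^1[1-\epsilon,1]$ with $f_2'(1)=0$. Your long excursion through the third equation, which you yourself retract, is indeed a dead end (the vanishing of the $(1-t)^{-1}$ coefficient there is automatic and yields no information), so the proof really rests only on the last paragraph.

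That last step, however, contains a sign error. If $y_2'\to L<0$, then $-y_2'\to|L|>0$ while the bracket tends to $-\infty$, so the term $-y_2'\bigl(\tfrac{d_1}{2}T_1'e^{-2y_1}+\tfrac{d_2c_2}{2c_1}T_2'e^{-2y_2}\bigr)$ tends to $-\infty$, not $+\infty$; hence $y_2''\to-\infty$, not $+\infty$, and your stated contradiction with $y_2'\le 0$ does not arise. The correct contradiction is with the boundedness of $y_2'$ (equivalently, with the finiteness of the limit $L$): since $h^2\to h(1)^2>0$ and the bracket blows up like a negative constant times $(1-t)^{-1}$, integrating the inequality for $y_2''$ forces $y_2'\to-\infty$, which is exactly how the paper concludes. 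The hypothesis that $|y_2'|$ is bounded near $t=1$ is available from Theorem \ref{LS:R} and is carried through Lemmas \ref{0OSC}--\ref{1OSC}, so the repair is immediate, but as written the sentence ``so $y_2''\to+\infty$, whence $y_2'$ is increasing to $+\infty$'' is false and should be replaced by the blow-down argument just described.
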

\begin{proof}
We now know that
$\lim_{t\to 1}e^{2y_1-4y_2}=0$, $h$ is a continuous positive function, and $e^{-2y_1}T_1'$ blows up like $\frac{1}{t-1}$. 
Therefore, the function $y_2$ must satisfy $\lim_{t\to 1}y_2'(t)=0$ 
because otherwise, $y_2'(t)$ stays away from $0$ for values of $t$ arbitrarily close to $1$ so second equation of \eqref{NEF2S'} implies that $y_2'$ is not bounded, which is a contradiction. Then $y_2\in C^1[1-\epsilon,1]$ and $y_2'(1)=0$, so we also have $f_2\in C^1[1-\epsilon,1]$ with $f_2'(1) = 0$.  
\end{proof}

\begin{lemma}\label{SOSC}
 Let  $(h,y_1,y_2)$ be as in Lemma \ref{1OSC'}. Then $f_1,f_2\in C^2[1-\epsilon,1]$, $h\in C^1[1-\epsilon,1]$, $h'(1) = 0$ and $f_1''(1)=0$.
\end{lemma}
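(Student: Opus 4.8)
The plan is to bootstrap the regularity one more order, starting from the conclusions of Lemma \ref{1OSC'}: at this point we know $f_1\in C^1$ with $f_1(1)=0$, $f_1'(1)=-h(1)<0$; $f_2\in C^1$ with $f_2(1)>0$, $f_2'(1)=0$; $h\in C^0$ with $h(1)>0$; and we know the finer asymptotics $y_1'(t)=\frac{1}{t-1}+O(1)$ with the $O(1)$ term continuously differentiable, $\lim_{t\to 1}y_2'(t)=0$, and $e^{2y_1-4y_2}\to 0$. First I would upgrade $h$: the third equation of \eqref{NEF2S'} reads $-h'/h^3 = d_1(\frac{T_1'}{2}e^{-2y_1}-\frac{y_1'}{h^2})+d_2(\frac{c_2T_2'}{2c_1}e^{-2y_2}-\frac{y_2'}{h^2})$. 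Using $e^{-2y_1}=C(t)/(1-t)^2$ with $C$ continuously differentiable, $T_1'(1)=T_1''(1)=0$ so $T_1'(t)=O((1-t)^2)$ hence $T_1'e^{-2y_1}$ is continuous up to $t=1$; similarly $y_1'/h^2 = (\frac{1}{t-1}+O(1))/h^2$ — this is the dangerous term, since $h^2$ is merely continuous and $\frac{1}{t-1}$ blows up. The resolution is the same cancellation that appeared in Lemma \ref{1OSC}: writing $y_1'=f_1'/f_1$ and using $f_1(1)=0$, $f_1'(1)=-h(1)$, one checks that the singular parts of $d_1 T_1' e^{-2y_1}/2$ (which is actually $O(1)$) play no role, and that $-h'/h^3$ is in fact bounded, even continuous, once we feed in $e^{-2y_1}\sim C(1)/(1-t)^2$ and the continuity of $C$. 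Concretely, since $-h'/h^3=(\frac{1}{2h^2})'/\text{(stuff)}$... more directly: $(\frac{1}{h^2})' = 2h'/h^3 \cdot(-1)$... so $-h'/h^3 = \frac12 (1/h^2)'$, meaning $1/h^2$ has a continuous derivative, hence $1/h^2\in C^1$, hence $h\in C^1[1-\epsilon,1]$.

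Next I would compute $h'(1)$. Having $h\in C^1$, evaluate the third equation of \eqref{NEF2S'} in the limit $t\to 1$. The terms $\frac{d_1}{2}T_1'e^{-2y_1}$ and $\frac{d_2c_2}{2c_1}T_2'e^{-2y_2}$ tend to $0$ (first because $T_1'=O((1-t)^2)$ and $e^{-2y_1}=O((1-t)^{-2})$, but with the precise constant one sees the product $\to 0$ using $T_1''(1)=0$ hence $T_1'=o((1-t))$ is not quite enough — one needs $T_1'(t)/(1-t)^2\to \frac12 T_1'''(1)\cdot$const which is finite, so $T_1'e^{-2y_1}\to$ a finite limit; however combined with the structure this finite limit must vanish to match $h'(1)=0$; alternatively just note $T_2'e^{-2y_2}\to 0$ cleanly). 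The key remaining terms are $-d_1 y_1'/h^2 - d_2 y_2'/h^2$. Since $y_2'(1)=0$ the second is $0$. For the first, $y_1'/h^2 = (f_1'/f_1)/h^2$; as $t\to 1$, $f_1'\to -h(1)$, $h^2\to h(1)^2$, and $f_1\to 0$, so $y_1'/h^2\to -\infty$ unless... wait, this shows $-h'/h^3\to +\infty$, contradicting $h\in C^1$. The resolution must again be a cancellation with the $T_1'e^{-2y_1}$ term: in fact $\frac{d_1}{2}T_1'e^{-2y_1} - d_1 y_1'/h^2$ must be bounded, and its limit gives $-h'(1)/h(1)^3$. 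So the hard computational core is: show $\frac{T_1'}{2}e^{-2y_1} - \frac{y_1'}{h^2}$ converges to $0$ as $t\to 1$, which will give $h'(1)=0$ after also using $y_2'(1)=0$. I would do this by plugging the asymptotic expansions $y_1'=\frac{1}{t-1}+\phi(t)$ ($\phi\in C^1$), $e^{-2y_1}=C(t)(1-t)^{-2}$, $h^2=h^2(t)$ (now $C^1$, with $h^2(1)=f_1'(1)^2=h(1)^2$) into \eqref{NEF2S'H2} or directly into the relation $f_1'(1)=-h(1)$ differentiated, and matching the $(1-t)^{-1}$ coefficients.

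Then $f_1''(1)=0$: with $f_1\in C^1$, $f_1'(1)=-h(1)$, and now $h\in C^1$ with $h'(1)=0$, I would return to the second equation of \eqref{NEF2S'}, $y_1''=h^2((d_1-1-c_1T_1)e^{-2y_1} - y_1'(\frac{d_1}{2}T_1'e^{-2y_1}+\frac{d_2c_2}{2c_1}T_2'e^{-2y_2})+\beta\frac{d_2}{d_1}e^{2y_1-4y_2})$, or better use $f_1''=f_1(y_1''+(y_1')^2)$. Since $y_1''+(y_1')^2$ blows up like $(1-t)^{-2}$ (from the leading $(d_1-1)e^{-2y_1}$ term balanced against $(y_1')^2\sim(1-t)^{-2}$), and $f_1\sim$ const$\cdot(1-t)$, the product $f_1''$ could have a finite nonzero limit; I'd compute that limit explicitly from the leading asymptotics — one expects the $(1-t)^{-2}$ singularities in $y_1''$ and $(y_1')^2$ to combine so that $y_1''+(y_1')^2 = O((1-t)^{-1})$, making $f_1''\to 0$. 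This is the standard "smooth cap" cancellation and should fall out of the expansion $f_1(t)=h(1)(1-t)+O((1-t)^2)$, $f_1''(t) = O(1)$ with the $O(1)$ forced to vanish at $t=1$ by matching. Finally $f_2\in C^2$ follows from writing $f_2''=f_2(y_2''+(y_2')^2)$ and reading off from the now-regular first equation of \eqref{NEF2S'} (third of the displayed system) for $y_2''$ that $y_2''$ extends continuously to $t=1$ (all coefficients $T_2'e^{-2y_2}$, $(\alpha-c_2T_2)e^{-2y_2}$, $T_1'e^{-2y_1}$, $e^{2y_1-4y_2}$ are now continuous at $t=1$), and $(y_2')^2\to 0$, so $f_2''$ is continuous; and $f_1\in C^2$ similarly once $f_1''(1)$ is identified. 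The main obstacle is bookkeeping the cancellations in the $y_1$ equation near the singular orbit — making sure that the apparent $(1-t)^{-1}$ and $(1-t)^{-2}$ blow-ups in $-h'/h^3$ and in $y_1''+(y_1')^2$ genuinely cancel — which requires carrying the asymptotic expansion of $y_1'$ (equivalently $z_1=1+$ higher order, from \eqref{BUEFZ'}) to one more order than in Lemma \ref{1OSC}, using the now-established differentiability of the coefficient functions $a,b$ there and possibly a second application of Proposition \ref{TCs}-type reasoning.
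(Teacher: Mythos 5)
There is a fatal error at the very start of your argument: you repeatedly assert that $T_1''(1)=0$, concluding that $T_1'(t)=O((1-t)^2)$ and hence that $T_1'e^{-2y_1}$ is continuous up to $t=1$. This is false. The smoothness conditions displayed after \eqref{SC} give $T_1''(1)=2\,T_0(1)^2>0$ (with $T_0\equiv 1$ this is $T_1''(1)=2$). So $T_1'(t)=O(1-t)$, and since $e^{-2y_1}\sim C(1)/(1-t)^2$, the term $T_1'e^{-2y_1}$ blows up like $1/(1-t)$ — it is not $O(1)$ as you claim. The whole mechanism that makes the third equation of \eqref{NEF2S'} yield a finite $h'/h^3$ is the cancellation between the two $1/(1-t)$ singularities of $T_1'e^{-2y_1}/2$ and $y_1'/h^2$, and the coefficients match precisely because $T_1''(1)=2$ and $C(1)=1/h(1)^2$. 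Your text never establishes this cancellation; instead you first say $T_1'e^{-2y_1}$ is benign, then discover a contradiction ("$-h'/h^3\to+\infty$, contradicting $h\in C^1$"), and then assert, without computation, that a cancellation with the very term you earlier dismissed must rescue the argument. That is not a proof, and your intermediate step "$-h'/h^3$ is in fact bounded, even continuous, ... hence $1/h^2\in C^1$" is circular because it assumes the regularity it is supposed to deliver.

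The paper's route is different and avoids this swamp. It starts from the output of Lemma \ref{1OSC}, namely $f_1(t)=(1-t)K(t)$ with $K$ positive and $C^1$, and computes directly that $f_1''(1)=-2K'(1)$ exists. It then feeds the resulting Taylor expansions $\frac{1}{f_1(t)}=\frac{1}{-(1-t)f_1'(1)+(1-t)^2g(t)}$ and $\frac{T_1'(t)}{2f_1(t)}=\frac{1}{f_1'(1)}+(1-t)\tilde g(t)$ into \eqref{NEF2S'H2} to show $a(t)=h^2/(f_1')^2$ is continuous with $a'(1)$ existing, rewrites the second equation of \eqref{NEF2S} in the singular-ODE form of Proposition \ref{TCs}, and applies that proposition to get $f_1'\in C^1$, then $f_2'\in C^1$ and $h\in C^1$. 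Finally $h(1)h'(1)=f_1'(1)f_1''(1)$ and the identity $h^2=(f_1')^2+k(t)(1-t)$ with $k(1)=0$, combined with the second equation of \eqref{NEF2S}, force $f_1''(1)=0$ and hence $h'(1)=0$. You gesture at Proposition \ref{TCs} only in the last line of your proposal; in the paper it is the engine of the whole bootstrap. If you want to salvage your approach via the third equation, you must (i) use the correct $T_1''(1)=2T_0(1)^2$, (ii) carry the expansion of $y_1'$ (equivalently of $z_1$ via \eqref{BUEFZ'}) to the order where you can extract the coefficient of $1/(1-t)$ in both $T_1'e^{-2y_1}/2$ and $y_1'/h^2$ and verify they cancel, and (iii) not presuppose $h\in C^1$ while doing so. The paper's method sidesteps all three issues by working with $f_1=(1-t)K(t)$ and Proposition \ref{TCs} rather than with the raw third equation.
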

\begin{proof}
By the proof of Lemma \ref{1OSC} we know that $f_1(t)=(1-t)K(t)$ where $K$ is a positive and continuously differentiable function, so in particular, 
$f_1'(t)=-K(t)+(1-t)K'(t)$. 
Next,
 \begin{align}
  \lim_{t\rightarrow 1} \frac{f_1'(t) - f_1'(1)}{t-1} = \lim_{t\rightarrow 1} \frac{K'(t)(1-t) - K(t) - (-K(1))}{t-1} = -2K'(1)
 \end{align}
 Therefore $f_1''(1)$ exists and $f_1''(1) = -2K'(1)$. The existence of $f_1''(1)$ implies, by Taylor's Theorem, that 
 $\frac{1}{f_1(t)}=\frac{1}{-(1-t)f_1'(1) + (1-t)^2g(t)}$ for some $g\in C^0[1-\epsilon,1]$. The fact that 
 $T_1$ satisfies smoothness conditions then implies that
 $\frac{T_1'(t)}{2f_1(t)}=\frac{1}{f_1'(1)}+(1-t)\tilde{g}(t)$, where $\tilde{g}\in C^0[1-\epsilon,1]$. 

Now from \eqref{NEF2S'H2} and the existence of $f_1''(1)$, we see that 
$\frac{h(t)^2}{(f_1'(t))^2}=a(t)$ is continuous and $a'(1)$ exists, so the second equation of \eqref{NEF2S} can be written as
\begin{align}
f_1f_1''&=(f_1')^2\left(1+\frac{h^2(d_1-1)}{(f_1')^2}-\frac{h^2d_1T_1'}{2f_1'f_1}\right)-f_1'h^2\frac{f_1d_2c_2T_2'}{2c_1f_2^2}-h^2c_1T_1 + h^2\beta\frac{d_2}{d_1}\frac{f_1^4}{f_2^4}\\
&=(f_1')^2\frac{a(t)}{f_1'(1)}\left(\frac{f_1'(1)}{a(t)}+(d_1-1)f_1'(1)-f_1'd_1\right)+ b(t)(1-t),
\end{align}
where $b\in C^0[1-\epsilon,1]$. Since $a'(1)$ exists, this equation has the required form to apply Proposition \ref{TCs}, so 
we find that $f_1'\in C^1[1-\epsilon,1]$. 
We can also use Proposition \ref{TCs} on the second equation of \eqref{NEF2S'} to show that $y_2'$, and therefore $f_2'$, are both in $ C^1[1-\epsilon,1]$. The expression for $h^2$ then shows that $h\in C^1[1-\epsilon,1]$, and 
$h(1)h'(1)=f_1'(1)f_1''(1)$, so $h'(1)=-f_1''(1)$. Then the second equation of \eqref{NEF2S} coupled with the fact that 
$h^2=(f_1')^2+k(t)(1-t)$ for some $k\in C^0[1-\epsilon,1]$ with $k(1)=0$ implies that $f_1''(1)=0$, so the same is true for $h'(1)$.
\end{proof}

\begin{proof}[Proof of Theorem \ref{LS:R}]

 We know that $f_i\in C^0[1-\epsilon,1]$, with $f_1(1)=0$ and $f_2(1)>0$ by Lemma \ref{0OSC}. 
 Then Lemmas \ref{1OSC} and \ref{1OSC'} imply that $f_i\in C^1[1-\epsilon,1]$ and $h\in C^0[1-\epsilon,1]$, with 
 $f_2'(1)=0$ and $f_1'(1)=-h(1)$. Finally, Lemma \ref{SOSC} implies that $f_i\in C^2[1-\epsilon,1]$ and $h\in C^1[1-\epsilon,1]$ with $
  f_1''(1)=0$ as required. 
 \end{proof}
\begin{proof}[Proof of Theorem \ref{RicciRegularity}, with \eqref{WeakRegularity} replaced by \eqref{StrongRegularity}]
By Proposition \ref{EOIE} and the discussion following it, we can assume that the equations \eqref{NEF2S'} and \eqref{NEF2S'H2} are satisfied. 
Since \eqref{StrongRegularity} holds, Theorem \ref{LS:R} implies that \eqref{SCST} holds. 
Then Theorem \ref{C1RG} implies that $\g$ is $C^1$ on all of $M$.

Now, the metric $\g$ is $C^\infty$ on the principal part of the manifold. Therefore, 
since $T>0$ is positive and smooth, Theorem \ref{DTG} in Appendix \ref{DKRE} (which is similar to Theorem 4.5 (a) of \cite{DeTurckKazdan}) 
implies that the Riemannian metric $\g$ is automatically smooth on the \textit{entirety} 
of $M$ since it is $C^1$ on all of $M$.
\end{proof}

\section{Proof of Theorem \ref{RicciExistence} in the case that $T_2$ is constant}\label{T2CC}
We now turn to the task of proving Theorem \ref{RicciExistence}. Since we assume that 
$[\mathfrak{n}_1,\mathfrak{n}_2]=0$, we know that $\beta=0$. We first prove Theorem \ref{RicciExistence} in the case $T_2>0$ is constant. We do this for a number of reasons. 
Firstly, in this case, we are actually able to show that the choice of $c_2$ is unique, hence motivating the need to allow such additional scaling in the first place. Secondly, 
the proof simplifies dramatically, but not to the extent that it bears no resemblance to the more general proof, so we can treat this step as a warm up. 
In fact, the proof in this case is essentially identical to the proof of Theorem 6.3 in \cite{Hamilton84}, so we are afforded the opportunity to review 
(and fill in some 
omitted computations) in Hamilton's work.

In the case that $T_2$ is constant, the equations of \eqref{PRC2S''} simplify to those studied by Hamilton in \cite{Hamilton84} because of the following proposition. 
\begin{prop}\label{MPF2}
Let $\beta = 0$. Suppose that $T_2>0$ is constant and $(h,f_1,f_2, c_1, c_2)$ is a solution of \eqref{PRC2S''} on $[\frac{1}{2},1]$ satisfying \eqref{SC} and \eqref{EC}, and $h, f_2>0$. Then the constant $c_2$ is determined as $c_2 = \frac{\alpha}{T_2}$, and $f_2>0$ is constant. 
\end{prop}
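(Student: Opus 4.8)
The plan is to exploit the third equation of \eqref{PRC2S''} together with the boundary conditions on $f_2$, showing that $f_2$ cannot be non-constant without violating either positivity or the smoothness conditions. First I would set $\beta = 0$ and specialize the third equation of \eqref{PRC2S''} to the case where $T_2$ is a positive constant; it becomes
\begin{align*}
 \frac{f_2''}{h^2f_2}+(d_2-1)\frac{(f_2')^2}{h^2f_2^2}+d_1\frac{f_1'f_2'}{h^2f_1f_2}-\frac{h'f_2'}{h^3 f_2}-\frac{\alpha}{f_2^2}&=-\frac{c_2T_2}{f_2^2}.
\end{align*}
Multiplying through by $h^2 f_2^2$ and rearranging, the left side can be recognized (up to the $-h^2(\alpha - c_2 T_2)$ term) as a total derivative of a suitable power of $f_2$ weighted by $f_1^{d_1}/h$; more precisely, I would check that $f_2'' f_2 + (d_2-1)(f_2')^2 + d_1 \frac{f_1'}{f_1} f_2' f_2 - \frac{h'}{h} f_2' f_2$ is proportional to $\frac{1}{f_1^{d_1} h^{-1}}\bigl(f_1^{d_1} h^{-1} f_2^{d_2} (f_2^{1-d_2}(f_2^{d_2})')/d_2 \cdots\bigr)'$ — i.e., that there is a first-integral structure. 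The cleanest route is probably to observe that the quantity $W := f_1^{d_1} f_2^{d_2} h^{-1} f_2' f_2^{d_2 - 1}$, or a similar product, has a derivative controlled by $(\alpha - c_2 T_2)$ times a positive factor, turning the ODE into a monotonicity statement for $W$.

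Next I would use the boundary data. By \eqref{SC} and \eqref{EC} we have $f_2'(\tfrac12) = 0$ and $f_2'(1) = 0$, while $f_1(\tfrac12) > 0$, $f_1(1) = 0$, $h > 0$ and $f_2 > 0$ throughout. So the relevant "flux" quantity $W$ vanishes at both endpoints $t = \tfrac12$ and $t = 1$. If $\alpha - c_2 T_2 \neq 0$ then $W$ would be strictly monotone (or at least the integral of a one-signed quantity), which combined with $W(\tfrac12) = W(1) = 0$ forces $f_2' \equiv 0$ on $[\tfrac12, 1]$ — but then going back to the ODE with $f_2$ constant gives $-\alpha/f_2^2 = -c_2 T_2/f_2^2$, i.e. $c_2 = \alpha/T_2$, contradicting $\alpha - c_2 T_2 \neq 0$. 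Hence $\alpha - c_2 T_2 = 0$, i.e. $c_2 = \alpha/T_2$; and with that, the ODE reduces to $f_2'' f_2 + (d_2 - 1)(f_2')^2 + d_1 \frac{f_1'}{f_1} f_2' f_2 - \frac{h'}{h} f_2' f_2 = 0$, a linear homogeneous first-order ODE for $f_2'$ on $(\tfrac12, 1)$, from which $f_2'(\tfrac12) = 0$ gives $f_2' \equiv 0$, so $f_2$ is constant. (One must double-check the argument near $t = 1$ where $f_1 \to 0$, using that $f_1'/f_1 \to -\infty$ but the combination appearing in the first integral stays integrable because of the explicit power of $f_1$ weighting $W$.)

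The main obstacle I expect is making the first-integral/monotonicity argument rigorous at the singular endpoint $t = 1$: the coefficients $f_1'/f_1$ and $h'/h$ in the ODE blow up as $t \to 1$ (since $f_1(1) = 0$ and $h'(1) = 0$ but $f_1'(1) = -h(1) \neq 0$), so one cannot naively integrate $W'$ up to $t = 1$. The fix is to choose the weight in $W$ to be exactly $f_1^{d_1} h^{-1}$ (matching the volume-density structure of the metric), so that $W$ extends continuously to $t = 1$ with value $0$ because $f_1^{d_1} \to 0$ dominates while $f_2' f_2^{d_2 - 1}$ and $h^{-1}$ stay bounded. One should also verify the sign of the factor multiplying $(\alpha - c_2 T_2)$ in $W'$ is constant on $(\tfrac12, 1)$ — it will be a positive multiple of $f_1^{d_1} h f_2^{d_2 - 2}$ or similar — so that $W$ is genuinely monotone rather than merely of one-signed derivative on a subinterval. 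Once the endpoint behavior is handled, the rest is elementary: vanishing of a monotone function at both ends forces it to vanish identically, and the conclusion $c_2 = \alpha/T_2$ with $f_2$ constant follows immediately.
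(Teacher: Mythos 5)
Your proof is correct and follows the same essential strategy as the paper's. The paper writes the third equation of \eqref{PRC2S''} (after multiplying by $h^2 f_2$) as $f_2'' = a(t)f_2' + b(t)$ with $b(t) = h^2(\alpha - c_2 T_2)/f_2$ of constant sign when $T_2$ is constant, and deduces from the Neumann conditions $f_2'(\tfrac12)=f_2'(1)=0$ that $b\equiv 0$; your conserved quantity $W = f_1^{d_1} f_2^{d_2-1} f_2'/h$ is precisely the integrating factor $\exp\bigl(-\int_{1/2}^t a\bigr)$ applied to $f_2'$, and a direct computation gives $W' = f_1^{d_1} f_2^{d_2-2}\,h\,(\alpha - c_2 T_2)$, so the two arguments coincide, with yours making the behavior at the singular endpoint $t=1$ (where $a\to+\infty$, equivalently $f_1\to 0$) completely explicit. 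One small slip: the expression you first propose for $W$, namely $f_1^{d_1} f_2^{d_2} h^{-1} f_2' f_2^{d_2-1}$, carries an extra factor of $f_2^{d_2}$; the correct power of $f_2$ in $W$ is $d_2-1$, which is what matches the form of $W'$ you state later.
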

\begin{proof}
Observe that the third equation of \eqref{PRC2S''} can be written 
\begin{align*}
 f_2''(t)=a(t)f_2'(t)+b(t),
\end{align*}
where $a$ is continuous on $[\frac{1}{2},1)$, $\lim_{t\to 1}a(t) = +\infty$ and $b(t):=\frac{h^2(\alpha-c_2T_2)}{f_2}$ is continuous on $[\frac{1}{2},1]$. 
Since the function $b(t)$ does not change sign on $[\frac{1}{2},1]$ unless it is uniformly zero, the Neumann
 conditions $f_2'(\frac{1}{2})=f_2'(1)=0$ imply that $b(t)$ must be uniformly zero. Then clearly $f_2'(t)=0$, and $\alpha-c_2T_2=0$.  
\end{proof}
Thus if $T_2>0$ is constant, Proposition \ref{MPF2} implies that $f_2$ is constant, and we are left with the first 
two equations of \eqref{PRC2S''}:
\begin{align}\label{PRC2S1S}
\begin{split}
d_1\frac{f_1''}{f_1}-d_1\frac{h'f_1'}{hf_1}&=-c_1\\
 \frac{f_1''}{h^2f_1}+(d_1-1)\frac{(f_1')^2}{h^2f_1^2}-\frac{h'f_1'}{h^3 f_1}-\frac{1}{f_1^2}(d_1-1)&=-\frac{c_1T_1}{f_1^2},
 \end{split}
\end{align}
alongside the smoothness conditions
\begin{align}
 f_1(1)=0,&\qquad h(1)>0,\\
 f_1'(1)=-h(1)<0, &\qquad f_1''(1)=h'(1) = 0,
\end{align} 
as well as the evenness conditions
\begin{align}
 h'(\tfrac{1}{2})=0,\qquad
 f_1'(\tfrac{1}{2})&=0.
\end{align}
\subsection{The solution for large $c_1$}
Following Hamilton, we define the function $l(t)=\frac{-f_1'(t)}{h(t)}$. Then provided \eqref{PRC2S1S}, holds, we have
\begin{align}\label{EFL1S}
 l'(t)=\sqrt{\frac{c_1}{d_1}}\sqrt{c_1T_1(t)+(d_1-1)l^2(t)-(d_1-1)},
\end{align}
and the even and smoothness conditions become $l(\frac{1}{2})=0$, $l(1)=1$.

Now since $T_1>0$ on $[\frac{1}{2},1)$, if $c_1$ is large enough, then we can always solve \eqref{EFL1S} for $l$ with $l(\frac{1}{2})=0$. Indeed, 
the growth of $\sqrt{\frac{c_1}{d_1}}\sqrt{c_1T_1(t)+(d_1-1)l^2(t)-(d_1-1)}$ in $l$ does not exceed linear, and the only other obstruction to existence comes from the square root, but having 
$c_1$ large ensures that 
\begin{align}\label{PCSR}
c_1T_1(t)+(d_1-1)l^2(t)-(d_1-1)>0 \ \text{for} \ t\in [\tfrac{1}{2},1]. 
\end{align}
\subsection{Lowering $c_1$ until we achieve the correct terminal smoothness condition}
Let $\hat{c}_1$ be the infimum 
of all values of $c_1>0$ so that a solution of \eqref{EFL1S} with $l(\frac{1}{2})=0$ exists on $[\frac{1}{2},1]$, 
and \eqref{PCSR} holds. Clearly $\hat{c}_1>0$ because at $t=\frac{1}{2}$, $c_1T_1(t)+(d_1-1)l^2(t)-(d_1-1)=c_1T_1(\frac{1}{2})-(d_1-1)$ which becomes negative for small enough 
$c_1$.

Now by taking a sequence of $c_1$ convergent to $\hat{c}_1$ from above, we obtain a corresponding sequence of solutions $l(t)$, convergent to another solution
$\hat{l}(t)$, except now, there is a point $t^*$ with 
\begin{align*}
 \hat{c}_1T_1(t^*)+(d_1-1)\hat{l}^2(t^*)-(d_1-1)=0.
\end{align*}

If $t^*=1$, then we have $\hat{l}(1)=1$ as required. 
If $t^*\in (\frac{1}{2},1)$, then $\hat{c}_1T_1'(t^*)+2(d_1-1)\hat{l}'(t^*)\hat{l}(t^*)=0$, but since $\hat{c}_1T_1(t^*)+(d_1-1)\hat{l}^2(t^*)-(d_1-1)=0$, we have 
$\hat{l}'(t^*)=0$, so we obtain 
$T_1'(t^*)=0$, which is also a contradiction. 
Finally, if $t^*=\frac{1}{2}$ (this case was neglected by Hamilton), then $\hat{c}_1T_1(\frac{1}{2})-(d_1-1)=0$, so $\hat{c}_1 T_1-(d_1-1)\le 0$ on $[\frac{1}{2},1]$. 
Therefore, $\hat{l}$, which is a continuously differentiable and non-negative function, satisfies $\hat{l}'(t)\le \sqrt{\frac{\hat{c}_1(d_1-1)}{d_1}}\hat{l}(t)$, so $\hat{l}(t)=0$, which is clearly a contradiction. 

We therefore find a solution of $\hat{l}$ to \eqref{EFL1S} with the boundary conditions $\hat{l}(\frac{1}{2})=0$, $\hat{l}(1)=1$. 
We can then use the function $\hat{l}$ to find the functions $h$ and $f_1$ (and $f_2$) solving \eqref{PRC2S''}. Since $\hat{l}'(1) = 0$, 
this solution satisfies the hypotheses of Theorem \ref{RicciRegularity} (with \eqref{WeakRegularity} replaced by \eqref{StrongRegularity}), 
so we conclude that the smoothness conditions are satisfied.
\section{An Approximating Sequence of Solutions}\label{AASOS}
We now turn to the task of proving Theorem \ref{RicciExistence} in the case that $T_2$ is not constant. 
The proof consists of two main steps. The first step involves producing a sequence of solutions to the prescribed Ricci curvature equation that `almost' satisfies the hypotheses of Theorem \ref{RicciRegularity}. The second step involves taking a limit of this sequence, and showing that the limiting solution
does indeed satisfy 
the hypotheses of Theorem \ref{RicciRegularity} (with \eqref{StrongRegularity} instead of \eqref{WeakRegularity}). 
In this section, we prove Theorem \ref{TESS} below, which accomplishes the first step.
\begin{theorem}\label{TESS}
 Fix $a,S>0$. There exists a sequence of smooth functions $(h^{(k)}>0,y_1^{(k)},y_2^{(k)})$ on $[\frac{1}{2},1]$, and a corresponding sequence of bounded constants 
 $c_1^{(k)}>\frac{d_1-1}{T_1(\frac{1}{2})}$, 
 $c_2^{(k)}\in (\frac{\alpha}{\sup T_2},\frac{\alpha}{\inf T_2})$ solving \eqref{NEF2S'} (with $\beta = 0$) such that:
 \begin{itemize}
  \item $(y_2^{(k)})'(1)=0$ (smoothness condition for $f_2$),
  \item $(y_i^{(k)})'\le 0$,
  \item $(h^{(k)})'(\frac{1}{2})=(y_1^{(k)})'(\frac{1}{2})=(y_2^{(k)})'(\frac{1}{2})=0$ (evenness around $\frac{1}{2}$),
  \item $e^{-2y^{(k)}_1}(\frac{1}{2})=a$, $\sup_{t\in [\frac{1}{2},1]} e^{-2y^{(k)}_2(t)}+(y_2^{(k)})'(t)=S$,
  \item $(h^{(k)},y_1^{(k)},y_2^{(k)})$ is unbounded in $C^0[\frac{1}{2},1]\times C^1[\frac{1}{2},1]\times C^1[\frac{1}{2},1]$. 
 \end{itemize}
\end{theorem}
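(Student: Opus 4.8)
The plan is to realise each member of the sequence as the solution of a two-point boundary value problem for \eqref{NEF2S'} on $[\tfrac12,1]$ in which the degeneracy at the singular orbit is \emph{not} imposed, and to obtain these solutions via Leray--Schauder degree theory. Introduce a large ``collapse parameter'' $m$ (destined to become the index $k$) and seek a triple of positive functions $(h,y_1,y_2)$ on $[\tfrac12,1]$ together with constants $c_1,c_2$ solving the first three equations of \eqref{NEF2S'} (with $\beta=0$), subject to: the boundary data $y_1(\tfrac12)=-\tfrac12\ln a$ and $y_1'(\tfrac12)=y_2'(\tfrac12)=0$ (together with $T_1'(\tfrac12)=T_2'(\tfrac12)=0$ these make the third equation of \eqref{NEF2S'} give $h'(\tfrac12)=0$ automatically); the fourth equation of \eqref{NEF2S'}; the terminal Neumann condition $y_2'(1)=0$; the normalisation $\sup_{[\tfrac12,1]}(e^{-2y_2}+y_2')=S$; and the collapse normalisation $e^{-2y_1(1)}=m$. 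The remaining scalar freedoms are the initial values $h(\tfrac12),y_2(\tfrac12)$ and the constants $c_1,c_2$ --- four parameters, balanced by the four scalar side conditions just listed, so one expects an isolated solution for each $m$. I would encode the problem as a fixed point $u=\mathcal{T}_m(u)$ of a compact operator on a bounded open subset $\mathcal{U}$ of $(C^1[\tfrac12,1])^3\times\R^2$ on which $h,e^{y_1},e^{y_2}>0$ and $c_1,c_2>0$.

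To compute the degree of $\mathrm{id}-\mathcal{T}_m$ on $\mathcal{U}$ I would homotope $T_2$ to a constant, keeping $T_1$ fixed and preserving the monotonicity hypotheses ($T_i'\le 0$ on $[\tfrac12,1]$, $T_1'<0$ on $(\tfrac12,1)$, $T_1''(\tfrac12)<0$). At the constant end, Proposition~\ref{MPF2} forces $f_2$ to be constant and $c_2=\alpha/T_2$, so the system collapses to the scalar equation \eqref{EFL1S} of Section~\ref{T2CC}: for $c_1>\tfrac{d_1-1}{T_1(\tfrac12)}$ large enough one solves \eqref{EFL1S} with $l(\tfrac12)=0$, and $c_1$ can then be tuned so that $f_1(1)=m^{-1/2}$, producing a solution whose structure is completely understood. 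Thus the degree equals that of an explicit model problem, is nonzero, and is therefore nonzero along the whole homotopy, in particular for the original $T_2$; hence $\mathcal{T}_m$ has a fixed point in $\mathcal{U}$ for every $m$.

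The substantive work, and the step I expect to be the main obstacle, is the family of \emph{a priori} estimates needed to confine every solution of every problem arising in the homotopy (uniformly in $m$ and in the homotopy parameter) to a fixed $\mathcal{U}'\Subset\mathcal{U}$, along which in addition $y_1',y_2'\le 0$; this is exactly what makes the degree well defined and homotopy-invariant, and it is where the monotonicity of $T$ is essential. The points to establish are: (i) the solution extends to all of $[\tfrac12,1]$ with $e^{y_1}>0$ on $[\tfrac12,1)$ and $h$ bounded, i.e.\ degeneration can occur only at $t=1$ --- here the monotonicity of $T_1$ and the requirement $c_1>\tfrac{d_1-1}{T_1(\tfrac12)}$ (which also gives $h(\tfrac12)^2>0$ in the fourth equation and $y_1''(\tfrac12)<0$ in the first) enter, via a comparison/continuity argument generalising the role of $l=-f_1'/h$ in Section~\ref{T2CC}; (ii) the sign conditions $y_1',y_2'\le 0$ hold, i.e.\ no solution has $y_i'(t_0)=0$ for $t_0\in(\tfrac12,1)$ --- a consequence of the monotonicity of \emph{both} $T_1$ and $T_2$ together with the admissible ranges of $c_1,c_2$; (iii) $c_1$ stays in a compact range $(\tfrac{d_1-1}{T_1(\tfrac12)},C_0]$ and $c_2$ in $(\tfrac{\alpha}{\sup T_2},\tfrac{\alpha}{\inf T_2})$, the lower bound on $c_2$ forced by the sign needed in the fourth equation of \eqref{NEF2S'} at $t=\tfrac12$ and the upper bound by the sign of $\alpha-c_2T_2$ near $t=1$ that is compatible with $y_2'(1)=0$ and $y_2'\le 0$; and (iv) uniform control of $|y_2'|$ on $[\tfrac12,1]$. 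I expect the genuinely delicate cases to be ruling out interior degeneration and controlling $y_2'$ near $t=1$, where $e^{-2y_1}T_1'$ is large and negative and the balance in the $y_2$-equation is tight.

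Granting these estimates, Leray--Schauder degree yields, for each $m=k$, a solution $(h^{(k)},y_1^{(k)},y_2^{(k)},c_1^{(k)},c_2^{(k)})$ of \eqref{NEF2S'}, smooth on $[\tfrac12,1]$ by ODE regularity, satisfying all the listed bullet points, with $c_1^{(k)}$ bounded and $c_2^{(k)}\in(\tfrac{\alpha}{\sup T_2},\tfrac{\alpha}{\inf T_2})$. Finally, $e^{-2y_1^{(k)}(1)}=k$ forces $y_1^{(k)}(1)=-\tfrac12\ln k\to-\infty$, so $\|y_1^{(k)}\|_{C^0[\frac12,1]}\to\infty$ and the sequence is unbounded in $C^0[\tfrac12,1]\times C^1[\tfrac12,1]\times C^1[\tfrac12,1]$, as required.
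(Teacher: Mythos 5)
Your framework (recasting the problem as zeroes of a compact perturbation of the identity and invoking Leray--Schauder degree) is the same as the paper's, but the route you take to the unbounded sequence --- prescribing the near-collapse $e^{-2y_1(1)}=m$ as a boundary condition, treating $c_1$ as an unknown, and homotoping $T_2$ to a constant --- has a genuine gap at exactly the point you set aside with ``granting these estimates''. The a priori bounds (i)--(iv) \emph{are} the substance of the theorem: in the paper they occupy Propositions \ref{DILE}, \ref{EFVI} and Lemmas \ref{BVP3'}--\ref{SABLC}, and they are established only for a single sufficiently large value $c_1=(c_1)^*$, because the sign arguments behind them (e.g.\ choosing $c_1$ so large that $c_1T_1\ge E$ on $[\frac{1}{2},1-\epsilon]$ in Proposition \ref{EFVI}) genuinely need $c_1$ large. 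In your formulation $c_1$ is pinned down by the collapse normalisation, and for large $m$ it is forced toward the critical value, precisely the regime in which no uniform $C^1$ control is available --- that is why the paper needs all of Section 6, where only $L^4$ bounds on $h$ and the $(1-t)^{-1}$ bound on $y_1'$ survive. So the estimates you would need, uniformly along your $T_2$-homotopy and at near-critical $c_1$, are not a ``comparison argument generalising $l=-f_1'/h$''; no argument is offered, and the paper's own strategy (keep $c_1$ as the deformation parameter, prove bounds only at one large $c_1^*$, and extract the unbounded sequence by the contradiction in Proposition \ref{CSUBS} at $c_1=\frac{d_1-1}{T_1(\frac{1}{2})}$) is designed specifically to avoid ever proving them. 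There is also an internal inconsistency: you ask for bounds ``uniformly in $m$'' confining all solutions to a fixed $\mathcal{U}'\Subset\mathcal{U}$, yet your solutions satisfy $\|y_1\|_{C^0}\ge\frac{1}{2}\ln m$, so any admissible bound must depend on $m$; and once the bounds are $m$-dependent, the boundedness of $c_1^{(k)}$ as $k\to\infty$ --- part of the statement and needed in Theorem \ref{LOKS} --- is left unproven.

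The degree computation at the constant-$T_2$ end of your homotopy is a second gap. Proposition \ref{MPF2} does reduce that problem to the scalar equation \eqref{EFL1S}, and a shooting/intermediate-value argument in $c_1$ plausibly produces a solution with $f_1(1)=m^{-1/2}$; but existence of a zero does not give a nonzero degree. Without uniqueness or nondegeneracy of the model solutions (neither of which is known for the non-collapsed Dirichlet-type condition $f_1(1)=m^{-1/2}$), the local indices could cancel. The paper avoids this by deforming through the artificial parameters $p_1,p_2,p_3$ all the way to an affine map whose unique zero and linearisation are explicit, so that point (iii) of Theorem \ref{PLSD} applies with the single nontrivial eigenvalue $1+\int_{1/2}^{1}T_2>1$. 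To salvage your route you would need either a nondegeneracy analysis of the constant-$T_2$ boundary value problem, or a further homotopy from it to a genuinely trivial problem together with the attendant a priori bounds; neither is supplied.
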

\subsection{Re-writing The Equations}\label{Rewriting}
For producing the required sequence of solutions, it will be useful to work with the functions $v_i=\ln(h)-y_i$. Note that if 
$(h,y_1,y_2)$ solves \eqref{NEF2S'} and is bounded in 
$C^0[\frac{1}{2},1]\times C^1[\frac{1}{2},1]\times C^1[\frac{1}{2},1]$, then $\ln(h)$ is also bounded in $C^1[\frac{1}{2},1]$, so $v_i$ is also 
bounded in $C^1[\frac{1}{2},1]$. So for our purposes, it suffices to produce a sequence of solutions to \eqref{NEF2S'} 
so that the corresponding $v_i$ functions are unbounded in 
$C^1[\frac{1}{2},1]$.

Now observe that $he^{-y_i}=e^{v_i}$, so we can differentiate and use \eqref{NEF2S'}:
\begin{equation}
\begin{aligned}
 v_i'&=\frac{h'}{h}-y_i',\\
 &=(d_1y_1'+d_2y_2'-y_i')-\tfrac{d_1}{2}T_1'e^{2v_1}-\tfrac{d_2c_2}{2c_1}T_2'e^{2v_2}.
\end{aligned} 
\end{equation}
Differentiating again we can write second order differential equations for $v_1$ and $v_2$:
\begin{equation}\label{TMEFV}
\begin{aligned}
 v_i''&=(d_1y_1''+d_2y_2''-y_i'')-\tfrac{d_1}{2}T_1''e^{2v_1}-d_1T_1'e^{2v_1}v_1'-\tfrac{d_2c_2}{2c_1}T_2''e^{2v_2}-\tfrac{d_2c_2}{c_1}T_2'e^{2v_2}v_2'\\
 &=(d_1-\delta_1^i)(d_1-1-c_1T_1)e^{2v_1}+(d_2-\delta_2^i)(\alpha-c_2T_2)e^{2v_2}-\tfrac{d_1}{2}T_1''e^{2v_1}-\tfrac{d_2c_2}{2c_1}T_2''e^{2v_2}\\
 &-d_1T_1'e^{2v_1}v_1'-\tfrac{d_2c_2}{c_1}T_2'e^{2v_2}v_2'\\
 &=(d_1-\delta_1^i)(d_1-1-c_1T_1)e^{2v_1} +(d_2-\delta_2^i)(\alpha-c_2T_2)e^{2v_2}-\tfrac{d_1}{2}T_1''e^{2v_1}-\tfrac{d_2c_2}{2c_1}T_2''e^{2v_2}\\
 &-(v_i'+\tfrac{d_1}{2}T_1'e^{2v_1}+\tfrac{d_2c_2}{2c_1}T_2'e^{2v_2})\left(\tfrac{d_1}{2}T_1'e^{2v_1}+
 \tfrac{d_2c_2 }{2c_1}T_2'e^{2v_2}\right)-d_1T_1'e^{2v_1}v_1'-\tfrac{d_2c_2}{c_1}T_2'e^{2v_2}v_2'.
\end{aligned} 
\end{equation}
We write these equations as
\begin{equation}\label{SOEFV}
 v_i''(t)=F_i(t,c_1,c_2,e^{2v_1}(t),e^{2v_2}(t),v_1'(t),v_2'(t)).
\end{equation}
Note that the first and second equations of \eqref{NEF2S'} can be written as
\begin{align}
\begin{split}
\label{NEF2S'V}
y_1''&=\left((d_1-1-c_1T_1)e^{2v_1}-y_1'\left(\tfrac{d_1}{2}T_1'e^{2v_1}+\tfrac{d_2c_2}{2c_1}T_2'e^{2v_2}\right)\right),\\
y_2''&=\left((\alpha-c_2T_2)e^{2v_2}-y_2'\left(\tfrac{d_1}{2}T_1'e^{2v_1}+\tfrac{d_2c_2}{2c_1}T_2'e^{2v_2}\right)\right).\\
\end{split}
\end{align}
If we happen to have a solution $(v_1,v_2)$ of \eqref{SOEFV}, then \eqref{NEF2S'V} can be solved for $y_i'$ in terms of $v_i$ since we know 
$y_i'(\frac{1}{2})=0$. Similarly, integrating one more time yields $y_i$ if we know $y_i(\frac{1}{2})$. Then $h$ can be found with $h=e^{v_i+y_i}$. Thus, the original functions $y_i$, $h$ can be recovered if we know $v_i$.

We will now express the problem of finding solutions of \eqref{SOEFV} (with appropriate boundary conditions) in terms of finding zeroes of a suitably defined function $F$. First, we introduce the notation needed for this.
Define $e^{-2y_2(\frac{1}{2})}=\gamma$. We will require our solutions to satisfy $e^{-2y_1}(\frac{1}{2})=a$ and $\sup_{t\in [\frac{1}{2},1]} e^{-2y_2(t)}+(y_2'(t))^2=S$. Then using \eqref{NEF2S'}, we see that
\begin{equation}\label{ICH}
\displaystyle
h\left(\tfrac{1}{2}\right)=\sqrt{\frac{-c_1}{d_1a(d_1-1-c_1T_1(\frac{1}{2}))+d_2\gamma(\alpha-c_2T_2(\frac{1}{2}))}}.
\end{equation}
We write this as $h(\frac{1}{2})=I(a,\gamma,c_1,c_2)$. 
Since $T_i'\le 0$, we see that $T_2(\frac{1}{2})=\sup T_2$, so for fixed $a$, the function $I$ is continuous in $\gamma\ge 0$, $c_2\in [\frac{\alpha}{\sup T_2},\frac{\alpha}{\inf T_2}]$ and $c_1>\frac{d_1-1}{T_1(\frac{1}{2})}$. Moreover, there exists a number $C>0$ depending on $S,a,T_1$ and $T_2$ so that $\frac{1}{2}\sqrt{\frac{1}{d_1 aT_1(\frac{1}{2})}}\le I(a,\gamma,c_1,c_2)\le 2\sqrt{\frac{1}{d_1aT_1(\frac{1}{2})}}$ for all $c_1\ge C$, $\gamma\le S$ and $c_2\in  [\frac{\alpha}{\sup T_2},\frac{\alpha}{\inf T_2}]$. 
We then obtain the following initial conditions for $v_1$ and $v_2$:
\begin{align}\label{ICFV}
\begin{split}
 \frac{e^{v_1(\frac{1}{2})}}{\sqrt{a}}=\frac{e^{v_2(\frac{1}{2})}}{\sqrt{\gamma}}&=I(a,\gamma,c_1,c_2),\\
 v_1'(\tfrac{1}{2})=v_2'(\tfrac{1}{2})&=0.
 \end{split}
\end{align}
Now, we define a function
$F:C^1([\tfrac{1}{2},1]:\mathbb{R}^2)\times [\tfrac{\alpha}{\sup T_2},\tfrac{\alpha}{\inf T_2}]\times  
(0,S+1]\to C^1([\tfrac{1}{2},1]:\mathbb{R}^2)\times \mathbb{R}^2$ as follows:
\begin{align}
F(v_1,v_2,c_2,\gamma)=(v_1-w_1,v_2-w_2,y_2'(1),(1-p_2)\gamma+p_2\sup_{[\frac{1}{2},1]}(e^{-2y_2}+(y_2')^2)-S),
\end{align}
where $w_1,w_2$ and $y_2$ are chosen to satisfy the following:
\begin{equation}
\begin{aligned}\label{eqn:defnF}
 w_i''(t)&=p_1F_i(t,c_1,c_2,e^{2v_1}(t),e^{2v_2}(t),v_1'(t),v_2'(t)),\\
 w_1(\tfrac{1}{2})&=p_3\ln(\sqrt{a}I(a,\gamma,c_1,c_2)),\quad w_2(\tfrac{1}{2})=p_3\ln(\sqrt{\gamma}I(a,\gamma,c_1,c_2)),\quad w_i'(\tfrac{1}{2})=0,\\
 y_2''&=e^{2p_3v_2}\left((\alpha-c_2T_2)-p_3y'_2\left(\tfrac{d_1}{2}T_1'e^{2v_1-2v_2}+\tfrac{d_2c_2 }{2c_1}T_2'\right)\right),\\
 y_2(\tfrac{1}{2})&=-\frac{\ln(\gamma)}{2}, \qquad y_2'(\tfrac{1}{2})=0.
\end{aligned}
\end{equation}
Then, solving \eqref{SOEFV} with the initial conditions \eqref{ICFV}, $y_2'(1)=0$ and $\sup_{[\frac{1}{2},1]}(e^{-2y_2}+(y_2')^2)=S$ 
is equivalent to finding the zeroes of the function $F$ when the parameters $p_1$, $p_2$, and $p_3$ are all set equal to $1$. Note that if $V$ is an open and bounded ball in 
$C^1([\frac{1}{2},1]:\mathbb{R}^2)$ and $\overline {\gamma} \in (0,S+1)$, then $F$ is a compact perturbation of the 
identity on $\overline{\Omega}$, where \begin{equation}\label{defOmega}
 \Omega=V\times (\tfrac{\alpha}{\sup T_2},\tfrac{\alpha}{\inf T_2})\times  
(\overline{\gamma},S+1).
\end{equation} We know from maximum principle arguments similar to those in Proposition \ref{MPF2} 
that if $(v_1,v_2,c_2,\gamma)\in C^1([\frac{1}{2},1]:\mathbb{R}^2)\times [\frac{\alpha}{\sup T_2},\frac{\alpha}{\inf T_2}]\times  
(0,S+1]$ is a zero of $F$, then $c_2\in(\frac{\alpha}{\sup T_2},\frac{\alpha}{\inf T_2})$. It is also clear that $\gamma\in(0,S]$. 
Note that to prove Theorem \ref{TESS}, it suffices to construct an unbounded sequence of zeroes of $F$ so that $y_1'\le 0$ and $y_2'\le 0$. 
\subsection{The solution for large $c_1$}
Having re-written the equations of \eqref{NEF2S'}, we are now
in a position to construct the sequence of solutions satisfying the conclusion of Theorem \ref{TESS}. 
The main idea is essentially what we already saw in Section \ref{T2CC}: find a $c_1$ large enough so that we actually have a solution, and then lower $c_1$ until we encounter an obstruction to existence. In this subsection, we state Theorem \ref{PDT} which deals with existence of solutions for $c_1$ large. We also state and prove Propositions \ref{BasicEstimates} and \ref{CSUBS}, which establish existence of the required sequence of solutions. The proof of Theorem \ref{PDT} makes use of several technical lemmas, which are proved in the next subsection.
\begin{theorem}\label{PDT}
 Set $p_1=p_2=p_3=1$. There exists a large $(c_1)^*$ so that when $c_1=(c_1)^*$, all of the zeroes of $F$ lie in a bounded set $\Omega$ of the form
\eqref{defOmega}. 
Furthermore, $\deg(F,\Omega,0)\neq 0$. 
\end{theorem}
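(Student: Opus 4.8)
The plan is a Leray--Schauder degree argument, using the parameters $p_1,p_2,p_3$ to homotope the map $F$ (at $p_1=p_2=p_3=1$) down to a map that is the identity on the function-space factor and affine on the finite-dimensional one, whose degree can then be read off. Since $F$ is a compact perturbation of the identity on $\overline\Omega$ for any $\Omega$ of the form \eqref{defOmega}, the two ingredients needed are: (i) a \emph{fixed} bounded $\Omega$ containing all zeros of $F$ not only for $(p_1,p_2,p_3)=(1,1,1)$ but for every $(p_1,p_2,p_3)\in[0,1]^3$, so that the homotopy is admissible; and (ii) an evaluation of $\deg(F,\Omega,0)$ at a convenient endpoint of that homotopy.

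For (i) I would first prove the a priori estimates that form the technical lemmas of the next subsection: there is a large $(c_1)^\ast$ (at least as large as the constant $C$ in the two-sided bound on $I(a,\gamma,c_1,c_2)$) and constants $M$, $\delta\in(0,S)$ such that, with $c_1=(c_1)^\ast$, every zero $(v_1,v_2,c_2,\gamma)$ of $F$ — for any $(p_1,p_2,p_3)\in[0,1]^3$ — satisfies $\|v_i\|_{C^1[1/2,1]}\le M$, $c_2\in(\tfrac{\alpha}{\sup T_2},\tfrac{\alpha}{\inf T_2})$ and $\gamma\in[\delta,S]$. The mechanism behind the $v_i$-bound is that for $c_1$ large the only $O(c_1)$ term in $F_i$ of \eqref{SOEFV} is $(d_1-\delta_1^i)(d_1-1-c_1T_1)\,e^{2v_1}$, which is strongly negative wherever $T_1>0$; combined with the Neumann data $v_i'(1/2)=0$ and the two-sided bounds already recorded for $I$, this confines $e^{2v_1}$, and then, through the $v_2$-equation, $e^{2v_2}$ and the derivatives $v_i'$, to bounded sets. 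Near $t=1$, where $d_1-1-c_1T_1>0$ but the competing term $-\tfrac{d_1}{2}T_1''e^{2v_1}$ is negative, one argues with barriers instead. The inclusion $c_2\in(\tfrac{\alpha}{\sup T_2},\tfrac{\alpha}{\inf T_2})$ at a zero comes from a maximum-principle argument applied to the $y_2$-equation of \eqref{eqn:defnF} together with $y_2'(1/2)=y_2'(1)=0$, exactly as in Proposition \ref{MPF2}; $\gamma\le S$ is immediate since $\sup_{[1/2,1]}(e^{-2y_2}+(y_2')^2)\ge e^{-2y_2(1/2)}=\gamma$; and $\gamma\ge\delta$ follows from the $C^0$-bound on $v_2$, the relation $v_2(1/2)=p_3\ln(\sqrt\gamma\,I)$ at a zero and the lower bound on $I$ (with the $y_2$-equation filling the gap when $p_3$ is small). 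I would then take $V$ to be the open ball of radius $M+1$ about the origin in $C^1([1/2,1]:\mathbb{R}^2)$ and $\overline\gamma=\delta/2$, so that $\Omega=V\times(\tfrac{\alpha}{\sup T_2},\tfrac{\alpha}{\inf T_2})\times(\overline\gamma,S+1)$ as in \eqref{defOmega} contains every zero of $F$ for every $(p_1,p_2,p_3)\in[0,1]^3$ and has no zero on $\partial\Omega$.

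For (ii), homotopy invariance of the Leray--Schauder degree — valid since $(p_1,p_2,p_3)\mapsto F$ is a continuous family of compact perturbations of the identity on $\overline\Omega$ with no zeros on $\partial\Omega$ — gives $\deg(F,\Omega,0)$ at $(1,1,1)$ equal to $\deg(F,\Omega,0)$ at $(0,0,0)$ (deforming the $p_i$ to $0$, if convenient one at a time). At $(0,0,0)$, \eqref{eqn:defnF} forces $w_1\equiv w_2\equiv 0$ and $y_2'(t)=\int_{1/2}^t(\alpha-c_2T_2)\,\dd s$, so
\[
 F(v_1,v_2,c_2,\gamma)=\Bigl(v_1,\ v_2,\ \tfrac{\alpha}{2}-c_2\!\int_{1/2}^1\! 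T_2\,\dd s,\ \gamma-S\Bigr),
\]
whose only zero in $\Omega$ is $(0,0,c_2^{0},S)$ with $c_2^{0}=\tfrac{\alpha}{2\int_{1/2}^1 T_2\,\dd s}$; this $c_2^{0}$ lies in the open interval $(\tfrac{\alpha}{\sup T_2},\tfrac{\alpha}{\inf T_2})$ precisely because $T_2$ is non-constant, so its average over $[1/2,1]$ lies strictly between $\inf T_2$ and $\sup T_2$, and $S\in(\overline\gamma,S+1)$ by the choice of $\overline\gamma$. Writing $F=\mathrm{Id}-K$ with $K$ compact, at this zero $DK$ annihilates the $C^1$-factor and the $\gamma$-slot and equals $1+\int_{1/2}^1 T_2\,\dd s>1$ on the $c_2$-slot; hence $\mathrm{Id}-DK$ is invertible, the zero is nondegenerate, and its Leray--Schauder index is $(-1)^1=-1$. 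Therefore $\deg(F,\Omega,0)=-1\neq 0$, as claimed.

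The main obstacle is step (i): the $C^1$-bounds on $v_1,v_2$ must hold uniformly over the \emph{entire} parameter cube $[0,1]^3$ (so the homotopy is admissible), and near the singular orbit $t=1$ the sign of $d_1-1-c_1T_1$ becomes positive, so the ``large negative leading coefficient'' heuristic fails there and must be replaced by careful barrier estimates that also control the coupling between the $v_1$- and $v_2$-equations; securing the strictly positive lower bound $\gamma\ge\delta$ uniformly along the homotopy, and confirming that $c_2$ never reaches an endpoint, are the remaining delicate points. Everything after these estimates — the homotopy invariance and the index computation at $(0,0,0)$ — is routine.
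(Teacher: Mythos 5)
Your proposal follows essentially the same route as the paper: deform the $p_i$ (one at a time) from $(1,1,1)$ to $(0,0,0)$, using a priori $C^1$-bounds on $(v_1,v_2)$ for large $c_1$, the maximum-principle confinement of $c_2$, and a lower bound on $\gamma$ to keep all zeroes in a set of the form \eqref{defOmega}, and then read off the degree at $(0,0,0)$ from the single eigenvalue $1+\int_{1/2}^{1}T_2>1$ of the linearisation, exactly as the paper does via point (iii) of Theorem \ref{PLSD}. The only cosmetic difference is that you ask for estimates uniform over the whole cube $[0,1]^3$, whereas the paper (and your own one-parameter-at-a-time fallback) only needs them along the staged path supplied by Lemmas \ref{BVP3'}, \ref{BVP2} and \ref{SABLC}.
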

\begin{proof}
The main idea is to use topological degree theory and continuously deform the problem with all $p_i$'s equal to $1$, to the problem with all $p_i$'s equal to $0$. If we can show that solutions remain bounded under this deformation, then we can use information about the latter problem to deduce information about the 
former problem.

First, we select the value $(c_1)^*$ given by Lemma \ref{SABLC}, and work with this value for the rest of the proof. 
If $p_1 = p_2 = p_3 = 0$, then we see that $F(v_1, v_2, c_2, \gamma) = (v_1, v_2,\int_{\frac{1}{2}}^{1}(\alpha-c_2T_2), \gamma - S)$.
The unique zero of $F$ is $(v_1,v_2,c_2,\gamma)=(0,0,\frac{\alpha}{2\int_{\frac{1}{2}}^{1}T_2},S)$, so it is clear that there exists an $\Omega$ having the 
form \eqref{defOmega} so that all of the zeroes of $F$ with $p_1=p_2=p_3=0$ lie in $\Omega$. 
Now we have $(I-F)(v_1,v_2,c_2,\gamma)=(0,0,c_2-\int_{\frac{1}{2}}^{1}(\alpha-c_2T_2),S)$; the only non-zero eigenvalue of the linearisation of this function is $1+\int_{\frac{1}{2}}^1 T_2>1$, so point (iii) of Theorem \ref{PLSD} implies that $\deg(F, \Omega, 0) \neq 0$ when $p_1 = p_2 = p_3 = 0$. 
Then, we use Lemmas \ref{BVP3'}, \ref{BVP2} and \ref{SABLC} to continuously deform $F$ while possibly enlarging $\Omega$ if required to contain all of the zeroes 
 (while still retaining the form \eqref{defOmega}). The homotopy invariance of the Leray-Schauder degree (point (ii) of Theorem \ref{PLSD}) 
 implies that $\deg(F,\Omega,0)$ for $p_1 = p_2 = p_3 = 1$ is the same as for $p_1 = p_2 = p_3 = 0$. In particular, $\deg(F,\Omega,0) \neq 0$ when $p_1 = p_2 = p_3 = 1$. This completes the proof.
\end{proof}

Since $\deg(F,\Omega,0) \neq 0$, point (i) of Theorem \ref{PLSD} gives us existence of zeroes of 
$F$ when $p_1 = p_2 = p_3 = 1$. In fact, in this case we obtain an entire family of zeroes, which are parametrized by $c_1$. We make the following observations about all of these solutions.
\begin{prop}\label{BasicEstimates}
 Suppose $p_1=p_2=p_3=1$, and $(v_1,v_2,c_2,\gamma)$ is a zero of $F$ with $c_1>\frac{d_1-1}{T_1(\frac{1}{2})}$. Then for the associated functions $y_1$ and $y_2$, we have $y_1'\le 0$, $y_2'\le 0$, and $e^{-2y_2}\le \gamma e^{\sqrt{S}}$.
\end{prop}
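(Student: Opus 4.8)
The plan is to read the three inequalities off the equations \eqref{NEF2S'} (and the auxiliary identity \eqref{NEF2S'H2}) that a zero of $F$ with $p_1=p_2=p_3=1$ must satisfy, treating $y_2$ first since it is the easiest. Such a zero corresponds to a triple $(h>0,y_1,y_2)$ solving \eqref{NEF2S'} with $y_1'(\tfrac12)=y_2'(\tfrac12)=y_2'(1)=0$ and $\sup_{[\frac12,1]}(e^{-2y_2}+(y_2')^2)=S$; in particular $(y_2')^2\le S$ everywhere. For $y_2'\le 0$ I would argue by a maximum principle, in the spirit of Proposition \ref{MPF2}. Write the second equation of \eqref{NEF2S'V} as $y_2''=(\alpha-c_2T_2)e^{2v_2}-y_2'A$ with $A=\tfrac{d_1}{2}T_1'e^{2v_1}+\tfrac{d_2c_2}{2c_1}T_2'e^{2v_2}$; the hypotheses $T_1'<0$ on $(\tfrac12,1)$ and $T_2'\le 0$ give $A<0$ on $(\tfrac12,1)$. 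Since $c_2\in(\tfrac{\alpha}{\sup T_2},\tfrac{\alpha}{\inf T_2})$ (noted just before the proposition) while $T_2$ is non-increasing with $T_2(\tfrac12)=\sup T_2$, $T_2(1)=\inf T_2$, the function $\alpha-c_2T_2$ is non-decreasing, negative at $\tfrac12$ and positive at $1$, so it changes sign at a single $\bar t\in(\tfrac12,1)$. On $[\tfrac12,\bar t]$ the forcing $(\alpha-c_2T_2)e^{2v_2}$ is $\le 0$, so integrating the linear first-order ODE satisfied by $y_2'$ from $y_2'(\tfrac12)=0$ gives $y_2'\le 0$ there (with $y_2'(\bar t)\le 0$). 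On $[\bar t,1]$ the forcing is $\ge 0$; if $y_2'$ were positive somewhere on this interval its maximum over $[\bar t,1]$ would be a positive value attained at an interior $t_0$ (the endpoint values being $\le 0$), and then $y_2''(t_0)=0$ forces $(\alpha-c_2T_2(t_0))e^{2v_2(t_0)}=A(t_0)y_2'(t_0)<0$, contradicting $\alpha-c_2T_2(t_0)\ge 0$. Hence $y_2'\le 0$ on $[\tfrac12,1]$, and then $e^{-2y_2(t)}=\gamma\,e^{2\int_{1/2}^{t}(-y_2')}\le\gamma\,e^{2\cdot\frac12\cdot\sqrt S}=\gamma e^{\sqrt S}$, using $0\le -y_2'\le\sqrt S$ and $t-\tfrac12\le\tfrac12$.

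The inequality $y_1'\le 0$ is the main point and is harder, because $y_1'$ carries no boundary condition at $t=1$. Write the first equation of \eqref{NEF2S'V} as $y_1''=(d_1-1-c_1T_1)e^{2v_1}-y_1'A$. Since $c_1>\tfrac{d_1-1}{T_1(1/2)}$ and $T_1$ decreases from $T_1(\tfrac12)$ to $T_1(1)=0$, there is a unique $t_{**}\in(\tfrac12,1)$ with $c_1T_1(t_{**})=d_1-1$; the forcing $(d_1-1-c_1T_1)e^{2v_1}$ is $\le 0$ on $[\tfrac12,t_{**}]$ and $>0$ on $(t_{**},1]$. Integrating from $y_1'(\tfrac12)=0$ gives $y_1'\le 0$ on $[\tfrac12,t_{**}]$, and in fact $y_1'(t_{**})<0$. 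I would then observe that a positive maximum of $y_1'$ over $[\tfrac12,1]$ cannot be attained at an interior point: at such a $t_M$ one has $y_1''(t_M)=0$, hence $(d_1-1-c_1T_1(t_M))e^{2v_1(t_M)}=y_1'(t_M)A(t_M)<0$, forcing $t_M<t_{**}$ and so $y_1'(t_M)\le 0$ by the previous step. Consequently, if $y_1'$ is positive anywhere then $y_1'(1)>0$ and $y_1'>0$ on an interval $(t_1,1]$ with $y_1'(t_1)=0$ and $t_1\in(t_{**},1)$; on $(t_1,1)$ the equation then gives $y_1''>0$, so $y_1'$ is increasing there.

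Ruling out $y_1'(1)>0$ is the crux. The plan is to bring in the auxiliary identity \eqref{NEF2S'H2}: with $h^2=N/D$, where $N=(d_1y_1'+d_2y_2')^2-d_1(y_1')^2-d_2(y_2')^2-c_1$ and $D=d_1(d_1-1-c_1T_1)e^{-2y_1}+d_2(\alpha-c_2T_2)e^{-2y_2}$, the positivity and continuity of $h^2$ on $[\tfrac12,1]$ force $N$ and $D$ to be nonzero and of equal sign away from a common zero set; both are negative at $t=\tfrac12$, and both are positive at $t=1$ (using $T_1(1)=0$, $y_2'(1)=0$ and $\alpha-c_2T_2(1)>0$). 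I would then analyse $N$ and $D$ near the point $t_1$ above — where $y_1'(t_1)=0$, so $N(t_1)=d_2(d_2-1)(y_2'(t_1))^2-c_1$ — and on $(t_1,1)$, where $y_1'>0$ and $y_2'\le 0$, using the sign information for $d_1-1-c_1T_1$ and $\alpha-c_2T_2$ together with $(y_2')^2\le S$, so as to contradict the required coincidence of the signs of $N$ and $D$ (and the continuity of $y_1'$ at $t_1$). This bookkeeping near $t_1$ is the delicate step; everything before it is routine manipulation of the ODEs.
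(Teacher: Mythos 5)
Your arguments for $y_2'\le 0$ and for the exponential bound $e^{-2y_2}\le\gamma e^{\sqrt S}$ are correct and are essentially the paper's: both hinge on $\alpha-c_2T_2$ being non-decreasing, negative at $\frac12$ and positive at $1$, together with the two-ended boundary data $y_2'(\frac12)=y_2'(1)=0$, and the final inequality is the same integral estimate using $0\le -y_2'\le\sqrt S$ on an interval of length $\frac12$.

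The gap is exactly where you flagged it: ruling out $y_1'(1)>0$. You correctly observe that a positive interior maximum of $y_1'$ is impossible and that $y_1'$ must therefore be increasing and positive on some terminal interval $(t_1,1]$, but the sign-bookkeeping for $N$ and $D$ in \eqref{NEF2S'H2} near $t_1$ that you sketch does not close: at $t_1$ one has $N(t_1)=d_2(d_2-1)(y_2'(t_1))^2-c_1$, whose sign is not controlled without an a priori relation between $c_1$ and $d_2(d_2-1)S$, and $D(t_1)$ involves $\alpha-c_2T_2(t_1)$, which can still be negative if $t_1$ lies before the sign change of $\alpha-c_2T_2$; so the hoped-for contradiction between the signs of $N$ and $D$ is not forced. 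The missing ingredient is that a zero of $F$ satisfies not only the second and third equations of \eqref{NEF2S'} but, via Proposition \ref{EOIE} (using the evenness conditions at $t=\frac12$ to extend the solution evenly), the \emph{first} equation of \eqref{PRC2S''} as well. In the variables $y_i=\ln f_i$ this reads
\begin{equation*}
 \operatorname{tr}(y'')=-\operatorname{tr}((y')^2)+\frac{h'}{h}\operatorname{tr}(y')-c_1,
\end{equation*}
and since $\operatorname{tr}(y')(\frac12)=0$ and $c_1>0$, the quantity $w:=d_1y_1'+d_2y_2'$ is strictly negative on $(\frac12,1]$: indeed $w'(\frac12)=-c_1<0$, and if $w$ ever returned to $0$ at some later time $t^*$ then $w'(t^*)=-\operatorname{tr}((y')^2)(t^*)-c_1<0$ would contradict $w$ crossing $0$ from below. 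Combined with $y_2'(1)=0$ this gives $y_1'(1)<0$, after which the same monotonicity argument you used for $y_2'$ (now with $d_1-1-c_1T_1$ in place of $\alpha-c_2T_2$, non-decreasing, negative at $\frac12$ since $c_1>\frac{d_1-1}{T_1(\frac12)}$) rules out $y_1'$ ever being positive. That use of the first Ricci equation, not a direct analysis of \eqref{NEF2S'H2}, is what the paper's proof pivots on and what your write-up lacks.
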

\begin{proof}
Since $T_2'\le 0$, the quantity $\alpha-c_2T_2$ is increasing on $[\frac{1}{2},1]$ and initially negative since $c_2\in (\frac{\alpha}{\sup T_2},\frac{\alpha}{\inf T_2})$. Therefore, the second equation of \eqref{NEF2S'} implies that $y_2'(t)$ becomes negative for some times $t>\frac{1}{2}$. Therefore, if $y_2'$ ever became positive afterwards, it would have to occur when $\alpha-c_2T_2$ is positive. But since $\alpha-c_2T_2$ then stays positive, we find that $y_2'$ stays positive, which contradicts $y_2'(1)=0$. Therefore $y_2'\le 0$. 

Now the first equation of
\eqref{PRC2S''} becomes 
\begin{align}\label{prctry}
 tr(y'')=-tr((y')^2)+\frac{h'}{h}tr(y')-c_1,
\end{align}
which implies that $d_1y_1'+d_2y_2'$ is negative on $(\frac{1}{2},1]$. 
Therefore, $y_1'(1)<0$ since $y_2'(1)=0$. However, $T_1'\le 0$ and the quantity $d_1-1-c_1T_1$ is also increasing and initially negative, so the same argument as for 
$y_2'$ can be used to show that if $y_1'$ were ever positive, it would have to stay that way up until $t=1$, which contradicts $y_1'(1)<0$. 

Finally, since $(y_2')^2 \le S$ and $e^{-2y_2 (\frac{1}{2})}=\gamma$, we have 
\begin{align*}
 e^{-2y_2(t)}=e^{2y_2(\frac{1}{2})-2y_2(t)-2y_2(\frac{1}{2})} =\gamma e^{-2\int_{\frac{1}{2}}^t y_2'(s)ds} \le \gamma e^{\sqrt{S}}.
\end{align*}
\end{proof}
We now construct our unbounded sequence of solutions. 
\begin{prop}\label{CSUBS}
 There exists a sequence of $c_1\in (\frac{d_1-1}{T_1(\frac{1}{2})}, c_1^*]$, which we denote $c_1^{(k)}$, convergent to $\hat{c}_1$, and a corresponding sequence of zeroes $(v_1^{(k)}, v_2^{(k)}, c_2^{(k)}, \gamma^{(k)})$ of $F$ (with $p_1=p_2=p_3=1$) so that $(v_1^{(k)}, v_2^{(k)})$ is unbounded in $C^1([\frac{1}{2},1]:\mathbb{R}^2)$.
\end{prop}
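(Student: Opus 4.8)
The plan is to mirror the argument of Section \ref{T2CC}: begin at the large value $c_1^*$ of Theorem \ref{PDT}, where $\deg(F,\Omega,0)\ne 0$ and every zero of $F$ is confined to a bounded set of the form \eqref{defOmega}, and then decrease $c_1$, using $c_1$ itself as a homotopy parameter, until the solutions are forced to degenerate. For $c_1>\frac{d_1-1}{T_1(\frac12)}$ I will write $F_{c_1}$ for the map $F$ with $p_1=p_2=p_3=1$; it depends continuously on $c_1$ and is a compact perturbation of the identity. I would let $\hat c_1$ be the infimum of the set of $\tilde c\in(\frac{d_1-1}{T_1(\frac12)},c_1^*]$ for which the $C^1$ norms of $(v_1,v_2)$ over all zeroes of $F_{c_1}$ with $c_1\in[\tilde c,c_1^*]$ are uniformly bounded. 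By Theorem \ref{PDT} this set contains $c_1^*$, and it is upward closed (if $\tilde c$ lies in it so does every larger value in the range), so $\hat c_1\in[\frac{d_1-1}{T_1(\frac12)},c_1^*]$ and every $\tilde c>\hat c_1$ belongs to it.

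First I would record existence of zeroes for $c_1>\hat c_1$. Fixing such a $c_1$ and an admissible $\tilde c\in(\hat c_1,c_1)$, the uniform bound on the $(v_1,v_2)$-components of zeroes over $[\tilde c,c_1^*]$, together with the fact that every zero has $c_2$ in the compact interval $[\frac{\alpha}{\sup T_2},\frac{\alpha}{\inf T_2}]$ (by the maximum-principle remark preceding \eqref{defOmega}) and $\gamma$ in a compact subinterval of $(0,S]$ (since $\gamma=e^{2v_2(1/2)}/I(a,\gamma,c_1,c_2)^2$ with $v_2(\tfrac12)$ and $\ln I$ bounded), confines all these zeroes to the interior of a single $\Omega$ of the form \eqref{defOmega}, with no zero on $\partial\Omega$ for any $c_1'\in[\tilde c,c_1^*]$. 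Homotopy invariance of the Leray--Schauder degree (Theorem \ref{PLSD}(ii)) along $c_1'\mapsto F_{c_1'}$, $c_1'\in[\tilde c,c_1^*]$, then gives $\deg(F_{c_1},\Omega,0)=\deg(F_{c_1^*},\Omega,0)\ne 0$, so $F_{c_1}$ has a zero; and by Proposition \ref{BasicEstimates} every zero of $F_{c_1}$ satisfies $y_1'\le 0$ and $y_2'\le 0$.

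The heart of the argument is to show that the solutions cannot remain uniformly bounded as $c_1\downarrow\hat c_1$; equivalently, that for every $R,\delta>0$ there is $c_1\in(\hat c_1,\hat c_1+\delta)$ with a zero of $F_{c_1}$ satisfying $\|(v_1,v_2)\|_{C^1}>R$. If this failed, the zeroes over $[\hat c_1,c_1^*]$ would be uniformly bounded, hence (by the same reasoning about $c_2,\gamma$) trapped in a single $\Omega$ of the form \eqref{defOmega}. In the case $\hat c_1>\frac{d_1-1}{T_1(\frac12)}$, continuity of $F$ in $c_1$ and compactness of $\partial\Omega$ would give $\delta'>0$ with $\hat c_1-\delta'>\frac{d_1-1}{T_1(\frac12)}$ and no zero of $F_{c_1'}$ on $\partial\Omega$ for $c_1'\in[\hat c_1-\delta',c_1^*]$; homotopy invariance and the a priori bounds on $c_2,\gamma$ would then confine all zeroes to a fixed $\Omega$ over $[\hat c_1-\delta',c_1^*]$, contradicting the minimality of $\hat c_1$. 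In the case $\hat c_1=\frac{d_1-1}{T_1(\frac12)}$, a compactness/limiting argument ($F$ a compact perturbation of the identity, $c_2$ in a compact interval, $\gamma$ bounded away from $0$) would produce a zero of $F$ at $c_1=\frac{d_1-1}{T_1(\frac12)}$ which, as a limit of zeroes with $y_1'\le 0$, still satisfies $y_1'\le 0$; but writing the second equation of \eqref{NEF2S'} (with $\beta=0$) as $y_1''=A(t)y_1'+B(t)$, where $A=-h^2\bigl(\tfrac{d_1}{2}T_1'e^{-2y_1}+\tfrac{d_2c_2}{2c_1}T_2'e^{-2y_2}\bigr)\ge 0$ and $B=h^2(d_1-1-c_1T_1)e^{-2y_1}$, the choice $c_1=\frac{d_1-1}{T_1(\frac12)}$ gives $d_1-1-c_1T_1(t)=c_1\bigl(T_1(\tfrac12)-T_1(t)\bigr)$, which vanishes at $t=\tfrac12$ and is strictly positive on $(\tfrac12,1)$ since $T_1'<0$ there, so $B\ge 0$ with $B>0$ on $(\tfrac12,1)$ (recall $h>0$). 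As $y_1'(\tfrac12)=0$, the integrating-factor identity $y_1'(t)=\int_{1/2}^{t}\exp\!\bigl(\int_s^tA(r)\,\dd r\bigr)B(s)\,\dd s$ forces $y_1'(t)>0$ on $(\tfrac12,1)$, contradicting $y_1'\le 0$. Either way the claim holds.

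To finish, I would apply the claim with $R=k$, $\delta=\tfrac1k$, obtaining (for $k$ large) $c_1^{(k)}\in(\hat c_1,\hat c_1+\tfrac1k)\subset(\tfrac{d_1-1}{T_1(\frac12)},c_1^*]$ and zeroes $(v_1^{(k)},v_2^{(k)},c_2^{(k)},\gamma^{(k)})$ of $F_{c_1^{(k)}}$ with $\|(v_1^{(k)},v_2^{(k)})\|_{C^1}>k$; then $c_1^{(k)}\to\hat c_1$ and $(v_1^{(k)},v_2^{(k)})$ is unbounded in $C^1([\tfrac12,1]:\R^2)$, as desired. The hard part will be the third step, i.e.\ genuinely ruling out that the solution family persists as uniformly bounded under the deformation — this is the analogue of the obstruction \eqref{PCSR} hit in Section \ref{T2CC}, and it is exactly where the monotonicity hypotheses on $T_1$ and $T_2$ enter, through the signs of $A$ and $B$ together with the borderline identity $c_1T_1(\tfrac12)=d_1-1$.
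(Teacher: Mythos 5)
Your key step --- the endpoint analysis at $c_1=\frac{d_1-1}{T_1(\frac12)}$ --- is exactly the paper's key step: the limit solution satisfies $y_1''=A(t)y_1'+B(t)$ with $A\ge 0$ and $B\ge 0$ vanishing only at $t=\frac12$, so $y_1'(1)>0$, contradicting $(y_1^{(k)})'(1)\le 0$ from Proposition \ref{BasicEstimates}. (Minor slip: this is the \emph{first} equation of \eqref{NEF2S'}, not the second.) The paper reaches this endpoint more directly, though. It negates the proposition itself: if no convergent sequence of $c_1$'s has unbounded zeroes, then, since the interval $(\frac{d_1-1}{T_1(\frac12)},c_1^*]$ is bounded, the $(v_1,v_2)$-components of \emph{all} zeroes over \emph{all} $c_1$ in that interval are uniformly bounded. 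Combined with \eqref{ICFV} (which keeps $\gamma$ away from $0$) and the a priori range of $c_2$, one gets a single $\Omega$ containing all zeroes, whence $\deg(F,\Omega,0)\ne 0$ for every $c_1$ by homotopy from $c_1^*$. A solution for each $c_1$ then exists, and sending $c_1\to\frac{d_1-1}{T_1(\frac12)}$ under the uniform bounds produces the contradictory limit.

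Your infimum machinery introduces a genuine gap in Case 1. You claim that ``homotopy invariance and the a priori bounds on $c_2,\gamma$ would then confine all zeroes to a fixed $\Omega$ over $[\hat c_1-\delta',c_1^*]$.'' But the Leray--Schauder degree only gives \emph{existence} of a zero inside $\Omega$ --- it says nothing about non-existence of zeroes \emph{outside} $\Omega$. The a priori $C^1$ bound on $(v_1,v_2)$ that places zeroes in $\Omega$ is available for $c_1'\in[\hat c_1,c_1^*]$ (by the negated claim), but for $c_1'\in[\hat c_1-\delta',\hat c_1)$ no such bound has been established --- establishing it is precisely what would be needed to contradict the minimality of $\hat c_1$, so the argument is circular. (Also note that if $\hat c_1>\frac{d_1-1}{T_1(\frac12)}$, the proposition holds \emph{automatically}: by definition of $\hat c_1$, for $\tilde c<\hat c_1$ there are zeroes over $[\tilde c,c_1^*]$ of arbitrarily large norm, and a subsequence of their $c_1$-values converges. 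So the Case 1 contortions are not only gappy, they are unnecessary. Your ``claim'' --- unboundedness \emph{from above} $\hat c_1$ --- is actually stronger than what the proposition requires, and false in general when $\hat c_1>\frac{d_1-1}{T_1(\frac12)}$.) Replace the infimum construction with the paper's direct negation of the proposition; your Case 2 computation then supplies the needed contradiction.
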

\begin{proof}
If this were not the case, then all zeroes of $F$ have $(v_1,v_2)$ bounded in $C^1([\frac{1}{2},1]:\mathbb{R}^2)$, 
independently of $c_1\in (\frac{d_1-1}{T_1(\frac{1}{2})}, c_1^*]$. 
These bounds imply that $\gamma$ is bounded away from $0$ uniformly by \eqref{ICFV}. We already know that $\gamma\le S$ and $c_2\in (\frac{\alpha}{\sup T_2},\frac{\alpha}{\inf T_2})$, so these bounds imply that 
there is an $\Omega$ having the form of \eqref{defOmega} so that all of the zeroes of $F$ (if any) lie in $\Omega$ 
for all $c_1\in (\frac{d_1-1}{T_1(\frac{1}{2})},c_1^*]$. 
Then the homotopy invariance of the Leray-Schauder degree implies that $\deg(F,\Omega,0)\neq 0$ for all $c_1\in (\frac{d_1-1}{T_1(\frac{1}{2})},c_1^*]$ since we know the degree is non-zero for $c_1=c_1^*$ by Theorem \ref{PDT}. 
In particular, there exists a solution lying in $\Omega$ for each such $c_1$.

Consider a sequence of $c_1^{(k)}$, convergent to $\frac{d_1-1}{T_1(\frac{1}{2})}$, and the corresponding zeroes of $F$. 
Since $a$ is fixed and $\liminf \gamma^{(k)}>0$, \eqref{NEF2S'V} implies that $(y_1^{(k)})$ and $(y_2^{(k)})$ are both uniformly bounded in $C^2[\frac{1}{2},1]$. 
Then $h^{(k)}=e^{y_1^{(k)}+v_1^{(k)}}$ is also bounded uniformly in $C^1[\frac{1}{2},1]$. 
Then by taking the limit as $c_1^{(k)}$ approaches $\frac{d_1-1}{T_1(\frac{1}{2})}$, we would obtain uniform convergence to a solution $(h,y_1,y_2)$ of \eqref{NEF2S'} 
with $c_1T_1\le d_1-1$ on the entirety of $[\frac{1}{2},1]$ since $T_1'\le 0$. Then the first equation of \eqref{NEF2S'} implies $y_1'(1)>0$, a contradiction with the fact that $(y_1^{(k)})'(1) \le 0$ for each $k$ by Proposition \ref{BasicEstimates}. We therefore find our sequence of solutions 
$(v_1^{(k)}, v_2^{(k)}, c_2^{(k)}, \gamma^{(k)})$ with $(v_1^{(k)}, v_2^{(k)})$ unbounded.
\end{proof}

\begin{proof}[Proof of Theorem \ref{TESS}]
 By Section \ref{Rewriting}, we can equivalently find a sequence of zeroes of $F$ when $p_1 = p_2 = p_3 = 1$. This is achieved by Theorem \ref{PDT}, since $\deg(F,\Omega,0)\neq 0$. Proposition \ref{CSUBS} yields the unbounded sequence of solutions (by the discussion at the start of Section \ref{Rewriting}, unboundedness of $(v_1^{(k)}, v_2^{(k)})$ implies unboundedness for the corresponding sequence $(h^{(k)}, y_1^{(k)}, y_2^{(k)})$). That the required properties are satisfied by the sequence of functions, is established by Proposition \ref{BasicEstimates}.
\end{proof}

\subsection{Boundedness of solutions under deformation}
Now we list the technical results needed in the proof of Theorem \ref{PDT}. 
These results allow us to carry out the various steps in the deformation from the case where 
$p_1 = p_2 = p_3 = 0$, to the case we are interested in, that is, $p_1 = p_2 = p_3 = 1$.
\begin{lemma}\label{BVP3'}
 Set $p_1=p_2=0$. For each $c_1\ge C$, there exists a bounded set $\Omega$ of the form \eqref{defOmega} such that zeroes of $F$ (if any) lie in $\Omega$ for each $p_3\in [0,1]$. 
\end{lemma}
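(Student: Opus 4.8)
The plan is to unwind the definition of $F$ in the case $p_1=p_2=0$, where the zeroes are almost entirely pinned down and the required bounds can be read off directly. Suppose $(v_1,v_2,c_2,\gamma)$ is a zero of $F$; if there are none, there is nothing to prove. Since $p_2=0$, the fourth component of $F$ equals $\gamma-S$, so $\gamma=S$. Since $p_1=0$, the functions $w_i$ in \eqref{eqn:defnF} satisfy $w_i''=0$ together with $w_i'(\tfrac12)=0$, so each $w_i$ is the constant $w_i(\tfrac12)$; the vanishing of the first two components of $F$ then forces $v_1\equiv p_3\ln(\sqrt{a}\,I(a,S,c_1,c_2))$ and $v_2\equiv p_3\ln(\sqrt{S}\,I(a,S,c_1,c_2))$. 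In particular $v_1$ and $v_2$ are constant functions, so $v_i'\equiv 0$ and $\|v_i\|_{C^1[\frac12,1]}=|v_i|$.

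The next step is to bound $|v_1|$ and $|v_2|$ uniformly. I would invoke the estimate on $I$ recorded just before \eqref{ICFV}: since $c_1\ge C$, $\gamma=S$ and $c_2\in[\tfrac{\alpha}{\sup T_2},\tfrac{\alpha}{\inf T_2}]$, it gives $\tfrac12\sqrt{\tfrac{1}{d_1 a T_1(\frac12)}}\le I(a,S,c_1,c_2)\le 2\sqrt{\tfrac{1}{d_1 a T_1(\frac12)}}$. Together with $0\le p_3\le 1$, this produces a constant $M>0$ depending only on $a$, $S$, $d_1$ and $T_1$ (and in particular independent of $c_1\ge C$, of $c_2$, and of $p_3$) with $|v_1|,|v_2|\le M$. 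Hence every zero of $F$, for every $p_3\in[0,1]$, has $(v_1,v_2)$ lying strictly inside the open ball $V=\{(v_1,v_2)\in C^1([\tfrac12,1]:\mathbb{R}^2):\|v_1\|_{C^1}+\|v_2\|_{C^1}<2M+1\}$.

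Finally, it remains to locate $c_2$ and $\gamma$. For $c_2$ I would cite the maximum-principle argument already alluded to in the text (modelled on Proposition \ref{MPF2}): using only the $y_2$-equation of \eqref{eqn:defnF}, the monotonicity $T_1',T_2'\le 0$, and the boundary data $y_2'(\tfrac12)=0$ and $y_2'(1)=0$ coming respectively from the initial condition and the vanishing of the third component of $F$, one concludes that a zero of $F$ must satisfy $c_2\in(\tfrac{\alpha}{\sup T_2},\tfrac{\alpha}{\inf T_2})$, and this is insensitive to the values of the parameters $p_i$. Since also $\gamma=S\in(\tfrac{S}{2},S+1)$, the set $\Omega:=V\times(\tfrac{\alpha}{\sup T_2},\tfrac{\alpha}{\inf T_2})\times(\tfrac{S}{2},S+1)$ has the form \eqref{defOmega} and contains all zeroes of $F$ for every $p_3\in[0,1]$ (indeed the same $\Omega$ works for all $c_1\ge C$), which is the claim. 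There is no essential difficulty in this lemma; the one point needing care is the uniformity of the $I$-estimate, hence of the radius of $V$, over the ranges $c_1\ge C$, $c_2\in[\tfrac{\alpha}{\sup T_2},\tfrac{\alpha}{\inf T_2}]$ and $p_3\in[0,1]$, after which everything follows directly from the definition of $F$.
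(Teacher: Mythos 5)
Your proof is correct and follows the same route as the paper's: with $p_1=p_2=0$ you read off $\gamma=S$ and $v_i\equiv p_3\ln(\sqrt{\cdot}\,I)$ directly from the definition of $F$, apply the uniform two-sided bound on $I$ recorded before \eqref{ICFV} (valid here since $\gamma=S\le S$), and quote the maximum-principle argument for $c_2\in(\tfrac{\alpha}{\sup T_2},\tfrac{\alpha}{\inf T_2})$; the paper's proof is a compressed version of exactly these observations.
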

\begin{proof}
 Since $p_1=p_2=0$, zeroes of $F$ satisfy 
 \begin{align*}
  v_1(t)=p_3\ln(\sqrt{a}I(a,\gamma,c_1,c_2)), \quad v_2(t)=p_3\ln(\sqrt{\gamma}I(a,\gamma,c_1,c_2))\quad \mbox{ and } \gamma=S,
 \end{align*}
   so are bounded independently of $p_3\in[0,1]$. As before, it is still clear that $c_2\in  (\frac{\alpha}{\sup T_2},\frac{\alpha}{\inf T_2})$. 
\end{proof}
\begin{lemma}\label{BVP2}
 Set $p_1=0$, $p_3=1$. For each $c_1\ge C$, there exists a bounded set $\Omega$ of the form \eqref{defOmega} such that the zeroes of 
 $F$ (if any) lie in $\Omega$ for any $p_2\in [0,1]$. 
\end{lemma}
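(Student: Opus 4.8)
The plan is to use that putting $p_1=0$ forces $v_1,v_2$ to be explicit constants, so that the only quantities requiring control are $c_2$ and $\gamma$, and the real work is ruling out $\gamma\to 0$. With $p_1=0$ the functions $w_i$ in \eqref{eqn:defnF} satisfy $w_i''=0$ and $w_i'(\tfrac12)=0$, so every zero $(v_1,v_2,c_2,\gamma)$ of $F$ has $v_1\equiv\ln(\sqrt a\,I(a,\gamma,c_1,c_2))$ and $v_2\equiv\ln(\sqrt\gamma\,I(a,\gamma,c_1,c_2))$ (using $p_3=1$). By the discussion in Section \ref{Rewriting} we already know $c_2\in(\tfrac{\alpha}{\sup T_2},\tfrac{\alpha}{\inf T_2})$ and $\gamma\le S$; moreover, since the vanishing of the last component of $F$ writes $S$ as a convex combination of $\gamma=e^{-2y_2(1/2)}$ and $Q:=\sup_{[\frac12,1]}(e^{-2y_2}+(y_2')^2)$, and $Q\ge\gamma$, we also get $Q\ge S$. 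Given the two-sided bound on $I(a,\gamma,c_1,c_2)$ valid for $c_1\ge C$ and $\gamma\le S$, it then suffices to bound $\gamma$ away from $0$; one can then take $\Omega$ of the form \eqref{defOmega} with $V$ a ball in $C^1([\frac12,1]:\mathbb{R}^2)$ large enough to contain all the constant pairs $(v_1,v_2)$ that arise and $\overline\gamma$ a small positive constant.

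The heart of the matter is therefore the lower bound on $\gamma$. I would set $\tilde y_2=y_2+\tfrac12\ln\gamma$, so $\tilde y_2(\tfrac12)=\tilde y_2'(\tfrac12)=0$. Since $v_1,v_2$ are constant with $e^{2v_1}=aI^2$ and $e^{2v_2}=\gamma I^2$ (writing $I=I(a,\gamma,c_1,c_2)$), the $y_2$-equation of \eqref{eqn:defnF} (with $p_3=1$) becomes a linear ODE $\tilde y_2''+\psi(t)\,\tilde y_2'=\gamma I^2(\alpha-c_2T_2(t))$, where $\psi(t)=\tfrac{d_1}{2}T_1'(t)aI^2+\tfrac{d_2c_2}{2c_1}T_2'(t)\gamma I^2$. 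Since $|T_i'|$ is bounded on $[\frac12,1]$, $\gamma\le S$, $c_2$ lies in a fixed interval, $c_1\ge C$, and $I$ is bounded above and below, there are constants $M,K$ depending only on $a,S,C,d_1,d_2,\alpha,T_1,T_2$ with $\|\psi\|_\infty\le M$ and $|\gamma I^2(\alpha-c_2T_2)|\le K\gamma$ on $[\frac12,1]$. A standard integrating-factor estimate for a linear ODE with vanishing initial data then gives $\|\tilde y_2\|_\infty+\|\tilde y_2'\|_\infty\le C'\gamma$ for some constant $C'$ depending only on the same data. Consequently $e^{-2y_2(t)}=\gamma e^{-2\tilde y_2(t)}\le\gamma e^{2C'S}$ and $(y_2'(t))^2\le(C'\gamma)^2$, so $Q\le\gamma e^{2C'S}+(C'\gamma)^2$. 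Since $Q\ge S$ and the right-hand side is continuous, strictly increasing in $\gamma\ge 0$, and vanishes at $\gamma=0$, there is $\gamma_0>0$ (depending only on the same data, hence independent of $p_2$ and of $c_1\ge C$) with $\gamma\ge\gamma_0$ for every zero of $F$.

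Finally, with $\gamma\in[\gamma_0,S]$ and $c_2$ in its fixed interval, the bound $\tfrac12(d_1aT_1(\tfrac12))^{-1/2}\le I(a,\gamma,c_1,c_2)\le 2(d_1aT_1(\tfrac12))^{-1/2}$ for $c_1\ge C$ shows $|v_1|=|\ln(\sqrt a\,I)|$ and $|v_2|=|\ln(\sqrt\gamma\,I)|$ are bounded by a constant independent of $p_2\in[0,1]$. Taking $V$ to be a ball in $C^1([\frac12,1]:\mathbb{R}^2)$ of radius exceeding this constant and $\overline\gamma=\tfrac12\gamma_0$, all zeroes of $F$ (for any $p_2\in[0,1]$) lie in $\Omega=V\times(\tfrac{\alpha}{\sup T_2},\tfrac{\alpha}{\inf T_2})\times(\overline\gamma,S+1)$, which has the form \eqref{defOmega}. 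The only genuinely nontrivial step is the lower bound on $\gamma$; once $p_1=0$ is used to render $v_1,v_2$ explicit, everything else is bookkeeping with the already-established estimate on $I$.
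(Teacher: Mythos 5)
Your proposal is correct and follows essentially the same route as the paper: with $p_1=0$ the functions $v_1,v_2$ are explicit constants, $\gamma\le S$ comes from the last component of $F$, and the only real issue is ruling out $\gamma$ near $0$, which both arguments handle by integrating the $y_2$-equation of \eqref{eqn:defnF} to show that small $\gamma$ forces $e^{-2y_2}$ and $y_2'$ to be small, contradicting $(1-p_2)\gamma+p_2\sup(e^{-2y_2}+(y_2')^2)=S$. The only difference is presentational: you make the estimate quantitative via a Gronwall bound yielding an explicit $\gamma_0>0$, whereas the paper phrases it as a contradiction along a sequence of zeroes with $\gamma\to 0$.
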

\begin{proof}
 Since $p_1=0$, the zeroes $(v_1,v_2,c_2,\gamma)$ of $F$ satisfy $v_1(t) = \ln(\sqrt{a}I(a, \gamma, c_1, c_2))$ and $v_2(t)=\ln(\sqrt{\gamma}I(a, \gamma, c_1, c_2))$ and we also note that $\gamma\le S$ (irrespective of $p_2\in [0,1]$). It follows that $e^{2v_1}$ and $e^{2v_2}$ are bounded independently of $p_2$.
 
 Furthermore, if there was a sequence of zeroes with 
 $\gamma$ converging to $0$, then $e^{2v_2} = \gamma I(a,\gamma,c_1,c_2)^2$ goes to $0$ uniformly, so by integrating the third equation of \eqref{eqn:defnF}, we see that $y_2'$ converges to $0$ uniformly on $[\frac{1}{2},1]$. Then $y_2(\frac{1}{2}) = -\frac{\ln (\gamma)}{2}\to \infty$ implies that $e^{-2y_2}$ converges 
 to $0$ uniformly.
 
 The fact that $\gamma$, $e^{-2y_2}$ and $y_2'$ all converge to $0$ (uniformly for $e^{-2y_2}$ and $y_2'$)
 contradicts $p_2\gamma+(1-p_2)\sup_{[\frac{1}{2},1]}(e^{-2y_2}+(y_2')^2)=S$. Therefore, $\gamma$ stays away from $0$, and $v_1,v_2$ are bounded. 
\end{proof}

Note that the above 
two results are valid for all values of $c_1\ge C$, and the bounded set $\Omega$ depends on the value of $c_1$. 
The last step in the deformation ($p_2=p_3 = 1$, while $p_1$ goes from $0$ to $1$) 
requires selecting a large enough value of $c_1$, and the arguments are more involved.
\begin{prop}\label{DILE}
There exists a $D>0$ independent of $c_1\ge C$, $\gamma\le S$, $c_2\in  [\frac{\alpha}{\sup T_2},\frac{\alpha}{\inf T_2}]$ and $p_1\in [0,1]$ so that the following holds:
if $F(v_1,v_2,c_2,\gamma)=0$ ($p_2=p_3=1$) and $v_i'\le 0$, $i=1,2$ on $[\frac{1}{2},t^*]$ for some $t^*\le 1$, then 
$\left|v_i'\right|=-v_i'< Dc_1$ on $[\frac{1}{2},t^*]$, $i=1,2$. 
\end{prop}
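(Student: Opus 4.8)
The plan is to establish a Gronwall-type inequality for the quantity $-v_1'-v_2'$ on $[\tfrac12,t^*]$, exploiting the initial condition $v_i'(\tfrac12)=0$ from \eqref{ICFV}. The key structural observation is that, among all the terms in the formula \eqref{TMEFV} for $F_i$, the only one that grows with $c_1$ is $-(d_1-\delta_1^i)\,c_1T_1\,e^{2v_1}$; every other term is either uniformly bounded or carries a factor of $v_1'$ or $v_2'$.

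\textbf{Step 1 (bounding $e^{2v_1}$ and $e^{2v_2}$).} Since $v_i'\le 0$ on $[\tfrac12,t^*]$, we have $e^{2v_i(t)}\le e^{2v_i(1/2)}$ there. By \eqref{ICFV}, $e^{2v_1(1/2)}=a\,I(a,\gamma,c_1,c_2)^2$ and $e^{2v_2(1/2)}=\gamma\,I(a,\gamma,c_1,c_2)^2$, and by the upper bound on $I$ recorded after \eqref{ICH} (valid for $c_1\ge C$, $\gamma\le S$, $c_2\in[\tfrac{\alpha}{\sup T_2},\tfrac{\alpha}{\inf T_2}]$), there is a constant $M$ depending only on $a,S,T_1(\tfrac12),d_1$ such that $e^{2v_1},e^{2v_2}\le M$ on $[\tfrac12,t^*]$.

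\textbf{Step 2 (pointwise estimate on $v_i''$).} Since $p_1\in[0,1]$ we have $|v_i''|=p_1|F_i|\le|F_i|$. In \eqref{TMEFV} use that $T_1,T_1',T_1'',T_2,T_2',T_2''$ are bounded on $[\tfrac12,1]$ (as $T$ extends smoothly), that $c_2\le\tfrac{\alpha}{\inf T_2}$ and $c_1\ge C$ (so every factor $c_2/c_1$ is bounded), and Step 1. Then each of the terms $\tfrac{d_1}{2}T_1''e^{2v_1}$, $\tfrac{d_2c_2}{2c_1}T_2''e^{2v_2}$, $(d_1-\delta_1^i)(d_1-1)e^{2v_1}$, $(d_2-\delta_2^i)(\alpha-c_2T_2)e^{2v_2}$ and $\bigl(\tfrac{d_1}{2}T_1'e^{2v_1}+\tfrac{d_2c_2}{2c_1}T_2'e^{2v_2}\bigr)^2$ is bounded by a constant; the terms $v_i'\bigl(\tfrac{d_1}{2}T_1'e^{2v_1}+\tfrac{d_2c_2}{2c_1}T_2'e^{2v_2}\bigr)$, $d_1T_1'e^{2v_1}v_1'$ and $\tfrac{d_2c_2}{c_1}T_2'e^{2v_2}v_2'$ are bounded by a constant times $|v_1'|+|v_2'|$; and the one remaining term is bounded by $Ac_1$. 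Absorbing the bounded terms into the $c_1$-term (using $c_1\ge C$) gives
\begin{equation*}
 |v_i''(t)|\le Ac_1+B\bigl(|v_1'(t)|+|v_2'(t)|\bigr),\qquad t\in[\tfrac12,t^*],\ i=1,2,
\end{equation*}
for constants $A,B$ depending only on the $C^2$-norms of $T_1,T_2$ on $[\tfrac12,1]$, on $M,\alpha,\inf T_2,C$ and the $d_i$ — in particular independent of $c_1,\gamma,c_2$ and $p_1$.

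\textbf{Step 3 (Gronwall) and the main difficulty.} Put $G:=-v_1'-v_2'=|v_1'|+|v_2'|\ge 0$ on $[\tfrac12,t^*]$. Then $G(\tfrac12)=0$ and $G'=-v_1''-v_2''\le|v_1''|+|v_2''|\le 2Ac_1+2BG$. Setting $u:=G+\tfrac{Ac_1}{B}$ gives $u'\le 2Bu$, hence $u(t)\le u(\tfrac12)e^{2B(t-1/2)}\le\tfrac{Ac_1}{B}e^{B}$ since $t-\tfrac12\le\tfrac12$. Therefore $-v_i'(t)\le G(t)\le\tfrac{Ac_1}{B}(e^B-1)<Dc_1$ on $[\tfrac12,t^*]$ with $D:=1+\tfrac{A}{B}(e^B-1)$, which is independent of $c_1\ge C$, $\gamma\le S$, $c_2\in[\tfrac{\alpha}{\sup T_2},\tfrac{\alpha}{\inf T_2}]$ and $p_1\in[0,1]$, as required. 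The only delicate point is the bookkeeping of Step 2: one must check that no term in \eqref{TMEFV} secretly contributes a power of $c_1$ beyond the explicit linear one — the dangerous-looking ratios $c_2/c_1$ are harmless because $c_2$ ranges in a fixed compact interval and $c_1\ge C$, and the factors $e^{2v_i}$ are controlled by Step 1 precisely because $v_i'\le 0$. Once this is in place, the Gronwall step on the interval $[\tfrac12,t^*]\subseteq[\tfrac12,1]$ of length at most $\tfrac12$ is routine.
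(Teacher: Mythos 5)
Your proof is correct and follows the same route as the paper: you bound $e^{2v_i}$ by its value at $t=\tfrac12$ using $v_i'\le 0$ and the bounds on $I(a,\gamma,c_1,c_2)$, rewrite the zero equations of $F$ as $v''$ equal to a bounded-coefficient linear term in $v'$ plus a forcing term of size $O(c_1)$, and conclude by Gronwall. The paper states exactly this ($v''=p_1Av'+p_1c_1B$ with $A,B$ bounded, then Gronwall); your Steps 2--3 simply make the bookkeeping and the Gronwall computation explicit.
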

\begin{proof}
With the assumption that $v_i'\le 0$, we find that $e^{2v_i}\le e^{2v_i(\frac{1}{2})}\le \frac{4(a+S)}{d_1 aT_1(\frac{1}{2})}$, 
for all $c_1\ge C$, $\gamma\le S$ and $c_2\in  [\frac{\alpha}{\sup T_2},\frac{\alpha}{\inf T_2}]$. Then considering the vector $v=(v_1,v_2)$, the first two equations of $F(v_1,v_2,c_2,\gamma)=0$ can be written as
\begin{align}
v''=p_1Av'+p_1c_1 B,
\end{align}
where $A(t)$ is a continuous $2\times 2$ matrix and $B(t)$ is a continuous vector, both bounded independently of $(c_1,c_2,\gamma)$. The result then follows from Gronwall's inequality. 
\end{proof}
\begin{prop}\label{EFVI}
 For each $\epsilon >0$, there is a $c_1\geq C$ large enough so that on $[\frac{1}{2}, 1-\epsilon]$, $v_1'-v_2' \geq 0$ and $0 \leq -v_i' \leq Dc_1$.
\end{prop}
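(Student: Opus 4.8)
The plan is a continuity (bootstrap) argument in $t$ that plays the sign information against the quantitative estimate of Proposition~\ref{DILE}. Fix $\epsilon>0$ and enlarge $C$ so that $c_1T_1-(d_1-1)\ge\tfrac12c_1T_1(1-\epsilon)>0$ on $[\tfrac12,1-\epsilon]$ (possible since $T_1'\le0$ forces $T_1\ge T_1(1-\epsilon)>0$ there). Let $(v_1,v_2,c_2,\gamma)$ be a zero of $F$ with $p_2=p_3=1$, $p_1\in(0,1]$, $\gamma\le S$, $c_2\in[\tfrac{\alpha}{\sup T_2},\tfrac{\alpha}{\inf T_2}]$, with associated functions $y_1,y_2,h$ (the case $p_1=0$ is trivial, since then $v_i'\equiv0$); note $p_2=1$ forces $(y_2')^2\le S-e^{-2y_2}\le S$ pointwise, so $|y_2'|\le\sqrt S$ and $e^{-2y_2}\le S$ uniformly in $c_1$. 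Let $\tau\in[\tfrac12,1]$ be the largest value for which $v_1'\le0$ and $v_1'-v_2'\ge0$ hold on $[\tfrac12,\tau]$ (so also $v_2'=v_1'-(v_1'-v_2')\le0$ there). Since $v_i'(\tfrac12)=0$, $\tau\ge\tfrac12$; the goal is $\tau>1-\epsilon$, which together with Proposition~\ref{DILE} (applicable on $J:=[\tfrac12,\tau]$, giving $-v_i'<Dc_1$ and hence the $c_1$-uniform bound $e^{2v_i}\le e^{2v_i(1/2)}\le\tfrac{4(a+S)}{d_1aT_1(1/2)}$) is exactly the claim. Assume for contradiction $\tau\le1-\epsilon$; on $J$ we then control $e^{2v_i}$ and $|y_2'|$ uniformly in $c_1$ and have the signs $T_i'\le0$, $v_i'\le0$, $v_1'\ge v_2'$.

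The comparison $w:=v_1-v_2$ is the clean part. Subtracting the two equations of~\eqref{TMEFV} gives, with $P:=\tfrac{d_1}{2}T_1'e^{2v_1}+\tfrac{d_2c_2}{2c_1}T_2'e^{2v_2}\le0$, that $w''=p_1\big[(c_1T_1-(d_1-1))e^{2v_1}+(\alpha-c_2T_2)e^{2v_2}-w'P\big]$. On $J$, $w$ is non-decreasing, so $e^{2v_1}/e^{2v_2}=e^{2w}\ge e^{2w(1/2)}=a/\gamma\ge a/S$, and $w'\ge0$, $P\le0$ give $-w'P\ge0$; hence $w''\ge p_1\big[(c_1T_1-(d_1-1))\tfrac aS-\sup|\alpha-c_2T_2|\big]e^{2v_2}>0$ for $C$ large, so $w'$ increases strictly from $0$ and $v_1'-v_2'>0$ on $(\tfrac12,\tau]$. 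For $y_1'$: the first equation of~\eqref{NEF2S'V} reads $y_1''=(d_1-1-c_1T_1)e^{2v_1}-y_1'P$, and on $[\tfrac12,1-\epsilon]$, whenever $y_1'\le0$, both terms are $\le0$ with the first $<0$ strictly (for any $e^{2v_1}>0$), so $y_1''<0$; with $y_1'(\tfrac12)=0$ this gives $y_1'<0$ on $(\tfrac12,1-\epsilon]$, strictly decreasing, and integrating $y_1''\le-\tfrac12c_1T_1(1-\epsilon)e^{2v_1}$ yields $\int_{1/2}^te^{2v_1(s)}\,\mathrm{d}s\le\tfrac{2|y_1'(t)|}{c_1T_1(1-\epsilon)}$.

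Now fix a $c_1$-uniform upper bound $K$ for $-\tfrac{d_1}{2}T_1'e^{2v_1}-\tfrac{d_2c_2}{2c_1}T_2'e^{2v_2}$ on $J$, and set $t_2:=\inf\{t:\,y_1'(t)=-\tfrac{K}{d_1-1}\}$; one checks $t_2\le\tau$ (otherwise the estimates below already make the defining inequalities of $\tau$ strict on $(\tfrac12,\tau]$, contradicting maximality). Using the displayed bound on $\int e^{2v_1}$ together with the fact that, on $[\tfrac12,t_2)$, $|y_1'|<\tfrac{K}{d_1-1}$ and $|y_2'|\le\sqrt S$ make the derivatives of $\ln h$ and of $v_1$ $c_1$-uniformly bounded, one gets $t_2-\tfrac12=O(1/c_1)$ — too short an interval for $e^{2v_1}$ and $h^2$ to move away from their (bounded below) values at $\tfrac12$ — whence $e^{2v_1}\ge\delta$ on $[\tfrac12,t_2]$ for a $c_1$-uniform $\delta>0$. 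On $[\tfrac12,t_2]$ the final form of~\eqref{TMEFV} then gives $v_1''\le p_1\big[(d_1-1)(d_1-1-c_1T_1)\delta+\tilde C\big]<0$ for $C$ large (the drift terms $-(v_1'+P)P$, $-d_1T_1'e^{2v_1}v_1'$, $-\tfrac{d_2c_2}{c_1}T_2'e^{2v_2}v_2'$ all being $\le0$, and the $T_i''$-terms $c_1$-uniformly bounded), so $v_1'<0$ on $(\tfrac12,t_2]$. On $[t_2,1-\epsilon]$, $y_1'\le-\tfrac{K}{d_1-1}$ (strictly, for $t>t_2$) and $y_2'\le0$ give, via $v_1'=-\tfrac{d_1}{2}T_1'e^{2v_1}-\tfrac{d_2c_2}{2c_1}T_2'e^{2v_2}+(d_1-1)y_1'+d_2y_2'\le K+(d_1-1)y_1'<0$. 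Hence $v_1'<0$ and $v_1'-v_2'>0$, strictly, on $(\tfrac12,\tau]$, so neither defining inequality of $\tau$ can become tight at $\tau\le1-\epsilon$ — contradicting maximality. Therefore $\tau>1-\epsilon$.

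The routine part is the sign-chasing of the drift terms; the genuinely delicate step is the $c_1$-uniform lower bound $e^{2v_1}\ge\delta$ (equivalently a lower bound on $h^2$) on $[\tfrac12,t_2]$. The a priori bound $|v_i'|<Dc_1$ from Proposition~\ref{DILE} is useless here on its own, as it only yields $e^{2v_1}\ge e^{2v_1(1/2)}e^{-Dc_1}$. The resolution is that the $-c_1T_1e^{2v_1}$ term in the $y_1$-equation drives $y_1'$ down to $-\tfrac{K}{d_1-1}$ within time $O(1/c_1)$ — too short for $e^{2v_1}$ or $h^2$ to move appreciably — after which $v_1'\le0$ follows algebraically from $y_1'$ being very negative. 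Verifying that every constant appearing ($K$, $\delta$, $\tilde C$, the implied constant in $O(1/c_1)$) is genuinely independent of $c_1\ge C$, $\gamma\le S$, $c_2$ and $p_1$ — in particular reconciling the upper bound on $e^{2v_1}$ supplied by Proposition~\ref{DILE} with the lower bound needed for the sign of $v_1''$ near $t=\tfrac12$ — is where the real effort goes.
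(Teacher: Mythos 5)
Your bootstrap architecture (maximal interval from $t=\tfrac12$, sign-chasing of the drift terms, $c_1$ chosen large against fixed constants, Proposition~\ref{DILE} supplying the quantitative bound) and your treatment of $w=v_1-v_2$ are essentially the paper's; the estimate you derive for $w''$ is the paper's inequality \eqref{DVI'I}. The divergence, and the genuine gap, is in how you force $v_1'\le 0$. Your two-stage argument hinges on the identity $v_1'=(d_1-1)y_1'+d_2y_2'-\bigl(\tfrac{d_1}{2}T_1'e^{2v_1}+\tfrac{d_2c_2}{2c_1}T_2'e^{2v_2}\bigr)$, i.e.\ on the third equation of \eqref{NEF2S'}. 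That identity is available only when $p_1=1$: for a zero of $F$ with $p_1\in(0,1)$ one has $v_i''=p_1F_i$, the function $y_1$ is merely the auxiliary solution of the first equation of \eqref{NEF2S'V}, and the relation between $v_1'$ and $(y_1',y_2')$ does not propagate (the conserved quantity $R_1=v_1'+P-(d_1y_1'+d_2y_2'-y_1')$ satisfies $R_1'=-PR_1$ only when $p_1=1$; for small $p_1$ one has $v_1'\approx 0$ while $y_1'$ is driven very negative, so the identity fails badly). This breaks both the $c_1$-uniform bound on $v_1'$ on $[\tfrac12,t_2)$ (hence the $O(1/c_1)$ shortness and the lower bound $e^{2v_1}\ge\delta$) and the Stage-2 algebraic conclusion $v_1'\le d_2y_2'\le 0$. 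Since the whole purpose of this proposition is to control zeroes of $F$ along the homotopy $p_1\in[0,1]$ in Lemma~\ref{SABLC} — and you yourself set the scope as $p_1\in(0,1]$ — the argument as written only covers $p_1=1$. A secondary, more repairable issue: you use $y_2'\le 0$ in Stage 2 without proof; for these zeroes it would need a separate maximum-principle argument in the style of Proposition~\ref{BasicEstimates} (which the paper states only for $p_1=p_2=p_3=1$), or you could enlarge the threshold on $y_1'$ to absorb $d_2\sqrt{S}$ using only $|y_2'|\le\sqrt S$.

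The step you single out as ``genuinely delicate'' — a $c_1$-uniform lower bound on $e^{2v_1}$ — is in fact unnecessary, and removing it is exactly what makes the paper's proof uniform in $p_1$. On the bootstrap interval $v_1-v_2$ is non-decreasing, so $e^{2v_2}\le \tfrac{\gamma}{a}e^{2v_1}$; hence in the upper bound for $v_i''$ coming from \eqref{TMEFV} (after discarding the non-positive drift terms) \emph{every} term is proportional to $e^{2v_1}$, and the bracket is a comparison of constants: the dominant term $-(d_1-\delta_1^i)c_1T_1e^{2v_1}$ beats $d_1(d_1-1)+\tfrac{\gamma}{a}\bigl(d_2\sup|\alpha-c_2T_2|+\tfrac{d_2c_2}{2c_1}\sup|T_2''|\bigr)+\tfrac{d_1}{2}\sup|T_1''|$ once $c_1T_1(1-\epsilon)$ exceeds the fixed constant $E$ (this is \eqref{VI'I0}). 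So $v_i''<0$ on the bootstrap interval for both $i$, with no lower bound on $e^{2v_1}$, no auxiliary $y_1$, and no dependence on $p_1$ beyond the harmless overall factor. Replacing your Stage 1/Stage 2 mechanism by this factoring observation repairs the proof and simultaneously disposes of the $y_2'\le0$ issue.
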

\begin{proof}
We define a quantity $E>0$, independent of $c_1$, $c_2$ and $\gamma$ with:
\begin{align}
 E=\sup_{\substack{\gamma\le S,\, c_1\ge C,\\c_2\in [\frac{\alpha}{\sup T_2},\frac{\alpha}{\inf T_2}]}} \left(d_1(d_1-1)+\frac{\gamma}{a}(d_2\sup \left|\alpha-c_2T_2\right| + \frac{d_2c_2}{2c_1}\sup \left|T_2''\right|)
 +\frac{d_1}{2}\sup \left|T_1''\right|+1\right).
\end{align}
For any $\epsilon>0$, choose $c_1$ large enough so that $\min _{[\frac{1}{2},1-\epsilon]}c_1T_1=c_1T_1(1-\epsilon)\ge E$ 
(note that here we use that $T_1>0$, $T_1'<0$ on $(\frac{1}{2}, 1)$). Suppose we have $F(v_1,v_2,c_2,\gamma)=0$ 
for some $p_1\in [0,1]$. Then we claim that on $[\frac{1}{2},1-\epsilon]$, we have $v_1'-v_2'\ge 0$ and $0\le -v_i'\le D c_1$.

If $p_1=0$, the claim obviously holds, so  we assume that $0<p_1\le 1$. 
Now since $v_1'=v_2'=0$ at $t=\frac{1}{2}$, there is a maximal interval $[\frac{1}{2},t^*]\subseteq [\frac{1}{2},1-\epsilon]$ so that 
$v_1'-v_2' \geq 0$ and $0 \leq -v_i' \leq Dc_1$ holds on 
$[\frac{1}{2},t^*]$. We aim to show that $t^* = 1-\epsilon$. 
To see this, we can use the two characterizations of the quantity $E$ 
(as a supremum over $\gamma$, $c_1$, $c_2$ and as a minimum over $t\in [\frac{1}{2}, 1-\epsilon]$), and $t^*<1-\epsilon$, to find that on $[\frac{1}{2},t^*]$,
\begin{equation}\label{DVI'I}
\begin{aligned}
 v_1''-v_2''&\ge -p_1(v_1'-v_2')\left(\tfrac{d_1}{2}T_1'e^{2v_1}+\tfrac{d_2c_2 }{2c_1}T_2'e^{2v_2}\right)+p_1e^{2v_1}\left((c_1T_1-d_1+1)-e^{2v_2(\frac{1}{2})-2v_1(\frac{1}{2})}\sup \left|\alpha-c_2T_2\right|\right)\\
 &\ge -p_1(v_1'-v_2')\left(\tfrac{d_1}{2}T_1'e^{2v_1}+\tfrac{d_2c_2 }{2c_1}T_2'e^{2v_2}\right)+p_1e^{2v_1}\left((E-d_1+1)-\frac{\gamma}{a}\sup \left|\alpha-c_2T_2\right|\right)\\
 &\ge -p_1(v_1'-v_2')\left(\tfrac{d_1}{2}T_1'e^{2v_1}+\tfrac{d_2c_2 }{2c_1}T_2'e^{2v_2}\right)+p_1e^{2v_1}.
\end{aligned}
\end{equation}
Using in addition $0\le -v_i'\le Dc_1$ on $[\frac{1}{2},t^*]$, we also have
\begin{equation}\label{VI'I0}
 \begin{aligned}
 \frac{v_i''}{p_1}&\le (d_1-\delta_1^i)e^{2v_1}(d_1-1-c_1T_1)+(d_2-\delta_2^i)e^{2v_1+2v_2(\frac{1}{2})-2v_1(\frac{1}{2})}\sup \left|\alpha-c_2T_2\right|\\
 & \hspace{7cm}+\tfrac{d_1}{2}\left|T_1''\right|e^{2v_1}+\tfrac{d_2c_2}{2c_1}\left|T_2''\right|e^{2v_1+2v_2(\frac{1}{2})-2v_1(\frac{1}{2})}\\
 &\le e^{2v_1}\left(d_1(d_1-1)-d_1E+e^{2v_2(\frac{1}{2})-2v_1(\frac{1}{2})}(d_2\sup \left|\alpha-c_2T_2\right|+\tfrac{d_2c_2}{2c_1}\left|T_2''\right|)
 +\tfrac{d_1}{2}\left|T_1''\right|\right)<0.
\end{aligned}
\end{equation}
The inequalities \eqref{DVI'I} and \eqref{VI'I0} imply that $t^*>\frac{1}{2}$. 
In fact, these same inequalities show us that $v_i'<0$ and $v_1'-v_2'>0$ at $t^*$. Finally, since $v_i'\le 0$ on $[\frac{1}{2},t^*]$, we can use 
Proposition \ref{DILE} to conclude that $-v_i'(t^*)<Dc_1$. Therefore, we have a contradiction with the definition of $t^*$, unless $t^*=1-\epsilon$ as desired.
\end{proof}
\begin{lemma}\label{SABLC}
 Set $p_2=p_3=1$. There exists a large $(c_1)^*\ge C$ and an $\Omega$ having the form of \eqref{defOmega} so that when $c_1=(c_1)^*$, 
 the zeroes of $F$ (if any) lie in $\Omega$ for any $p_1\in [0,1]$. 
\end{lemma}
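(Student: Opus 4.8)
The plan: fix $\epsilon\in(0,\tfrac12)$ and let $(c_1)^*\ge C$ be the value supplied by Proposition \ref{EFVI} for this $\epsilon$; we take $c_1=(c_1)^*$ throughout. By a maximum principle argument as in Proposition \ref{MPF2} (which uses only the Neumann conditions $y_2'(\tfrac12)=y_2'(1)=0$ and applies for every $p_1\in[0,1]$ since $p_3=1$), any zero of $F$ has $c_2\in(\tfrac{\alpha}{\sup T_2},\tfrac{\alpha}{\inf T_2})$; and $\gamma=e^{-2y_2(\frac12)}\le S$ directly from the constraint defining $F$. So it suffices to produce, uniformly in $p_1\in[0,1]$, a $C^1$-bound on $(v_1,v_2)$ and a positive lower bound on $\gamma$; then an $\Omega$ of the form \eqref{defOmega} containing all zeroes exists.

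On the bulk $[\tfrac12,1-\epsilon]$ the estimates come from Proposition \ref{EFVI}, which gives $v_1'-v_2'\ge 0$ and $0\le -v_i'\le D(c_1)^*$ there, so $e^{2v_i}\le e^{2v_i(\frac12)}$ is bounded above. Writing $h=e^{v_1+y_1}$ one has $\tfrac{h'}{h}=v_1'+y_1'$ with $y_1'=y_2'-(v_1'-v_2')$, and since $|y_2'|\le\sqrt S$ everywhere (immediate from the sup-constraint) and $h(\tfrac12)=I$ is two-sidedly bounded for $c_1\ge C$, the function $h$ is bounded above and below on $[\tfrac12,1-\epsilon]$. Hence $v_1,y_1$ are bounded there, and $v_2=\ln h-y_2$ is bounded there provided $\gamma$ is bounded away from $0$. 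Also, $v_1'\le 0$ and the lower bound on $h$ give $f_1(1-\epsilon)=e^{y_1(1-\epsilon)}\ge f_1(\tfrac12)\,h(1-\epsilon)/h(\tfrac12)\ge\delta_0>0$ uniformly.

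Both remaining points --- the lower bound on $\gamma$, and the bounds over the collar $[1-\epsilon,1]$ --- reduce to one uniform a priori estimate: a positive lower bound for $f_1$ on $[1-\epsilon,1]$, equivalently an upper bound on $e^{2v_1}=h^2/f_1^2$ there, equivalently the absence of collapse at the fixed value $c_1=(c_1)^*$. Granting it: $h=e^{v_1+y_1}$ is then bounded above on the collar (with $y_1$ bounded above there), so $e^{2v_2}=h^2e^{-2y_2}$ is controlled ($y_2$ bounded below by the constraint), the $v_i$-equations become $v_i''=O(1)$ there, and $v_i',v_i$ are bounded given their bulk values at $t=1-\epsilon$. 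For $\gamma$: if $\gamma_n\to 0$ along some sequence of zeroes then $e^{-2y_2}\le e^{\sqrt S}\gamma_n\to 0$ uniformly on $[\tfrac12,1]$, so $e^{2v_2}=h^2e^{-2y_2}\to 0$ uniformly; feeding this and the bound on $e^{2v_1}$ into the $y_2$-equation and running Gronwall from $y_2'(\tfrac12)=0$ forces $|y_2'|\to 0$ uniformly on $[\tfrac12,1]$, which contradicts $\sup_{[\frac12,1]}(e^{-2y_2}+(y_2')^2)=S$. Thus $\gamma\ge\overline\gamma>0$, and assembling the bulk and collar bounds produces the ball $V$ and the constant $\overline\gamma$.

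The main obstacle is exactly this collar estimate, where the coercivity of Proposition \ref{EFVI} is gone because $c_1T_1\to 0$ as $t\to 1$. The intended route exploits the smoothness of $T$ at the singular orbit ($T_1(1)=T_1'(1)=0$, $T_1''(1)=2T_0(1)^2>0$): for $t$ near $1$, the second equation of \eqref{NEF2S} shows $y_1''=\tfrac{f_1''}{f_1}-\tfrac{(f_1')^2}{f_1^2}$ becomes large and positive whenever $f_1$ is small (the term $h^2(d_1-1)/f_1^2$ dominating), which drives $y_1'=f_1'/f_1$ upward and prevents $f_1$ from dropping below $\delta_0$ on the collar --- any interior minimum of $\ln f_1$ there would have to sit where $d_1-1-c_1T_1\ge 0$, i.e. in a window of width $O(c_1^{-1/2})$ about $t=1$, and at such a point $y_1''>0$, a contradiction. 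An equivalent formulation is to build a supersolution for $w=e^{2v_1}(1-t)^2=h^2$ using the favourably-signed terms $-\tfrac{d_1}{2}T_1''e^{2v_1}$ and $-\tfrac{d_1^2}{4}(T_1')^2e^{4v_1}$ of the $v_1$-equation. Making this precise uniformly in $p_1\in[0,1]$, i.e. controlling the coupling between $h$ and $f_1$ near $t=1$, is where the real work lies.
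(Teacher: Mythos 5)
Your setup (fixing $\epsilon$, invoking Propositions \ref{DILE} and \ref{EFVI} for the bulk $[\frac12,1-\epsilon]$, the maximum-principle bound on $c_2$, and the argument that $\gamma\to 0$ would force $e^{2v_2}\to 0$ uniformly and hence $e^{-2y_2}+(y_2')^2\to 0$, contradicting the constraint) matches the paper. But the heart of the lemma is the collar estimate on $[1-\epsilon,1]$, and there you have a genuine gap: you reduce everything to ``a positive lower bound for $f_1$ on $[1-\epsilon,1]$'' and then explicitly leave that estimate unproved (``where the real work lies''). Moreover the reduction itself is off target. First, a lower bound on $f_1$ is not equivalent to an upper bound on $e^{2v_1}=h^2/f_1^2$, and neither is what the lemma actually requires: since the zeroes of $F$ are the tuples $(v_1,v_2,c_2,\gamma)$, what you need is a $C^1([\frac12,1])$ bound on $(v_1,v_2)$ together with $\gamma\ge\overline\gamma>0$, uniformly in $p_1$; no lower bound on $f_1$ is needed (indeed the eventual limiting solution has $f_1(1)=0$, so such a bound is exactly what must fail as $c_1$ is lowered). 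Second, your sketched mechanism is not sound: $\ln f_1$ is monotone decreasing, so ruling out interior minima says nothing about how small $f_1$ gets near $t=1$; and for $p_1\in(0,1)$ the zeroes satisfy $v_i''=p_1F_i$ rather than the geometric system, so identities such as the second equation of \eqref{NEF2S} or \eqref{NEF2S'H2} (on which your ``$y_1''>0$ at small $f_1$'' argument rests) are not available uniformly along the homotopy --- a point you acknowledge but do not resolve.

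The paper's collar argument is different and softer, and avoids $f_1$ entirely: it propagates the a priori bound $-2Dc_1\le v_i'\le 1$ on $[1-\epsilon,1]$ by a continuity argument. On the maximal subinterval $[1-\epsilon,t^*]$ where this holds, the bulk bound at $t=1-\epsilon$ plus $v_i'\le 1$ give $e^{2v_i}\le \frac{4(a+S)}{d_1aT_1(\frac12)}e^{2(t^*-(1-\epsilon))}$, whence $v_i''\le G_1$ and $-v_i''\le G_2(1+c_1)$ with $G_1,G_2$ independent of $c_1,c_2,\gamma$ (here the sign $T_i'\le 0$ is used, not any coercivity from $c_1T_1$). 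Integrating over the collar of width $\epsilon$ then yields $-Dc_1-G_2(1+c_1)\epsilon\le v_i'(t^*)\le G_1\epsilon$, so for $\epsilon$ small (with the matching $c_1$ from Proposition \ref{EFVI}) the bound is strict at $t^*$ and hence persists to $t^*=1$. With $v_i'$ bounded on all of $[\frac12,1]$, the $\gamma$ lower bound follows as you argued, and the $C^1$ bound on $(v_1,v_2)$ is immediate. In short: the step you defer is precisely the step the lemma is about, and the route you propose for it both proves the wrong statement and relies on identities that are unavailable for intermediate $p_1$; the shortness of the collar, not a non-collapse estimate for $f_1$, is what closes the argument.
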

\begin{proof}
We need to find a $c_1$ and an $\Omega$ having the form of \eqref{defOmega} that contains all of the zeroes of $F$ for this choice of $c_1$ and any $p_1\in [0,1]$. Assume to the contrary that there is no such $\Omega$, 
so we have a sequence of zeroes of $F$ that cannot be appropriately contained. First note that since we have zeroes, we know that $\gamma\le S<S+1$, and it is also clear that $c_2\in  (\frac{\alpha}{\sup T_2},\frac{\alpha}{\inf T_2})$. Therefore, $\gamma\to 0$, or one of $v_1$ and $v_2$ become unbounded in $C^1[\frac{1}{2},1]$. 
In fact, if $\gamma\to 0$, then by construction, $v_2(\frac{1}{2})\to -\infty$, so we get unboundedness of $(v_1, v_2)\in C^1([\frac{1}{2},1]:\mathbb{R}^2)$ in either case. We will find a large $c_1>0$ so that this does not occur.

First, by Proposition \ref{DILE} and Proposition \ref{EFVI}, 
for $\epsilon$ small enough and $c_1$ large enough, we have $0\le -v_i'\le Dc_1$ on $[\frac{1}{2},1-\epsilon]$. We also claim that 
\begin{equation}\label{EFV'L}
-2Dc_1\le v_i'(t)\le 1
\end{equation}
for $t\in[1-\epsilon,1]$, provided $c_1$ is made larger, if necessary.
Certainly \eqref{EFV'L} holds for some interval $[1-\epsilon,t^*]$ for some $t^*\in (1-\epsilon,1]$. On such an interval, the estimates for 
$v_i'$ on $[\frac{1}{2},1-\epsilon]$ and $[1-\epsilon,t^*]$ imply that
\begin{align}\label{EFE2VL}
e^{2v_i}\le I(a,\gamma,c_1,c_2)^2 \max\{a,\gamma\}e^{2(t^*-(1-\epsilon))}\le \frac{4(a+S)}{d_1aT_1(\frac{1}{2})}e^{2(t^*-(1-\epsilon))}
\end{align}
on $[1-\epsilon,t^*]$. 
This estimate \eqref{EFE2VL}, alongside the fact that \eqref{EFV'L} holds for $t\in [1-\epsilon,t^*]$ implies that
\begin{align}
 v_i''(t)\le G_1, \ t\in [1-\epsilon,t^*],
\end{align}
for some $G_1$ independent of $c_1,c_2$ and $\gamma\le S$ since $T_i'\le 0$.
On the other hand, \eqref{EFE2VL} and \eqref{EFV'L} also imply 
\begin{align}
 -v_i''(t)\le G_2(1+c_1),\ t\in [1-\epsilon,t^*],
\end{align}
for some $G_2$ independent of $c_1,c_2$ and $\gamma$. Since $-Dc_1\le v_i'\le 0$ at $t=1-\epsilon$, the above estimates on $v_i''$ imply
\begin{align}
-Dc_1-G_2(1+c_1)\epsilon \le v_i'(t^*)\le G_1 \epsilon.
\end{align}

Since $G_1$, $G_2$ are independent of $c_1$ and $\gamma$, we can choose $\epsilon>0$ small enough (along with the corresponding value of $c_1$ provided 
by Proposition \ref{EFVI}) so that
\begin{align}
-2Dc_1< v_i'(t^*)<1,
\end{align}
which is a contradiction with the definition of $t^*$ being the last time \eqref{EFV'L} holds, unless $t^*=1$. Thus we conclude that $t^*=1$. 

We therefore find that the bounds \eqref{EFV'L} on $v_i'$ hold on $[\frac{1}{2}, 1]$, for some large enough $c_1$. We now fix such a value of $c_1$, 
so now, $v_i'$ is bounded independently of $p_1,c_2$ and $\gamma$. Suppose $\gamma\to 0$. Then the bounds on $v_2'$ and the initial conditions 
for $v_2$ imply that $e^{2v_2}$ converges to $0$ uniformly on $[\frac{1}{2},1]$, and $e^{2v_1}$ remains bounded. The definition of $y_2$ in \eqref{eqn:defnF} then implies that $e^{-2y_2}+(y_2')^2$ converges to $0$ uniformly, which is a contradiction.

Therefore $\gamma$ is bounded away from $0$. Then, since $a$ is fixed, $\gamma\le S$ and $v_i'$ is bounded, we 
can recover $C^1$ bounds on $v_i$, independently of $p_1$, concluding the proof.
\end{proof}
\section{Convergence}
With Theorem \ref{TESS} in hand, the proof of Theorem \ref{RicciExistence} will be complete if we can take a limit of the sequence of solutions, to arrive at 
a metric which satisfies the hypothesis of Theorem \ref{RicciRegularity} (with \eqref{StrongRegularity} instead of \eqref{WeakRegularity}). This is accomplished by Theorem \ref{LOKS} below. Although the proof is technical, the main idea
is to show that the solutions must remain bounded away from $t=1$, so we get smooth subconvergence away from the singular orbit. 
\begin{theorem}\label{LOKS}
 Suppose $(h^{(k)},y_1^{(k)},y_2^{(k)})$ is a sequence of functions satisfying \eqref{NEF2S'} (with $\beta = 0$) for some bounded sequence of constants $c_1^{(k)}>\frac{d_1-1}{T_1(\frac{1}{2})}$ and $c_2^{(k)}\in (\frac{\alpha}{\sup T_2},\frac{\alpha}{\inf T_2})$ and:
 \begin{itemize}
  \item $(y_2^{(k)})'(1)=0$ (smoothness condition for $f_2$),
  \item $(y_i^{(k)})'\le 0$,
  \item $(h^{(k)})'(\frac{1}{2})=(y_1^{(k)})'(\frac{1}{2})=(y_2^{(k)})'(\frac{1}{2})=0$ (evenness around $\frac{1}{2}$),
  \item $e^{-2y^{(k)}_1}(\frac{1}{2})=a$, $\displaystyle\sup_{t\in [\frac{1}{2},1]} e^{-2y^{(k)}_2(t)}+((y_2^{(k)})'(t))^2=S$,
  \item $(h^{(k)},y_1^{(k)},y_2^{(k)})$ is unbounded in $C^0[\frac{1}{2},1]\times C^1[\frac{1}{2},1]\times C^1[\frac{1}{2},1]$. 
 \end{itemize}
 Then there exists $(h,y_1,y_2)$ satisfying \eqref{NEF2S'} on $[\frac{1}{2},1)$, 
 such that $\left|y_2'\right|$ is bounded, and $y_i'\le 0$, and so that $(h,y_1,y_1',y_2,y_2')$ is unbounded about $1$. 
 Also, $e^{-2y_1}(\frac{1}{2})=a$, $\sup_{t\in [\frac{1}{2},1)} e^{-2y_2(t)}+(y_2'(t))^2=S$. 
\end{theorem}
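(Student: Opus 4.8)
The plan is to extract from the sequence a subsequence converging in $C^\infty_{loc}([\tfrac12,1))$ to a solution of \eqref{NEF2S'}, and to verify that the failure of compactness is confined to the singular orbit $t=1$. First I would pass to a subsequence so that the bounded constants converge, $c_1^{(k)}\to c_1\ge\tfrac{d_1-1}{T_1(1/2)}$ and $c_2^{(k)}\to c_2\in[\tfrac{\alpha}{\sup T_2},\tfrac{\alpha}{\inf T_2}]$, and so that $\gamma_k:=e^{-2y_2^{(k)}(1/2)}\to\gamma\in[0,S]$. One must rule out $\gamma=0$: if $\gamma_k\to0$ then, since $|(y_2^{(k)})'|\le\sqrt S$, $y_2^{(k)}\to+\infty$ uniformly on $[\tfrac12,1]$, hence $e^{-2y_2^{(k)}}\to0$ uniformly and so $\sup((y_2^{(k)})')^2\to S$; but a maximum principle argument for the second equation of \eqref{NEF2S'} in the spirit of Propositions \ref{MPF2} and \ref{BasicEstimates}, valid on any subinterval on which $h^{(k)}$ stays bounded, shows that $(y_2^{(k)})'$ cannot drop to near $-\sqrt S$ and yet return to $0$ at $t=1$, a contradiction. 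Thus $\gamma>0$ and $y_2^{(k)}(1/2)$ is bounded; combined with $e^{-2y_2^{(k)}}\le S$ (hence $y_2^{(k)}\ge-\tfrac12\ln S$) and $|(y_2^{(k)})'|\le\sqrt S$, the functions $y_2^{(k)}$ and $(y_2^{(k)})'$ are already bounded on all of $[\tfrac12,1]$. Also $e^{-2y_1^{(k)}}\ge a$ since $y_1^{(k)}$ is decreasing, and the fourth equation of \eqref{NEF2S'} expresses $h^{(k)}(1/2)$ in terms of $c_1^{(k)},c_2^{(k)},\gamma_k$, keeping it in a compact subset of $(0,\infty)$ (the only threat, $h^{(k)}(1/2)\to\infty$, would force $c_1^{(k)}\to\tfrac{d_1-1}{T_1(1/2)}$ and $c_2^{(k)}\to\tfrac{\alpha}{\sup T_2}$ simultaneously, which is ruled out as in the proof of Proposition \ref{CSUBS} using $(y_1^{(k)})'(1)\le0$).

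The crucial step — geometrically, that the collapse of $f_1$ happens only at $t=1$ — is a uniform bound on $e^{-2y_1^{(k)}}$, and hence on $h^{(k)}$, $(h^{(k)})^{-1}$ and $(y_1^{(k)})'$, on each interval $[\tfrac12,1-\epsilon]$. Write $D^{(k)}=d_1(d_1-1-c_1^{(k)}T_1)e^{-2y_1^{(k)}}+d_2(\alpha-c_2^{(k)}T_2)e^{-2y_2^{(k)}}$ for the denominator in \eqref{NEF2S'H2}. Since $(h^{(k)})^2>0$, on the set where $D^{(k)}<0$ the numerator $d_1(d_1-1)((y_1^{(k)})')^2+2d_1d_2(y_1^{(k)})'(y_2^{(k)})'+d_2(d_2-1)((y_2^{(k)})')^2-c_1^{(k)}$ is negative as well; reading this as a quadratic inequality in $(y_1^{(k)})'$ and using $|(y_2^{(k)})'|\le\sqrt S$ together with the bound on $c_1^{(k)}$ yields a uniform bound $|(y_1^{(k)})'|\le L$ there. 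Since $D^{(k)}(1/2)<0$ (because $c_1^{(k)}T_1(1/2)>d_1-1$ and $c_2^{(k)}T_2(1/2)>\alpha$), it follows that $e^{-2y_1^{(k)}}\le a\,e^{2L(t-1/2)}$ up to the first zero $s_k$ of $D^{(k)}$. On $(s_k,1)$ one has $D^{(k)}>0$, which requires that one of $c_1^{(k)}T_1<d_1-1$, $c_2^{(k)}T_2<\alpha$ holds; there I would run a comparison/maximum principle argument for $v_i^{(k)}=\ln h^{(k)}-y_i^{(k)}$ exactly as in Propositions \ref{DILE} and \ref{EFVI}, using $(y_i^{(k)})'\le0$, $T_i'\le0$, and crucially that $T_1'<0$ on $(\tfrac12,1)$ forces $T_1'\le-m_\epsilon<0$ on $[\tfrac12,1-\epsilon]$, to bound $e^{-2y_1^{(k)}}$ uniformly on $[\tfrac12,1-\epsilon]$. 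Together with the controlled data at $t=1/2$, bootstrapping \eqref{NEF2S'} then gives uniform $C^m([\tfrac12,1-\epsilon])$ bounds for every $m$.

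Arzela--Ascoli and a diagonal argument over $\epsilon=1/n$ then produce a subsequence with $(h^{(k)},y_1^{(k)},y_2^{(k)})\to(h,y_1,y_2)$ in $C^\infty_{loc}([\tfrac12,1))$, and $(h,y_1,y_2)$ solves \eqref{NEF2S'} on $[\tfrac12,1)$ and inherits $y_i'\le0$, $|y_2'|\le\sqrt S$, the evenness conditions at $\tfrac12$, $e^{-2y_1}(1/2)=a$, and $e^{-2y_2}+(y_2')^2\le S$ on $[\tfrac12,1)$. For the equality $\sup_{[\frac12,1)}(e^{-2y_2}+(y_2')^2)=S$: for each $k$ the supremum over $[\tfrac12,1]$ is attained at some $t_k$; if a subsequence has $t_k\to t_\infty<1$, passing to the limit at $t_\infty$ gives the value $S$, while if $t_k\to1$ one uses that $y_2^{(k)}$ is $\sqrt S$-Lipschitz, so $y_2^{(k)}(1)\to\lim_{t\to1}y_2(t)=:y_2(1)$, whence $e^{-2y_2^{(k)}}(t_k)\to e^{-2y_2(1)}=\sup_{[\frac12,1)}e^{-2y_2}$, and a finer analysis of $(y_2^{(k)})'$ near $t=1$ (the mechanism of Lemma \ref{1OSC'}) forces $e^{-2y_2(1)}=S$. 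Finally, were $(h,y_1,y_1',y_2,y_2')$ bounded on $[\tfrac12,1)$, then $(h,y_1,y_2)$ would extend across $t=1$ as a smooth solution of \eqref{NEF2S'}; since the initial data of the sequence at $t=1/2$ and the parameters $c_i^{(k)},\gamma_k$ converge to those of the limit, continuous dependence would then force $(h^{(k)},y_1^{(k)},y_2^{(k)})$ to exist and be uniformly bounded on $[\tfrac12,1]$ for large $k$, contradicting the hypothesis; hence $(h,y_1,y_1',y_2,y_2')$ is unbounded about $1$.

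The main obstacle is the uniform local bound of the second paragraph, that is, making precise and uniform in $k$ the assertion that the collapse of $f_1^{(k)}$ cannot occur before $t=1$. The quadratic-in-$(y_1^{(k)})'$ observation dispenses for free with the region where the denominator of \eqref{NEF2S'H2} is negative, but on its complement — where the metrics genuinely begin to degenerate — one must reproduce, uniformly in the now merely bounded sequence $c_1^{(k)}$, the shooting and comparison estimates of Section \ref{AASOS}, and this is the technical heart of the argument.
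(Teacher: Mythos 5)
Your overall strategy — uniform bounds on compact subintervals of $[\tfrac12,1)$, Arzel\`a--Ascoli plus a diagonal argument, and a continuous-dependence argument to show the limit really does blow up at $t=1$ — is indeed the paper's strategy, and the observation in your second paragraph that the numerator of \eqref{NEF2S'H2} is a quadratic in $(y_1^{(k)})'$, so negativity of the denominator yields a uniform bound on $(y_1^{(k)})'$, is a genuinely nice remark. However, as you yourself signal in the final paragraph, the execution has gaps at exactly the places you flag as ``the technical heart,'' and the gaps are real, not merely details to fill in.

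\textbf{The order of arguments matters, and yours cannot be made to work in the proposed order.} You try to rule out $\gamma_k\to0$ at the very beginning, via a maximum-principle argument for the second equation of \eqref{NEF2S'}, but you admit this is only ``valid on any subinterval on which $h^{(k)}$ stays bounded.'' That caveat is fatal: at that point in the argument no bound on $h^{(k)}$ (in any norm, on any subinterval reaching up to $t=1$) has been established, and in fact the unboundedness of $h^{(k)}$ near $t=1$ is exactly the behaviour you are trying to pin down. In the paper, this step is Lemma \ref{GDNGT0} and it comes \emph{last} among the a priori estimates, because it requires the $L^4$ (hence $L^2$) bound on $h^{(k)}$ of Lemma \ref{L2BH}: with that bound, $\lVert (h^{(k)})^2(\alpha-c_2^{(k)}T_2)e^{-2y_2^{(k)}}\rVert_{L^1}\to0$, and one can integrate $(y_2^{(k)})''$ from $t^{(k)}$ to $1$ to get a contradiction with $(y_2^{(k)})'(1)=0$. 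The $L^4$ bound itself relies on the delicate blow-up rate estimate $-(y_1^{(k)})'(t)\le\frac{1+\delta}{1-t}$ of Lemma \ref{BY1D}, which in turn relies on Lemma \ref{IEY10} ($e^{2y_1^{(k)}-2y_2^{(k)}}\to0$ near $t=1$, uniformly in $k$). None of these three estimates appears in your proposal, and they are the essential input for the step you put first.

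\textbf{The uniform interior bound cannot be obtained from the Section \ref{AASOS} machinery.} You propose to bound $e^{-2y_1^{(k)}}$ on $[\tfrac12,1-\epsilon]$ by ``running the comparison/maximum principle argument for $v_i^{(k)}$ exactly as in Propositions \ref{DILE} and \ref{EFVI}.'' But Proposition \ref{EFVI} hinges on choosing $c_1$ \emph{large}, specifically large enough that $c_1T_1(1-\epsilon)\ge E$ for a fixed constant $E$; here $c_1^{(k)}$ is only known to be bounded, and (since the sequence was produced by lowering $c_1$ from $c_1^*$) there is no reason it is large. The paper's Lemma \ref{UBI} bypasses this by a different mechanism: if $(y_1^{(k)})'(t^{(k)})\to-\infty$ for interior times $t^{(k)}\in[\tfrac12+\epsilon,1-\epsilon]$, one uses \eqref{NEF2S'H2} to show $\frac{(h^{(k)})^2 e^{-2y_1^{(k)}}}{((y_1^{(k)})')^2}\to\frac{d_1-1}{d_1-1-\hat c_1 T_1}$, after which the ODE \eqref{EFKY1} has a dominant (correctly signed) cubic term in $(y_1^{(k)})'$ that forces finite-time blow-up \emph{before} $1-\epsilon/4$, a contradiction. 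This is a strictly interior blow-up argument, not a shooting/comparison argument, and there is no ``large $c_1$'' threshold. Your quadratic observation bounds $(y_1^{(k)})'$ only on the set where the denominator $D^{(k)}$ of \eqref{NEF2S'H2} is negative; the set where $D^{(k)}>0$ is precisely where the collapse mechanism lives, and your proposal for that region does not close.

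\textbf{Ruling out $h^{(k)}(\tfrac12)\to\infty$ also needs its own argument.} You refer to Proposition \ref{CSUBS}, but that proposition goes the other way: it assumes uniform $C^1$ bounds on $(v_1^{(k)},v_2^{(k)})$ and derives a contradiction from $c_1^{(k)}\to\frac{d_1-1}{T_1(1/2)}$; it cannot be invoked to \emph{produce} bounds. Also, from \eqref{ICH}, blow-up of $h^{(k)}(\tfrac12)$ forces $d_1-1-c_1^{(k)}T_1(\tfrac12)\to0$ \emph{and} $\gamma^{(k)}(\alpha-c_2^{(k)}T_2(\tfrac12))\to0$, the latter admitting $\gamma^{(k)}\to0$ as an alternative to $c_2^{(k)}\to\alpha/\sup T_2$, so the dichotomy you state is incomplete. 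The paper's Lemma \ref{BATS} handles this via a careful first-order differential inequality for $v_1'$ and $v_2'$ using the quantity $K=-\tfrac{d_1}{2}T_1'e^{2v_1}-\tfrac{d_2c_2}{2c_1}T_2'e^{2v_2}$; notably this is the one place in the whole proof that uses the hypothesis $T_1''(\tfrac12)<0$, which does not appear in your proposal at all.

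In short: you have the right skeleton, and the quadratic-in-$(y_1')'$ remark is worth keeping, but the actual a priori estimate chain (Lemmas \ref{BATS} $\to$ \ref{UBI} $\to$ \ref{IEY10} $\to$ \ref{BY1D} $\to$ \ref{L2BH} $\to$ \ref{GDNGT0}) is both reordered and substantively different from what you describe, and the three steps you would need to reproduce (initial-time bound on $h^{(k)}$, interior bound on $(y_1^{(k)})'$, and exclusion of $\gamma_k\to0$) are each left with a genuine hole.
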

By passing to a subsequence, we can assume that $c_1^{(k)}$, $c_2^{(k)}$ and $\gamma^{(k)}=e^{-2y_2^{(k)}(\frac{1}{2})}$ depend monotonically on $k$, 
so that all three sequences are convergent (we write $\hat{c}_1=\lim_{k\to \infty}c_1^{(k)}$). 
The proof of Theorem \ref{LOKS} is essentially Lemma \ref{LLTL}, which is possible to prove after all of the other lemmas in this section. 
\begin{lemma}\label{BATS}
Suppose $(h^{(k)},y_1^{(k)},y_2^{(k)})$ satisfies the hypothesis of Theorem \ref{LOKS}. 
Then for each sufficiently small $\epsilon>0$, $(h^{(k)},y_1^{(k)}, (y_2^{(k)})')$ 
is uniformly bounded in $C^0[\frac{1}{2},\frac{1}{2}+\epsilon]\times C^1[\frac{1}{2},\frac{1}{2}+\epsilon]\times C^0[\frac{1}{2},\frac{1}{2}+\epsilon]$.
\end{lemma}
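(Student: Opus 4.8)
The plan is a standard ODE continuation (barrier) argument on a short interval to the right of $t=\tfrac12$, where all the coefficients of the system \eqref{NEF2S'} turn out to be uniformly controlled in $k$. Two facts are immediate from the hypotheses. Since $e^{-2y_2^{(k)}(t)}+((y_2^{(k)})'(t))^2\le S$ for all $t\in[\tfrac12,1]$, we get $e^{-2y_2^{(k)}}\le S$ and $|(y_2^{(k)})'|\le \sqrt S$ on the whole of $[\tfrac12,1]$; in particular the asserted $C^0$-bound on $(y_2^{(k)})'$ holds on any subinterval with no smallness needed. Secondly, the data at $t=\tfrac12$ is uniform in $k$: $y_1^{(k)}(\tfrac12)=-\tfrac12\ln a$ is fixed, $(y_1^{(k)})'(\tfrac12)=(h^{(k)})'(\tfrac12)=0$, and by \eqref{ICH} we have $h^{(k)}(\tfrac12)=I(a,\gamma^{(k)},c_1^{(k)},c_2^{(k)})$. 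Using $c_1^{(k)}\ge\tfrac{d_1-1}{T_1(\frac12)}>0$ together with the upper bounds on $c_1^{(k)}$, on $c_2^{(k)}$, and $\gamma^{(k)}\le S$, the denominator of \eqref{ICH} is bounded above; and it stays bounded away from $0$ because $c_1^{(k)}$ stays a definite distance from $\tfrac{d_1-1}{T_1(\frac12)}$. Hence there are constants $0<\delta_0\le H_0$, independent of $k$, with $\delta_0\le h^{(k)}(\tfrac12)\le H_0$.

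Now let $t_k^*\in(\tfrac12,1]$ be the largest value such that, on $[\tfrac12,t_k^*]$, all three of $\tfrac{\delta_0}{2}\le h^{(k)}\le 2H_0$, $y_1^{(k)}\ge-\tfrac12\ln a-1$, and $|(y_1^{(k)})'|\le 1$ hold. On this interval every quantity entering the right-hand sides of the first and third equations of \eqref{NEF2S'} is bounded by a constant $L$ depending only on $a$, $S$, $\delta_0$, $H_0$, the bounds on $c_1^{(k)},c_2^{(k)}$, and $\sup_{[\frac12,1]}(|T_1|+|T_1'|+|T_2|+|T_2'|)$: indeed $e^{-2y_1^{(k)}}\le a\,e^{2}$, $e^{-2y_2^{(k)}}\le S$, $|(y_1^{(k)})'|\le 1$, $|(y_2^{(k)})'|\le\sqrt S$, $h^{(k)}\le 2H_0$, and $1/(h^{(k)})^2\le 4/\delta_0^2$ — and crucially, since $\beta=0$, the only exponentials occurring are $e^{-2y_1^{(k)}}$ and $e^{-2y_2^{(k)}}$, both bounded above (there are no ``bad'' factors $e^{2y_1}$ or $e^{2y_1-4y_2}$). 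It follows that $|(y_1^{(k)})''|\le L$ and $\bigl|(h^{(k)})'/(h^{(k)})^3\bigr|\le L$, hence $|(h^{(k)})'|\le(2H_0)^3L$ and $\bigl|(\ln h^{(k)})'\bigr|\le(2H_0)^2L$, all on $[\tfrac12,t_k^*]$.

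Integrating these bounds from $t=\tfrac12$ and using the uniform initial data, there is an $\epsilon_0>0$, depending only on $L,\delta_0,H_0,a$ (hence independent of $k$), so that on $[\tfrac12,\tfrac12+\epsilon_0]$ the three inequalities defining $t_k^*$ all hold \emph{strictly}; this forces $t_k^*\ge\tfrac12+\epsilon_0$. Therefore, on $[\tfrac12,\tfrac12+\epsilon]$ for any $\epsilon\le\epsilon_0$ we have $|y_1^{(k)}|\le\tfrac12|\ln a|+1$, $|(y_1^{(k)})'|\le 1$, and $h^{(k)}\le 2H_0$ uniformly in $k$, which together with $|(y_2^{(k)})'|\le\sqrt S$ is the assertion of the lemma.

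I expect the only delicate point to be the two-sided control of $h^{(k)}(\tfrac12)$ via \eqref{ICH}: the lower bound uses only that $c_1^{(k)}$ is bounded (above, and trivially below by the positive number $\tfrac{d_1-1}{T_1(\frac12)}$) together with $\gamma^{(k)}\le S$ and the upper bound on $c_2^{(k)}$, whereas the upper bound on $h^{(k)}(\tfrac12)$ needs $c_1^{(k)}$ to stay away from $\tfrac{d_1-1}{T_1(\frac12)}$ — which holds for the sequences we apply this to, since after passing to the convergent subsequence $\hat c_1=\lim_k c_1^{(k)}>\tfrac{d_1-1}{T_1(\frac12)}$. Once $h^{(k)}(\tfrac12)$ and the coefficients of \eqref{NEF2S'} are seen to be controlled on the barrier region, the continuation step is entirely routine.
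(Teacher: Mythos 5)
Your continuation/barrier argument on $[\tfrac12,\tfrac12+\epsilon]$ is fine, but it is only the routine half of the lemma (the paper dispatches it in one sentence as ``standard ODE results, provided $h^{(k)}(\tfrac12)$ is bounded''). The genuine gap is exactly where you flag your ``delicate point'': the uniform upper bound on $h^{(k)}(\tfrac12)$. You justify it by asserting that $c_1^{(k)}$ stays a definite distance from $\tfrac{d_1-1}{T_1(\frac12)}$, i.e.\ that $\hat c_1=\lim_k c_1^{(k)}>\tfrac{d_1-1}{T_1(\frac12)}$. Nothing in the hypotheses of Theorem \ref{LOKS} gives this: they only give $c_1^{(k)}>\tfrac{d_1-1}{T_1(\frac12)}$ with the sequence bounded, and the sequence actually fed into this lemma (from Theorem \ref{TESS}, produced via Proposition \ref{CSUBS}) comes with no guarantee that its limit avoids the endpoint $\tfrac{d_1-1}{T_1(\frac12)}$. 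If $c_1^{(k)}\to\tfrac{d_1-1}{T_1(\frac12)}$ and simultaneously $\gamma^{(k)}(\alpha-c_2^{(k)}T_2(\tfrac12))\to0$ (which is possible, since $\gamma^{(k)}$ may tend to $0$ or $c_2^{(k)}$ may tend to $\tfrac{\alpha}{\sup T_2}$ and $T_2(\tfrac12)=\sup T_2$), then both terms in the denominator of \eqref{ICH} vanish in the limit and $h^{(k)}(\tfrac12)\to\infty$ is entirely consistent with your stated hypotheses. So your proof assumes away the only nontrivial scenario.

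Ruling out $h^{(k)}(\tfrac12)\to\infty$ is the actual content of the lemma, and it is not soft: the paper's proof sets $K=-\tfrac{d_1}{2}T_1'e^{2v_1}-\tfrac{d_2c_2}{2c_1}T_2'e^{2v_2}$, uses the curvature hypothesis $T_1''(\tfrac12)<0$ (together with $T_2''(\tfrac12)\le0$ and the a priori bound $e^{2y_1-2y_2}\le \gamma e^{\sqrt S}/a$) to get a quantitative lower bound on the forcing terms near $t=\tfrac12$ on a $k$-independent interval $[\tfrac12,t_1]$, proves the invariant inequality $v_i'\ge(1-2\delta)K$ there via \eqref{TMEFV} and \eqref{eqn:2limits}, and then observes that this first-order differential inequality for $v_1$ (which contains $e^{2v_1}$) forces $v_1(\tfrac12)=\ln h^{(k)}(\tfrac12)+\tfrac12\ln a$ to be bounded independently of $k$ --- contradicting $h^{(k)}(\tfrac12)\to\infty$. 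Note that your argument never uses $T_1''(\tfrac12)<0$, which is a strong signal that the hard case has been skipped: that hypothesis of Theorem \ref{RicciExistence} enters the existence proof precisely here. To repair your proposal you would need to either prove the missing bound on $h^{(k)}(\tfrac12)$ along these lines, or prove separately that $\hat c_1>\tfrac{d_1-1}{T_1(\frac12)}$ for the sequence of Theorem \ref{TESS}, neither of which is done.
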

\begin{proof}
 We can think of \eqref{NEF2S'} as a system of first order ODEs for $(h^{(k)},y_1^{(k)},(y_1^{(k)})',(y_2^{(k)})')$ 
 with initial conditions $(h^{(k)}(\frac{1}{2}),-\frac{\ln(a)}{2},0,0)$. 
 Since $e^{-2y_2^{(k)}}$ is uniformly bounded in $C^1[\frac{1}{2},1]$, standard ODE results would imply that we can find $\epsilon>0$ sufficiently small so that we obtain the desired bounds, provided $h^{(k)}(\frac{1}{2})$ is bounded. 

If we assume, for the sake of contradiction, that $\lim_{k\to \infty}h^{(k)}(\frac{1}{2})=\infty$, then by \eqref{ICH} we have that
\begin{align}\label{eqn:2limits}
 \lim_{k\to \infty}(d_1-1-c_1^{(k)}T_1(\tfrac{1}{2}))=\lim_{k\to \infty}\gamma^{(k)}(\alpha-c_2^{(k)}T_2(\tfrac{1}{2}))=0.
\end{align}

We now supress reference to the superscript $k$. 
We let $K=-\frac{d_1}{2}T_1'e^{2v_1}-\frac{d_2c_2}{2c_1}T_2'e^{2v_2}\ge 0$, where the $v_i$ functions are as defined in Section \ref{Rewriting}. Note that $K'\ge -\frac{d_1}{2}T_1''e^{2v_1}-\frac{d_2}{2}T_2''e^{2v_2}+2\min\{v_1',v_2'\}K$. 
Choose $t_1>\frac{1}{2}$ so that on $[\frac{1}{2},t_1]$, 
\begin{align}
 -\tfrac{d_1}{2}T_1''e^{2v_1}-\tfrac{d_2}{2}T_2''e^{2v_2}=e^{2v_1}\left(-\tfrac{d_1}{2}T_1''-\tfrac{d_2}{2}T_2''e^{2v_2-2v_1}\right)\ge e^{2v_1}(t_1-\tfrac{1}{2})
\end{align}
 for all $k$ 
(such a choice is possible 
because $T_1''(\frac{1}{2})<0$, $T_2''(\frac{1}{2})\le 0$, and $e^{2v_2-2v_1}=e^{2y_1-2y_2}\le \frac{\gamma e^{\sqrt{S}}}{a}$ 
by the conditions in the hypothesis of Theorem \ref{LOKS}).
 We claim that for $\delta=\frac{1}{5}$ (any $\delta<\frac{1}{4}$ will do), 
 \begin{equation}\label{ME1.5}
  v_i'\ge (1-2\delta) K
 \end{equation}
 on $[\frac{1}{2},t_1]$. 
 To see this, we use equation 
 \eqref{TMEFV} to find 
 \begin{align}
    v_i''&=(d_1-\delta_1^i)e^{2v_1}(d_1-1-c_1T_1)+(d_2-\delta_2^i)e^{2v_2}(\alpha-c_2T_2)+K(v_i'-K)+K' \nonumber\\
    &\ge e^{2v_1}\left((d_1-\delta_1^i)(d_1-1-c_1T_1(\tfrac{1}{2}))+(d_2-\delta_2^i)e^{2y_1-2y_2}(\alpha-c_2T_2(\tfrac{1}{2}))\right)+K(v_i'-K)+K' \nonumber\\
    & \ge e^{2v_1}\left((d_1-\delta_1^i)(d_1-1-c_1T_1(\tfrac{1}{2}))+(d_2-\delta_2^i)\tfrac{\gamma e^{\sqrt{S}}}{a}(\alpha-c_2T_2(\tfrac{1}{2}))+\delta (t_1-\tfrac{1}{2})\right) \nonumber\\ \label{eqnA}
    &\hspace{7cm}+2 \delta \min\{v_1',v_2'\}K+(v_i'-K)K+(1-\delta)K'
\end{align}
on $[\frac{1}{2},t_1]$. 
Using \eqref{eqn:2limits}, we see that the first term of \eqref{eqnA} is strictly positive for all $k$ large enough. 
If we assume \eqref{ME1.5} holds up until time $t_2\in [\frac{1}{2},t_1]$, we then have 
\begin{align*}
v_i''&> 2\delta(1-2\delta)K^2-2\delta K^2+(1-\delta)K'
=-4\delta^2 K^2+(1-\delta)K'
=(1-2\delta)K'+\delta(K'-4\delta K^2)\\
&\ge (1-2\delta)K'+\delta(2(1-2\delta)-4\delta)K^2\\
&\ge (1-2\delta)K'
\end{align*} 
on $[\frac{1}{2},t_2]$. 
Therefore, \eqref{ME1.5} is preserved on $[\frac{1}{2},t_1]$, and $t_1$ is independent of $k$. Clearly \eqref{ME1.5} holds at $t=\frac{1}{2}$, so it holds on $[\frac{1}{2},t_1]$. 
Now \eqref{ME1.5} can be thought of as a first order differential inequality for $v_1$. 
This inequality features a term $e^{v_1}$, so since
$v_1$ is defined on $[\frac{1}{2},t_1]$, we find that $v_1(\frac{1}{2})$ is bounded independently of $k$, which contradicts $\lim_{k\to \infty}h^{(k)}(\frac{1}{2})=\infty$.
\end{proof}

\begin{lemma}\label{UBI}
 Let $(h^{(k)},y_1^{(k)},y_2^{(k)})$ be as in the statement of Lemma \ref{BATS}. Then for each sufficiently small $\epsilon>0$, 
 $(h^{(k)},y_1^{(k)},(y_2^{(k)})')$ is uniformly bounded in $C^0[\frac{1}{2}+\epsilon,1-\epsilon]\times C^1[\frac{1}{2}+\epsilon,1-\epsilon]\times C^0[\frac{1}{2}+\epsilon,1-\epsilon]$.
\end{lemma}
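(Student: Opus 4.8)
The plan is to push the uniform bounds of Lemma~\ref{BATS} from a right neighbourhood of $t=\tfrac12$ all the way out to $t=1-\epsilon$ by a continuation argument, the only possible obstruction being the collapse $f_1^{(k)}=e^{y_1^{(k)}}\to 0$ before $t=1$, and then to rule that out by a blow-up/limit argument. The cheap reductions come first: by hypothesis $\sup_{[\frac12,1]}\bigl(e^{-2y_2^{(k)}}+((y_2^{(k)})')^2\bigr)=S$, so $e^{-2y_2^{(k)}}\le S$ and $|(y_2^{(k)})'|\le\sqrt S$ on all of $[\tfrac12,1]$; and since $(y_1^{(k)})'\le 0$ with $y_1^{(k)}(\tfrac12)=-\tfrac12\ln a$, we have $y_1^{(k)}\le-\tfrac12\ln a$, so $f_1^{(k)}$ is bounded above and the only way $y_1^{(k)}$ can fail to be uniformly bounded is $y_1^{(k)}\to-\infty$ somewhere. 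Hence it suffices to produce a uniform positive lower bound for $f_1^{(k)}$ on $[\tfrac12,1-\epsilon]$, together with uniform bounds for $h^{(k)}$ and $(y_1^{(k)})'$ there.

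For the continuation step, one views \eqref{NEF2S'} as a first-order system for $(h^{(k)},y_1^{(k)},(y_1^{(k)})',(y_2^{(k)})')$, using that the ``$h'$'' equation can be rewritten without any negative power of $h^{(k)}$, namely $(h^{(k)})'=h^{(k)}\bigl(d_1(y_1^{(k)})'+d_2(y_2^{(k)})'\bigr)-(h^{(k)})^3\bigl(\tfrac{d_1}{2}T_1'e^{-2y_1^{(k)}}+\tfrac{d_2c_2^{(k)}}{2c_1^{(k)}}T_2'e^{-2y_2^{(k)}}\bigr)$. On any subinterval of $[\tfrac12,1)$ on which $f_1^{(k)}\ge\delta>0$, the right-hand sides are then controlled in terms of $h^{(k)}$, $|(y_1^{(k)})'|$, the bound $e^{-2y_1^{(k)}}\le\delta^{-2}$, the already-bounded $y_2^{(k)}$-data and the bounded constants $c_i^{(k)}$; a Gr\"onwall/continuation estimate propagates the bounds, with $h^{(k)}$ kept under control by \eqref{NEF2S'H2}, whose denominator $d_1(d_1-1-c_1^{(k)}T_1)e^{-2y_1^{(k)}}+d_2(\alpha-c_2^{(k)}T_2)e^{-2y_2^{(k)}}$ is strictly negative at $t=\tfrac12$ (both summands are, since $c_1^{(k)}>\tfrac{d_1-1}{T_1(\frac12)}$ and $c_2^{(k)}>\tfrac{\alpha}{\sup T_2}=\tfrac{\alpha}{T_2(\frac12)}$) — one tracks its sign, noting that wherever it vanishes the numerator $d_1(d_1-1)((y_1^{(k)})')^2+2d_1d_2(y_1^{(k)})'(y_2^{(k)})'+d_2(d_2-1)((y_2^{(k)})')^2-c_1^{(k)}$ must vanish too, pinning $(y_1^{(k)})'$ and hence $h^{(k)}$. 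Combining this with Lemma~\ref{BATS} reduces the lemma to showing that the first time $\tau_k$ at which $f_1^{(k)}$ drops to a fixed small $\delta_0>0$ satisfies $\liminf_k\tau_k\ge 1$.

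To rule out an interior collapse, suppose instead (along a subsequence) $\tau_k\to t_\infty<1$. Since each $f_1^{(k)}$ is non-increasing, on every compact $[\tfrac12,t]\subset[\tfrac12,t_\infty)$ we have $f_1^{(k)}\ge\delta_0$ for large $k$, so by the previous step $(h^{(k)},y_1^{(k)},y_2^{(k)})$ subconverges in $C^2_{\mathrm{loc}}[\tfrac12,t_\infty)$ to a solution $(h,y_1,y_2)$ of \eqref{NEF2S'} for the limiting constants $\hat c_1,\hat c_2$, with $y_i'\le 0$, $|y_2'|\le\sqrt S$, and $y_1\to-\infty$ at $t_\infty$ (so $(h,y_1,y_1')$ is unbounded about $t_\infty$). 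One then analyses the asymptotics of this limit at $t_\infty$: keeping the leading terms of \eqref{NEF2S'H2} as $e^{-2y_1}\to\infty$ and $|y_1'|\to\infty$ gives $h^2e^{-2y_1}\sim\tfrac{(d_1-1)(y_1')^2}{d_1-1-\hat c_1T_1(t_\infty)}$ (forcing $\hat c_1T_1(t_\infty)\le d_1-1$), hence $\ell:=-f_1'/h\to\ell_\infty:=\bigl(1-\tfrac{\hat c_1T_1(t_\infty)}{d_1-1}\bigr)^{1/2}$. Here I would run the localized analogue of the chain of lemmas of Section~\ref{PRR} at $t_\infty$; the difference from $t=1$ is that now $T_1(t_\infty)>0$ and $T_1'(t_\infty)<0$, so the limit does \emph{not} close up smoothly but acquires a genuine conical factor ($\ell_\infty<1$) in the collapsing $\n_1$-directions. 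One then converts this into a contradiction: to the right of $t_\infty$, $f_1^{(k)}\to 0$ on all of $(t_\infty,1]$ while $\ell^{(k)}$ stays bounded below on each $[t_\infty+\eta,1]$, so $f_1^{(k)}(t_\infty+\eta)\ge c_\eta\int_{t_\infty+\eta}^1 h^{(k)}\,\mathrm dt$ forces $\int_{t_\infty+\eta}^1h^{(k)}\,\mathrm dt\to 0$; but the rewritten ``$h'$'' equation shows that once the collapse is under way the cubic term dominates and (because $T_1'<0$) $h^{(k)}$ is increasing on $(t_\infty,1-\eta]$, so $h^{(k)}(t_\infty)\to 0$ — which, fed back into \eqref{NEF2S'H2} on $[\tfrac12,t_\infty)$ where $f_1^{(k)}$ and the relevant quantities are already uniformly bounded, is impossible.

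The routine parts are the two easy reductions and the continuation step, once the rewritten ``$h'$'' equation and \eqref{NEF2S'H2} are in hand. The genuinely delicate part — and the step I expect to be the main obstacle — is the exclusion of an interior collapse: carrying out the asymptotic analysis of the collapse at a point $t_\infty\in(\tfrac12,1)$, where the smoothness conditions of $T$ at $t=1$ that power Section~\ref{PRR} are unavailable and one must instead exploit $T_1(t_\infty)>0$ and $T_1'(t_\infty)<0$, and then turning the resulting conical degeneration into an honest contradiction with the global ODE dynamics on $[\tfrac12,1]$ (in particular with $f_1^{(k)}(1)>0$ and with the sign of the right-hand side of the ``$h'$'' equation).
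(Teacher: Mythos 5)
There is a genuine gap, and it sits exactly where the real difficulty of the lemma lies: your continuation step. You assert that on any interval where $f_1^{(k)}\ge\delta$ the bounds of Lemma \ref{BATS} propagate by a Gr\"onwall estimate, with $h^{(k)}$ ``kept under control'' by \eqref{NEF2S'H2}. But the system is superlinear: the rewritten $h'$-equation contains the term $-(h^{(k)})^3\bigl(\tfrac{d_1}{2}T_1'e^{-2y_1^{(k)}}+\tfrac{d_2c_2^{(k)}}{2c_1^{(k)}}T_2'e^{-2y_2^{(k)}}\bigr)\ge 0$, and in the $y_1$-equation the factor $h^2e^{-2y_1}$ is, via \eqref{NEF2S'H2}, comparable to $((y_1^{(k)})')^2$ once $|(y_1^{(k)})'|$ is large; hence any Gr\"onwall-type comparison ODE blows up in finite time, and nothing in your argument shows that this time exceeds $1-\epsilon$ uniformly in $k$. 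Moreover \eqref{NEF2S'H2} only \emph{relates} $h^{(k)}$ to $(y_1^{(k)})'$ -- a lower bound on $f_1^{(k)}$ bounds neither quantity, and your remark that wherever the denominator vanishes the numerator must vanish too is a pointwise identity for fixed $k$, giving no uniform control when the denominator merely becomes small along the sequence. The paper's proof supplies precisely the idea you are missing: it turns the superlinearity into the estimate. If $|(y_1^{(k)})'|$ were unbounded at interior points, then (using $e^{2y_1^{(k)}-2y_2^{(k)}}\to 0$ and \eqref{NEF2S'H2}) one gets $\tfrac{(h^{(k)})^2e^{-2y_1^{(k)}}}{((y_1^{(k)})')^2}\to\tfrac{d_1-1}{d_1-1-\hat c_1T_1}$, so the $y_1$-equation becomes a Riccati-type inequality whose cubic coefficient is bounded away from zero on $[\tfrac12+\epsilon,1-\tfrac{\epsilon}{4}]$ because $T_1'\le T^*<0$ there; the solution would then have to blow up strictly before $t=1-\tfrac{\epsilon}{4}$, contradicting that each $(h^{(k)},y_1^{(k)},y_2^{(k)})$ is smooth on all of $[\tfrac12,1]$. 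The same mechanism bounds $h^{(k)}$, and with it $y_1^{(k)}$ is bounded for free (its value at $\tfrac12$ is fixed), so an ``interior collapse'' of $f_1^{(k)}$ never needs separate treatment.

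Your second pillar -- excluding collapse at an interior $t_\infty$ -- is, as you yourself flag, a plan rather than a proof, and its concluding chain has unjustified links: the lower bound for $\ell^{(k)}=-f_1'/h$ on all of $[t_\infty+\eta,1]$ (the cone angle of the \emph{limit} as $t\to t_\infty^-$ says nothing about the approximating solutions to the right of $t_\infty$); the claim that $h^{(k)}$ is increasing on $(t_\infty,1-\eta]$ (cubic dominance needs $|y_1'|$ large there and a nondegenerate $T_1'$, but $T_1'\to0$ at $t=1$ and you have no lower bound on $|(y_1^{(k)})'|$ on that range); and the final ``impossible'', since $h^{(k)}(t_\infty)\to0$ does not contradict convergence that is only locally uniform on $[\tfrac12,t_\infty)$ without an additional equicontinuity or monotonicity input. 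Note also that a ``localized Section \ref{PRR}'' at $t_\infty$ is not available off the shelf: those lemmas use the smoothness conditions of $T_1$ at $t=1$ ($T_1(1)=0$, $T_1'(1)=0$, $T_1''(1)>0$), so the conical asymptotics at $t_\infty$ would have to be built from scratch. In short, both steps of your route end up requiring the blow-up-before-$t=1$ argument that the paper runs directly, and once that argument is in hand the stated bounds follow without any collapse dichotomy.
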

\begin{proof}
 We already know that
 $(y_2^{(k)})'^2\le S$, and $y_1^{(k)}(\frac{1}{2})$ is fixed, so it suffices to show that 
 both $(y_1^{(k)})'$ and $h^{(k)}$ are bounded in $C^0[\frac{1}{2}+\epsilon,1-\epsilon]$. 
 
 We now assume that 
$(y_1^{(k)})'(t^{(k)})$ is unbounded for some sequence of times $t^{(k)}\in [\frac{1}{2}+\epsilon,1-\epsilon]$. 
First note that $(y^{(k)}_1)'(t)\le 0$ for $t\in [\frac{1}{2},1]$, so for $(y_1^{(k)})'(t^{(k)})$ to be unbounded, we must have $(y_1^{(k)})'(t^{(k)})\to -\infty$, after possibly taking a subsequence. From the equation for $(y^{(k)}_1)''$ in \eqref{NEF2S'}, we find 
\begin{align}\label{IEY1}
(y_1^{(k)})''&\le(h^{(k)})^2 e^{-2y_1^{(k)}}\left((d_1-1-c_1^{(k)}T_1)-\tfrac{d_1}{2}T_1'(y_1^{(k)})'\right).
\end{align}
Now since $T_1'<0$ on $[\frac{1}{2}+\epsilon,1-\frac{\epsilon}{4}]$ and is continuous, there is a $T^*<0$ so that $\frac{d_1}{2}T_1'\le T^*$ on 
$[\frac{1}{2}+\epsilon,1-\frac{\epsilon}{4}]$. Therefore, since $(y_1^{(k)})'$ gets arbitrarily large somewhere on $[\frac{1}{2}+\epsilon,1-\epsilon]$ and $c_1^{(k)}$ is bounded, we see from \eqref{IEY1} that 
\begin{align}
    \max_{t\in [1-\epsilon,1-\frac{\epsilon}{4}]}(y_1^{(k)})'(t)\to -\infty.
\end{align} 
Therefore, for each $t\in [1-\frac{\epsilon}{2},1]$, we have
\begin{align}
y_1^{(k)}(t)&\le y_1^{(k)}(1-\tfrac{\epsilon}{2})
=y_1^{(k)}(1-\epsilon)+\int_{1-\epsilon}^{1-\frac{\epsilon}{2}}(y_1^{(k)})' \nonumber \\
&\le -\frac{\ln(a)}{2}+\frac{\epsilon}{2}\max_{t\in [1-\epsilon,1-\frac{\epsilon}{2}]}(y_1^{(k)})'(t)\to -\infty.
\end{align}
Combining with the fact that $e^{-2y_2^{(k)}}\le S$, we find that 
 $\max_{t\in [1-\frac{\epsilon}{2},1]}e^{2y^{(k)}_1(t)-2y^{(k)}_2(t)}\to 0$. Then using the expression for $(h^{(k)})^2$ in
 \eqref{NEF2S'H2} and the fact that $(y_2^{(k)})'^2\le S$, we see that
\begin{align}
\frac{(h^{(k)})^2e^{-2y_1^{(k)}}}{(y_1^{(k)})'^2}&=\frac{d_1(d_1-1)+\frac{2d_1d_2(y_2^{(k)})'}{(y_1^{(k)})'}+\frac{d_2(d_2-1)(y_2^{(k)})'^2}{(y_1^{(k)})'^2}-\frac{c_1^{(k)}}{(y_1^{(k)})'^2}}{d_1(d_1-1-c_1^{(k)}T_1)+d_2 e^{2y_1^{(k)}-2y_2^{(k)}}(\alpha-c_2^{(k)} T_2)} \nonumber \\
&\to \frac{d_1-1}{d_1-1-\hat{c}_1T_1}
\end{align}
uniformly on $[1-\frac{\epsilon}{2},1-\frac{\epsilon}{4}]$. 
Now, re-writing the equation for $(y^{(k)}_1)''$ in \eqref{NEF2S'}, we get
\begin{align}\label{EFKY1}
(y_1^{(k)})''&=\frac{(h^{(k)})^2e^{-2y_1^{(k)}}}{(y_1^{(k)})'^2}\left((y_1^{(k)})'^2(d_1-1-c_1^{(k)}T_1)-(y_1^{(k)})'^3(\tfrac{d_1}{2}T_1'+\tfrac{d_2c_2^{(k)}}{2c^{(k)}_1}T_2'e^{2y^{(k)}_1-2y^{(k)}_2})\right).
\end{align}
Therefore, for large enough $k$, we have 
\begin{align}
(y_1^{(k)})''&\le 2\frac{d_1-1}{d_1-1-\hat{c}_1T_1}\left((y_1^{(k)})'^2(d_1-1-c_1^{(k)}T_1)-(y_1^{(k)})'^3T^*\right)
\end{align}
on $[1-\frac{\epsilon}{2},1-\frac{\epsilon}{4}]$, and $(y_1^{(k)})'(1-\frac{\epsilon}{2})$ going to $-\infty$. 
Such solutions cannot be made to exist on the entirety of $[1-\frac{\epsilon}{2},1-\frac{\epsilon}{4}]$ because of the dominance of the cubic term, so we obtain a contradiction. 
Therefore, $(y_1')^{(k)}$ is bounded in $C^0([\frac{1}{2}+\epsilon,1-\epsilon])$. 

Having already established bounds for $(y_1')^{(k)}$ and $(y_2')^{(k)}$, we can similarly show that
if $h^{(k)}$ gets too large on the interior, then $h^{(k)}$ blows up too early since there is a dominant cubic term (with the correct sign) in the third equation of \eqref{NEF2S'}. 
\end{proof}
\begin{lemma}\label{IEY10}
 Let $(h^{(k)},y_1^{(k)}, y_2^{(k)})$ be as in the statement of Lemma \ref{UBI}. Then \begin{align*}
                                                                                       \lim_{(k,t)\to (\infty,1)}e^{2y_1^{(k)}(t)-2y_2^{(k)}(t)}=0,
                                                                                      \end{align*}
 by which we mean that for each $\delta'>0$, there is a $K'>0$ and an $\epsilon'>0$ so that $e^{2y_1^{(k)}(t)-2y_2^{(k)}(t)}\le \delta'$ 
 whenever $k\ge K'$ and $t\in [1-\epsilon',1]$. 
\end{lemma}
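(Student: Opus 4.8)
The plan is to reduce the assertion to a statement involving $y_1^{(k)}$ alone and then argue by contradiction. Since $\sup_{[\frac12,1]}\big(e^{-2y_2^{(k)}}+((y_2^{(k)})')^2\big)=S$ forces $e^{-2y_2^{(k)}}\le S$ and $((y_2^{(k)})')^2\le S$ everywhere, we have $e^{2y_1^{(k)}(t)-2y_2^{(k)}(t)}\le S\,e^{2y_1^{(k)}(t)}$; and because $(y_1^{(k)})'\le 0$, the function $y_1^{(k)}$ is non-increasing, so $e^{2y_1^{(k)}(t)}\le e^{2y_1^{(k)}(1-\epsilon')}$ for all $t\in[1-\epsilon',1]$. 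Hence it suffices to prove: for every $N>0$ there exist $K',\epsilon'>0$ such that $y_1^{(k)}(1-\epsilon')\le -N$ whenever $k\ge K'$. (As already noted, after passing to a subsequence we may assume $c_1^{(k)}\to\hat c_1$, $c_2^{(k)}\to\hat c_2$ and $\gamma^{(k)}:=e^{-2y_2^{(k)}(\frac12)}$ all converge.)

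To prove the displayed claim I would argue by contradiction: suppose it fails, so there are $N>0$, indices $k_j\to\infty$ and times $t_j\to1$ with $y_1^{(k_j)}(t_j)\ge -N$. By monotonicity $-N\le y_1^{(k_j)}\le -\tfrac12\ln a$ on $[\tfrac12,t_j]$, so $e^{\pm 2y_1^{(k_j)}}$ is uniformly bounded there. If $\gamma^{(k_j)}\to0$ along a subsequence, then $|(y_2^{(k_j)})'|\le\sqrt S$ gives $e^{-2y_2^{(k_j)}}\le\gamma^{(k_j)}e^{\sqrt S}\to0$ uniformly on $[\tfrac12,1]$, whence $e^{2y_1^{(k_j)}-2y_2^{(k_j)}}\le\tfrac1a\gamma^{(k_j)}e^{\sqrt S}\to0$ uniformly, contradicting the choice of $N$. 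So $\gamma^{(k_j)}$ stays away from $0$, and then $y_2^{(k_j)}$ and $e^{\pm2y_2^{(k_j)}}$ are also uniformly bounded on $[\tfrac12,1]$. Since $c_1^{(k_j)}>\tfrac{d_1-1}{T_1(\frac12)}$ and both $c_i^{(k_j)}$ are bounded, the only quantities appearing in \eqref{NEF2S'} and \eqref{NEF2S'H2} not yet under uniform control on $[\tfrac12,t_j]$ are $(y_1^{(k_j)})'$ and $h^{(k_j)}$ — and \eqref{NEF2S'H2} ties these together once $|(y_1^{(k_j)})'|$ is large, since then $(h^{(k_j)})^2e^{-2y_1^{(k_j)}}/((y_1^{(k_j)})')^2$ lies between fixed positive constants.

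The core of the argument is to rule out blow-up of $(y_1^{(k_j)})'$ and $h^{(k_j)}$ on $[\tfrac12,t_j]$. By Lemma \ref{BATS} and Lemma \ref{UBI}, any such blow-up can occur only at times $\tau_j\to1$ (with $\tau_j\le t_j$, so $y_1^{(k_j)}(\tau_j)\ge -N$ as well), and since $(y_1^{(k_j)})'\le0$ it must be that $(y_1^{(k_j)})'(\tau_j)\to-\infty$. Just to the right of $\tau_j$ I would use the rewritten equation \eqref{EFKY1} (with $\beta=0$): once $|(y_1^{(k_j)})'|$ is large its cubic term, with coefficient $-\big(\tfrac{d_1}{2}T_1'+\tfrac{d_2c_2^{(k_j)}}{2c_1^{(k_j)}}T_2'e^{2y_1^{(k_j)}-2y_2^{(k_j)}}\big)$, dominates; near $t=1$ this coefficient is $\ge0$ and, using the smoothness conditions $T_1(1)=T_1'(1)=0<T_1''(1)$ (so $|T_1'(t)|\ge c(1-t)$ there) together with $T_2'\le0$, it is bounded below by $c'(1-t)$. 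Thus $w:=-(y_1^{(k_j)})'$ satisfies a differential inequality of the shape $w'\ge c'w^3(1-t)$ wherever $y_1^{(k_j)}$ is bounded below; comparing with the fact that the $k_j$-th solution is smooth on the \emph{closed} interval $[\tfrac12,1]$ forces $w(\tau_j)\lesssim\tfrac{1}{1-\tau_j}$, i.e. the rescaled derivative $z_1^{(k_j)}:=-(1-t)(y_1^{(k_j)})'$ cannot itself blow up. Feeding this into a rescaled analysis of $z_1^{(k_j)}$ patterned on the proof of Lemma \ref{1OSC} (and equation \eqref{BUEFZ}), one of three things must happen: (a) $z_1^{(k_j)}$, hence $(y_1^{(k_j)})'$ and $h^{(k_j)}$, is in fact uniformly bounded on $[\tfrac12,t_j]$; combined with Lemmas \ref{BATS}, \ref{UBI} and a short Gronwall/continuity estimate on $[t_j,1]$ (valid because $1-t_j\to0$, the data at $t_j$ are bounded, and near $t=1$ the system is Lipschitz as long as $y_1$ stays bounded below), this yields a uniform bound on $(h^{(k_j)},y_1^{(k_j)},(y_1^{(k_j)})',(y_2^{(k_j)})')$ over all of $[\tfrac12,1]$, contradicting the unboundedness hypothesis; or (b) $z_1^{(k_j)}$ is bounded below by a positive constant $z_0$ near $t=1$, in which case integrating $(y_1^{(k_j)})'\le-\tfrac{z_0}{1-t}$ from $1-\epsilon_0$ to $t_j$ gives $y_1^{(k_j)}(t_j)\le y_1^{(k_j)}(1-\epsilon_0)+z_0\ln\tfrac{1-t_j}{\epsilon_0}\to-\infty$, contradicting $y_1^{(k_j)}(t_j)\ge -N$; or (c) $w$ genuinely blows up before $t=1$, contradicting smoothness of the solution. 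Any of these closes the argument.

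I expect this core step to be the main obstacle, and specifically the degeneracy of the damping coefficients $T_1'$ and $T_2'$ at $t=1$: the finite-time blow-up of $(y_1^{(k_j)})'$ is therefore not cheap and must be weighed against the distance $1-t$, which is precisely where the smoothness behaviour $T_1\sim\tfrac12T_1''(1)(t-1)^2$ with $T_1''(1)>0$ is used; simultaneously one must track the rescaled variable $z_1^{(k_j)}=-(1-t)(y_1^{(k_j)})'$ to handle the borderline decay rate $\tfrac{1}{1-t}$, so the real work is the bookkeeping in the trichotomy (a)--(c).
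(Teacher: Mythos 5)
Your reduction and the contradiction setup (a sequence $k_j\to\infty$, $t_j\to 1$ with $y_1^{(k_j)}(t_j)\ge -N$, after splitting off the case $\gamma^{(k_j)}\to 0$) match the paper's, and your preliminary estimate $-(1-t)(y_1^{(k_j)})'\le C$ near $t=1$ via the cubic term of \eqref{EFKY1} is sound. But the core of your argument, the trichotomy (a)--(c), has a genuine gap: it is not exhaustive. Boundedness of $z_1^{(k_j)}=-(1-t)(y_1^{(k_j)})'$ does not imply boundedness of $(y_1^{(k_j)})'$ or of $h^{(k_j)}$, so the inference opening case (a) is invalid, and the regime left uncovered is precisely the hard one: $z_1^{(k_j)}$ small near $t=1$ (so (b) gives nothing), no finite-time blow-up (each solution is smooth on $[\frac{1}{2},1]$, so (c) never occurs), and yet $\sup\left|(y_1^{(k_j)})'\right|$ and $\sup h^{(k_j)}$ tending to infinity with the blow-up ``escaping'' into a shrinking neighbourhood of $t=1$; for instance profiles with $\left|(y_1^{(k_j)})'\right|\sim (1-t+\eta_j)^{-1/2}$, $\eta_j\to 0$, are compatible with $z_1\to 0$, with $y_1^{(k_j)}(t_j)\ge -N$ (the integral of $\left|y_1'\right|$ stays bounded), and with smoothness of each solution. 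Your appeal to a ``rescaled analysis patterned on Lemma \ref{1OSC}'' does not cover this: the proof of Lemma \ref{1OSC} is predicated on Lemma \ref{0OSC}, i.e.\ on collapse ($e^{-2y_1}\to\infty$, $y_1'\to-\infty$), which is exactly what your contradiction hypothesis denies, so the step there that rules out $z_1\to 0$ does not transfer.

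The paper closes exactly this regime by a mechanism your proposal never invokes: using Lemmas \ref{BATS} and \ref{UBI} it passes to a limit solution on $[\frac{1}{2},1)$ with $e^{2y_1}\ge a'$, applies the contrapositive of Theorem \ref{RicciRegularity} (with \eqref{StrongRegularity}) to conclude that this limit, failing to collapse, must be bounded up to $t=1$, and then uses stability of these bounds under small perturbations of the data at $t=\frac{1}{2}$ and of $(c_1,c_2)$ to transfer boundedness back to the sequence for large $k$, contradicting the unboundedness hypothesis; the escaping blow-up scenario is thereby excluded without any direct uniform estimate near $t=1$. To repair your argument you would either need a genuinely new uniform estimate ruling out the intermediate regime, or you should follow the paper's route of compactness plus the already-proved regularity theorem plus continuous dependence. (A minor, repairable slip: when $\gamma^{(k_j)}\to 0$, the conclusion $e^{2y_1-2y_2}\to 0$ does not contradict your reduced claim about $y_1$; the correct statement is that the lemma then holds outright along that subsequence.)
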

\begin{proof}
Recall that, by definition, $\gamma^{(k)}=e^{-2y_2^{(k)}(\frac{1}{2})}$. Since $y_1^{(k)}$ is a decreasing function and $(y_2^{(k)})'\le \sqrt{S}$, we have that 
$e^{2y_1^{(k)}(t)-2y_2^{(k)}(t)}\le \frac{\gamma^{(k)} e^{\sqrt{S}}}{a}$ so the result is obvious if $\lim_{k\to \infty}\gamma^{(k)}=0$. 
On the other hand we also have 
\begin{align*}
 e^{2y_1^{(k)}(t)-2y_2^{(k)}(t)}\le e^{2y_1^{(k)}(t)}S,
\end{align*} 
so if $\lim_{k\to \infty}\gamma^{(k)}=\hat{\gamma}>0$, 
it suffices to show that $\lim_{(k,t)\to (\infty,1)}e^{2y_1^{(k)}(t)}=0$. 

Now we assume that $\lim_{k\to \infty}\gamma^{(k)}=\hat{\gamma}>0$. If, for the sake of contradiction, we assume that 
$\lim_{(k,t)\to (\infty,1)}e^{2y_1^{(k)}(t)}\neq 0$, then we can take a subsequence in $k$ if necessary, so that there is a sequence of times $t^{(k)}\to 1$ and an $a'>0$ so that $e^{2y_1^{(k)}(t^{(k)})}\ge a'$. Fix an $\epsilon>0$, and take a further subsequence if necessary so that $t^{(k)}\ge1- \epsilon$ for all $k$. Since $\lim_{k\to \infty}\gamma^{(k)}=\hat{\gamma}>0$, Lemmas \ref{BATS} and \ref{UBI} imply that the sequence $(h^{(k)}, y_1^{(k)}, y_2^{(k)})$ is uniformly bounded in $C^0[\frac{1}{2},1-\epsilon]\times C^1[\frac{1}{2},1-\epsilon]\times C^1[\frac{1}{2},1-\epsilon]$. We can therefore take a limit to find a solution $(h, y_1, y_2)$ of \eqref{NEF2S'} on $[\frac{1}{2},1-\epsilon]$. 
 Note that $e^{2y_1(t)}\ge a'$ for all $t\in [\frac{1}{2},1-\epsilon]$, since $e^{2y_1^{(k)}(t)}\ge e^{2y_1^{(k)}(t^{(k)})}\ge a'$ for each 
 $t\in [\frac{1}{2},1-\epsilon]$. 
 
 Now since the initial conditions for $(h,y_1,y_2)$ are actually independent of $\epsilon$, uniqueness of solutions to ODEs implies that the solution we get is independent of $\epsilon>0$, and can therefore be extended to a solution on $[\frac{1}{2},1)$, and we still have $e^{2y_1}\ge a'$ and $e^{-2y_2}+(y_2')^2\le S$. This implies that this solution does not satisfy the right smoothness conditions at $t=1$, 
 so the contrapositive of Theorem \ref{RicciRegularity} (with \eqref{WeakRegularity} replaced by \eqref{StrongRegularity}) then implies that $(h,y_1,y_2)$ must be bounded in $C^0[\frac{1}{2},1]\times C^1[\frac{1}{2},1]\times C^1[\frac{1}{2},1]$. However, 
 these bounds survive small perturbations in the initial conditions and small perturbations in $(c_1,c_2)$, 
 so we obtain a contradiction with the unboundedness of our original sequence $(h^{(k)},y_1^{(k)},y_2^{(k)})$. 
\end{proof}

\begin{lemma}\label{BY1D}
 Let $(h^{(k)},y_1^{(k)},y_2^{(k)})$ be as in the statement of Lemma \ref{IEY10}. Then for each $\delta>0$, 
 there exists an $\epsilon>0$ and a $K>0$ so that $-(y_1^{(k)})'(t)\le \frac{1+\delta}{1-t}$ whenever $k\ge K$ and $t\in [1-\epsilon,1]$. 
\end{lemma}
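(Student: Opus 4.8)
The plan is a first-hitting-time argument for the rescaled quantity
$z_1^{(k)}(t):=-(1-t)(y_1^{(k)})'(t)$. This is non-negative and continuous on $[\tfrac12,1]$ (each $y_1^{(k)}$ being $C^1$ there), so in particular $z_1^{(k)}(1)=0$; and the conclusion of the lemma is precisely that $z_1^{(k)}\le 1+\delta$ on $[1-\epsilon,1]$ for $k\ge K$. Repeating the computation from the proof of Lemma \ref{1OSC} (with $\beta=0$), \eqref{NEF2S'} and \eqref{NEF2S'H2} yield, wherever $(y_1^{(k)})'\ne 0$,
\[
 (z_1^{(k)})'=-\frac{z_1^{(k)}}{1-t}\bigl(g_0^{(k)}+g_1^{(k)}z_1^{(k)}+g_2^{(k)}(z_1^{(k)})^2\bigr),
\]
with the $g_i^{(k)}$ as defined there.

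The first step is to show that if $(k_j,t_j)\to(\infty,1)$ with $z_1^{(k_j)}(t_j)$ bounded and bounded away from $0$, then $g_0^{(k_j)}(t_j)\to 1$, $g_1^{(k_j)}(t_j)\to d_1-1$ and $g_2^{(k_j)}(t_j)\to -d_1$. I would deduce this from: the boundedness and convergence of $c_1^{(k)},c_2^{(k)}$; the smoothness conditions on $T$ (giving $T_1\to 0$, $T_1'(t)/(1-t)\to -T_1''(1)=-2$, $T_2'\to 0$, and $T_2'(t)/(1-t)$ bounded); the bound $|(y_2^{(k)})'|\le\sqrt S$; the identity $(1-t)/(y_1^{(k)})'(t)=-(1-t)^2/z_1^{(k)}(t)$, which tames the $1/(y_1^{(k)})'$ factors appearing through $K$ inside $g_0^{(k)},g_1^{(k)}$ — here is where $z_1^{(k)}(t_j)$ staying away from $0$ is used; and Lemma \ref{IEY10}, which gives $e^{2y_1^{(k_j)}(t_j)-2y_2^{(k_j)}(t_j)}\to 0$, and thereby forces the denominator $d_1(d_1-1-c_1^{(k)}T_1)+d_2(\alpha-c_2^{(k)}T_2)e^{2y_1^{(k)}-2y_2^{(k)}}$ of \eqref{NEF2S'H2} to converge to $d_1(d_1-1)>0$ (so the identity above is valid near $t=1$ for large $k$). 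Since at $z_1=1+\delta$ the bracketed factor has limit $1+(d_1-1)(1+\delta)-d_1(1+\delta)^2=\bigl(1-(1+\delta)\bigr)\bigl(1+d_1(1+\delta)\bigr)=-\delta(1+d_1+d_1\delta)<0$, this furnishes $\epsilon,K>0$ with the property that whenever $k\ge K$, $t\in[1-\epsilon,1)$ and $z_1^{(k)}(t)=1+\delta$, one has $(z_1^{(k)})'(t)>0$.

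The second step is the contradiction. If the lemma fails for some $\delta>0$, choose $k_j\to\infty$ and $\tau_j\to 1$ — necessarily $\tau_j<1$, since $z_1^{(k_j)}(1)=0$ — with $z_1^{(k_j)}(\tau_j)>1+\delta$, and put $\sigma_j:=\inf\{t\in[\tau_j,1]:z_1^{(k_j)}(t)\le 1+\delta\}$. By continuity and $z_1^{(k_j)}(1)=0$, one has $\sigma_j\in(\tau_j,1)$, $z_1^{(k_j)}(\sigma_j)=1+\delta$, and $z_1^{(k_j)}>1+\delta$ on $[\tau_j,\sigma_j)$, whence $(z_1^{(k_j)})'(\sigma_j)\le 0$. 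But $\sigma_j\ge\tau_j\to 1$ and $(y_1^{(k_j)})'(\sigma_j)=-(1+\delta)/(1-\sigma_j)\ne 0$, so the first step, applied at $t=\sigma_j$, gives $(z_1^{(k_j)})'(\sigma_j)>0$ for $j$ large — a contradiction. I expect the only genuine work to be the convergence of the $g_i^{(k)}$ near $t=1$, with Lemma \ref{IEY10} the essential input; the potentially delicate $1/(y_1^{(k)})'$ terms present no obstacle because the argument never needs a bound on $z_1^{(k)}$ — it only evaluates the ODE at the fixed value $z_1^{(k)}=1+\delta$.
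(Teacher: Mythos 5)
Your proof is correct and rests on the same key inputs and computation as the paper's: Lemma \ref{IEY10} (forcing $e^{2y_1^{(k)}-2y_2^{(k)}}\to 0$ near $t=1$), the smoothness conditions on $T$, the boundedness of $c_1^{(k)},c_2^{(k)}$ and $(y_2^{(k)})'$, and \eqref{NEF2S'H2} to control the relevant coefficients; the paper carries out the estimate directly on $(y_1^{(k)})''$ whereas you package it via the $g_i$ from Lemma \ref{1OSC}, which is equivalent. Your contradiction step is a modest streamlining: you use $z_1^{(k)}(1)=0$ (finiteness of $(y_1^{(k)})'(1)$) to locate a first return time $\sigma_j$ to the level $z_1=1+\delta$ and get a sign contradiction there directly, whereas the paper first shows via a first exit time $\tilde t^{(k)}$ that $z_1^{(k)}$ persists above $1+\delta$ on $[t^{(k)},1)$ and then derives the final contradiction by observing that this persistence forces $y_1^{(k)}$ to be unbounded at $t=1$. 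Both contradictions ultimately appeal to smoothness of $y_1^{(k)}$ on the closed interval, so this is a presentational rather than substantive difference.
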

\begin{proof}
If it were not true, then we could find a $\delta>0$ and sequence of times $t^{(k)}\to 1$ so that 
$(y_1^{(k)})'(t^{(k)})\le -\frac{1+\delta}{(1-t^{(k)})}$, after possibly taking a subsequence. 
Then for large enough $k$, we claim that
\begin{align}\label{NEY1D}
 (y_1^{(k)})'(t)\le -\frac{1+\delta}{(1-t)}
\end{align}
for each $t\ge t^{(k)}$. 
To see this, we argue by contradiction and take a sequence of times $\tilde{t}^{(k)}\in [t^{(k)},1)$ so that \eqref{NEY1D} holds at $\tilde{t}^{(k)}$, but fails to hold immediately afterwards. Then $\lim_{k \to \infty} (y_1^{(k)})' (\tilde{t}^{(k)}) = - \infty$. Now letting $A^{(k)} =\frac{(h^{(k)})^2 e^{-2y_1^{(k)}}}{(y_1^{(k)})'^2}(\tilde{t}^{(k)})$, we see that
\begin{align}\label{LFA}
\lim_{k\to \infty}A^{(k)}= 1
\end{align}
by using \eqref{NEF2S'H2},
since $0\le e^{2y_1^{(k)}(\tilde{t}^{(k)})-2y_2^{(k)}(\tilde{t}^{(k)})} \to 0$ by Lemma \ref{IEY10}. Now we have
\begin{align}
(y_1^{(k)})''(\tilde{t}^{(k)})\le A^{(k)} \left((y_1^{(k)})'(\tilde{t}^{(k)})\right)^2\left(d_1-1-\frac{d_1(-T_1'(\tilde{t}^{(k)}))(1+\delta)}{2(1-\tilde{t}^{(k)})}\right)
\end{align}
for large $k$. Then since $\lim_{k\to \infty}\frac{-T_1'(\tilde{t}^{(k)})}{2(1-\tilde{t}^{(k)})}=1$ (by the smoothness conditions for $T$), we can use \eqref{NEY1D} (at $t=\tilde{t}^{(k)}$) and \eqref{LFA} and to find
\begin{align}
(y_1^{(k)})''(\tilde{t}^{(k)})<-\frac{1+\delta}{(1-\tilde{t}^{(k)})^2}
\end{align}
for large $k$, 
which contradicts the definition of $\tilde{t}^{(k)}$. 
Therefore, \eqref{NEY1D} holds for large $k$ and $t\ge t^{(k)}$, but this too is a contradiction because it implies that $y_1^{(k)}$ cannot exist 
on the entirety of $[t^{(k)},1]$. 
\end{proof}
\begin{lemma}\label{L2BH}
 Let $(h^{(k)},y_1^{(k)},y_2^{(k)})$ be as in the statement of Lemma \ref{BY1D}. Then $h^{(k)}$ is uniformly bounded in $L^4[\frac{1}{2},1]$. 
\end{lemma}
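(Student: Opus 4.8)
The plan is to localise the estimate near $t=1$, where the only possible blow-up occurs, and there to control $h^{(k)}$ by $|(f_1^{(k)})'|$ through the algebraic identity \eqref{NEF2S'H2}. On the fixed interval $[\tfrac12,1-\epsilon]$ (for one small $\epsilon>0$), Lemma \ref{BATS} together with Lemma \ref{UBI} already give a uniform $C^0$ bound on $h^{(k)}$, so $\int_{1/2}^{1-\epsilon}(h^{(k)})^4$ is uniformly bounded and it suffices to bound $\int_{1-\epsilon}^1(h^{(k)})^4\,dt$ uniformly in $k$.

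First I would rewrite \eqref{NEF2S'H2} near $t=1$. Multiplying numerator and denominator by $e^{2y_1^{(k)}}$ and writing $f_i^{(k)}=e^{y_i^{(k)}}$, the numerator equals $\big(d_1(d_1-1)((y_1^{(k)})')^2+O(|(y_1^{(k)})'|)+O(1)\big)(f_1^{(k)})^2$, where the $O$-terms use only $|(y_2^{(k)})'|\le\sqrt S$ and the boundedness of $c_1^{(k)}$, while the denominator tends to $d_1(d_1-1)>0$ because $T_1(1)=0$ and, by Lemma \ref{IEY10}, $e^{2y_1^{(k)}-2y_2^{(k)}}\to 0$ as $(k,t)\to(\infty,1)$. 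Hence, for $k$ large and $t$ close to $1$,
\[
 (h^{(k)})^2\le C\big(((f_1^{(k)})')^2+1\big),\qquad\text{so}\qquad (h^{(k)})^4\le C\big(((f_1^{(k)})')^4+1\big),
\]
with $C$ depending only on $d_1,d_2,a,S$ and $\sup_k c_1^{(k)}$ (we also used $f_1^{(k)}\le f_1^{(k)}(\tfrac12)=a^{-1/2}$, since $(y_1^{(k)})'\le0$). It therefore remains to bound $\int_{1-\epsilon}^1\big((f_1^{(k)})'\big)^4\,dt$ uniformly.

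By Lemma \ref{BY1D}, for any small $\delta>0$ there are $\epsilon,K>0$ with $-(y_1^{(k)})'(t)\le\frac{1+\delta}{1-t}$ on $[1-\epsilon,1]$ for $k\ge K$, so $\big((f_1^{(k)})'\big)^4=\big((y_1^{(k)})'\big)^4(f_1^{(k)})^4\le\frac{(1+\delta)^4}{(1-t)^4}(f_1^{(k)})^4$, and everything reduces to a uniform bound for $\int_{1-\epsilon}^1(f_1^{(k)})^4(1-t)^{-4}\,dt$. This is the crux, and the main obstacle: one needs, on the range of $t$ where the profile is still degenerating, a decay estimate $f_1^{(k)}(t)\le C(1-t)^{3/4+\eta}$ for some $\eta>0$ with $C$ independent of $k$, equivalently a lower bound $-(1-t)(y_1^{(k)})'(t)\ge\tfrac34+\eta$ there. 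Unlike the limiting solution of Section \ref{PRR}, the approximate $f_1^{(k)}$ does not vanish at $t=1$, so one cannot run a barrier for $z_1^{(k)}:=-(1-t)(y_1^{(k)})'$ all the way to the endpoint. Instead I would differentiate $z_1^{(k)}$ and substitute the first equation of \eqref{NEF2S'} together with the sharp asymptotics $h^2e^{-2y_1}=((y_1^{(k)})')^2\big(1+o(1)\big)$ from \eqref{NEF2S'H2} (using $T_1''(1)=2>0$ and Lemma \ref{IEY10}), obtaining $\big(z_1^{(k)}\big)'\approx -\frac{z_1^{(k)}(d_1z_1^{(k)}+1)(1-z_1^{(k)})}{1-t}$ modulo uniformly bounded lower-order terms. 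From this one extracts, on one hand, that $z_1^{(k)}$ stays bounded below on $[1-\epsilon,1-\rho_k]$ -- yielding the power decay of $f_1^{(k)}$ and an integrable bound for $\int_{1-\epsilon}^{1-\rho_k}(f_1^{(k)})^4(1-t)^{-4}$ -- and, on the other hand, that the complementary interval $[1-\rho_k,1]$, where $z_1^{(k)}$ has collapsed and $f_1^{(k)}$ has essentially stopped decreasing, shrinks fast enough relative to the size of $h^{(k)}$ there that $\int_{1-\rho_k}^1(h^{(k)})^4$ remains uniformly bounded.

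Finally, adding the three contributions -- the uniform $C^0$ bound on $[\tfrac12,1-\epsilon]$ from Lemmas \ref{BATS} and \ref{UBI}, the integrable estimate on $[1-\epsilon,1-\rho_k]$, and the bounded spike estimate on $[1-\rho_k,1]$ -- gives $\int_{1/2}^1(h^{(k)})^4\,dt\le C$ uniformly in $k$. Everything apart from the quantitative control of $z_1^{(k)}$ near $t=1$ described above is the same kind of reduction and $O$-estimate already used repeatedly in Sections \ref{PRR} and \ref{AASOS}; that quantitative control is where the work lies.
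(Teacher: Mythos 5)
Your reduction near $t=1$ is fine as far as it goes: on $[\tfrac12,1-\epsilon]$ Lemmas \ref{BATS} and \ref{UBI} give uniform $C^0$ bounds, and multiplying numerator and denominator of \eqref{NEF2S'H2} by $e^{2y_1^{(k)}}$, using $|(y_2^{(k)})'|\le\sqrt S$, boundedness of $c_1^{(k)}$, $T_1(1)=0$ and Lemma \ref{IEY10}, indeed yields $(h^{(k)})^2\le C\bigl(((f_1^{(k)})')^2+1\bigr)$ near $t=1$ (this is essentially \eqref{eqn:h2le} together with \eqref{EFFTH'}). But the remaining step --- the uniform $L^4$ bound on $(f_1^{(k)})'$ --- is exactly where the lemma's content lies, and your plan for it does not go through. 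You need a \emph{lower} bound $z_1^{(k)}=-(1-t)(y_1^{(k)})'\ge\tfrac34+\eta$, whereas for each fixed $k$ the solution is smooth on $[\tfrac12,1]$ with $(y_1^{(k)})'(1)$ finite, so $z_1^{(k)}(1)=0$ and no such bound can hold uniformly up to $t=1$; you acknowledge this by cutting at $1-\rho_k$, but then no argument is given for $\int_{1-\rho_k}^1(h^{(k)})^4$. Nothing in the available estimates ($-(y_1^{(k)})'\le\frac{1+\delta}{1-t}$, $f_1^{(k)}\le f_1^{(k)}(1-\rho_k)$) prevents a spike of $h^{(k)}$ on that interval: the naive bound $h\lesssim|f_1'|\le\frac{(1+\delta)f_1(1-\rho_k)}{1-t}$ gives a divergent integral of $(1-t)^{-4}$, and monotonicity of $f_1$ only gives a free $L^1$ bound on $|f_1'|$, not $L^4$. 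Moreover, the approximate ODE you want for $z_1^{(k)}$ uses $h^2e^{-2y_1}=((y_1^{(k)})')^2(1+o(1))$, which is only valid where $|(y_1^{(k)})'|$ is large (in the paper this asymptotic is invoked only along special times $\tilde t^{(k)}$ in the proof of Lemma \ref{BY1D}), so it is not uniformly available on $[1-\epsilon,1]$.

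The paper's proof avoids any lower bound on $z_1^{(k)}$ altogether. It estimates $(\ln h^{(k)})'$ directly from the third equation of \eqref{NEF2S'}: the term $-\frac{h^2}{2}\bigl(d_1T_1'e^{-2y_1}+\frac{d_2c_2}{c_1}T_2'e^{-2y_2}\bigr)$ is bounded by $d_1(1+\delta)^3(-y_1')+N$ using only \eqref{BFET}, \eqref{NEF2S'H2}, \eqref{EFFTH'} and the \emph{upper} bound $(1-t)|y_1'|\le1+\delta$ of Lemma \ref{BY1D}; the crucial point is the near-cancellation with the exact term $d_1y_1'$ appearing in the same equation, leaving
\begin{align*}
\frac{(h^{(k)})'}{h^{(k)}}\le d_1\bigl((1+\delta)^3-1\bigr)\bigl(-(y_1^{(k)})'\bigr)+N\le\frac{d_1(1+\delta)(\delta^3+3\delta^2+3\delta)}{1-t}+N,
\end{align*}
with the coefficient made less than $\tfrac14$ by choosing $\delta$ small. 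Integrating gives $h^{(k)}\lesssim(1-t)^{-D}$ with $D<\tfrac14$, hence the uniform $L^4$ bound. If you want to salvage your route, you would have to either reproduce this differential inequality for $\ln h^{(k)}$ (at which point you are doing the paper's proof), or genuinely establish quantitative decay of $f_1^{(k)}$ together with a substitute estimate on $[1-\rho_k,1]$; as written, that control is asserted rather than proved, so the argument has a real gap.
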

\begin{proof}
The idea of this proof is to carefully estimate $h^{(k)}$ towards the $t=1$ end of the interval by 
using the third equation of \eqref{NEF2S'}.

To start, choose a $\delta>0$ so that $d_1(1+\delta)(\delta^3+3\delta^2+3\delta)<\frac{1}{4}$. 
For ease of notation, we supress reference to the superscript $k$ in this proof. Since $T_1(1)=0$, $\alpha-c_2T_2(1)\ge 0$, $e^{-2y_1}\ge a$ and $e^{-2y_2}\le S$, we see that
\begin{align}\label{EFFTH'}
 \frac{d_1(d_1-1)}{d_1(d_1-1-c_1T_1) + (\alpha-c_2T_2) e^{2y_1-2y_2} } < 1 + \delta
\end{align}
on $[1-\epsilon,1]$ if $\epsilon$ is small enough, and $\epsilon$ can be made independent of $k$. Also, by Lemma \ref{IEY10} and the smoothness conditions for $T_1,T_2$, we can find a possibly smaller $\epsilon>0$ and $K>0$ so that whenever $k\ge K$, we have 
 \begin{align}\label{BFET}
  \left(-\frac{d_1T_1'}{2}-\frac{d_2c_2}{2c_1}T_2'e^{2y_1-2y_2}\right)\le d_1(1+\delta)(1-t)
 \end{align}
 on $[1-\epsilon,1]$ for all $k\ge K$. Finally, by Lemma \ref{BY1D}, we can make $\epsilon>0$ smaller and $K>0$ larger if necessary so that for $k\ge K$ and 
 $t\in [1-\epsilon,1]$, 
 \begin{align}\label{ICOL21}
  (1-t)\left|y_1'\right|\le 1+\delta,
 \end{align}
 so in particular, there exists an $N>0$ depending on $\delta$ so that 
 \begin{align}\label{DefNH}
  \frac{(d_2^2S+2d_1d_2\sqrt{S}\left|y_1'\right|)d_1(1+\delta)(1-t)}{d_1(d_1-1-c_1T_1)+d_2 e^{2y_1-2y_2}(\alpha-c_2 T_2)}\le N,
 \end{align} 
 whenever $k\ge K$ and $t\ge 1-\epsilon$. 
 
Now one of the terms in the third equation of \eqref{NEF2S'} is  
$-\frac{h^2}{2}\left(d_1T_1'e^{-2y_1}+\frac{d_2c_2}{c_1}T_2'e^{-2y_2}\right)$.
By \eqref{BFET} we can estimate:
\begin{align}\label{FEFTH'}
-\frac{h^2}{2}\left(d_1T_1'e^{-2y_1}+\frac{d_2c_2}{c_1}T_2'e^{-2y_2}\right)&\le d_1(1+\delta)(1-t)h^2e^{-2y_1}
\end{align}
on $[1-\epsilon,1]$, and for each $k\ge K$. Using \eqref{NEF2S'H2} we can also estimate the $h^2$ term:
\begin{align}
 h^2&\le \frac{d_1^2y_1'^2+d_2^2y_2'^2+2d_1d_2y_1'y_2'-d_1y_1'^2-d_2y_2'^2}{d_1 (d_1-1-c_1T_1) e^{-2y_1} + d_2 (\alpha-c_2 T_2) e^{-2y_2}} \nonumber \\
 \label{eqn:h2le}
 &\le \frac{d_1(d_1-1)y_1'^2 + d_2^2 S +2d_1d_2\sqrt{S}\left|y_1'\right|}{d_1 (d_1-1-c_1T_1) e^{-2y_1} + d_2 (\alpha-c_2 T_2) e^{-2y_2}}.
\end{align}

Using \eqref{FEFTH'}, \eqref{eqn:h2le}, \eqref{EFFTH'} and \eqref{DefNH}, we then find 
\begin{align}
&-\frac{h^2}{2}\left(d_1T_1'e^{-2y_1}+\frac{d_2c_2}{c_1}T_2'e^{-2y_2}\right) \nonumber \\
&\le  d_1(1+\delta)(1-t)\left(\frac{d_1(d_1-1)y_1'^2 + d_2^2 S + 2d_1d_2\sqrt{S}\left|y_1'\right|}{d_1(d_1-1-c_1T_1)+d_2 (\alpha-c_2 T_2) e^{2y_1-2y_2}}\right)\\
&=\frac{d_1(1+\delta)(1-t)d_1(d_1-1)y_1'^2}{d_1(d_1-1-c_1T_1)+d_2 (\alpha-c_2 T_2) e^{2y_1-2y_2}} + \frac{(d_2^2S+2d_1d_2\sqrt{S}\left|y_1'\right|)d_1(1+\delta)(1-t)}{d_1(d_1-1-c_1T_1)+d_2 (\alpha-c_2 T_2) e^{2y_1-2y_2}}\\
&\le d_1(1+\delta)^2(1-t)y_1'^2+N.
\end{align}
Then by using \eqref{ICOL21} again, we find
\begin{align}
-\frac{h^2}{2}\left(d_1T_1'e^{-2y_1}+\frac{d_2c_2}{c_1}T_2'e^{-2y_2}\right)\le d_1(1+\delta)^3 (-y_1')+N.
\end{align}
We now reinstate the superscript $k$, and turn to the third equation of \eqref{NEF2S'}, finding
\begin{equation}\label{EFHINOFY1}
\begin{aligned}
\frac{(h^{(k)})'}{h^{(k)}}&\le d_1(y_1^{(k)})^{'}+d_1(1+\delta)^3 \left(-(y_1^{(k)})'\right) + N\\
&= d_1(\delta^3+3\delta^2+3\delta)\left(-(y_1^{(k)})'\right)+N\le \frac{d_1(1+\delta)(\delta^3+3\delta^2+3\delta)}{(1-t)}+N,
\end{aligned}
\end{equation}
for $t\in [1-\epsilon,1]$. Integrating the inequality \eqref{EFHINOFY1}, we arrive at
$\frac{h^{(k)}(t)}{h^{(k)}(1-\epsilon)}\le \frac{e^{N(t-(1-\epsilon))}\epsilon^D}{(1-t)^{D}}$, where $D=d_1 (1+\delta)(\delta^3+3\delta^2+3\delta)$. By our choice of $\delta$, $D < \frac{1}{4}$, so $\frac{(h^{(k)})}{h^{(k)}(1-\epsilon)}$ is bounded uniformly in $L^4[1-\epsilon,1]$, hence 
$h^{(k)}$ is bounded uniformly in $L^4[\frac{1}{2},1]$ because we have already established uniform bounds for $h^{(k)}$ in $C^0[\frac{1}{2},1-\epsilon]$. 
\end{proof}

\begin{lemma}\label{GDNGT0}
 Let $(h^{(k)},y_1^{(k)},y_2^{(k)})$ be as in the statement of Lemma \ref{L2BH}. Then $y_2^{(k)}$ is uniformly bounded in $C^1[\frac{1}{2},1]$. 
\end{lemma}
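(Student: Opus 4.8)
The plan: the derivative bound is free, and the real content is a lower bound on $\gamma^{(k)}:=e^{-2y_2^{(k)}(1/2)}$. Indeed, $((y_2^{(k)})'(t))^2\le\sup_{[\frac12,1]}(e^{-2y_2^{(k)}}+((y_2^{(k)})')^2)=S$ gives $|(y_2^{(k)})'|\le\sqrt S$ on $[\tfrac12,1]$, and $e^{-2y_2^{(k)}}\le S$ gives $y_2^{(k)}\ge-\tfrac12\ln S$. Since $(y_2^{(k)})'\le0$ the function $y_2^{(k)}$ is nonincreasing, so $y_2^{(k)}(t)\le y_2^{(k)}(\tfrac12)=-\tfrac12\ln\gamma^{(k)}$; hence it suffices to show $\liminf_k\gamma^{(k)}>0$. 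Suppose not, and pass to a subsequence with $\gamma^{(k)}\to0$. From $-\sqrt S\le(y_2^{(k)})'\le0$ we get $e^{-2y_2^{(k)}(t)}=\gamma^{(k)}e^{-2\int_{1/2}^t(y_2^{(k)})'}\le\gamma^{(k)}e^{\sqrt S}$, so $e^{-2y_2^{(k)}}\to0$ uniformly on $[\tfrac12,1]$.

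I would derive a contradiction by proving $(y_2^{(k)})'\to0$ uniformly; combined with the uniform decay of $e^{-2y_2^{(k)}}$ this contradicts $\sup_{[\frac12,1]}(e^{-2y_2^{(k)}}+((y_2^{(k)})')^2)=S>0$. Write $P^{(k)}:=\tfrac{d_1}{2}T_1'e^{-2y_1^{(k)}}+\tfrac{d_2c_2^{(k)}}{2c_1^{(k)}}T_2'e^{-2y_2^{(k)}}$, which is $\le0$ since $T_i'\le0$ on $[\tfrac12,1]$; then the second equation of \eqref{NEF2S'} with $\beta=0$ reads $(y_2^{(k)})''=(h^{(k)})^2(\alpha-c_2^{(k)}T_2)e^{-2y_2^{(k)}}-(h^{(k)})^2(y_2^{(k)})'P^{(k)}$. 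A direct bound for $\int_{1/2}^1|(y_2^{(k)})''|$ is obstructed by the term $(h^{(k)})^2(y_2^{(k)})'P^{(k)}$, which involves $(h^{(k)})^2e^{-2y_1^{(k)}}T_1'$ and is not obviously small, since $h^{(k)}$ and $e^{-2y_1^{(k)}}$ may both blow up as $t\to1$. The key point is to integrate the ODE over all of $[\tfrac12,1]$: since $(y_2^{(k)})'(\tfrac12)=(y_2^{(k)})'(1)=0$ the left side integrates to $0$, giving $\int_{1/2}^1(h^{(k)})^2(\alpha-c_2^{(k)}T_2)e^{-2y_2^{(k)}}=\int_{1/2}^1(h^{(k)})^2(y_2^{(k)})'P^{(k)}$. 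The right integrand equals $(h^{(k)})^2(-(y_2^{(k)})')(-P^{(k)})\ge0$, while the left side is at most $\max_{[\frac12,1]}|\alpha-c_2^{(k)}T_2|\cdot\|e^{-2y_2^{(k)}}\|_{C^0[\frac12,1]}\cdot\int_{1/2}^1(h^{(k)})^2$; the first factor is bounded because $c_2^{(k)}\in(\tfrac{\alpha}{\sup T_2},\tfrac{\alpha}{\inf T_2})$, the second tends to $0$, and the third is $\le\|h^{(k)}\|_{L^4[\frac12,1]}^2/\sqrt2$, which is bounded by Lemma \ref{L2BH}. Hence $\int_{1/2}^1(h^{(k)})^2(-(y_2^{(k)})')(-P^{(k)})\to0$.

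Consequently $\int_{1/2}^1|(y_2^{(k)})''|\le\int_{1/2}^1(h^{(k)})^2|\alpha-c_2^{(k)}T_2|e^{-2y_2^{(k)}}+\int_{1/2}^1(h^{(k)})^2(-(y_2^{(k)})')(-P^{(k)})\to0$, so using $(y_2^{(k)})'(1)=0$ and the fundamental theorem of calculus, $\sup_{[\frac12,1]}|(y_2^{(k)})'|\le\int_{1/2}^1|(y_2^{(k)})''|\to0$, producing the contradiction. Therefore $\liminf_k\gamma^{(k)}>0$, and by the first paragraph $y_2^{(k)}$ is uniformly bounded in $C^1[\tfrac12,1]$. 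Note this argument uses only the hypotheses of Theorem \ref{LOKS} and the $L^4$ bound of Lemma \ref{L2BH}; the one genuinely delicate ingredient is the sign bookkeeping that makes the hard term $\int(h^{(k)})^2(-(y_2^{(k)})')(-P^{(k)})$ nonnegative, so that the identity obtained from integrating the ODE controls it, with everything else reducing to Hölder's inequality and the uniform decay of $e^{-2y_2^{(k)}}$.
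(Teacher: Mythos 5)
Your proof is correct and reaches the same conclusion using the same core ingredients as the paper: reduce to ruling out $\gamma^{(k)}\to 0$, note that this would force $e^{-2y_2^{(k)}}\to 0$ uniformly, invoke the $L^4$ bound on $h^{(k)}$ from Lemma \ref{L2BH} to make $\int(h^{(k)})^2(\alpha-c_2^{(k)}T_2)e^{-2y_2^{(k)}}$ small, and contradict the normalization $\sup_{[\frac12,1]}(e^{-2y_2^{(k)}}+((y_2^{(k)})')^2)=S$. The one place you diverge is the final step. You integrate the full ODE for $(y_2^{(k)})''$ over $[\frac12,1]$, using \emph{both} boundary conditions $(y_2^{(k)})'(\frac12)=(y_2^{(k)})'(1)=0$, to obtain an identity that forces the nonnegative ``bad'' term $\int(h^{(k)})^2(-(y_2^{(k)})')(-P^{(k)})$ to tend to $0$ as well, and thereby conclude $\int|(y_2^{(k)})''|\to 0$, hence $(y_2^{(k)})'\to 0$ uniformly. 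The paper instead simply discards the term $-h^2 y_2'P$ by its sign, getting the one-sided bound $(y_2^{(k)})''\le (h^{(k)})^2(\alpha-c_2^{(k)}T_2)e^{-2y_2^{(k)}}$, chooses times $t^{(k)}$ with $(y_2^{(k)})'(t^{(k)})\le -\sqrt{S}/2$ (available once $e^{-2y_2^{(k)}}\to 0$), and integrates from $t^{(k)}$ to $1$ to contradict $(y_2^{(k)})'(1)=0$. Both arguments are valid; your version is slightly longer but yields the stronger statement that $(y_2^{(k)})'\to 0$ uniformly along the bad subsequence and never has to invoke the sup condition to locate a specific time $t^{(k)}$, while the paper's is a touch shorter because it never needs to estimate the term involving $e^{-2y_1^{(k)}}$ at all.
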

\begin{proof}
Since $(y_2^{(k)})'$ and $e^{-2y_2^{(k)}}$ are both uniformly bounded on $[\frac{1}{2},1]$, it suffices to show that 
$\gamma^{(k)}=e^{-2y_2^{(k)}(\frac{1}{2})}$ does not converge to $0$. 
If $\lim_{k\to \infty}\gamma^{(k)}=0$, we would find that $e^{-2y_2^{(k)}}$ converges to $0$ uniformly on $[\frac{1}{2},1]$. 
The equation for $(y_2^{(k)})''$ in \eqref{NEF2S'} and the fact that $(y_2^{(k)})'$ and 
$\left(\frac{d_1}{2}T_1'e^{-2y_1}+\frac{d_2c_2 }{2c_1} T_2' e^{-2y_2}\right)$ are both non-positive implies that
\begin{align}
(y_2^{(k)})''\le (h^{(k)})^2 (\alpha-c_2^{(k)}T_2) e^{-2y_2^{(k)}}. 
\end{align}
Now, we know that $(y_2^{(k)})'$ is negative, $e^{-2y_2^{(k)}}$ converges to $0$ uniformly 
and $\sup_{[\frac{1}{2},1]}(e^{-2y_2^{(k)}}+((y^{(k)}_2)')^2)-S=0$. Therefore we can find a sequence of times $t^{(k)}$ so that $(y_2^{(k)})'(t^{(k)})\le-\frac{\sqrt{S}}{2}$. Then
\begin{align*}
 0&=(y_2^{(k)})'(1)\\
 &=(y_2^{(k)})'(t^{(k)})+\int_{t^{(k)}}^{1}(y_2^{(k)})''(s)ds\\
 &\le -\frac{\sqrt{S}}{2}+\left|\left|(h^{(k)})^2(\alpha-c_2^{(k)}T_2) e^{-2y_2^{(k)}}\right|\right|_{L^1[\frac{1}{2},1]}.
\end{align*}
Notice that $(\alpha-c_2^{(k)}T_2)$ is bounded, $e^{-2y_2^{(k)}}$ converges to $0$ uniformly, and 
$(h^{(k)})^2$ is bounded in $L^1[\frac{1}{2},1]$ (by Lemma \ref{L2BH}), so $\left|\left|(h^{(k)})^2e^{-2y_2^{(k)}}(\alpha-c_2^{(k)}T_2)\right|\right|_{L^1[\frac{1}{2},1]}\to 0$, a contradiction. 

\end{proof}

Finally, we have enough information to take limits. 

\begin{lemma}\label{LLTL}
 Let $(h^{(k)}, y_1^{(k)}, y_2^{(k)})$ be as in the statement of Lemma \ref{GDNGT0}. Then there exists $(h,y_1,y_2)$ satisfying \eqref{NEF2S'} on $[\frac{1}{2},1)$, 
 such that $\left|y_2'\right|$ is bounded, and $y_i'\le 0$, and so that $(h, y_1, y_1', y_2, y_2')$ is unbounded about $1$. Also, $e^{-2y_1}(\frac{1}{2})=a$, $\sup_{t\in [\frac{1}{2},1]} e^{-2y_2(t)}+(y_2'(t))^2=S$. 
\end{lemma}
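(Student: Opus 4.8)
The plan is to extract a convergent subsequence of $(h^{(k)},y_1^{(k)},y_2^{(k)})$ and verify that the limit has all the properties in the statement, the essential one being that it genuinely degenerates at $t=1$. Recall the constants $c_1^{(k)},c_2^{(k)},\gamma^{(k)}$ already converge, the last to a positive limit by Lemma \ref{GDNGT0}, so it remains to control the functions. First I would obtain uniform bounds away from the singular orbit: by Lemmas \ref{BATS} and \ref{UBI}, for each small $\epsilon>0$ the triple $(h^{(k)},y_1^{(k)},(y_2^{(k)})')$ is uniformly bounded in $C^0[\frac12,1-\epsilon]\times C^1[\frac12,1-\epsilon]\times C^0[\frac12,1-\epsilon]$; since $y_1^{(k)}(\frac12)=-\frac{\ln a}{2}$ is fixed and $\gamma^{(k)}=e^{-2y_2^{(k)}(\frac12)}$ is bounded away from $0$, the functions $y_i^{(k)}$ are themselves uniformly bounded on $[\frac12,1-\epsilon]$, so the coefficients $e^{-2y_i^{(k)}}$ in \eqref{NEF2S'} are uniformly bounded above and below there. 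Feeding these bounds back into \eqref{NEF2S'} and repeatedly differentiating yields uniform $C^\infty$ bounds on every compact subinterval of $[\frac12,1)$, so by Arzel\`a--Ascoli and a diagonal argument, after passing to a subsequence, $(h^{(k)},y_1^{(k)},y_2^{(k)})\to(h,y_1,y_2)$ in $C^\infty_{\mathrm{loc}}[\frac12,1)$, with $h>0$ and $(h,y_1,y_2)$ solving \eqref{NEF2S'} on $[\frac12,1)$.

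Next I would transfer the qualitative properties to the limit. The bounds $(y_i^{(k)})'\le 0$ and $((y_2^{(k)})')^2\le S$ pass to the limit, so $y_i'\le 0$ and $|y_2'|\le\sqrt S$ (in particular $|y_2'|$ is bounded and $y_2$ extends Lipschitz-continuously to $[\frac12,1]$), and evaluating at $t=\frac12$ gives $e^{-2y_1}(\frac12)=a$. For the normalisation $\sup_{t\in[\frac12,1)}(e^{-2y_2}+(y_2')^2)=S$, let $t_k\in[\frac12,1]$ be a point at which $e^{-2y_2^{(k)}}+((y_2^{(k)})')^2$ attains the value $S$, and pass to a subsequence with $t_k\to t_\infty$. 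If $t_\infty<1$, then $C^1_{\mathrm{loc}}$ convergence transfers this equality to the limit at $t_\infty$. If $t_\infty=1$, one uses that $e^{-2y_2^{(k)}}$ is increasing (as $(y_2^{(k)})'\le 0$) and converges uniformly on $[\frac12,1]$, together with the near-boundary estimates of Lemmas \ref{IEY10} and \ref{BY1D}, to conclude that either $\lim_{t\to1}e^{-2y_2(t)}=S$, and then the supremum is $S$, or the maximising points may be taken in a fixed compact subinterval of $[\frac12,1)$, which reduces to the previous case.

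Finally, and this is the point of the whole construction, I would show $(h,y_1,y_1',y_2,y_2')$ is unbounded about $t=1$. Here Lemma \ref{IEY10} does the work: fixing $t<1$ and letting $k\to\infty$ in the bound $e^{2y_1^{(k)}(t)-2y_2^{(k)}(t)}\le\delta'$, valid for $t\in[1-\epsilon',1]$ and $k\ge K'$, gives $e^{2y_1(t)-2y_2(t)}\le\delta'$ for $t$ close to $1$; hence $\lim_{t\to1}e^{2y_1(t)-2y_2(t)}=0$, and since $y_2$ is bounded this forces $\lim_{t\to1}e^{2y_1(t)}=0$, i.e.\ $\lim_{t\to1}y_1(t)=-\infty$, so $y_1$ and a fortiori the tuple $(h,y_1,y_1',y_2,y_2')$ is unbounded about $1$. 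I expect the main difficulty to lie in the first step: one must be sure the sequence does not degenerate on the interior $[\frac12,1)$, so that the subsequential limit is a genuine solution there, but this is exactly what Lemmas \ref{BATS}, \ref{UBI} and \ref{GDNGT0} were proved to supply; the only other delicate point is the supremum normalisation when the maximising points approach the singular orbit.
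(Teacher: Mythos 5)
Your overall structure (extract a $C^\infty_{\mathrm{loc}}$-convergent subsequence on $[\tfrac12,1)$, transfer the inequalities, pin down the normalisations, and finally show degeneration at $t=1$) matches the paper's proof, and the first two steps are essentially identical to what the authors do. Two of your later steps merit comment.

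Your unboundedness argument is a genuinely different, and in fact stronger, route. The paper argues by contradiction: if $(h,y_1,y_1',y_2,y_2')$ were bounded on $[\tfrac12,1)$ the solution would extend to $[\tfrac12,1]$, and then continuous dependence on the data $(\gamma,c_1,c_2)$ (via \eqref{ICH} and the ODE system) would force boundedness of $(h^{(k)},y_1^{(k)},y_2^{(k)})$ for large $k$, contradicting the hypothesis. You instead fix $\delta'>0$, apply Lemma~\ref{IEY10} to get $e^{2y_1^{(k)}(t)-2y_2^{(k)}(t)}\le\delta'$ for $k\ge K'$, $t\in[1-\epsilon',1]$, and let $k\to\infty$ at a fixed $t<1$ to conclude $\lim_{t\to1}e^{2y_1(t)-2y_2(t)}=0$; since $y_2'\le 0$ forces $e^{2y_2(t)}\le e^{2y_2(1/2)}=1/\hat\gamma$, this gives $\lim_{t\to 1}e^{2y_1(t)}=0$, i.e.\ $y_1(t)\to-\infty$. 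This is correct and more constructive than the paper's argument: it actually establishes the collapse condition \eqref{WeakRegularity} (not merely \eqref{StrongRegularity}) for the limit, which is a strictly stronger conclusion. (The authors' need for \eqref{StrongRegularity} remains, because it is invoked \emph{inside} the proof of Lemma~\ref{IEY10}, but your observation shows the final limit in fact satisfies the stronger collapse condition.)

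There is, however, a real gap in your treatment of the normalisation $\sup_{t\in[\frac12,1)}(e^{-2y_2}+(y_2')^2)=S$. You correctly reduce to the case where the maximisers $t_k$ approach $1$, but the lemmas you invoke (\ref{IEY10} and \ref{BY1D}) do not control $(y_2^{(k)})'$ near $t=1$ — they concern $e^{2y_1^{(k)}-2y_2^{(k)}}$ and $(y_1^{(k)})'$. The needed input is Lemma~\ref{L2BH}, the uniform $L^4[\frac12,1]$ bound on $h^{(k)}$. The paper's argument in this case runs as follows: pass to a subsequence so $e^{-2y_2^{(k)}(1)}$ converges; if the limit is $<S$, there is $\epsilon>0$ and times $t^{(k)}$ with $(y_2^{(k)})'(t^{(k)})\le-\sqrt{\epsilon}$, while the second equation of \eqref{NEF2S'} together with $(y_2^{(k)})'\le 0$ gives $(y_2^{(k)})''\le (h^{(k)})^2\alpha S$; integrating from $t^{(k)}$ to $1$ and using $(y_2^{(k)})'(1)=0$ yields
\begin{equation*}
\sqrt\epsilon\le -(y_2^{(k)})'(t^{(k)})\le\alpha S\,\bigl\|(h^{(k)})^2\bigr\|_{L^1[t^{(k)},1]}\le\alpha S\,\bigl\|h^{(k)}\bigr\|_{L^4[\frac12,1]}^{2}\,(1-t^{(k)})^{1/2},
\end{equation*}
where the last step is H\"older, and the right-hand side tends to $0$ as $t^{(k)}\to 1$, a contradiction. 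If instead $e^{-2y_2^{(k)}(1)}\to S$, one combines $y_2'\ge-\sqrt S$ with $e^{-2y_2^{(k)}(1)}\le e^{-2y_2^{(k)}(t)}e^{2\sqrt S(1-t)}$ to get $\lim_{t\to1}e^{-2y_2(t)}=S$. Without an appeal to Lemma~\ref{L2BH} (or an equivalent uniform-integrability statement for $(h^{(k)})^2$ near $t=1$), your step ``maximising points may be taken in a fixed compact subinterval'' is unsupported.
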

\begin{proof}
We now know that for each $\epsilon>0$, $(h^{(k)},y_1^{(k)},y_2^{(k)})$ is bounded in 
$C^0[\frac{1}{2},1-\epsilon]\times C^1[\frac{1}{2},1-\epsilon]\times C^1[\frac{1}{2},1-\epsilon]$. The fact that 
these functions solve \eqref{NEF2S'} implies that we actually have uniform bounds in
$C^1[\frac{1}{2},1-\epsilon]\times C^2[\frac{1}{2},1-\epsilon]\times C^2[\frac{1}{2},1-\epsilon]$, whence we get sub-convergence in 
$C^0[\frac{1}{2},1-\epsilon]\times C^1[\frac{1}{2},1-\epsilon]\times C^1[\frac{1}{2},1-\epsilon]$ to a limit $(h,y_1,y_2)$ which still solves \eqref{NEF2S'}. 
It is clear that $h>0$ on $[\frac{1}{2},1-\epsilon]$. 

Now for each $\epsilon>0$, the limiting solution solves the same ODEs with the same initial conditions. Therefore, the solution is actually independent of $\epsilon>0$, 
so by sending $\epsilon$ to $0$, we get a solution defined on $[\frac{1}{2},1)$. Note that on $[\frac{1}{2},1)$, we have 
\begin{align}
 e^{-2y_2}\le S, \qquad e^{2y_1}\le \frac{1}{a}, \\
 y_1'\le 0, \qquad -\sqrt{S}\le y_2'\le 0,
\end{align}
because the corresponding statements are true for the sequence.

Now this solution cannot have $h, y_1, y_1', y_2, y_2'$ bounded on the entirety of $[\frac{1}{2},1)$, otherwise they could be extended to a solution on all of $[\frac{1}{2},1]$. Then by perturbing 
$\gamma$ (which will perturb $y_2(\frac{1}{2})$ and $h(\frac{1}{2})$), 
as well as $c_1$ and $c_2$, we see that the solution is bounded for all nearby $\gamma$ and $(c_1,c_2)$, which contradicts the unboundedness of our original sequence. 

Finally, we also show that $\sup_{t\in [\frac{1}{2},1)} e^{-2y_2(t)}+(y_2'(t))^2=S$. It is clear that $e^{-2y_2(t)}+(y_2'(t))^2\le S$ for each $t\in [\frac{1}{2},1)$ because the same is true for the sequence. Take a subsequence so that 
$e^{-2y_2^{(k)}(1)}$ is convergent. Now if $\lim_{k\to \infty}e^{-2y_2^{(k)}(1)}<S$, then since $(y_2^{(k)})' \le 0$, we can find an $\epsilon>0$ so that $e^{-2y_2^{(k)}}\le S-\epsilon$ on $[\frac{1}{2},1]$, for large enough $k$. Then there exists a sequence of times $t^{(k)}$ so that $((y_2^{(k)})' (t^{(k)}))^2 = S - e^{-2y_2^{(k)}(t^{(k)})}\ge \epsilon$, and we can again assume that $t^{(k)}$ is convergent to $t^*\in [\frac{1}{2},1]$. If $t^*<1$, then uniform convergence away from $t=1$ implies that $e^{-2y_2(t^*)}+(y_2'(t^*))^2=S$ as required. If $t^*=1$, then the fact that $(y_2^{(k)})'(t^{(k)})\le -\sqrt{\epsilon}$ is a contradiction. Indeed, the equation for $(y_2^{(k)})''$ in \eqref{NEF2S'} 
implies that on $[t^{(k)},1]$, we have 
\begin{align*}
 (y_2^{(k)})''\le (h^{(k)})^2(\alpha-c_2^{(k)}T_2)e^{-2y_2^{(k)}}\le (h^{(k)})^2\alpha S
\end{align*}
so since $\left|\left|(h^{(k)})^2\right|\right|_{L^1[t^{(k)},1]}\to 0$ (H\"older's inequality, coupled with the uniform bounds on $h^{(k)}$ in
$L^4[\frac{1}{2},1]$), we cannot possibly have $(y_2^{(k)})'(t^{(k)})\le -\sqrt{\epsilon}$ and 
$(y_2^{(k)})'(1)=0$ simultaneously. 

On the other hand, if $\lim_{k\to \infty}e^{-2y_2^{(k)}(1)}=S$, then we can use the estimate  $((y_2^{(k)})')^2 \le S$ to conclude that $\lim_{t\to 1}e^{-2y_2(t)}=S$. 
\end{proof}
%
\begin{proof}[Proof of Theorem \ref{RicciExistence}]
By Proposition \ref{EOIE} and the discussion following it, it is enough to obtain a solution to the equation \eqref{NEF2S'} (with $\beta = 0$). 
By Theorems \ref{TESS} and \ref{LOKS} we obtain a solution to \eqref{NEF2S'} on $[\frac{1}{2}, 1)$ 
that satisfies the right smoothness conditions at $t = \frac{1}{2}$, and is unbounded at $t = 1$. 
Then Theorem \ref{RicciRegularity} (with \eqref{WeakRegularity} replaced by \eqref{StrongRegularity}) implies that the corresponding metric $\g$ extends to a smooth metric on all of $M$.
\end{proof}


\appendix 
\section{Some Properties of Singular ODEs}
In this appendix, we state some facts about solutions to singular ODEs that are used in Section \ref{PRR}. 
\begin{prop}\label{TCsF}
 Let $x:(0,\epsilon]\to \mathbb{R}$ be a continuously differentiable function satisfying 
 \begin{align*}
  x'(t)=-\frac{a(t)(x(t)-c)}{t}+b(t),
 \end{align*}
 with $c\in \mathbb{R}$, and $a,b$ continuous functions on $[0,\epsilon]$, with $a(0)>0$. 
 If $x$ is continuous on $[0,\epsilon]$ with $x(0)=c$, then $x\in C^1[0,\epsilon]$, and 
 $x'(0)=\frac{b(0)}{1+a(0)}$. 
\end{prop}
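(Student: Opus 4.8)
The plan is to reduce everything to a statement about the single function $u(t):=\frac{x(t)-c}{t}$, namely that $\lim_{t\to 0^+}u(t)$ exists and equals $\frac{b(0)}{1+a(0)}$; once this is known, the conclusion follows quickly. First I would record the differential equation for $u$: since the hypothesis says $x'(t)=-a(t)u(t)+b(t)$, differentiating $u=(x-c)/t$ gives $tu'(t)=x'(t)-u(t)=-(1+a(t))u(t)+b(t)$, i.e.\ $u'(t)+\frac{1+a(t)}{t}u(t)=\frac{b(t)}{t}$ on $(0,\epsilon]$.

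The next step is to solve this linear first-order equation with an integrating factor, and this is where the main obstacle lies: since $a(0)>0$, the coefficient $\frac{1+a(t)}{t}$ is not integrable at $0$, and moreover the ``error'' $\frac{a(t)-a(0)}{t}$ need not be integrable at $0$ either, so the integrating factor has no usable closed form. The resolution is a squeeze estimate. Fix $\eta\in(0,\epsilon]$ small enough that $1+a>0$ on $[0,\eta]$ and $|a(t)-a(0)|\le \tfrac{a(0)}{2}$ on $[0,\eta]$, and set $\mu(t)=\exp\!\left(\int_\eta^t\frac{1+a(s)}{s}\,ds\right)$ for $t\in(0,\eta]$. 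Writing $1+a(s)=(1+a(0))+(a(s)-a(0))$ and using $\left|\int_\eta^t\frac{a(s)-a(0)}{s}\,ds\right|\le\frac{a(0)}{2}\,|\ln(t/\eta)|$, one gets $c_1 t^{1+\frac{3}{2}a(0)}\le \mu(t)\le c_2 t^{1+\frac12 a(0)}$ for suitable constants; in particular $\mu(t)\to 0$ as $t\to 0^+$.

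Then $(\mu u)'=\mu\,\frac{b}{t}$, and I would integrate this from $0$ to $t$. The left endpoint is handled because $\mu(t)u(t)=\mu(t)\frac{x(t)-c}{t}$ is bounded in absolute value by a constant times $t^{\frac12 a(0)}\,|x(t)-c|$, which tends to $0$ using only that $x$ is continuous at $0$ with $x(0)=c$ (no a priori bound on $u$ is needed); and the integral $\int_0^t\mu(s)\frac{b(s)}{s}\,ds$ converges near $0$ since $\left|\mu(s)\frac{b(s)}{s}\right|\le C s^{-1+\frac12 a(0)}$. This yields $u(t)=\mu(t)^{-1}\int_0^t\mu(s)\frac{b(s)}{s}\,ds$. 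Both numerator and denominator tend to $0$, $\mu$ is differentiable with $\mu'(t)=\frac{1+a(t)}{t}\mu(t)\ne 0$ near $0$, and the ratio of derivatives is $\frac{b(t)}{1+a(t)}\to\frac{b(0)}{1+a(0)}$, so L'H\^opital's rule gives $\lim_{t\to 0^+}u(t)=\frac{b(0)}{1+a(0)}$.

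Finally, $\lim_{t\to 0^+}\frac{x(t)-x(0)}{t}=\lim_{t\to 0^+}u(t)=\frac{b(0)}{1+a(0)}$, so the one-sided derivative of $x$ at $0$ exists and equals $\frac{b(0)}{1+a(0)}$; and $\lim_{t\to 0^+}x'(t)=\lim_{t\to 0^+}\big(-a(t)u(t)+b(t)\big)=-a(0)\frac{b(0)}{1+a(0)}+b(0)=\frac{b(0)}{1+a(0)}$ as well. Since $x'$ is continuous on $(0,\epsilon]$ by hypothesis and has this limit at $0$ matching $x'(0)$, we conclude $x\in C^1[0,\epsilon]$ with $x'(0)=\frac{b(0)}{1+a(0)}$. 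The only delicate point throughout is the squeeze estimate on $\mu$; everything else is routine.
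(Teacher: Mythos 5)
Your argument is correct, but it takes a genuinely different route from the paper's. The paper first normalizes to $c=0$ and $b(0)=0$ by adding an affine function, and then proves $\lim_{t\to 0^+}x(t)/t=0$ by a barrier argument: for any small $\tilde\epsilon>0$ it exhibits the explicit sub- and supersolutions $x_\pm(t)=\pm\frac{\tilde\epsilon}{1+a(0)-\tilde\epsilon}\,t$ and uses a comparison principle (together with $x(0)=0$) to sandwich $x(t)/t$ between $\pm\frac{\tilde\epsilon}{1+a(0)-\tilde\epsilon}$ on a short interval; continuity of $x'$ at $0$ then drops out of the ODE itself. You instead work directly with $u=(x-c)/t$, solve the resulting linear ODE by an integrating factor $\mu$, control $\mu$ near $t=0$ by two-sided power-law bounds $c_1 t^{1+\frac{3}{2}a(0)}\le\mu(t)\le c_2 t^{1+\frac{1}{2}a(0)}$ (which is exactly the right way to handle the non-integrable coefficient $\frac{1+a(t)}{t}$), and finish with L'H\^opital. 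Both proofs are sound; both turn on the fact that $a(0)>0$ makes the singular coefficient \emph{attractive}, so that the initial datum at $t=0$ is forced rather than freely chosen. The paper's barrier argument is shorter and avoids both L'H\^opital and the WLOG-free bookkeeping, and it quantifies the rate of convergence automatically. Your variation-of-parameters route is more systematic and would extend more readily, e.g., to extracting higher-order asymptotics of $x$ at $0$ or to weaker regularity assumptions on $b$. One minor point to state explicitly in your write-up: when you integrate $(\mu u)'=\mu b/t$ from $s$ to $t$ and let $s\to 0^+$, you should note that the boundary term $\mu(s)u(s)\to 0$ uses only the continuity of $x$ at $0$ with $x(0)=c$, not any a priori bound on $u$ — you do say this, and it is the crux of why the argument closes without circularity.
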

\begin{proof}
 By adding an affine function to $x$, we can assume without loss of generality that $c=0$ and $b(0)=0$. 
 
 For each small $\tilde{\epsilon}>0$, we can find a $\delta>0$ so that for each $t\in [0,\delta]$, we have 
 \begin{align*}
  \left|b(t)\right|&<\tilde{\epsilon}\\
  \left|a(t)-a(0)\right|&<\tilde{\epsilon}\\
  a(t)&>0. 
 \end{align*}
 On $[0,\delta]$, consider the functions 
 \begin{align*}
  x_+(t)&=\frac{\tilde{\epsilon}}{1+a(0)-\tilde{\epsilon}}t\\
  x_-(t)&=\frac{-\tilde{\epsilon}}{1+a(0)-\tilde{\epsilon}}t\\
 \end{align*}
Then 
\begin{align*}
 x_+'(t)=\frac{\tilde{\epsilon}}{1+a(0)-\tilde{\epsilon}}\ge -\frac{a(t)x_+(t)}{t}+b(t),\\
 x_-'(t)=\frac{-\tilde{\epsilon}}{1+a(0)-\tilde{\epsilon}}\le -\frac{a(t)x_-(t)}{t}+b(t),
\end{align*}
and $x_+(0)=x_-(0)=0$. 
Therefore, $x(t)\in [x_-(t),x_+(t)]$, so 
$\frac{x(t)}{t}\in [\frac{-\tilde{\epsilon}}{1+a(0)-\tilde{\epsilon}},\frac{\tilde{\epsilon}}{1+a(0)-\tilde{\epsilon}}]$. 
This shows that $x'(0)$ exists, and is equal to $0$. 
In fact, since $x'(t)=-\frac{a(t)x(t)}{t}+b(t)$, and $a(t)$, $b(t)$ are both continuous, this also shows that 
$\lim_{t\to 0}x'(t)=0$, so $x\in C^1[0,\epsilon]$. 
\end{proof}
A simple time reversal of the last proposition gives

\begin{prop}\label{TCs}
 Let $x:[1-\epsilon,1)\to \mathbb{R}$ be a continuously differentiable function satisfying 
 \begin{align*}
  x'(t)=\frac{a(t)(x(t)-c)}{(1-t)}+b(t),
 \end{align*}
 with $c\in \mathbb{R}$, and $a,b$ continuous functions on $[1-\epsilon,1]$, with $a(1)>0$. 
 If $x$ is continuous on $[1-\epsilon,1]$ with $x(1)=c$, then $x\in C^1[1-\epsilon,1]$, and 
 $x'(1)=\frac{b(1)}{1+a(1)}$. 
\end{prop}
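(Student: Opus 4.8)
The plan is to reduce this directly to Proposition \ref{TCsF} via the orientation-reversing change of variables $s = 1-t$. First I would set $s = 1-t$, so that $s$ ranges over $(0,\epsilon]$, and define $\tilde x(s) = x(1-s)$, $\tilde a(s) = a(1-s)$, and $\tilde b(s) = -b(1-s)$. Using the chain rule in the form $\tilde x'(s) = -x'(1-s)$ together with the differential equation satisfied by $x$, a one-line computation gives
\begin{align*}
 \tilde x'(s) = -\frac{a(1-s)\bigl(x(1-s)-c\bigr)}{1-(1-s)} - b(1-s) = -\frac{\tilde a(s)\bigl(\tilde x(s)-c\bigr)}{s} + \tilde b(s),
\end{align*}
so $\tilde x$ satisfies exactly the singular ODE appearing in the hypothesis of Proposition \ref{TCsF}.

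Next I would check the remaining hypotheses of Proposition \ref{TCsF}: the functions $\tilde a$ and $\tilde b$ are continuous on $[0,\epsilon]$ because $a$ and $b$ are continuous on $[1-\epsilon,1]$; moreover $\tilde a(0) = a(1) > 0$; and $\tilde x$ is continuous on $[0,\epsilon]$ with $\tilde x(0) = x(1) = c$ by assumption. Proposition \ref{TCsF} then applies and yields $\tilde x \in C^1[0,\epsilon]$ with $\tilde x'(0) = \frac{\tilde b(0)}{1+\tilde a(0)} = \frac{-b(1)}{1+a(1)}$.

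Finally I would translate back: since $x(t) = \tilde x(1-t)$ with $\tilde x \in C^1[0,\epsilon]$, we conclude $x \in C^1[1-\epsilon,1]$, and $x'(1) = -\tilde x'(0) = \frac{b(1)}{1+a(1)}$, as claimed. There is no substantive difficulty in this argument; the only point requiring care is the sign bookkeeping introduced by the reflection (the sign of $\tilde b$, and the relation $x'(1) = -\tilde x'(0)$), which is the reason it is worth recording the substitution explicitly rather than merely asserting that the statement follows by time reversal.
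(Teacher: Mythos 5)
Your proof is correct and is exactly the argument the paper intends: Proposition \ref{TCs} is stated there as following from Proposition \ref{TCsF} by ``a simple time reversal,'' which is precisely your substitution $s=1-t$, and your sign bookkeeping ($\tilde b(s)=-b(1-s)$, $x'(1)=-\tilde x'(0)$) is accurate and yields the stated value $x'(1)=\frac{b(1)}{1+a(1)}$.
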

\section{The Leray-Schauder Degree}
Throughout Section \ref{AASOS}, we use the notion of the Leray-Schauder degree. 
In this appendix we will describe the relevant results about the Leray-Schauder degree, as taken from pages 185-225 of \cite{LSDegree}.

Suppose $X$ is a Banach space, and $\Omega\subset X$ is bounded, open and convex. If $I:X\to X$ is the identity mapping and 
$f:\overline{\Omega}\to X$ is a compact mapping so that $x-f(x)\neq 0$ for each 
$x\in \partial \Omega$, then we can associate to $I-f$ the \textit{Leray-Schauder Degree}, denoted $\deg(I-f,\Omega,0)$, which provides a weighted count 
of the number of zeroes that $I-f$ has inside $\Omega$. The Leray-Schauder degree has useful properties, as described in the following. 
\begin{theorem}\label{PLSD}The Leray-Schauder degree has the following properties:\\
 (i) If $\deg(I-f,\Omega,0)\neq 0$, then there is an $x\in \Omega$ so that $x=f(x)$.\\
 (ii) If $H:[0,1]\times \Omega \to X$ is continuous, compact and $x-H(t,x)\neq 0$ for each $(t,x)\in [0,1]\times \partial \Omega$, then 
 $\deg(I-H(t,\cdot),\Omega,0)$ is independent of $t$. \\
 (iii) If $I-f'(a)$ is invertible for each fixed point $a$ of $f$, then $\deg(I-f,\Omega,0)=\sum_{a\in (I-f)^{-1}(0)}(-1)^{\sigma(a)}$, where 
 $\sigma(a)$ is the sum of of the algebraic multiplicities of the eigenvalues of $f'(a)$ contained in $(1,\infty)$. 
\end{theorem}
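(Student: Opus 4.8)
Since these are the classical properties of the Leray--Schauder degree (see pp.~185--225 of \cite{LSDegree}), the plan is to recall the construction of the degree by finite-dimensional approximation and then read off (i)--(iii) from the corresponding properties of the Brouwer degree. First I would recall the construction: since $f:\overline{\Omega}\to X$ is compact, $f(\overline{\Omega})$ is relatively compact, so for every $\varepsilon>0$ there is a finite-dimensional subspace $X_\varepsilon\subset X$ and a continuous Schauder projection $f_\varepsilon:\overline{\Omega}\to X_\varepsilon$ with $\sup_{\overline{\Omega}}\|f_\varepsilon-f\|<\varepsilon$. Because $x-f(x)\neq 0$ on the compact set $\partial\Omega$, we have $m:=\min_{\partial\Omega}\|x-f(x)\|>0$, and for $\varepsilon<m$ the map $I-f_\varepsilon$ has no zero on $\partial(\Omega\cap X_\varepsilon)$; one sets $\deg(I-f,\Omega,0):=\deg_B(I-f_\varepsilon,\Omega\cap X_\varepsilon,0)$, the Brouwer degree, and checks using the homotopy $tf_\varepsilon+(1-t)f_{\varepsilon'}$ together with the reduction property of the Brouwer degree that this is independent of the admissible choices made.

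For (i): if $\deg(I-f,\Omega,0)\neq 0$ then $\deg_B(I-f_\varepsilon,\Omega\cap X_\varepsilon,0)\neq 0$ for all small $\varepsilon$, so by the solution property of the Brouwer degree there is $x_\varepsilon\in\Omega\cap X_\varepsilon$ with $x_\varepsilon=f_\varepsilon(x_\varepsilon)$. Taking $\varepsilon=1/n\to 0$ and using that the points $f(x_{1/n})$ lie in a relatively compact set, a subsequence of $(x_{1/n})$ converges to some $x\in\overline{\Omega}$ with $x=f(x)$; the boundary condition forces $x\in\Omega$. For (ii): compactness of $H$ on $[0,1]\times\overline{\Omega}$ lets one produce, for small $\varepsilon$, a single finite-dimensional $X_\varepsilon$ and a continuous $H_\varepsilon:[0,1]\times(\overline{\Omega}\cap X_\varepsilon)\to X_\varepsilon$ uniformly $\varepsilon$-close to $H$; the hypothesis $x-H(t,x)\neq 0$ on $[0,1]\times\partial\Omega$ gives, by compactness, a uniform positive lower bound, so $I-H_\varepsilon(t,\cdot)$ is admissible for every $t$, and $t\mapsto\deg(I-H(t,\cdot),\Omega,0)=\deg_B(I-H_\varepsilon(t,\cdot),\Omega\cap X_\varepsilon,0)$ is constant by homotopy invariance of the Brouwer degree.

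For (iii): invertibility of $I-f'(a)$ at each fixed point $a$ together with the inverse function theorem makes each $a$ isolated; since the fixed-point set is closed in $\overline{\Omega}$ and lies in the relatively compact set $f(\overline{\Omega})$, there are only finitely many, $a_1,\dots,a_m$, all in $\Omega$. By excision and additivity, $\deg(I-f,\Omega,0)=\sum_{j}\deg(I-f,B_j,0)$ for small pairwise-disjoint balls $B_j\ni a_j$. On each $B_j$ the straight-line homotopy joining $I-f$ to the affine map $x\mapsto(I-f'(a_j))(x-a_j)$ is admissible, because the Taylor remainder $f(x)-f(a_j)-f'(a_j)(x-a_j)=o(\|x-a_j\|)$ cannot cancel $(I-f'(a_j))(x-a_j)$ on a sufficiently small sphere (here one uses invertibility of $I-f'(a_j)$); hence $\deg(I-f,B_j,0)=\deg(I-f'(a_j),B_j,0)$. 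The computation is then finished by the Leray--Schauder lemma on compact linear operators: since $f'(a_j)$ is compact, $I-f'(a_j)$ is Fredholm of index $0$, the sum of the generalized eigenspaces for the real eigenvalues of $f'(a_j)$ in $(1,\infty)$ is finite-dimensional and $I-f'(a_j)$-invariant, and restricting the computation to a finite-dimensional subspace containing it shows $\deg(I-f'(a_j),B_j,0)=(-1)^{\sigma(a_j)}$ with $\sigma(a_j)$ the total algebraic multiplicity of those eigenvalues, the rest of the spectrum contributing a positive sign.

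The step I expect to be the main obstacle is the last one, namely proving the Leray--Schauder lemma identifying the local degree of $I-f'(a)$ with $(-1)^{\sigma(a)}$: one must carry out the finite-dimensional reduction carefully and argue that the (possibly infinitely many) eigenvalues of $f'(a)$ outside $(1,\infty)$ do not affect the sign, the eigenvalue $1$ being excluded by invertibility. Everything else reduces routinely to the finite-dimensional Brouwer degree once admissibility of the various homotopies is checked, and the full details can be found in \cite{LSDegree}.
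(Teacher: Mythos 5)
Your sketch is correct in outline, but note that the paper does not prove this statement at all: Theorem \ref{PLSD} is quoted as a classical result, with the proof deferred entirely to pages 185--225 of \cite{LSDegree}. Your finite-dimensional approximation via Schauder projections, reduction to the Brouwer degree, and the Leray--Schauder index formula for (iii) is precisely the standard argument of that reference, so the proposal is consistent with the paper's (implicit) approach and fills in the details the authors chose to cite rather than reproduce.
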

In this paper, we usually take $X=C^1([\frac{1}{2},1]:\mathbb{R}^2)\times \mathbb{R}\times \mathbb{R}$, and 
$\Omega$ to be $V\times (\frac{\alpha}{\sup T_2},\frac{\alpha}{\inf T_2})\times (\overline{\gamma},S+1)$, where $V$ is some open ball in $C^1([\frac{1}{2},1]:\mathbb{R}^2)$, and the other terms are described in the main text.
\section{Regularity}\label{DKRE}
In this appendix, we explain why, in the cohomogeneity one setting, it suffices to know that our metric is $C^1$ when dealing with regularity of solutions to the prescribed Ricci curvature equation. This result is a slight modification of Theorem 4.5 (a) of \cite{DeTurckKazdan}.
\begin{theorem}\label{DTG}
 Suppose $T$ is a smooth, non-degenerate symmetric $(0,2)$ tensor field on $M$, and there is a Riemannian metric $\g$ so that $\g\in C^1(M)$, and 
 $\g\in C^{\infty}(D)$ for some open and dense subset $D$ of $M$ so that $M\setminus D$ consists of finitely many, non-intersecting smooth and closed submanifolds 
 of $M$. If $\text{Ric}(\g)=T$ on $D$, then $\g\in C^{\infty}(M)$. 
\end{theorem}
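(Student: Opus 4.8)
The plan is to argue locally and to reduce, by elliptic regularity, to the bootstrap of DeTurck and Kazdan; the one genuinely new point is to show that the equation $\Ric(\g)=T$, assumed only on the dense open set $D$, continues to hold \emph{across} $M\setminus D$ in the weak sense. Since the conclusion is local, I would fix $p\in\Sigma:=M\setminus D$ and a coordinate ball $B\ni p$ small enough that $\Sigma\cap B$ is a single smooth closed submanifold $\Sigma_0\subset B$ of some codimension $m\ge 1$ (this is where the hypothesis that $M\setminus D$ is a finite disjoint union of such submanifolds enters). Because $\g\in C^1(B)$ and $\g\in C^\infty(B\setminus\Sigma_0)$, every component $g_{ij}$ is $C^1$ on $B$ and smooth off $\Sigma_0$; in particular $\partial_a\partial_b g_{ij}$ is a well-defined distribution, so $\Ric(\g)_{ij}$ — a universal expression linear in the second derivatives of $g$, polynomial in the inverse metric, and quadratic in $\partial g$ — is a well-defined distribution on $B$, agreeing classically with $T_{ij}$ on $D\cap B$.

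The heart of the matter, and the step I expect to be the main obstacle, is to promote this to $\Ric(\g)=T$ as distributions on all of $B$. The distribution $\Ric(\g)_{ij}-T_{ij}$ vanishes on the dense open set $B\setminus\Sigma_0$, hence is supported on $\Sigma_0$; I claim it is zero. Each second-derivative term of $\Ric(\g)_{ij}$ has the form (a $C^1$ coefficient)$\,\times\,\partial_a\partial_b g_{kl}$, and for a $g_{kl}$ that is $C^1$ on $B$ and smooth off $\Sigma_0$ the distributional $\partial_a\partial_b g_{kl}$ equals its classical a.e.\ value plus, at most, a layer concentrated on $\Sigma_0$ whose density is the jump of $\partial g_{kl}$ across $\Sigma_0$ (for $m\ge 2$ even this possibility is absent, since the tubular $\Sigma_0$-boundaries have vanishing area); the $C^1$ regularity of $\g$ makes that jump zero, and the remaining terms of $\Ric(\g)_{ij}$ are continuous functions on $B$ that contribute nothing on $\Sigma_0$. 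The leftover a.e.\ part equals $T_{ij}$ minus continuous terms on $D$, hence is bounded near $\Sigma_0$ and extends continuously to $B$; so $\Ric(\g)_{ij}-T_{ij}=0$ as distributions on $B$. This is exactly where both hypotheses bite: the $C^1$ (rather than merely $C^0$) regularity of $\g$, and the smallness/smoothness of $M\setminus D$.

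With the distributional equation $\Ric(\g)=T$ now valid on all of $B$, the remainder is routine elliptic theory. Passing to $\g$-harmonic coordinates near $p$, the equation becomes a quasilinear elliptic system for $(g_{ij})$ whose principal part is $g^{ab}\partial_a\partial_b g_{ij}=-2T_{ij}+2Q_{ij}(g,\partial g)$, with $Q$ a universal expression quadratic in $\partial g$ and rational in $g$; interior $L^p$ estimates for operators with continuous coefficients give $g_{ij}\in W^{2,p}_{\mathrm{loc}}$ for all $p$, hence $\g\in C^{1,\alpha}$ near $p$, and then, since $T$ is smooth, the standard bootstrap of DeTurck and Kazdan (the argument behind \cite[Theorem~4.5(a)]{DeTurckKazdan}) yields $\g\in C^\infty$ near $p$. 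Since $p\in\Sigma$ was arbitrary and $\g$ is already smooth on $D$, we conclude $\g\in C^\infty(M)$. (As an alternative to the support-of-distributions argument one could mollify $\g$ and pass to the limit, using that $\Sigma_0$ is closed of measure zero; but the argument above pinpoints most cleanly the role of each hypothesis.)
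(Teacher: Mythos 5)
Your overall strategy --- extend $\Ric(\g)=T$ past $\Sigma:=M\setminus D$ in a weak sense and then bootstrap with elliptic regularity --- is the same as the paper's, but two of your key steps conceal genuine difficulties that the paper's proof is specifically designed to avoid. The more serious one is the passage to $\g$-harmonic coordinates. The paper never changes coordinates: it adds the DeTurck gauge term $\operatorname{div}^*\bigl(T^{-1}\operatorname{Bian}(\g,T)\bigr)$, which vanishes identically on $D$ by the second Bianchi identity, and observes that the resulting operator already has principal part $\g^{rs}\partial_r\partial_s \g_{ij}$ in the \emph{original smooth background coordinates} of $M$. This matters because $\g$ is only $C^1$: the $\g$-harmonic coordinate functions solve a linear elliptic equation whose first-order coefficients ($\g^{ij}\Gamma^k_{ij}$) are merely continuous, so the coordinate functions are only $W^{2,p}\cap C^{1,\alpha}$, the Jacobian of the change of variables is only $C^{0,\alpha}$, and $\g$ in harmonic coordinates drops to $C^{0,\alpha}$ with $\partial\g$ only in $L^p$. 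Your equation $g^{ab}\partial_a\partial_b g_{ij}=-2T_{ij}+2Q(g,\partial g)$ in those coordinates then has $L^{p/2}$ right-hand side and $C^{0,\alpha}$ (not $C^1$) principal coefficients, and you would have to first rebuild the lost derivative before the DeTurck--Kazdan bootstrap can begin; this is a real piece of work you have not supplied, and the Bianchi-gauge route sidesteps it entirely.

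The second issue is your structural claim that for a $C^1$ function smooth off $\Sigma_0$, the distributional $\partial_a\partial_b g_{kl}$ is the classical a.e.\ value plus at most a single layer with density the jump of $\partial g_{kl}$. That presupposes the classical $\partial^2 g_{kl}\in L^1_{\mathrm{loc}}$, which you have not established, and your justification (``the leftover a.e.\ part equals $T_{ij}$ minus continuous terms'') is circular in arbitrary coordinates, where the Ricci operator underdetermines $\partial^2\g$. The paper's proof never assigns a distributional meaning to $\partial^2\g$ at all: it cuts off, writes the gauge-fixed equation in divergence form $\partial_s(\g^{rs}\partial_r u)=f$ with $f\in C^0$, and proves $u=\phi\g_{ij}$ is the $W^{1,2}_0$ weak solution by integrating by parts on smooth domains exhausting $B\setminus\Sigma$, letting the boundary terms cancel using \emph{only} the continuity of $\partial u$ across $\Sigma$. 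This uses precisely the $C^1$ hypothesis, involves only first derivatives of $\g$, and is closer to your parenthetical mollification alternative. Elliptic regularity (Gilbarg--Trudinger Theorems 8.34 and 6.14) then gives $C^{1,\alpha}$ and $C^{2,\alpha}$, after which DeTurck--Kazdan takes over.
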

\begin{proof}
Note that since $T$ is invertible, on $D$ we have 
 \begin{align}
  \text{Ric}(\g)-T+\text{div}^*\left(T^{-1}\text{Bian}(\g,T)\right)=0,
 \end{align}
 where $\text{div}$ is the divergence operator, and $\text{Bian}$ is the Bianchi tensor; the Bianchi identity tells us that $\text{Bian}(\g,T)=0$ since $\text{Ric}(\g)=T$. 
 Now for any point $p\in M$, we can find local coordinates on a ball $B$, so that $M\setminus D$ lies in the standard hypersurface, and 
 the equation can be written 
 \begin{align}\label{EQFRC}
  \g^{rs}\frac{\partial^2 \g_{ij}}{\partial x_r\partial x_s}=H_{ij}(\g,T), \ \text{for each} \ i,j,
 \end{align}
 where $H_{ij}$ is an analytic function in the components of $T$ and their derivatives, as well as 
 the first derivatives of $\g$ and its inverse components. We aim to show that $\g$ is in $C^2$ for some possibly smaller neighborhood of $p$; since $p$ 
 is arbitrary, this will imply that $\g$ is $C^2$ everywhere, so Theorem 4.5 (a) of \cite{DeTurckKazdan}
can be used to imply that $\g$ is smooth everywhere. 
 
 Note that for each $i,j$, \eqref{EQFRC} can be seen as a strongly elliptic scalar equation
 since the $\g^{rs}$ terms are positive-definite and lie in $C^1$. Choose a smooth cut-off function $\phi$ (it is 
 $1$ on some neighbourhood of $p$, and $0$ in some neighbourhood of $\partial B$) and define $u=\phi \g_{ij}$.   
 Note that $u\in C^1(\overline{B})$, $u$ is smooth on the upper-half ball $B_+$ and the lower-half ball $B_-$, and it also 
 solves the following equation:
 \begin{align}\label{EQFRC'}
 \begin{split}
  L(u)&=f \ \text{on} \ B_+\cup B_-,\\
  u&=0 \ \text{on} \ \partial B, 
  \end{split}
 \end{align}
 where $L(u)=\frac{\partial \left(a_{rs}\frac{\partial u}{\partial x_r}\right)}{\partial x_s}$, $a_{rs}=g^{rs}\in C^1(\overline{B})$ are uniformly elliptic coefficients, and 
 $f$ is a continuous function on $\overline{B}$. Our aim is to show that $u$ is actually a $W^{1,2}_0(B)$ function which weakly solves \eqref{EQFRC'}.
 To this end, fix an arbitrary $\psi\in C_c^{\infty}(B)$, and take smooth domains $D_n$ that consist of two disconnected components, one of which lies in and exhausts $B_+$, and the other lies in and exhausts $B_-$. Now on $\overline{D_n}$, $u$ is smooth and $L(u)=f$ in the classical sense, so 
 \begin{align}
  \int_{D_n}a_{rs}\frac{\partial u}{\partial x_r}\frac{\partial \psi}{\partial x_s}+\int_{D_n}f\psi=\int_{\partial D_n}a_{rs}\frac{\partial u}{\partial x_r} \psi\nu_s.
 \end{align}
 As $n\to \infty$, the right-hand side vanishes (since $u,\psi\in C^1(\overline{B})$, $\psi=0$ close to $\partial B$, and the hypersurface terms cancel out), so we obtain 
\begin{align}\label{USPW}
 \int_B a_{rs}\frac{\partial u}{\partial x_r}\frac{\partial \psi}{\partial x_s}=-\int_B f\psi
\end{align}
for each $\psi\in C_c^{\infty}(B)$. It then follows that \eqref{USPW} also holds for $\psi\in W^{1,2}_0(B)$, so $u$ is in fact the unique weak solution of 
\eqref{EQFRC'}. Theorem 8.34 of \cite{GT} then implies that $u\in C^{1,\alpha}(\overline{B})$ for each $\alpha\in (0,1)$. 

Then $\g$ is $C^{1,\alpha}$ in a smaller neighborhood $\tilde{B}$ of $p$, so the right-hand side of \eqref{EQFRC} lies in $C^{0,\alpha}(\overline{\tilde{B}})$.
Then for a new cut-off function $\tilde{\phi}$ in $\tilde{B}$, we can proceed as before to find that $\tilde{u}=\tilde{\phi}g_{ij}$ is a weak solution to 
$L(\tilde{u})=\tilde{f}$ with $\tilde{f}\in C^{0,\alpha}(\overline{\tilde{B}})$, so $\tilde{u}\in C^{2,\alpha}(\overline{\tilde{B}})$ by Theorem 6.14 of \cite{GT}.
This implies that $\g$ is $C^2$ in a neighborhood of $p$. 
\end{proof}

In this paper, the idea is to use Theorem \ref{DTG} on manifolds $M$ equipped with a cohomogeneity one group action $\G$, and $D$ is chosen to be the principal 
part of the group action. 
Since our metrics are $\G$-invariant, and chosen to satisfy certain ODEs,
it is clear that the metrics will be smooth on the principal orbits, so it becomes important to know when the metric will be $C^1$ on all of the manifold, including the singular 
orbits; this is discussed in Section \ref{Prelim}. 




\begin{thebibliography}{99}

\bibitem{AlexandrinoBettiol} M. Alexandrino and R. Bettiol, \textit{Lie groups and geometric aspects of isometric
actions}, Springer, Cham, 2015.

\bibitem{LSDegree}
L. Ambrosio and N. Dancer, \textit{Calculus of Variations and Partial Differential Equations}, Springer-
Verlag, Berlin, 2000.

\bibitem{BohmSpheres}
C. B\"ohm, \textit{Inhomogeneous Einstein metrics on low-dimensional spheres and other low-dimensional spaces}, Invent. Math. \textbf{134} (1998), no. 1, 145--176.


\bibitem{Buttsworth18}
 T. Buttsworth, \textit{The Dirichlet problem for Einstein metrics on cohomogeneity one
manifolds}, Ann. Glob. Anal. Geom. \textbf{54} (2018), no. 1, 155--171. 

\bibitem{Buttsworth19'}
T. Buttsworth, \textit{Cohomogeneity one quasi-Einstein metrics}, J. Math. Anal. Appl. \textbf{470} (2019), no. 1, 201--217.

\bibitem{Buttsworth19}
T. Buttsworth, \textit{The prescribed Ricci curvature problem on three-dimensional unimodular Lie groups}, Math. Nachr. \textbf{292} (2019), no. 4, 747--759. 

\bibitem{BRPW2020}
T. Buttsworth, A. Pulemotov, Y.A. Rubinstein and W. Ziller, \textit{On the Ricci iteration for homogeneous metrics on spheres and projective spaces}, 
Transform. Groups (2020), 1--20. 

\bibitem{CaoDeTurck}
J. Cao, D. DeTurck, \textit{The Ricci curvature equation with rotational symmetry}, Amer. J. Math.  \textbf{116} (1994), no. 2, 219--241.

\bibitem{DancerWang}
A. Dancer and M. Wang, \textit{On Ricci solitons of cohomogeneity one}, Ann. Global Anal. Geom. \textbf{39} (2011), no. 3, 259--292.

\bibitem{Delay01}
E. Delay and M. Herzlich, \textit{Ricci curvature in the neighborhood of rank-one symmetric spaces}, J. Geom. Anal. \textbf{11} (2001), no. 4, 573--588.

\bibitem{Delay02}
E. Delay, \textit{Studies of some curvature operators in a neighborhood of an asymptotically hyperbolic Einstein
manifold}, Adv. Math. \textbf{168} (2002), no. 2, 213--224.

\bibitem{Deturck81}
D. DeTurck, \textit{Existence of metrics with prescribed Ricci curvature: local theory}, Invent. Math. \textbf{65} (1981), no. 2, 179--207.

\bibitem{DeTurck85}
D. DeTurck, \textit{Prescribing positive Ricci curvature on compact manifolds}, Rend. Semin. Mat. Univ. Politec. Torino \textbf{43} (1985), no. 3, 357--369.

\bibitem{DeTurckKazdan}
D. DeTurck and J. Kazdan, \textit{Some regularity theorems in Riemannian geometry}, Ann. Sci. \'Ec.
Norm. Sup\'er. \textbf{14} (1981), no. 3, 249--260.

\bibitem{DeturckKoiso} D. DeTurck and N. Koiso, \textit{Uniqueness and non-existence of metrics with prescribed Ricci curvature}, Ann. Inst. H. Poincar\'e, Anal. non lin\'eare, \textbf{1} (1984), 351--359.

\bibitem{EscWang}
J. Eschenburg and M. Wang, \textit{The initial value problem for cohomogeneity one Einstein metrics}, J. Geom. Anal. \textbf{10} (2000), no. 1, 109--137.

\bibitem{GT}
D. Gilbarg and N. Trudinger, \textit{Elliptic partial differential equations of second order}, Springer-Verlag, Berlin, 1983. 

\bibitem{GroveZiller}
K. Grove and W. Ziller, \textit{Cohomogeneity one manifolds with positive Ricci curvature}, Invent. Math. \textbf{149} (2002), no. 3, 619--646.

\bibitem{Hamilton84}
R. Hamilton, \textit{The Ricci curvature equation, in: Seminar on nonlinear partial differential equations (S.-S. Chern, Ed.)}, Math. Sci. Res. Inst. Publ. \textbf{2}, Springer (1984), 47--72. 

\bibitem{Pulemotov16}
A. Pulemotov, \textit{The Dirichlet problem for the prescribed Ricci curvature equation on cohomogeneity one manifolds}, Ann. Mat. Pura Appl. \textbf{195} (2016), no. 4, 1269--1286.

\bibitem{AP16}
A. Pulemotov, \textit{Metrics with prescribed Ricci curvature on homogeneous spaces}, J. Geom. Phys. \textbf{106} (2016), 275--283.

\bibitem{AP19}
A. Pulemotov, \textit{Maxima of curvature functionals and the prescribed Ricci curvature problem on homogeneous spaces}, 
J. Geom. Anal. \textbf{30} (2020), no. 1, 987--1010.

\bibitem{VZ18}
L.~Verdiani and W.~Ziller, \textit{Smoothness Conditions in Cohomogeneity one manifolds}, Transformation Groups, to appear.

\bibitem{YauRicci}
S. Yau, \textit{On the Ricci curvature of a compact K\"ahler manifold and the complex Monge-Amp\`ere equation, I}, 
Comm. Pure Appl. Math. \textbf{31} (1978), no. 3, 339--411.

\bibitem{Ziller09} W. Ziller, \textit{On the geometry of cohomogeneity one manifolds with positive curvature, in Riemannian topology and geometric structures on manifolds}, vol. 271 of Progr. Math., Birkhäuser
Boston, Boston, MA, 2009, 233–262.
\end{thebibliography}
\end{document}